\documentclass[a4paper,final,12pt]{article}

\usepackage{amsmath,amssymb,amsthm,amsfonts,
geometry,listings,graphicx,showkeys,bbm,
enumerate,nicefrac}

\geometry{a4paper,top=35mm,left=16mm,right=16mm,bottom=35mm,
headsep=10mm,footskip=10mm}

\newtheorem{lemma}{Lemma}[section]
\newtheorem{theorem}[lemma]{Theorem}
\newtheorem{proposition}[lemma]{Proposition}
\newtheorem{corollary}[lemma]{Corollary}

\newcommand{\XO}[1]{\ensuremath{\bar{X}_{#1}}}
\newcommand{\Y}[1]{\ensuremath{\mathcal{X}_{#1}}}
\newcommand{\YM}[1]{\ensuremath{\mathcal{X}_{#1}^M}}
\newcommand{\YMM}[1]{\ensuremath{\tilde{\mathcal{X}}_{#1}^M}}
\renewcommand{\O}[1]{\ensuremath{\mathcal{O}_{#1}}}
\newcommand{\OM}[1]{\ensuremath{\mathcal{O}_{#1}^M}}

\newcommand{\YO}[1]{\ensuremath{\bar{\mathcal{X}}_{#1}}}
\newcommand{\YOM}[1]{\ensuremath{\bar{\mathcal{X}}_{#1}^M}}
\newcommand{\Yb}[1]{\ensuremath{Y_{#1}}}
\newcommand{\YX}[1]{\ensuremath{\mathbf{X}_{#1}}}
\newcommand{\YXM}[1]{\ensuremath{\mathbf{X}_{#1}^M}}

\newcommand{\fl}[2][T\!/\!M]{
  \ensuremath{ \lfloor #2 \rfloor_{#1} }
}

\newcommand{\cl}[2][T\!/\!M]{
  \ensuremath{ \lceil #2 \rceil_{#1} }
}

\renewcommand{\P}[0]{\ensuremath{\mathbb{P}}}
\newcommand{\R}[0]{\ensuremath{\mathbb{R}}}
\newcommand{\N}[0]{\ensuremath{\mathbb{N}}}
\newcommand{\E}[0]{\ensuremath{\mathbb{E}}}

\newcommand{\Rm}[0]{\ensuremath{\Psi}}
\newcommand{\Um}[0]{\ensuremath{\Phi}}
\newcommand{\Vm}[0]{\ensuremath{\mathcal{V}}}

\newcommand{\D}[0]{\ensuremath{\mathcal{D}}}
\newcommand{\B}[0]{\ensuremath{\mathcal{B}}}
\newcommand{\M}[0]{\ensuremath{\mathcal{M}}}
\newcommand{\C}[0]{\ensuremath{\mathcal{C}}}
\newcommand{\F}[0]{\ensuremath{\mathcal{F}}}
\newcommand{\Ff}[0]{\ensuremath{\mathbf{F}}}
\renewcommand{\L}[0]{\ensuremath{\mathcal{L}}}
\renewcommand{\S}[0]{\ensuremath{\mathcal{S}}}
\newcommand{\SM}[0]{\ensuremath{\mathcal{S}^M}}

\newcommand{\BaMi}[0]{\ensuremath{V}}
\newcommand{\BaLa}[0]{\ensuremath{W}}

\newcommand{\one}[0]{\ensuremath{\mathbbm{1}}}

\newcommand{\qf}[0]{\ensuremath{\phi}}
\renewcommand{\sf}[0]{\ensuremath{\rho}}

\newcommand{\qs}[0]{\ensuremath{\varphi}}
\newcommand{\sr}[0]{\ensuremath{\rho}}
\newcommand{\srr}[0]{\ensuremath{\varrho}}

\newcommand{\cc}[0]{\ensuremath{\alpha}}
\newcommand{\nr}[0]{\ensuremath{\theta}}

\newcommand{\qp}[0]{\ensuremath{\varphi}}
\renewcommand{\sp}[0]{\ensuremath{\rho}}
\newcommand{\spp}[0]{\ensuremath{\varrho}}

\newcommand{\cp}[0]{\ensuremath{\alpha}}
\newcommand{\np}[0]{\ensuremath{\theta}}

\newcommand{\qm}[0]{\ensuremath{\phi}}
\newcommand{\qmm}[0]{\ensuremath{\varphi}}
\newcommand{\sm}[0]{\ensuremath{\rho}}
\newcommand{\smm}[0]{\ensuremath{\varrho}}

\newcommand{\cm}[0]{\ensuremath{\chi}}
\newcommand{\nm}[0]{\ensuremath{\theta}}

\newcommand{\nmm}[0]{\ensuremath{\vartheta}}

\newcommand{\Id}[0]{\ensuremath{\textup{Id}}}

\setcounter{secnumdepth}{4}
\setcounter{tocdepth}{4}

\usepackage{courier}
\lstset{language=matlab,
basicstyle=\footnotesize\ttfamily,numbers=left,showspaces=false,
showstringspaces=false,numberstyle=\tiny,morekeywords={dst, idst, ones}}

\title{Strong convergence rates for nonlinearity-truncated
Euler-type approximations of stochastic \\ Ginzburg-Landau equations}
\author{Sebastian Becker and Arnulf Jentzen}

\begin{document}

\maketitle

\begin{abstract}
This article proposes and analyzes
explicit and easily implementable temporal numerical approximation schemes
for additive noise-driven stochastic partial differential equations (SPDEs) with polynomial nonlinearities
such as, e.g., stochastic Ginzburg-Landau equations.
We prove essentially sharp strong convergence rates for the considered 
approximation schemes.
Our analysis is carried out for abstract stochastic evolution equations
on separable Banach and Hilbert spaces including the above mentioned
SPDEs as special cases.
We also illustrate our strong convergence rate
results by means of a numerical simulation in {\sc Matlab}.
\end{abstract}

\tableofcontents

\section{Introduction}
\label{sec:intro}

In this article we study strong numerical approximations
of semilinear stochastic evolution equations (SEEs) with superlinearly growing nonlinearities.
The explicit Euler scheme and the linear-implicit Euler scheme are 
known to diverge strongly and numerically weakly 
in the case of such SEEs; see Theorem~2.1 in 
Hutzenthaler et al.~\cite{hjk11},
Theorem~2.1 in 
Hutzenthaler al.~\cite{HutzenthalerJentzenKloeden2013},
and Section~5.1 in Kurniawan~\cite{Kurniawan2014}.
Fully drift-implicit Euler schemes have been shown to converge strongly 
even in the case of some SEEs with superlinearly growing nonlinearities
and thereby overcome the lack of strong convergence of the explicit 
and the linear-implicit Euler scheme;
see, e.g.,
Theorem~2.4 in Hu~\cite{Hu1996},
Theorem~2.10 Gy{\"o}ngy \& Millet~\cite{gm05},
and Theorem~1.1 in Kov{\'a}cs et al.~\cite{kll2015}.
Fully drift-implicit Euler schemes can, however, often only be simulated approximatively
as a nonlinear equation has to be solved in each time step
and the resulting approximations of the fully drift-implicit Euler approximations 
require additional computational effort (particularly, when the state space of the considered
SEE is high dimensional, see, e.g., Figure~4 
in Hutzenthaler et al.~\cite{HutzenthalerJentzenKloeden2012})
and have not yet been shown to converge strongly.
Recently, a series of explicit and easily implementable 
time-discrete approximation schemes
have been proposed and shown to converge strongly in the case of SEEs 
with superlinearly growing nonlinearities; see, e.g.,
Hutzenthaler et al.~\cite{HutzenthalerJentzenKloeden2012},
Wang \& Gan~\cite{WangGan2013},
Hutzenthaler \& Jentzen~\cite{HutzenthalerJentzen2012},
Tretyakov \& Zhang~\cite{TretyakovZhang2013},
Halidias~\cite{Halidias2013},
Sabanis~\cite{Sabanis2013,Sabanis2013E},
Halidias \& Stamatiou~\cite{HalidiasStamatiou2013},
Hutzenthaler et al.~\cite{HutzenthalerJentzenWang2013},
Szpruch \& Zh{\= a}ng~\cite{SzpruchZhang2013},
Halidias~\cite{Halidias2015},
Liu \& Mao~\cite{LiuMao2013},
Hutzenthaler \& Jentzen~\cite{HutzenthalerJentzen2014},
Zhang~\cite{Zhang2014},
Dareiotis et al.~\cite{DareiotisKumarSabanis2014},
Kumar \& Sabanis~\cite{KumarSabanis2014},
Beyn et al.~\cite{BeynIsaakKruse2014},
Zong et al.~\cite{ZongWuHuang2014},
Song et al.~\cite{SongLuLiu2015},
Ngo \& Luong~\cite{NgoLuong2015},
Tambue \& Mukam~\cite{TambueMukam2015},
Mao~\cite{Mao2015},
Beyn et al.~\cite{BeynIsaakKruse2015},
Kumar \& Sabanis~\cite{KumarSabanis2016},
and Mao~\cite{Mao2016}
in the case of finite dimensional SEEs
and see, e.g.,
Gy{\"o}ngy et al.~\cite{GoengySabanisS2014}
and
Jentzen \& Pu{\v s}nik \cite{jp2015}
in the case of infinte dimensional SEEs.
These schemes are suitable modified versions of the explicit Euler scheme that somehow 
tame/truncate the superlinearly growing nonlinearities of the considered SEE 
and thereby prevent the considered tamed scheme from strong divergence.
However, each of the above mentioned temporal strong convergence results
for implicit (see \cite{Hu1996,gm05,kll2015}) 
and explicit (see \cite{HutzenthalerJentzenKloeden2012,
WangGan2013,HutzenthalerJentzen2012,TretyakovZhang2013,
Halidias2013,Sabanis2013,Sabanis2013E,
HalidiasStamatiou2013,HutzenthalerJentzenWang2013,SzpruchZhang2013,Halidias2015,
LiuMao2013,HutzenthalerJentzen2014,Zhang2014,
DareiotisKumarSabanis2014,KumarSabanis2014,GoengySabanisS2014,
BeynIsaakKruse2014,ZongWuHuang2014,SongLuLiu2015,jp2015,NgoLuong2015,
TambueMukam2015,Mao2015,BeynIsaakKruse2015,KumarSabanis2016,Mao2016}) schemes applies
merely to trace noise class driven SEEs and excludes the important case of 
the more irregular space-time white noise.
In particular, none of these results applies to space-time white noise 
driven stochastic Ginzburg-Landau equations.
In this work we intend to close this gap and we propose 
(see~\eqref{eq:exp_scheme} and~\eqref{eq:li_scheme} below) 
and analyze (see Theorem~\ref{thm:main} below) suitable explicit and strongly
convergent approximation schemes
for possibly space-time white noise driven SEEs.
In particular, we establish essentially sharp strong convergence rates 
for suitable explicit nonlinearity-truncated approximation schemes
for space-time white noise driven stochastic Ginzburg-Landau equations 
(see~\eqref{eq:strong_convergence_space_time}, Corollary~\ref{cor:exp_euler_convergence_2}, 
and Corollary~\ref{cor:li_euler_convergence_2} below).

To illustrate the main result 
of this article (see Theorem~\ref{thm:main}
in Section~\ref{sec:main_result_theorem} below)
in more detail,
we consider the following example 
of our
general setting (see Section~\ref{sec:main_result_setting})
in this introductory section.
Let $ T \in (0,\infty) $, $ n \in \{1,3,5,\ldots\} $,
$ a_0, a_1, \ldots, a_{n-1} \in \R $,
$ a_n \in (-\infty, 0) $,
$ \xi \in H^2_0( (0,1); \R) $,
$ H = L^2( (0,1); \R ) $,
let $ F \colon L^{2n}( (0,1); \R ) \rightarrow H $
be the function with the property that for all
$ v \in L^{2n}( (0,1); \R ) $
it holds that
$ F(v) = \sum_{k=0}^n a_k v^k $,
let $ A \colon D(A) \subseteq H \rightarrow H $
be the Laplacian with Dirichlet boundary conditions on $ H $,
let $ (\Omega, \F, \P ) $ be a probability space
with a normal filtration $ (\F_t)_{ t\in[0,T] } $,
and let $ (W_t)_{ t\in[0,T] } $ be an 
$ \Id_{ H } $-cylindrical
$ (\F_t)_{t\in[0,T]} $-Wiener process.
The assumptions 
that $ n $ is odd 
and that $ a_n < 0 $ ensure
that there exists an up to indistinguishability
unique $ (\F_t)_{t\in[0,T]} $-adapted stochastic process
$ X \colon [0,T] \times \Omega \rightarrow L^{2n}((0,1);\R) $
with continuous sample paths
which satisfies that for all $ t \in [0,T] $
it holds $ \P $-a.s.\ that
\begin{align}
\label{eq:intro_SPDE_mild} 
 &X_t
  =
  e^{tA} \xi
  +
  \int_0^t
  e^{(t-s)A}
  F( X_s ) \, ds
  +
  \int_0^t
  e^{(t-s)A} \, dW_s
\end{align}
(cf., e.g., Section~7.2 in Da Prato \& Zabczyk~\cite{dz92}
and Chapter~6 in Cerrai~\cite{Cerrai2001}).
The stochastic process $ X $ is thus 
a mild solution of the SPDE
\begin{align}
\label{eq:intro_SPDE}
 &dX_t(x)
  =
  \left[
    \tfrac{\partial^2}{\partial x^2}
    X_t(x)
    +
    \textstyle\sum\limits_{ k=0 }^n
    a_k
    \left(
      X_t(x)
    \right)^k
  \right] dt
  +
  dW_t(x)
\end{align}
with $ X_0(x) = \xi(x) $ and $ X_t(0) = X_t(1) = 0 $ for
$ x \in (0,1) $, $ t \in [0,T] $.
In the case $ n = 3 $ and $ a_0 = a_2 = 0 $, 
the SPDE~\eqref{eq:intro_SPDE} 
is often referred to as
stochastic Ginzburg-Landau equation in the 
literature (see also \eqref{eq:GinzburgLandau}~below). 
In this article we introduce the following 
nonlinearity-truncated exponential scheme to
approximate the solution process $ X $
of the SPDE~\eqref{eq:intro_SPDE}.
Let $ \fl[h]{\cdot} \colon \R \rightarrow \R $, 
$ h \in (0,\infty) $,
be the mappings with the property that 
for all $ h \in (0,\infty) $, $ t \in \R $ it holds that 
$
  \fl[h]{t}
  =
  \max\!\left(
    \{ 0, h, -h, 2h, -2h, \ldots \} \cap (-\infty, t]
  \right)
$,
let $ \chi \in (0,\frac{1}{2n}] $,
and
let $ Y^{N} \colon [0,T] \times \Omega \to L^{2n^2}\!((0,1);\R) $,
$ N \in \N $,
be stochastic processes which satisfy 
that for all $ t \in [0,T] $, $ N \in \N $
it holds $\P$-a.s.\ that
\begin{align}
\label{eq:exp_scheme}
\begin{split}
 &Y_t^{N}
  =
  e^{tA} \xi
  +
  \int_0^t
  e^{(t-\fl[{T\!/\!N}]{s})A} \,
  \one_{ 
    \{
      \| Y_{\fl[{T\!/\!N}]{s}}^{N} \|_{L^{2n^2}\!((0,1);\R)}
      \leq
      (N/T)^{\chi}
    \}
  } \,
  F(Y_{\fl[{T\!/\!N}]{s}}^{N}) \, ds
  +
  \int_0^t
  e^{(t-\fl[{T\!/\!N}]{s})A} \, dW_s .
\end{split}
\end{align}
The approximation scheme~\eqref{eq:exp_scheme} is a 
nonlinearity-truncated modification
(cf. (2)~in Jentzen \& Pu{\v s}nik \cite{jp2015})
of a time-continuous version
(cf., e.g., (130)--(134)~in Da Prato et al.~\cite{pjr10})
of the classical exponential Euler approximation
scheme for semilinear SPDEs
(cf., e.g., (3.2)~and~(3.6)~in Lord \& Rougemont~\cite{lr04}).
We also propose the following 
nonlinearity-truncated linear-implicit scheme to
approximate the solution process $ X $
of the SPDE~\eqref{eq:intro_SPDE}.
Let $ Z^{N} \colon [0,T] \times \Omega \to L^{2n^2}\!((0,1);\R) $,
$ N \in \N $, 
be stochastic processes which satisfy 
that for all $ t \in [0,T] $, $ N \in \N $
it holds $\P$-a.s.\ that
\begin{align}
\label{eq:li_scheme}
\begin{split}
 &Z_t^{N}
  =
  \big(
    \Id_H - (t-\fl[{T\!/\!N}]{t})A
  \big)^{-1}
  \big(
    \Id_H - \tfrac{T}{N}A
  \big)^{ -\fl[{T\!/\!N}]{t}\frac{N}{T} } 
  \xi
\\&+
  \int_0^t
  \big(
    \Id_H - (t-\fl[{T\!/\!N}]{t})A
  \big)^{-1}
  \big(
    \Id_H - \tfrac{T}{N}A
  \big)^{ (\fl[{T\!/\!N}]{s}-\fl[{T\!/\!N}]{t})\frac{N}{T} } \,
  \one_{ 
    \{
      \| Z_{\fl[{T\!/\!N}]{s}}^{N} \|_{L^{2n^2}\!((0,1);\R)}
      \leq
      (N/T)^{\chi}
    \}
  } \,
  F(Z_{\fl[{T\!/\!N}]{s}}^{N}) \, ds
\\&+
  \int_0^t
  \big(
    \Id_H - (t-\fl[{T\!/\!N}]{t})A
  \big)^{-1}
  \big(
    \Id_H - \tfrac{T}{N}A
  \big)^{ (\fl[{T\!/\!N}]{s}-\fl[{T\!/\!N}]{t})\frac{N}{T} } \, dW_s .
\end{split}
\end{align}
The approximation scheme~\eqref{eq:li_scheme} is a 
nonlinearity-truncated modification
(cf. (2)~in Jentzen \& Pu{\v s}nik \cite{jp2015})
of a time-continuous version
(cf., e.g., (142)--(146)~in Da Prato et al.~\cite{pjr10})
of the classical linear-implicit Euler approximation
scheme for semilinear SPDEs.
Both approximation schemes,
\eqref{eq:exp_scheme} and~\eqref{eq:li_scheme},
are easy to implement (cf.~\eqref{eq:exp_time_step}
and~\eqref{eq:li_time_step} below and cf.,
e.g., also Section~4 in Lord \& Tambue~\cite{lt13}).
In Corollary~\ref{cor:exp_euler_convergence_2} 
and Corollary~\ref{cor:li_euler_convergence_2}
below we prove 
that for all $ p \in (0,\infty) $,
$ \theta \in [ 0, \nicefrac{ 1 }{ 4 } ) $
there exists a real number $ C \in \R $
such that
for all $ N \in \N $
it holds that
\begin{equation}
\label{eq:strong_convergence_space_time}
  \sup\nolimits_{ t \in [0,T] }
  \left(
    \E\!\left[ 
      \| X_t - Y^{N}_t \|_{H}^p
    \right]
  \right)^{ \nicefrac{ 1 }{ p } }
  +
  \sup\nolimits_{ t \in [0,T] }
  \left(
    \E\!\left[ 
      \| X_t - Z^{N}_t \|_{H}^p
    \right]
  \right)^{ \nicefrac{ 1 }{ p } }
  \leq 
  C
  \, N^{ - \theta }
  .
\end{equation}
Our proofs of~\eqref{eq:strong_convergence_space_time},
Corollary~\ref{cor:exp_euler_convergence_2}, 
and Corollary~\ref{cor:li_euler_convergence_2}, respectively, 
are based on the well known idea to substract
the Ornstein-Uhlenbeck process from the solution
process of the SPDE (cf.\ Section~\ref{sec:a_priori} below and, e.g.,
(14.2.2)--(14.2.3) in
Section~14.2 in Da Prato \& Zabczyk~\cite{dz96}),
are based on some arguments in
Jentzen \& Kurniawan~\cite{JentzenKurniawan2015}
and Jentzen \& Pu{\v s}nik~\cite{jp2015}, 
and are based on an
appropriate Lyapunov-type condition
for one-step approximation schemes on 
Banach spaces 
(see~\eqref{eq:assume_u} in Section~\ref{sec:a_priori_setting}, 
\eqref{eq:assume_u_main} in Section~\ref{sec:main_result_setting},
and Corollary~\ref{cor:U_cor1}).
To the best of our knowledge,
inequality~\eqref{eq:strong_convergence_space_time},
Corollary~\ref{cor:exp_euler_convergence_2}, 
and Corollary~\ref{cor:li_euler_convergence_2},
respectively,
are the first results in the literature that
establish strong convergence for temporal
numerical approximations of the SPDE~\eqref{eq:intro_SPDE}
in the case where $ n > 1 $.

In the following we
illustrate~\eqref{eq:strong_convergence_space_time}
through a numerical example.
For this we consider the choice $ T = 1 $, $ n = 3 $, 
$ a_0 = 0 $, $ a_1 = 1 $, $ a_2 = 0 $, $ a_3 = -1 $,
$ \xi = 0 $, $ \chi = \nicefrac{1}{6} = \nicefrac{1}{2n} $
and we note that the SPDE~\eqref{eq:intro_SPDE}
reduces in this case to the stochastic Ginzburg-Landau
equation
\begin{align}
\label{eq:GinzburgLandau}
\begin{split}
  &dX_t(x)
  =
  \left[
    \tfrac{\partial^2}{\partial x^2}
    X_t(x)
    +
    X_t(x)
    -
    \left( X_t(x) \right)^3
  \right] dt
  +
  dW_t(x)
\end{split}
\end{align}
with $ X_0(x) = 0 $ and $ X_t(0) = X_t(1) = 0 $ for
$ x \in (0,1) $, $ t \in [0,T] $.
Note that the approximation processes
in~\eqref{eq:exp_scheme} and~\eqref{eq:li_scheme} satisfy 
in the case of~\eqref{eq:GinzburgLandau} that
for all $ n \in \{ 1, 2, \ldots, N-1 \} $, $ N \in \N $
it holds $ \P $-a.s.\ that
$ Y_0^N = Z_0^N = 0 $ and
\begin{align}
\label{eq:exp_time_step}
  Y_{\nicefrac{(n+1)}{N}}^N
 &=
  e^{\frac{T}{N}A}
  \left[
    Y_{\nicefrac{n}{N}}^N
    +
    \tfrac{T}{N} \,
    \one_{ 
      \{
	\| Y_{\nicefrac{n}{N}}^{N} \|_{L^{18}((0,1);\R)}
	\leq
	N^{\nicefrac{1}{6}}
      \}
    }
    \left(
      Y_{\nicefrac{n}{N}}^{N}
      -
      \left[ 
        Y_{\nicefrac{n}{N}}^{N}
      \right]^3
    \right)
    +
    \smallint_{{\nicefrac{n}{N}}}^{\nicefrac{(n+1)}{N}}
    dW_s
  \right],
\\
\label{eq:li_time_step}
  Z_{\nicefrac{(n+1)}{N}}^N
 &=
  \big(
    \Id_H - \tfrac{T}{N}A
  \big)^{ -1 } 
  \left[
    Z_{\nicefrac{n}{N}}^N
    +
    \tfrac{T}{N} \,
    \one_{ 
      \{
	\| Z_{\nicefrac{n}{N}}^{N} \|_{L^{18}((0,1);\R)}
	\leq
	N^{\nicefrac{1}{6}}
      \}
    }
    \left(
      Z_{\nicefrac{n}{N}}^{N}
      -
      \left[ 
        Z_{\nicefrac{n}{N}}^{N}
      \right]^3
    \right)
    +
    \smallint_{{\nicefrac{n}{N}}}^{\nicefrac{(n+1)}{N}}
    dW_s
  \right] .
\end{align}
In Figure~\ref{fig:convergence_plot}
we plot approximatively the strong root mean 
square approximation errors
\begin{align}
\label{eq:error_approx}
\begin{split}
 &\left(
    \E\!\left[
      \|
        X_T
        -
        Y_T^N
      \|_H^2
    \right] 
  \right)^{\nicefrac{1}{2}}
  \qquad
  \text{and}
  \qquad
  \left(
    \E\!\left[
      \|
        X_T
        -
        Z_T^N
      \|_H^2
    \right] 
  \right)^{\nicefrac{1}{2}}
\end{split}
\end{align}
against the number of time steps 
$ N \in \{ 64, 128, 256, \ldots, 262144 \} $.
The infinite dimensional Hilbert space
$ H = L^2( (0,1); \R ) $ in~\eqref{eq:error_approx}
is approximated in Figure~\ref{fig:convergence_plot}
through the finite dimensional subspace spanned by the
first $ 1024 $ eigenfunctions of the Laplacian,
the unknown exact solution process $ (X_t)_{ t \in [0,T] } $
of~\eqref{eq:GinzburgLandau} is approximated
in Figure~\ref{fig:convergence_plot} through a 
nonlinearity-truncated linear-implicit
Crank-Nicolson approximation with $ 1048576 $
time steps, and the expectations 
in~\eqref{eq:error_approx} are 
approximated in Figure~\ref{fig:convergence_plot} by a Monte Carlo
approximation with $ 25 $ Monte Carlo runs.
Details can be found in Figure~\ref{fig:matlab_code}
in which we present the {\sc Matlab} code
that has been used to create Figure~\ref{fig:convergence_plot}.
The order lines in Figure~\ref{fig:convergence_plot}
correspond to the convergence orders
$ \nicefrac{1}{8} $, $ \nicefrac{1}{4} $,
and $ \nicefrac{1}{2} $.
Figure~\ref{fig:convergence_plot} thus
essentially agrees with our strong
convergence result~\eqref{eq:strong_convergence_space_time}
which proves that both schemes,
\eqref{eq:exp_time_step} and~\eqref{eq:li_time_step},
converge in the sense of~\eqref{eq:error_approx}
with order $ \nicefrac{1}{4}- $.

\begin{figure}[ht]
\centering
\includegraphics[width=\textwidth]{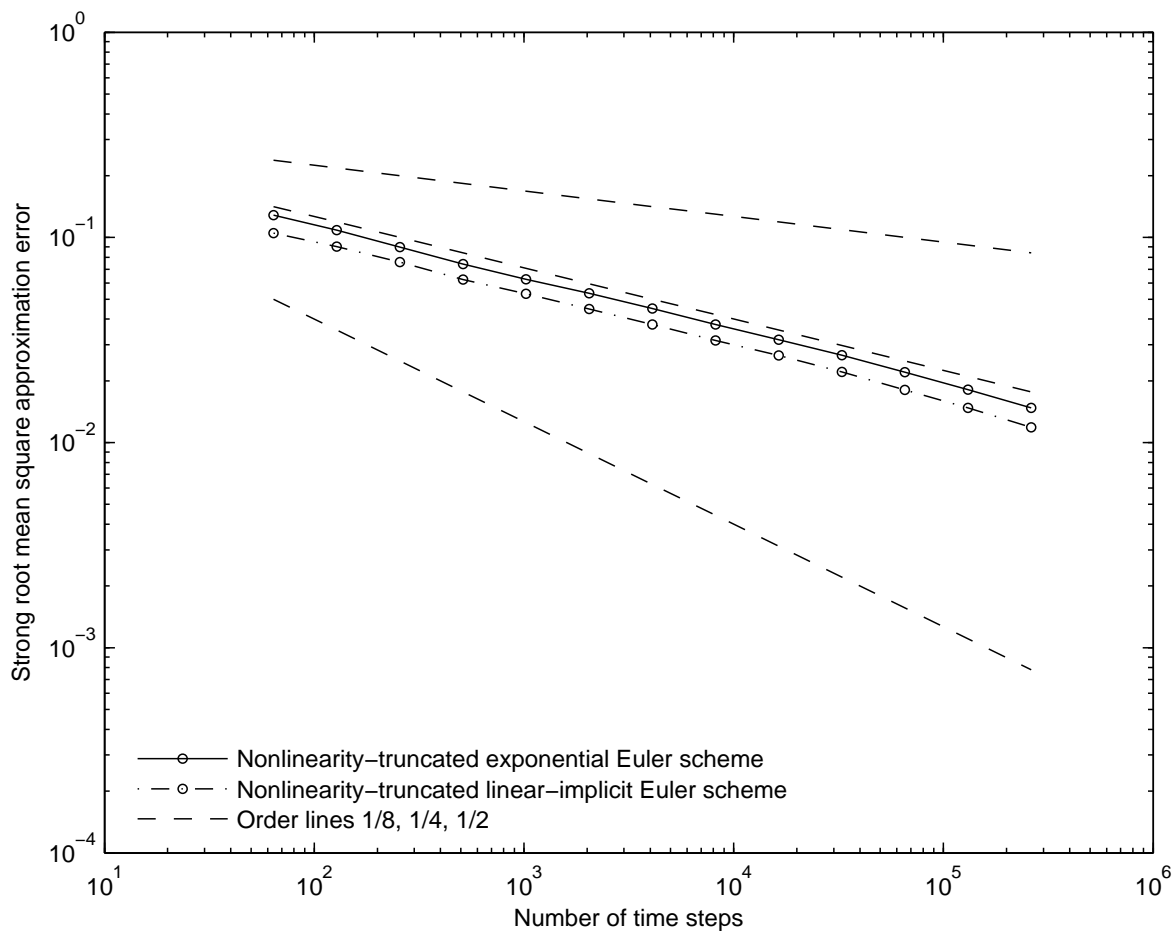}
\caption{Strong approximation errors~\eqref{eq:error_approx} against the 
number of time steps $ N \in \{ 64, 128, $ $ 256, \ldots, 262144 \} $.}
\label{fig:convergence_plot}
\end{figure}

\begin{figure}[p]
\begin{lstlisting}
H_dim = 2^10; N_ref = H_dim^2; N_apx = 2.^(6:18); MC_runs = 25; 
A = - (1:H_dim).^2 .* pi^2; F = @(x) x - x.^3; rng('default');
X = zeros(1, H_dim); Y = zeros(numel(N_apx), H_dim);
Z = zeros(numel(N_apx), H_dim); W = zeros(numel(N_apx), H_dim);
Err_Y = zeros(1,numel(N_apx)); Err_Z = zeros(1,numel(N_apx));

for r=1:MC_runs 
  X = X*0; Y = Y*0; Z = Z*0; W = W*0;

  for i = 1:N_ref
    dW = randn(1, H_dim) / sqrt(N_ref);
    x = F( dst( X ) * sqrt(2) );
    x = x*( ( sum( x.^18 )/(H_dim+1) ) <= N_ref^3 );
    X = ( (1+A/N_ref/2) .* X + idst( x ) / sqrt(2) / N_ref + dW ) ...
	./ (1-A/N_ref/2);
    
    for k = 1:numel(N_apx)   
      W(k,:) = W(k,:) + dW;
      if mod( i, N_ref / N_apx(k) ) == 0   
	x = F( dst( Y(k,:) ) * sqrt(2) );
	x = x*( ( sum( x.^18 )/(H_dim+1) ) <= N_apx(k)^3 );
	Y(k,:) = exp(A/N_apx(k)) ...
		 .* ( Y(k,:) + idst( x ) / sqrt(2) / N_apx(k) + W(k,:) );
	
	x = F( dst( Z(k,:) ) * sqrt(2) );
	x = x*( ( sum( x.^18 )/(H_dim+1) ) <= N_apx(k)^3 );
	Z(k,:) = ( Z(k,:) + idst( x ) / sqrt(2) / N_apx(k) + W(k,:) ) ...
	         ./ (1-A/N_apx(k));
	
	W(k,:) = W(k,:)*0;         
      end 
    end 
  end
  
  for k = 1:numel(N_apx)
    Err_Y(k) = Err_Y(k) + norm(X - Y(k,:))^2;
    Err_Z(k) = Err_Z(k) + norm(X - Z(k,:))^2;
  end
end

Err_Y = sqrt( Err_Y / MC_runs ); Err_Z = sqrt( Err_Z / MC_runs );
loglog( N_apx, Err_Y, 'o- k', 'MarkerSize', 3); hold on;
loglog( N_apx, Err_Z, 'o-. k', 'MarkerSize', 3);
loglog( N_apx, 0.4*[N_apx.^(-1/8); N_apx.^(-1/4); N_apx.^(-1/2)], '-- k');
xlabel('Number of time steps');
ylabel('Strong root mean square approximation error');  
legend('Location', 'SouthWest', ...
       'Nonlinearity-truncated exponential Euler scheme', ...
       'Nonlinearity-truncated linear-implicit Euler scheme', ...
       'Order lines 1/8, 1/4, 1/2'); legend('boxoff');
\end{lstlisting}
\caption{ {\sc Matlab} code used to create Figure~\ref{fig:convergence_plot}. }
\label{fig:matlab_code}
\end{figure}

The remainder of this article is organized as follows.
In Section~\ref{sec:a_priori} the required a priori moments for the nonlinearity-truncated 
approximation schemes are established,
in Section~\ref{sec:pathwise_error} 
the error analysis is performed in the pathwise sense under the hypothesis of suitable a priori bounds
for the approximation processes,
and in Section~\ref{sec:strong_error}
the error analysis is carried out in the strong $ L^p $-sense 
under the hypothesis of appropriate a priori moment bounds
for the approximation processes.
Section~\ref{sec:main_result} combines the results of Section~\ref{sec:a_priori}
and Section~\ref{sec:strong_error} and thereby establishes Theorem~\ref{thm:main}
which is the main result of this article.
The analysis in Sections~\ref{sec:a_priori}--\ref{sec:main_result}
is carried out for abstract stochastic evolution equations
on separable Banach and Hilbert spaces, respectively.
Section~\ref{sec:examples} then verifies that the assumptions of Theorem~\ref{thm:main}
in Section~\ref{sec:main_result} are satisfied in the case of concrete 
stochastic partial differential equations of the type~\eqref{eq:intro_SPDE} and, in particular,
establishes Corollary~\ref{cor:exp_euler_convergence_2} 
and Corollary~\ref{cor:li_euler_convergence_2}.

\subsection{Notation}
Throughout this article the following notation is used.
For measurable spaces $ (\Omega_1, \F_1) $
and $ (\Omega_2, \F_2) $ we denote by
$ \M( \F_1, \F_2 ) $ the
set of all $ \F_1 / \F_2 $-measurable mappings.
For a set $ A $ we denote by $ \mathcal{P}(A) $
the power set of $ A $.
For a set $ A $ and a subset $ \mathcal{A} \subseteq \mathcal{P}(A) $
we denote by $ \sigma_A( \mathcal{A} ) $ the smallest 
sigma-algebra on $ A $ which contains $ \mathcal{A} $.
For a topological space $ (X, \tau) $ we denote by 
$ \B(X) $ the set given by 
$ \B(X) = \sigma_X( \tau ) $.
For a natural number $ d \in \N $ and a 
set $ A \in \B(\R^d) $
we denote by $ \lambda_A \colon \B(A) \rightarrow [0, \infty] $
the Lebesgue-Borel measure on $ A $.
We denote by $ \fl[h]{\cdot} \colon \R \rightarrow \R $, 
$ h \in (0,\infty) $, and
$ \cl[h]{\cdot} \colon \R \rightarrow \R $, 
$ h \in (0,\infty) $,
the mappings with the property that 
for all $ h \in (0,\infty) $, $ t \in \R $ it holds that 
$
  \fl[h]{t}
  =
  \max\!\left(
    \{ 0, h, -h, 2h, -2h, \ldots \} \cap (-\infty, t]
  \right)
$
and
$
  \cl[h]{t}
  =
  \min\!\left(
    \{ 0, h, -h, 2h, -2h, \ldots \} \cap [t,\infty)
  \right)
$. 
For a measure space
$ (\Omega, \F, \mu) $,
a measurable space 
$ (S, \S) $,
a set $ R \subseteq S $,
and a
function $ f \colon \Omega \rightarrow R $
we denote by $ [f]_{\mu,\S} $ the set given by
$ 
  [f]_{\mu,\S} 
  =
  \{
    g \in \M(\F,\S)
    \colon
    ( 
      \exists \, A \in \F
      \colon
      \mu(A) = 0
      \text{ and }
      \{
        \omega \in \Omega
        \colon
        f(\omega)
        \neq
        g(\omega)
      \}
      \subseteq A
    )
  \}
$.

\section{A priori bounds}
\label{sec:a_priori}
\subsection{Setting}
\label{sec:a_priori_setting}
Let $ T, \qf \in (0,\infty) $, 
$ c, C \in [0,\infty) $,
$ M \in \N $,
let $ (\BaMi, \left\| \cdot \right\|_{\BaMi}) $
and $ (\BaLa, \left\| \cdot \right\|_{\BaLa} ) $
be separable $ \R $-Banach spaces with 
$ \BaMi \subseteq \BaLa $ densely and continuously,
let $ \Um, \Rm, \Vm \in \M\big( \B(\BaMi), \B([0,\infty)) \big) $,
$ F \in \M\big( \B(\BaMi), \B(\BaLa) \big) $,
$ \S \in \M\big( \B(\{ (s, t) \in [0,T]^2 \colon s < t \}), \B(L(\BaLa,\BaMi)) \big) $
satisfy that for all $ u,v \in \BaMi $,
$ r_1,r_2,r_3 \in [0,T] $,
$ s \in \{ 0, \frac{T}{M}, \frac{2T}{M}, \ldots, \frac{(M-1)T}{M} \} $,
$ t \in (s,s+\frac{T}{M} ] $ with
$ r_1 < r_2 < r_3 $ it holds that
$ 
  \left\| F(u + v) \right\|_{ \BaLa } 
  \leq 
  C( 1 + |\Um(u)|^{\qf} + |\Um(v)|^{\qf})
$,
$ 
  \S_{r_1,r_3}
  =
  \S_{r_2,r_3}
  \S_{r_1,r_2}
$,
and
\begin{align}
\label{eq:assume_u}
\begin{split}
 &\Um\!\left(
    \S_{s,t}
    \left[
      u
      +
      (t-s) \,
      \one_{ 
	[0, M/T ]
      }
      ( \Vm(u+v) ) \,
      F( u + v )
    \right]
  \right)
  \leq
  e^{c(t-s)}
  \left[
    \Um(u)
    +
    (t-s)
    \Rm(v)
  \right], 
\end{split}
\end{align}
let $ (\Omega, \F, \P ) $ 
be a probability space,
and let 
$ O, \Yb{} \colon [0,T] \times \Omega \rightarrow \BaMi $ 
be stochastic processes such that for all 
$ t \in (0,T] $ it holds that
\begin{align}
\label{eq:bootstrap_Y}
\begin{split}
  \Yb{t}
 &=
  \S_{0,t}
  \Yb{0}
  +
  \int_0^t
  \S_{\fl{s},t} \,
  \one_{ 
    \{
      \Vm(
        \Yb{\fl{s}}
        +
        O_{\fl{s}}
      )
      \leq
      M/T
    \}
  } \,
  F( \Yb{\fl{s}} + O_{\fl{s}} ) \, ds .
\end{split}
\end{align}

\subsection{A priori bounds based on variational arguments}
\label{sec:variational_a_priori}

\begin{lemma}
\label{lem:variational_a_priori}
Assume the setting in Section~\ref{sec:a_priori_setting}. 
Then for all $ t \in [0,T] $ it holds that
\begin{align}
\begin{split}
  \Um(\Yb{t})
 &\leq
  e^{ct}
  \Um( \Yb{0} )
  +
  \int_0^{t}
  e^{c(t-\fl{s})}
  \Rm( O_{\fl{s}} ) \, ds .
\end{split}
\end{align}
\end{lemma}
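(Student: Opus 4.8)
\emph{Proof plan.} The assertion is deterministic, so I would fix $\omega \in \Omega$ and argue pathwise; write $h = \frac{T}{M}$ and $t_k = kh$ for $k \in \{0,1,\ldots,M\}$. The first step is to notice that the integrand in~\eqref{eq:bootstrap_Y} is constant on each grid interval $[t_j,t_{j+1})$ (since $\fl{s} = t_j$ for $s$ there), so for $k \in \{0,\ldots,M-1\}$ and $t \in (t_k,t_{k+1}]$ the integral collapses into a finite sum
\begin{align*}
  \Yb{t}
  &=
  \S_{0,t}\Yb{0}
  +
  \sum_{j=0}^{k-1}
  h\,\S_{t_j,t}\,\one_{\{\Vm(\Yb{t_j}+O_{t_j})\le M/T\}}\,F(\Yb{t_j}+O_{t_j})
\\&\quad
  +
  (t-t_k)\,\S_{t_k,t}\,\one_{\{\Vm(\Yb{t_k}+O_{t_k})\le M/T\}}\,F(\Yb{t_k}+O_{t_k}).
\end{align*}
Using the flow identity $\S_{r_1,r_3} = \S_{r_2,r_3}\S_{r_1,r_2}$ to pull $\S_{t_k,t}$ out of every term and comparing the resulting bracket with the same display written for $\Yb{t_k}$ in place of $\Yb{t}$, I would arrive at the one-step representation
\begin{equation}
\label{eq:vap_onestep}
  \Yb{t}
  =
  \S_{t_k,t}
  \Big[
    \Yb{t_k}
    +
    (t-t_k)\,
    \one_{[0,M/T]}\!\big(\Vm(\Yb{t_k}+O_{t_k})\big)\,
    F(\Yb{t_k}+O_{t_k})
  \Big]
\end{equation}
for all $k \in \{0,\ldots,M-1\}$ and all $t \in (t_k,t_{k+1}]$, where rewriting the indicator uses that $\Vm$ is $[0,\infty)$-valued.

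Next I would feed~\eqref{eq:vap_onestep} into the Lyapunov-type hypothesis~\eqref{eq:assume_u} with $u = \Yb{t_k}$, $v = O_{t_k}$, $s = t_k$, and $t$ as above (note $t \in (t_k, t_k + \tfrac{T}{M}]$), obtaining the one-step bound
\begin{equation}
\label{eq:vap_step}
  \Um(\Yb{t})
  \le
  e^{c(t-t_k)}\big[\Um(\Yb{t_k}) + (t-t_k)\,\Rm(O_{t_k})\big]
  \qquad
  \text{for all } k \in \{0,\ldots,M-1\},\ t \in (t_k,t_{k+1}].
\end{equation}

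From here the claim follows by a short induction. I would first establish the asserted inequality at the grid points $t = t_k$ by induction on $k$: the case $k=0$ is an equality, and the inductive step follows from~\eqref{eq:vap_step} with $t = t_{k+1}$ together with the elementary identities $e^{ch}e^{ct_k} = e^{ct_{k+1}}$, $e^{ch}\int_0^{t_k}e^{c(t_k-\fl{s})}\Rm(O_{\fl{s}})\,ds = \int_0^{t_k}e^{c(t_{k+1}-\fl{s})}\Rm(O_{\fl{s}})\,ds$, and $\int_{t_k}^{t_{k+1}} e^{c(t_{k+1}-\fl{s})}\Rm(O_{\fl{s}})\,ds = e^{ch}\,h\,\Rm(O_{t_k})$. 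Finally, for an arbitrary $t \in [0,T]$ the case $t=0$ is trivial and otherwise $t \in (t_k,t_{k+1}]$ for some $k$; combining~\eqref{eq:vap_step} with the already established grid-point bound at $t_k$ and the identity $e^{c(t-t_k)}(t-t_k)\Rm(O_{t_k}) = \int_{t_k}^{t}e^{c(t-\fl{s})}\Rm(O_{\fl{s}})\,ds$ (valid since $\fl{s} = t_k$ for $s \in [t_k,t)$) gives exactly $\Um(\Yb{t}) \le e^{ct}\Um(\Yb{0}) + \int_0^t e^{c(t-\fl{s})}\Rm(O_{\fl{s}})\,ds$.

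I do not expect a genuine obstacle here: the argument is purely deterministic and uses only the cocycle property of $\S$ and a single application of~\eqref{eq:assume_u} per time step. The one point requiring care is the bookkeeping with the floor operator $\fl{\cdot}$ — in particular, carrying out the telescoping in the first step precisely so as to land on the clean form~\eqref{eq:vap_onestep}, and matching the discrete increments $h\,\Rm(O_{t_k})$ with the continuous-time integral $\int_0^t e^{c(t-\fl{s})}\Rm(O_{\fl{s}})\,ds$ that appears in the statement.
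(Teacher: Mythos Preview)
Your proposal is correct and follows essentially the same approach as the paper's proof: derive the one-step representation~\eqref{eq:vap_onestep} from~\eqref{eq:bootstrap_Y} via the cocycle property of $\S$, apply~\eqref{eq:assume_u} to obtain the one-step Lyapunov bound~\eqref{eq:vap_step}, iterate it at the grid points, and then handle a general $t$ by combining the grid-point estimate at $\fl{t}$ with one more application of~\eqref{eq:vap_step}. The paper carries out exactly these four steps in the same order; your bookkeeping with the floor function and the identification of the discrete sum with the integral $\int_0^t e^{c(t-\fl{s})}\Rm(O_{\fl{s}})\,ds$ matches the paper's computations.
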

\begin{proof}[Proof of Lemma~\ref{lem:variational_a_priori}]
Note that~\eqref{eq:bootstrap_Y}
shows that for all 
$ s \in \{ 0, \frac{T}{M}, \frac{2T}{M}, \ldots, \frac{(M-1)T}{M} \} $,
$ t \in (s, s+\frac{T}{M}] $
it holds that
\begin{align}
\begin{split}
 &\Yb{t}
  =
  \S_{s,t}
  \left[
    \Yb{s}
    +
    (t-s) \,
    \one_{ 
      \{
	\Vm(
	  \Yb{s}
	  +
	  O_{s}
	)
	\leq
	M/T
      \}
    } \,
    F( \Yb{s} + O_{s} )
  \right] .
\end{split}
\end{align}
This and~\eqref{eq:assume_u} 
prove that for all
$ s \in \{ 0, \frac{T}{M}, \frac{2T}{M}, \ldots, \frac{(M-1)T}{M} \} $,
$ t \in (s, s+\frac{T}{M}] $
it holds that
\begin{align}
\label{eq:a_priori_h1}
\begin{split}
  \Um(\Yb{t})
 &\leq 
  e^{c(t-s)}
  \left[
    \Um(\Yb{s})
    +
    (t-s)
    \Rm( O_{s} )
  \right] .
\end{split}
\end{align}
In particular, \eqref{eq:a_priori_h1}
implies that for all
$ m \in \{ 1, 2, \ldots, M \} $
it holds that
\begin{align}
\label{eq:a_priori_h2}
\begin{split}
  \Um(\Yb{\frac{mT}{M}})
 &\leq 
  e^{c\frac{T}{M}}
  \left[
    \Um(\Yb{\frac{(m-1)T}{M}})
    +
    \tfrac{T}{M}
    \Rm( O_{\frac{(m-1)T}{M}} )
  \right]
  \leq
  \ldots
  \leq
  e^{c\frac{mT}{M}}
  \Um( \Yb{0} )
  +
  \tfrac{T}{M}
  \sum_{l=0}^{m-1}
  e^{c(m-l)\frac{T}{M}}
  \Rm( O_{\frac{lT}{M}} )
\\&=
  e^{c\frac{mT}{M}}
  \Um( \Yb{0} )
  +
  \sum_{l=0}^{m-1}
  \int_{\frac{lT}{M}}^{\frac{(l+1)T}{M}}
  e^{c(\frac{mT}{M}-\fl{s})}
  \Rm( O_{\fl{s}} ) \, ds
\\&=
  e^{c\frac{mT}{M}}
  \Um( \Yb{0} )
  +
  \int_0^{\frac{mT}{M}}
  e^{c(\frac{mT}{M}-\fl{s})}
  \Rm( O_{\fl{s}} ) \, ds .
\end{split}
\end{align}
Moreover, \eqref{eq:a_priori_h1} 
and~\eqref{eq:a_priori_h2} yield
that for all 
$ t \in [0,T] $
it holds that
\begin{align}
\begin{split}
 &\Um(\Yb{t})
\\&\leq 
  e^{c(t-\fl{t})}
  \left[
    \Um(\Yb{\fl{t}})
    +
    (t-\fl{t})
    \Rm( O_{\fl{t}} )
  \right]
\\&\leq 
  e^{c(t-\fl{t})}
  \left[
    e^{c\fl{t}}
    \Um( \Yb{0} )
    +
    \int_0^{\fl{t}}
    e^{c(\fl{t}-\fl{s})}
    \Rm( O_{\fl{s}} ) \, ds
    +
    (t-\fl{t})
    \Rm( O_{\fl{t}} )
  \right]
\\&=
  e^{ct}
  \Um( \Yb{0} )
  +
  \int_0^{t}
  e^{c(t-\fl{s})}
  \Rm( O_{\fl{s}} ) \, ds .
\end{split}
\end{align}
This completes the proof of Lemma~\ref{lem:variational_a_priori}.
\end{proof}

\subsection{A priori bounds based on bootstrap-type arguments}
\label{sec:bootstrap_a_priori}

\begin{lemma}
\label{lem:bootstrap_F}
Assume the setting in Section~\ref{sec:a_priori_setting} 
and let $ t \in [0,T] $. Then
\begin{align}
\begin{split}
 &\big\|
    F( \Yb{t} + O_t )
  \big\|_{ \BaLa }
  \leq
  C
  \left(
    1
    +
    2^{[\qf-1]^+}
    e^{c\qf t}
    \left[ 
      \big|
	\Um( \Yb{0} )
      \big|^{\qf}
      +
      \left|  
	\smallint_0^{t}
	\Rm( O_{\fl{s}} ) \, ds
      \right|^{\qf}
    \right]
    +
    \big|
      \Um( O_{t} )
    \big|^{\qf}
  \right) .
\end{split}
\end{align}
\end{lemma}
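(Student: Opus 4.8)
The plan is to combine the polynomial growth hypothesis on $ F $ with the a priori bound supplied by Lemma~\ref{lem:variational_a_priori} and an elementary power inequality. First I would apply the assumption $ \| F(u+v) \|_{\BaLa} \leq C(1 + |\Um(u)|^{\qf} + |\Um(v)|^{\qf}) $ with the choice $ u = \Yb{t} $ and $ v = O_t $, which yields
\[
  \| F(\Yb{t} + O_t) \|_{\BaLa}
  \leq
  C\bigl( 1 + |\Um(\Yb{t})|^{\qf} + |\Um(O_t)|^{\qf} \bigr) .
\]
It therefore only remains to control the term $ |\Um(\Yb{t})|^{\qf} $.

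Next I would invoke Lemma~\ref{lem:variational_a_priori}, which gives $ \Um(\Yb{t}) \leq e^{ct}\Um(\Yb{0}) + \int_0^t e^{c(t-\fl{s})}\Rm(O_{\fl{s}})\,ds $. Since $ c \geq 0 $ and $ \fl{s} \leq s \leq t $ for every $ s \in [0,t] $, the integrand satisfies $ e^{c(t-\fl{s})} \leq e^{ct} $, so factoring out $ e^{ct} $ produces
\[
  \Um(\Yb{t})
  \leq
  e^{ct}
  \Bigl[
    \Um(\Yb{0})
    +
    \smallint_0^{t} \Rm(O_{\fl{s}})\,ds
  \Bigr] .
\]
Finally I would raise this inequality to the power $ \qf $ and use the elementary estimate $ (a+b)^{\qf} \leq 2^{[\qf-1]^+}(a^{\qf} + b^{\qf}) $, valid for all $ a, b \in [0,\infty) $ (by convexity of $ x \mapsto x^{\qf} $ when $ \qf \geq 1 $, and by subadditivity when $ \qf \in (0,1) $, in which case $ [\qf-1]^+ = 0 $), applied with $ a = \Um(\Yb{0}) $ and $ b = \int_0^t \Rm(O_{\fl{s}})\,ds $. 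This gives
\[
  |\Um(\Yb{t})|^{\qf}
  \leq
  2^{[\qf-1]^+}
  e^{c\qf t}
  \Bigl[
    |\Um(\Yb{0})|^{\qf}
    +
    \bigl| \smallint_0^{t} \Rm(O_{\fl{s}})\,ds \bigr|^{\qf}
  \Bigr] ,
\]
and substituting this bound into the displayed inequality from the first step completes the argument.

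I do not expect any genuine obstacle here: the statement is a direct consequence of the hypotheses and of Lemma~\ref{lem:variational_a_priori}. The only two points deserving an explicit word of justification are the monotonicity step $ e^{c(t-\fl{s})} \leq e^{ct} $ (which uses $ c \geq 0 $ and $ \fl{s} \geq 0 $) and the power inequality with the sharp constant $ 2^{[\qf-1]^+} $, whose proof splits according to whether $ \qf \geq 1 $ or $ \qf < 1 $.
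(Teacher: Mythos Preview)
Your proposal is correct and follows essentially the same approach as the paper: apply the growth hypothesis on $F$, insert the bound from Lemma~\ref{lem:variational_a_priori}, and use the elementary inequality $(a+b)^{\qf}\leq 2^{[\qf-1]^+}(a^{\qf}+b^{\qf})$. The only cosmetic difference is the order of the last two steps---you factor out $e^{ct}$ before applying the power inequality, while the paper applies the power inequality first and then bounds $e^{c(t-\fl{s})}\leq e^{ct}$ inside the resulting integral term---but this is immaterial.
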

\begin{proof}[Proof of Lemma~\ref{lem:bootstrap_F}]
Note that the assumption that 
$ 
  \forall \, u,v \in \BaMi 
  \colon
  \left\| F(u + v) \right\|_{ \BaLa } 
  \leq 
  C( 1 + |\Um(u)|^{\qf} + |\Um(v)|^{\qf})
$,
Lemma~\ref{lem:variational_a_priori},
and the fact that
$ 
  \forall \,
  x,y \in \R
$,
$
  r \in (0, \infty) 
  \colon
  | x + y |^r 
  \leq 
  2^{[r-1]^+} 
  | x |^r
  +
  2^{[r-1]^+} 
  | y |^r
$
imply that
\begin{align}
\begin{split}
  \big\|
    F( \Yb{t} + O_t )
  \big\|_{ \BaLa }
 &\leq
  C
  \left(
    1
    +
    \big|
      \Um( \Yb{t} )
    \big|^{\qf}
    +
    \big|
      \Um( O_{t} )
    \big|^{\qf}
  \right)
\\&\leq 
  C
  \left(
    1
    +
    \left|
      e^{ct}
      \Um( \Yb{0} )
      +
      \int_0^t
      e^{c(t-\fl{s})}
      \Rm( O_{\fl{s}} ) \, ds
    \right|^{\qf}
    +
    \big|
      \Um( O_{t} )
    \big|^{\qf}
  \right)
\\&\leq
  C
  \left(
    1
    +
    2^{[{\qf}-1]^+}
    \big|
      e^{ct}
      \Um( \Yb{0} )
    \big|^{\qf}
    +
    2^{[{\qf}-1]^+}
    \left|  
      \int_0^{t}
      e^{c(t-\fl{s})}
      \Rm( O_{\fl{s}} ) \, ds
    \right|^{\qf}
    +
    \big|
      \Um( O_{t} )
    \big|^{\qf}
  \right) .
\end{split}
\end{align}
This completes the proof 
of Lemma~\ref{lem:bootstrap_F}.
\end{proof}

\begin{corollary}
\label{cor:bootstrap_Y_1}
Assume the setting in Section~\ref{sec:a_priori_setting}
and let $ t \in (0,T] $, $ \sf \in [0,1) $. Then
\begin{align}
\begin{split}
  \left\|
    \Yb{t}
  \right\|_{\BaMi}
 &\leq
  \left\|
    \S_{0,t}
  \right\|_{ L(\BaMi) }
  \left\|
    \Yb{0}
  \right\|_{ \BaMi }
  +
  C
  \left[
    \sup_{ s \in [0,t) }
    \left(t-s\right)^{\sf}
    \big\|
      \S_{\fl{s},t}
    \big\|_{ L( \BaLa, \BaMi ) }
  \right]
  \Bigg[
    \int_0^t
    \left( t-s \right)^{-\sf}
    \big|
      \Um( O_{\fl{s}} )
    \big|^{\qf} \, ds
\\&\quad+
    \frac{t^{(1-\sf)}}{(1-\sf)}
    \left(
      1
      +
      2^{[\qf-1]^+}
      e^{c\qf t}
      \left[
        \big|
          \Um( \Yb{0} )
        \big|^{\qf}
        +
        \Big|
          \smallint_0^{t}
          \Rm( O_{\fl{u}} ) \, du 
        \Big|^{\qf}
      \right]
    \right)
  \Bigg] .
\end{split}
\end{align}
\end{corollary}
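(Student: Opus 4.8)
The plan is to insert the pointwise bound on the nonlinearity $F$ supplied by Lemma~\ref{lem:bootstrap_F} into the representation~\eqref{eq:bootstrap_Y} of the process $\Yb{}$ and then to estimate the resulting convolution-type time integral by a short elementary computation.

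First I would take $\left\|\cdot\right\|_{\BaMi}$-norms on both sides of~\eqref{eq:bootstrap_Y}, apply the triangle inequality $\left\|\smallint_0^t g(s)\,ds\right\|_{\BaMi}\leq\smallint_0^t\left\|g(s)\right\|_{\BaMi}\,ds$ to the Bochner integral, bound the indicator function in the integrand from above by $1$, and use submultiplicativity of the operator norms together with $\S_{0,t}\in L(\BaMi)$ and $\S_{\fl{s},t}\in L(\BaLa,\BaMi)$. This gives
\[
  \left\|\Yb{t}\right\|_{\BaMi}
  \leq
  \left\|\S_{0,t}\right\|_{L(\BaMi)}
  \left\|\Yb{0}\right\|_{\BaMi}
  +
  \int_0^t
  \big\|\S_{\fl{s},t}\big\|_{L(\BaLa,\BaMi)}
  \,
  \big\|F(\Yb{\fl{s}}+O_{\fl{s}})\big\|_{\BaLa}
  \,ds ,
\]
whose first summand already agrees with the first summand of the asserted bound, so only the integral remains.

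Second I would apply Lemma~\ref{lem:bootstrap_F} at the time $\fl{s}$; this is admissible since $0\leq\fl{s}\leq s\leq t\leq T$ forces $\fl{s}\in[0,T]$. Using additionally that $c\in[0,\infty)$, that $\Rm$ is $[0,\infty)$-valued, and that $\fl{s}\leq t$, one may replace $e^{c\qf\fl{s}}$ by $e^{c\qf t}$ and enlarge $\smallint_0^{\fl{s}}\Rm(O_{\fl{u}})\,du$ to $\smallint_0^{t}\Rm(O_{\fl{u}})\,du$, which yields for all $s\in[0,t)$ that
\[
  \big\|F(\Yb{\fl{s}}+O_{\fl{s}})\big\|_{\BaLa}
  \leq
  C\left(
    1
    +
    2^{[\qf-1]^+}
    e^{c\qf t}
    \Big[
      \big|\Um(\Yb{0})\big|^{\qf}
      +
      \big|\smallint_0^{t}\Rm(O_{\fl{u}})\,du\big|^{\qf}
    \Big]
    +
    \big|\Um(O_{\fl{s}})\big|^{\qf}
  \right) ,
\]
where every summand on the right-hand side except the last one is independent of $s$. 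I would then factor out $K:=\sup_{s\in[0,t)}(t-s)^{\sf}\big\|\S_{\fl{s},t}\big\|_{L(\BaLa,\BaMi)}$, so that $\big\|\S_{\fl{s},t}\big\|_{L(\BaLa,\BaMi)}\leq K\,(t-s)^{-\sf}$ for Lebesgue-almost all $s\in(0,t)$, split the integral into the part carrying the $s$-independent constant and the part carrying $\big|\Um(O_{\fl{s}})\big|^{\qf}$, and use $\smallint_0^{t}(t-s)^{-\sf}\,ds=\tfrac{t^{1-\sf}}{1-\sf}$, which is finite precisely because $\sf\in[0,1)$. Recollecting terms then reproduces the claimed inequality verbatim.

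There is essentially no obstacle; the argument is a short chain of elementary estimates resting on Lemma~\ref{lem:bootstrap_F}. The only steps deserving a moment's care are the legitimacy of passing from $\fl{s}$ to $t$ in the exponential and in the time integral inherited from Lemma~\ref{lem:bootstrap_F} (which uses $c\geq0$ and the nonnegativity of $\Rm$), the integrability of the singular kernel $(t-s)^{-\sf}$ as $s\uparrow t$ (which is exactly the hypothesis $\sf<1$), and the measurability of $s\mapsto\big\|\S_{\fl{s},t}\big\|_{L(\BaLa,\BaMi)}\,\big\|F(\Yb{\fl{s}}+O_{\fl{s}})\big\|_{\BaLa}$, which ensures that the integrals above are well defined.
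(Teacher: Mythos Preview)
Your proposal is correct and follows essentially the same route as the paper: take norms in \eqref{eq:bootstrap_Y}, bound the indicator by $1$, insert the pointwise estimate of Lemma~\ref{lem:bootstrap_F} at the time $\fl{s}$, enlarge $e^{c\qf\fl{s}}$ and $\smallint_0^{\fl{s}}$ to $e^{c\qf t}$ and $\smallint_0^{t}$ using $c\geq 0$ and $\Rm\geq 0$, factor out the weighted supremum of $\|\S_{\fl{s},t}\|_{L(\BaLa,\BaMi)}$, and evaluate $\smallint_0^t(t-s)^{-\sf}\,ds$. The only cosmetic difference is that the paper first inserts and removes the weight $(t-\fl{s})^{\sf}$ and then passes to $(t-s)^{\sf}$ (the two suprema coincide since each attains its value at grid points, and $(t-\fl{s})^{-\sf}\leq(t-s)^{-\sf}$), whereas you factor with $(t-s)^{\sf}$ directly; the net inequality is identical.
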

\begin{proof}[Proof of Corollary~\ref{cor:bootstrap_Y_1}]
Note that Lemma~\ref{lem:bootstrap_F} implies
\begin{align}
\begin{split}
 \left\|
    \Yb{t}
  \right\|_{\BaMi}
 &\leq
  \left\|
    \S_{0,t}
    \Yb{0}
  \right\|_{ \BaMi }
  +
  \int_0^t
  \big\|
    \S_{\fl{s},t}
  \big\|_{ L( \BaLa, \BaMi ) }
  \big\|
    \one_{ 
      \{
        \Vm(
          \Yb{\fl{s}}
          +
          O_{\fl{s}}
        )
        \leq
        M/T
      \}
    } \,
    F( \Yb{\fl{s}} + O_{\fl{s}} )
  \big\|_{ \BaLa } \, ds
\\&\leq
  \left\|
    \S_{0,t}
  \right\|_{ L(\BaMi) }
  \left\|
    \Yb{0}
  \right\|_{ \BaMi }
\\&\quad+
  \left[
    \sup_{ s \in [0,t) }
    \left(t-\fl{s}\right)^{\sf}
    \big\|
      \S_{\fl{s},t}
    \big\|_{ L( \BaLa, \BaMi ) }
  \right]
  \int_0^t
  \big( t-\fl{s} \big)^{-\sf}
  \big\|
    F( \Yb{\fl{s}} + O_{\fl{s}} )
  \big\|_{ \BaLa } \, ds
\\&\leq
  \left\|
    \S_{0,t}
  \right\|_{ L(\BaMi) }
  \left\|
    \Yb{0}
  \right\|_{ \BaMi }
  +
  C
  \left[
    \sup_{ s \in [0,t) }
    \left(t-s\right)^{\sf}
    \big\|
      \S_{\fl{s},t}
    \big\|_{ L( \BaLa, \BaMi ) }
  \right]
  \Bigg[
    \int_0^t
    \left( t-s \right)^{-\sf}
    \Bigg(
      1
      +
      \big|
        \Um( O_{\fl{s}} )
      \big|^{\qf}
\\&\quad+
      2^{[\qf-1]^+}
      e^{c\qf\fl{s}}
      \left[
	\big|
	  \Um( \Yb{0} )
	\big|^{\qf}
	+
	\left|  
	  \smallint_0^{\fl{s}}
	  \Rm( O_{\fl{u}} ) \, du
	\right|^{\qf}
      \right]
    \Bigg) \, ds
  \Bigg] .
\end{split}
\end{align}
This completes the proof of Corollary~\ref{cor:bootstrap_Y_1}.
\end{proof}

\begin{corollary}
\label{cor:a_priori_Lp}
Assume the setting in Section~\ref{sec:a_priori_setting} 
and let $ p \in [\max\{1,\nicefrac{1}{\qf}\}, \infty) $, 
$ t \in (0,T] $, $ \sf \in [0,1) $. Then
\begin{align}
\begin{split}
 &\left\|
    \Yb{t}
  \right\|_{\L^p( \P; \BaMi )}
  \leq
  \left\|
    \S_{0,t}
  \right\|_{ L(\BaMi) }
  \left\|
    \Yb{0}
  \right\|_{ \L^p( \P; \BaMi ) }
  +
  \frac{C t^{(1-\sf)}}{(1-\sf)}
  \left[
    \sup_{ s \in [0,t) }
    \left(t-s\right)^{\sf}
    \big\|
      \S_{\fl{s},t}
    \big\|_{ L( \BaLa, \BaMi ) }
  \right]
\\&\quad\cdot
  \Bigg(
    1
    +
    \sup_{ s \in [0,t) }
    \big\|
      \Um( O_{s} )
    \big\|_{ \L^{p\qf}(\P; \R) }^{\qf}
    +
    2^{[\qf-1]^+}
    e^{c\qf t}
    \left[ 
      \big\|
	\Um( \Yb{0} )
      \big\|_{ \L^{p\qf}(\P;\R) }^{\qf}
      +
      t^{\qf}
      \sup_{ s \in [0,t) }
      \big\|
	\Rm( O_{s} )
      \big\|_{ \L^{p\qf}(\P; \R) }^{\qf}
    \right]
  \Bigg) .
\end{split}
\end{align}
\end{corollary}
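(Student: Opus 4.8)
The plan is to deduce the asserted $\L^p$-estimate from the pathwise bound in Corollary~\ref{cor:bootstrap_Y_1} by taking $\L^p(\P;\R)$-norms on both sides and interchanging these norms with the time integrals appearing there via Minkowski's integral inequality. Since $p\in[\max\{1,\nicefrac{1}{\qf}\},\infty)\subseteq[1,\infty)$, the triangle inequalities in $\L^p(\P;\BaMi)$ and in $\L^p(\P;\R)$ are available, and applying them to the estimate of Corollary~\ref{cor:bootstrap_Y_1} (with the same $\sf$) yields
\begin{equation*}
\left\|\Yb{t}\right\|_{\L^p(\P;\BaMi)}
\leq
\left\|\S_{0,t}\right\|_{L(\BaMi)}\left\|\Yb{0}\right\|_{\L^p(\P;\BaMi)}
+
C\left[\sup_{s\in[0,t)}(t-s)^{\sf}\big\|\S_{\fl{s},t}\big\|_{L(\BaLa,\BaMi)}\right]
\left\|\mathcal{I}\right\|_{\L^p(\P;\R)},
\end{equation*}
where $\mathcal{I}$ abbreviates the inner $[\,\cdots\,]$-bracket on the right-hand side of Corollary~\ref{cor:bootstrap_Y_1}, i.e.\ the sum of $\int_0^t(t-s)^{-\sf}|\Um(O_{\fl{s}})|^{\qf}\,ds$ and $\frac{t^{1-\sf}}{1-\sf}$ times the factor involving $1$, $|\Um(\Yb{0})|^{\qf}$, and $\big|\int_0^t\Rm(O_{\fl{u}})\,du\big|^{\qf}$.

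It then remains to bound $\left\|\mathcal{I}\right\|_{\L^p(\P;\R)}$, for which I would distribute the factor $\frac{t^{1-\sf}}{1-\sf}$ and apply the triangle inequality once more so as to reduce matters to four summands. The deterministic summand contributes $\frac{t^{1-\sf}}{1-\sf}$ directly. For $\int_0^t(t-s)^{-\sf}|\Um(O_{\fl{s}})|^{\qf}\,ds$ I would invoke Minkowski's integral inequality to move the $\L^p(\P;\R)$-norm inside the $ds$-integral, then use the elementary identity $\big\||Z|^{\qf}\big\|_{\L^p(\P;\R)}=\|Z\|_{\L^{p\qf}(\P;\R)}^{\qf}$, and finally estimate $\|\Um(O_{\fl{s}})\|_{\L^{p\qf}(\P;\R)}\leq\sup_{u\in[0,t)}\|\Um(O_u)\|_{\L^{p\qf}(\P;\R)}$ (using that $\fl{s}\in[0,t)$ whenever $s\in[0,t)$), leaving the scalar integral $\int_0^t(t-s)^{-\sf}\,ds=\frac{t^{1-\sf}}{1-\sf}$. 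The $|\Um(\Yb{0})|^{\qf}$-summand is handled by the same $\big\||\cdot|^{\qf}\big\|_{\L^p}$-identity, and for $\big|\int_0^t\Rm(O_{\fl{u}})\,du\big|^{\qf}$ I would combine that identity with Minkowski's integral inequality in $\L^{p\qf}(\P;\R)$ to obtain the bound $\big(t\,\sup_{u\in[0,t)}\|\Rm(O_u)\|_{\L^{p\qf}(\P;\R)}\big)^{\qf}$. Substituting these four bounds into the displayed inequality and factoring out $\frac{t^{1-\sf}}{1-\sf}$ reproduces the asserted estimate verbatim.

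I do not anticipate a genuine obstacle here: this is the routine passage from a pathwise bound to an $\L^p$-moment bound. The only points that require a little care are the joint measurability needed to legitimately apply Minkowski's integral inequality — which is immediate because $s\mapsto\fl{s}$ is a step function taking only finitely many values, so each integrand is a finite sum of products of measurable functions on measurable rectangles — and the bookkeeping of exponents, which shows that the hypothesis $p\in[\max\{1,\nicefrac{1}{\qf}\},\infty)$ is used precisely through $p\geq1$ (for the triangle inequalities in $\L^p$) and $p\qf\geq1$ (so that Minkowski's integral inequality may be applied in $\L^{p\qf}(\P;\R)$, and so that $\|\cdot\|_{\L^{p\qf}(\P;\R)}$ is a genuine norm).
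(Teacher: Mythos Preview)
Your proposal is correct and follows essentially the same route as the paper's own proof: the paper also takes $\L^p(\P;\R)$-norms of the pathwise estimate from Corollary~\ref{cor:bootstrap_Y_1}, pushes these norms inside the time integrals via Minkowski's integral inequality together with the identity $\big\||Z|^{\qf}\big\|_{\L^p(\P;\R)}=\|Z\|_{\L^{p\qf}(\P;\R)}^{\qf}$, and then passes to suprema over $s\in[0,t)$. Your discussion of the role of the hypothesis $p\geq\max\{1,\nicefrac{1}{\qf}\}$ and of measurability is more explicit than what the paper records, but the underlying argument is the same.
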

\begin{proof}[Proof of Corollary~\ref{cor:a_priori_Lp}]
Observe that
Corollary~\ref{cor:bootstrap_Y_1} implies
\begin{align}
\begin{split}
 &\left\|
    \Yb{t}
  \right\|_{\L^p( \P; \BaMi )}
  \leq
  \left\|
    \S_{0,t}
  \right\|_{ L(\BaMi) }
  \left\|
    \Yb{0}
  \right\|_{ \L^p( \P; \BaMi ) }
\\&\quad+
  C
  \left[
    \sup_{ s \in [0,t) }
    \left(t-s\right)^{\sf}
    \big\|
      \S_{\fl{s},t}
    \big\|_{ L( \BaLa, \BaMi ) }
  \right]
  \Bigg[
    \int_0^t
    \left( t-s \right)^{-\sf}
    \big\|
      \Um( O_{\fl{s}} )
    \big\|_{ \L^{p\qf}(\P; \R) }^{\qf} \, ds
\\&\quad+
    \frac{t^{(1-\sf)}}{(1-\sf)}
    \left(
      1
      +
      2^{[\qf-1]^+}
      e^{c\qf t}
      \left[
        \big\|
          \Um( \Yb{0} )
        \big\|_{ \L^{p\qf}(\P;\R) }^{\qf}
        +
        \left|
        \smallint_0^{t}
	  \big\|
	    \Rm( O_{\fl{u}} )
	  \big\|_{ \L^{p\qf}(\P; \R) } \, du
        \right|^{\qf}
      \right]
    \right)
  \Bigg] .
\end{split}
\end{align}
The proof of Corollary~\ref{cor:a_priori_Lp} 
is thus completed.
\end{proof}

\section{Pathwise error estimates}
\label{sec:pathwise_error}
\subsection{Setting}
\label{sec:pathwise_setting}
Let $ T, C, \qs \in (0,\infty) $, 
$ L \in [0,\infty) $, 
$ \sr \in [0,1) $,
$ M \in \N $,
let $ ( H, \left< \cdot, \cdot \right>_H, \left\| \cdot \right\|_H ) $
be a separable $ \R $-Hilbert space,
let $ A \colon D(A) \subseteq H \rightarrow H $
be a generator of an analytic semigroup 
with
$ 
  \textup{spectrum}(A) 
  \subseteq 
  \{
    z \in \mathbb{C} 
    \colon 
    \textup{Re}(z) < 0
  \}
$,
let $ ( H_r, \left< \cdot, \cdot \right>_{ H_r }, \left\| \cdot \right\|_{ H_r } ) $, 
$ r \in \R $, be a family of interpolation spaces associated to $ -A $
(cf., e.g., Definition~3.5.26 in~\cite{Jentzen2015SPDElecturenotes}),
let $ (\BaMi, \left\| \cdot \right\|_{\BaMi}) $
be a separable $ \R $-Banach space with 
$ H_1 \subseteq \BaMi \subseteq H $ 
densely and continuously,
let $ \Vm \in \M\big( \B(\BaMi), \B( [0,\infty) ) \big) $,
$ \S \in \M\big( \B(\{ (s, t) \in [0,T]^2 \colon s < t \}), \B(L(H, \BaMi) ) \big) $,
$ F \in \C(\BaMi,H) $
satisfy for all $ x, y \in \BaMi $
with $ x-y \in H_1 $ that
$
  \left\|
    F(x) - F(y)
  \right\|_{H}^2
  \leq
  L
  \left\|
    x - y
  \right\|_{\BaMi}^2
  \left(
    1
    +
    \left\| x \right\|_{\BaMi}^{\qs}
    +
    \left\| y \right\|_{\BaMi}^{\qs}
  \right)
$
and
$
  \left<
    x - y,
    A(x-y) + F(x)
    - F(y)
  \right>_H
  \leq
  C
  \left\| x - y \right\|_H^2
$,
assume that
$
  \sup_{ t \in (0,T) }
  t^{\sr}
  \|
    e^{tA}
  \|_{ L(H,\BaMi) }
  <
  \infty
$,
let $ (\Omega, \F, \P ) $ 
be a probability space,
let $ X \colon [0,T] \times \Omega \rightarrow \BaMi $
be a stochastic process with continuous sample paths,
let $ O \colon [0,T] \times \Omega \rightarrow \BaMi $
be a stochastic process
which satisfies for all $ \omega \in \Omega $ that
$
  \limsup_{ r \searrow 0 }
  \sup_{ 0 \leq s < t \leq T }
  \frac{ 
    s \| O_t(\omega) - O_s(\omega) \|_{ \BaMi } 
  }{
    (t-s)^{r}
  }
  <
  \infty
$,
let 
$ 
    \O{}, \XO{}, \Y{}, \YO{}, \YX{} 
    \colon 
    [0,T] \times \Omega 
    \rightarrow \BaMi 
$
be stochastic processes,
and assume for all 
$ t \in [0,T] $ that
$
  X_t
  =
  \int_0^t
  e^{(t-s)A}
  F(X_s) \, ds
  +
  O_t
$,
$ \XO{t} = X_t - O_t $, 
$
  \YX{t}
  =
  \int_0^t
  e^{(t-s)A}
  F( \Y{\fl{s}} ) \, ds
$,
$
  \Y{t}
  =
  \int_0^t
  \S_{\fl{s},t} \,
  \one_{ 
    \{
      \Vm( \Y{\fl{s}} )
      \leq
      M/T
    \}
  } \,
  F( \Y{\fl{s}} ) \, ds
  +
  \O{t} 
$,
and
$ \YO{t} = \Y{t} - \O{t} $.

\subsection{Regularity of the exact solution}
\label{sec:exact_regularity}

The next elementary lemma 
is a slightly modified version of
Theorem~3.5 in Section~4.3 in Pazy~\cite{p83}.

\begin{lemma}
\label{lem:in_DA_continuous}
Let $ T \in (0,\infty) $,
$ \alpha \in (0,1] $,
let 
$ 
  ( W, \left\| \cdot \right\|_W ) 
$
be an $ \R $-Banach space,
let $ A \colon D(A) \subseteq W \rightarrow W $
be a generator of an analytic semigroup 
with
$ 
  \textup{spectrum}(A) 
  \subseteq 
  \{
    z \in \mathbb{C} 
    \colon 
    \textup{Re}(z) < 0
  \}
$, let
$ f \colon [0, T] \rightarrow W $ be a
continuous function which satisfies
$
  \sup_{ 0 \leq s < t \leq T }
  \frac{ 
    s
    \left\| f(t) - f(s) \right\|_W
  }{ 
    \left(t-s\right)^{\alpha}
  }
  <
  \infty
$,
and let
$ x \colon [0,T] \rightarrow W $ be the
function with the property that
for all $ t \in [0,T] $ it holds that
$ x(t) = \int_0^t e^{(t-s)A} f(s) \, ds $.
Then

\begin{enumerate}[(i)]
\item\label{it:in_DA_1}
it holds for all $ t \in [0,T] $
that $ x(t) \in D(A) $,
\item\label{it:in_DA_2}
it holds that the function
$
  (0,T]
  \ni
  t
  \mapsto
  x(t)
  \in 
  D(A)
$
is continuous,
\item\label{it:in_DA_3}
it holds that the function
$
  [0,T]
  \ni
  t
  \mapsto
  x(t)
  \in 
  W
$
is continuous,
\item\label{it:in_DA_4}
it holds that the function
$
  (0,T]
  \ni
  t
  \mapsto
  x(t)
  \in 
  W
$
is continuously differentiable, and
\item\label{it:in_DA_5}
it holds for all $ t \in (0,T] $
that
$
  x'(t)
  =
  Ax(t)
  +
  f(t)
$.
\end{enumerate}
\end{lemma}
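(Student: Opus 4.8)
The statement is essentially the classical result on the regularity of the convolution $x(t) = \int_0^t e^{(t-s)A} f(s)\,ds$ when $f$ is only Hölder continuous away from $0$, with a weight $s$ compensating a possible singularity at $0$; this is a weighted variant of Theorem~4.3.5 in Pazy~\cite{p83}. The plan is to reduce everything to the unweighted situation on each interval $[\varepsilon, T]$ with $\varepsilon \in (0,T)$ and then patch. First I would fix $t \in (0,T]$ and split $x(t) = \int_0^{t/2} e^{(t-s)A} f(s)\,ds + \int_{t/2}^t e^{(t-s)A} f(s)\,ds$. The first integral is harmless: the integrand is a smooth (indeed analytic) $D(A)$-valued function of the upper variable since $t - s \geq t/2 > 0$ there, so that piece lies in $D(A)$, depends continuously (even differentiably) on $t$ in the $D(A)$-norm, and its derivative is $\int_0^{t/2} A e^{(t-s)A} f(s)\,ds$ plus the boundary term $\tfrac12 e^{(t/2)A} f(t/2)$. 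For the second integral I would write, following Pazy, $\int_{t/2}^t e^{(t-s)A} f(s)\,ds = \int_{t/2}^t e^{(t-s)A}\bigl(f(s) - f(t)\bigr)\,ds + \bigl(\int_{t/2}^t e^{(t-s)A}\,ds\bigr) f(t)$, and observe that $\int_{t/2}^t e^{(t-s)A}\,ds\, f(t) = A^{-1}\bigl(e^{(t/2)A} - \Id\bigr)f(t) \in D(A)$ with the obvious continuous dependence on $t$.

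The heart of the matter is the term $g(t) := \int_{t/2}^t e^{(t-s)A}\bigl(f(s)-f(t)\bigr)\,ds$ (or, more cleanly, $\int_0^t e^{(t-s)A}(f(s)-f(t))\,ds$ after folding in the easy low-$s$ part). Using the analytic-semigroup bound $\|A e^{\tau A}\|_{L(W)} \leq c\,\tau^{-1}$ for $\tau \in (0,T]$, together with the weighted Hölder hypothesis $\|f(s)-f(t)\|_W \leq K\,s^{-1}(t-s)^\alpha$ for $0 \le s < t \le T$, I get $\|A e^{(t-s)A}(f(s)-f(t))\|_W \leq c K\, s^{-1} (t-s)^{\alpha - 1}$, which is integrable in $s$ over $(0,t)$ for $t$ bounded away from $0$ — here the weight $s^{-1}$ is exactly absorbed because on $[\varepsilon,T]$ we have $s^{-1} \le \varepsilon^{-1}$. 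Hence $g(t) \in D(A)$ and $A g(t) = \int_0^t A e^{(t-s)A}(f(s)-f(t))\,ds$; combining the three pieces gives $x(t) \in D(A)$, which is (i). For (iii), continuity of $x$ as a $W$-valued function on $[0,T]$ is standard and follows from dominated convergence and strong continuity of the semigroup (no weight needed near $0$ since $\|f\|_W$ is bounded and $\|e^{\tau A}\|_{L(W)}$ is bounded on $[0,T]$).

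For (ii), (iv), (v) I would work on a fixed $[\varepsilon, T]$, $\varepsilon \in (0,T)$. Differentiability of $x$ on $(0,T]$ as a $W$-valued map and the identity $x'(t) = A x(t) + f(t)$ are obtained exactly as in Pazy: write the difference quotient $\tfrac{1}{h}(x(t+h)-x(t))$, split off the incremental piece $\tfrac{1}{h}\int_t^{t+h} e^{(t+h-s)A} f(s)\,ds \to f(t)$, and for the remaining main piece $\tfrac{1}{h}(e^{hA}-\Id)x(t)$ use that $x(t) \in D(A)$ (established above) so this tends to $A x(t)$; the uniform-in-$h$ bounds needed to justify passing to the limit come from the analytic bound $\|A e^{\tau A}\| \le c\tau^{-1}$ and the weighted Hölder estimate, all of which are uniform on $[\varepsilon, T]$. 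Continuity of $t \mapsto A x(t)$ on $(0,T]$ — i.e. continuity of $x$ into $D(A)$, which is (ii) — then follows because $A x(t) = x'(t) - f(t)$ and I will have shown $x' $ is continuous (this is (iv) together with the explicit formula (v)); alternatively one checks directly that each of the three pieces above is $D(A)$-continuous in $t$, the only delicate one being $A g(t)$, handled by dominated convergence on $[\varepsilon,T]$ using the integrable majorant $c K \varepsilon^{-1} (t-s)^{\alpha-1}$ and a change of variables to align the domains of integration. Since $\varepsilon \in (0,T)$ was arbitrary, (ii), (iv), (v) follow on all of $(0,T]$.

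**Main obstacle.** The only real subtlety is bookkeeping near $s = 0$: the weight $s$ in the hypothesis means $f$ itself need not be Hölder (or even uniformly continuous) at $0$, so one cannot invoke Pazy's theorem verbatim. The fix is exactly the restriction to $[\varepsilon,T]$ (where $s^{-1}$ is bounded) combined with the separate, easy treatment of $\int_0^{\varepsilon/2}$ where $t-s$ stays bounded below; once this split is in place every estimate is the standard analytic-semigroup computation and dominated convergence does the rest.
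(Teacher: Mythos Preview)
Your plan is correct and follows the same core strategy as the paper: split off an initial piece where $t-s$ is bounded below, apply the Pazy trick of subtracting $f(t)$ in the remaining integral, and exploit the analytic-semigroup bound $\|Ae^{\tau A}\|_{L(W)} \leq c\tau^{-1}$ together with the weighted H\"older hypothesis on intervals bounded away from $0$.

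Two remarks on differences. First, your primary argument for (ii) via ``$Ax(t)=x'(t)-f(t)$ and $x'$ is continuous'' is circular as stated: the difference-quotient computation gives $x'(t)=Ax(t)+f(t)$ pointwise, but not continuity of $x'$; that needs continuity of $t\mapsto Ax(t)$, which is exactly (ii). Your ``alternative'' direct verification of $D(A)$-continuity of each piece is the correct route, and you should lead with it. Second, the paper organizes the proof in the opposite order: it proves (ii) first by an explicit estimate of $\|A(x(t_2)-x(t_1))\|_W$ (using a fixed $\varepsilon$-split rather than your moving $t/2$-split), and only then establishes (iv) and (v) by a difference-quotient argument that explicitly invokes (ii) to pass to the limit. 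This ordering avoids the circularity cleanly. The fixed-$\varepsilon$ decomposition also makes the uniformity bookkeeping slightly more transparent, since one works on $[\varepsilon,T]$ throughout rather than tracking a $t$-dependent splitting point; but your $t/2$-split works equally well once the logical order is fixed.
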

\begin{proof}[Proof of Lemma~\ref{lem:in_DA_continuous}]
Throughout this proof assume w.l.o.g.\ that 
$ W \neq \{ 0 \} $ and let $ K \in (0,\infty) $ be 
the real number which satisfies
$
  K
  =
  \sup_{ 0 \leq s < u \leq T }
  \sup_{r \in \{0,\frac{\alpha}{2},\frac{1}{2},1\}}
  \big[
    \|
      (-uA)^r
      e^{uA}
    \|_{ L(W) }
    +
    \|
      (-uA)^{-r}
      (
        e^{uA}
        -
        \Id_W
      )
    \|_{ L(W) }
    +
    \|
      f(u)
    \|_W
    +
    \frac{ 
      s
      \left\| f(u) - f(s) \right\|_W
    }{ 
      \left(u-s\right)^{\alpha}
    }
  \big]
$.
The assumption that
$ A \colon D(A) \subseteq W \rightarrow W $
is a generator of an analytic semigroup 
with 
$ 
  \textup{spectrum}(A) 
  \subseteq 
  \{
    z \in \mathbb{C} 
    \colon 
    \textup{Re}(z) < 0
  \}
$
and the assumption that
$ f \colon [0, T] \rightarrow W $ is a
continuous function with the property
that 
$
  \sup_{ 0 \leq s < u \leq T }
  \frac{ 
    s
    \left\| f(u) - f(s) \right\|_W
  }{ 
    \left(u-s\right)^{\alpha}
  }
  <
  \infty
$
assure that such a real number
does indeed exist.
Next note that for all
$ t \in (0,T] $, $ \varepsilon \in (0,t) $
it holds that
\begin{align}
\label{eq:in_DA_continuous_1}
\begin{split}
  x(t)
 &=
  e^{(t-\varepsilon)A}
  \int_0^{\varepsilon}
  e^{(\varepsilon-s)A}
  f(s) \, ds
  +
  \int_{\varepsilon}^t
  e^{(t-s)A}
  f(s) \, ds
\\&=
  e^{(t-\varepsilon)A}
  x(\varepsilon)
  +
  \int_{\varepsilon}^t
  e^{(t-s)A}
  \left[ 
    f(s)
    -
    f(t)
  \right] ds
  +
  \int_0^{(t-\varepsilon)}
  e^{sA}
  f(t) \, ds
\\&=
  e^{(t-\varepsilon)A}
  x( \varepsilon )
  +
  \int_{\varepsilon}^t
  e^{(t-s)A}
  \left[ 
    f(s)
    -
    f(t)
  \right] ds
  +
  A^{-1}
  \left(
    e^{(t-\varepsilon)A}
    -
    \Id_W
  \right)
  f(t) .
\end{split}
\end{align}
Moreover, observe that for all
$ t \in (0,T] $, $ \varepsilon \in (0,t) $
it holds that
\begin{align}
\label{eq:in_DA_continuous_2}
\begin{split}
  \int_{\varepsilon}^{t}
  \left\|
    A
    e^{(t-s)A}
    \left[
      f(s)
      -
      f(t)
    \right] 
  \right\|_W ds
 &\leq
  \int_{\varepsilon}^{t}
  \big\|
    A
    e^{(t-s)A}
  \big\|_{L(W)}
  \big\|
    f(s)
    -
    f(t)
  \big\|_W \, ds
\\&\leq
  \frac{ 
    K^2
  }{ 
    \varepsilon
  }
  \int_{\varepsilon}^{t}
  \left(
    t-s
  \right)^{(\alpha-1)} ds
  \leq
  \frac{
    K^2 t^{\alpha}
  }{
    \alpha
    \varepsilon
  }
  <
  \infty .
\end{split}
\end{align}
Combining 
the fact that 
$ x(0) = 0 $,
\eqref{eq:in_DA_continuous_1},
and \eqref{eq:in_DA_continuous_2} 
proves~\eqref{it:in_DA_1}.
In addition, note that
for all $ t \in (0,T] $, $ \varepsilon \in (0,t) $,
$ t_1, t_2 \in (\varepsilon,T] $
with $ t_1 \leq t \leq t_2 $ it holds that
\begin{align}
\begin{split}
 &x(t_2)
  -
  x(t_1)
\\&=
  \int_{t_1}^{t_2}
  e^{(t_2-s)A}
  f(s) \, ds
  +
  \left(
    e^{(t_2-t_1)A}
    -
    \Id_W
  \right)
  \int_0^{t_1}
  e^{(t_1-s)A}
  f(s) \, ds  
\\&=
  \int_{t_1}^{t_2}
  e^{(t_2-s)A}
  \left[
    f(s)
    -
    f(t_2)
  \right] ds
  +
  \int_{t_1}^{t_2}
  e^{(t_2-s)A}
  f(t_2) \, ds
  +
  \left(
    e^{(t_2-t_1)A}
    -
    \Id_W
  \right)
  \int_0^{\varepsilon}
  e^{(t_1-s)A}
  f(s) \, ds
\\&\quad+
  \left(
    e^{(t_2-t_1)A}
    -
    \Id_W
  \right)
  \int_{\varepsilon}^{t_1}
  e^{(t_1-s)A}
  f(t_1) \, ds 
  +
  \left(
    e^{(t_2-t_1)A}
    -
    \Id_W
  \right)
  \int_{\varepsilon}^{t_1}
  e^{(t_1-s)A}
  \left[
    f(s)
    -
    f(t_1)
  \right] ds .
\end{split}
\end{align}
Therefore, we obtain that
for all $ t \in (0,T] $, $ \varepsilon \in (0,t) $,
$ t_1, t_2 \in (\varepsilon,T] $
with $ t_1 \leq t \leq t_2 $ it holds that
\begin{align}
\begin{split}
 &x(t_2)
  -
  x(t_1)
\\&=
  \int_{t_1}^{t_2}
  e^{(t_2-s)A}
  \left[
    f(s)
    -
    f(t_2)
  \right] ds
  +
  A^{-1}
  \left(
    e^{(t_2-t_1)A}
    -
    \Id_W
  \right)
  \left[ 
    f(t_2)
    -
    f(t)
  \right]
\\&+
  A^{-1}
  e^{(t_1-\varepsilon)A}
  \left(
    e^{(t_2-t_1)A}
    -
    \Id_W
  \right)
  f(t)
  +
  A^{-1}
  \left(
    e^{(t_1-\varepsilon)A}
    -
    \Id_W
  \right)
  \left(
    e^{(t_2-t_1)A}
    -
    \Id_W
  \right)
  \left[ 
    f(t_1)
    -
    f(t)
  \right]
\\&+
  \left(
    e^{(t_2-t_1)A}
    -
    \Id_W
  \right)
  e^{(t_1-\varepsilon)A}
  \int_0^{\varepsilon}
  e^{(\varepsilon-s)A}
  f(s) \, ds
  +
  \left(
    e^{(t_2-t_1)A}
    -
    \Id_W
  \right)
  \int_{\varepsilon}^{t_1}
  e^{(t_1-s)A}
  \left[
    f(s)
    -
    f(t_1)
  \right] ds .
\end{split}
\end{align}
This implies that
for all $ t \in (0,T] $, $ \varepsilon \in (0,t) $,
$ t_1, t_2 \in (\varepsilon,T] $
with $ t_1 \leq t \leq t_2 $ it holds that
\begin{align}
\begin{split}
 &\left\|
    A
    \left(
      x(t_2)
      -
      x(t_1)
    \right)
  \right\|_W
\\&\leq
  \int_{t_1}^{t_2}
  \left\|
    A
    e^{(t_2-s)A}
  \right\|_{ L(W) }
  \left\|
    f(s)
    -
    f(t_2)
  \right\|_W ds
  +
  \left\|
    e^{(t_2-t_1)A}
    -
    \Id_W
  \right\|_{ L(W) }
  \left\|
    f(t_2)
    -
    f(t)
  \right\|_W
\\&\quad+
  \left\|
    e^{(t_1-\varepsilon) A}
  \right\|_{ L(W) }
  \left\|
    \left(
      e^{(t_2-t_1)A}
      -
      \Id_W
    \right)
    f(t)
  \right\|_W
\\&\quad+
  \big\|
    e^{(t_1-\varepsilon) A}
    -
    \Id_W
  \big\|_{ L(W) }
  \big\|
    e^{(t_2-t_1)A}
    -
    \Id_W
  \big\|_{ L(W) }
  \big\| 
    f(t_1)
    -
    f(t)
  \big\|_W
\\&\quad+
  \big\|
    \left( -A \right)^{-\frac{1}{2}}
    \big(
      e^{(t_2-t_1)A}
      -
      \Id_W
    \big)
  \big\|_{ L(W) }
  \big\|
    A
    e^{(t_1-\varepsilon)A}
  \big\|_{ L(W) }
  \int_0^{\varepsilon}
  \big\|
    \left( -A \right)^{\frac{1}{2}}
    e^{(\varepsilon-s)A}
  \big\|_{ L(W) }
  \big\| 
    f(s)
  \big\|_W \, ds
\\&\quad+
  \big\|
    \left( -A \right)^{-\frac{\alpha}{2}}
    \big(
      e^{(t_2-t_1)A}
      -
      \Id_W
    \big)
  \big\|_{ L(W) }
  \int_{\varepsilon}^{t_1}
  \big\|
    \left( -A \right)^{(1+\frac{\alpha}{2})}
    e^{(t_1-s)A}
  \big\|_{ L(W) }
  \big\| 
    f(s)
    -
    f(t_1)
  \big\|_W \, ds .
\end{split}
\end{align}
The fact that $ K \geq 1 $
hence assures that
for all $ t \in (0,T] $, $ \varepsilon \in (0,t) $,
$ t_1, t_2 \in (\varepsilon,T] $
with $ t_1 \leq t \leq t_2 $ it holds that
\begin{align}
\begin{split}
 &\left\|
    A
    \left(
      x(t_2)
      -
      x(t_1)
    \right)
  \right\|_W
\\&\leq
  \frac{ K^2 }{ t_1 }
  \int_{t_1}^{t_2}
  \left| t_2 - s \right|^{(\alpha-1)} ds
  +
  \frac{ K^2 }{ t }
  \left| t_2 - t \right|^{\alpha}
  +
  K
  \left\|
    \left(
      e^{(t_2-t_1)A}
      -
      \Id_W
    \right)
    f(t)
  \right\|_W
  +
  \frac{ K^3 }{ t_1 }
  \left| t - t_1 \right|^{\alpha}
\\&\quad+
  \frac{ 
    K^3
    \left| t_2 - t_1 \right|^{\frac{1}{2}} 
  }{ (t_1 - \varepsilon) }
  \left[
    \sup_{ s \in (0,\varepsilon) }
    \big\|
      f(s)
    \big\|_W
  \right]
  \int_0^{\varepsilon}
  \left( \varepsilon - s \right)^{-\frac{1}{2}} ds
\\&\quad+
  K^2
  \left| t_2 - t_1 \right|^{\frac{\alpha}{2}} 
  \int_{\varepsilon}^{t_1}
  \big\|
    \left( -A \right)
    e^{\frac{(t_1-s)}{2}A}
  \big\|_{ L(W) }
  \big\|
    \left( -A \right)^{\frac{\alpha}{2}}
    e^{\frac{(t_1-s)}{2}A}
  \big\|_{ L(W) }
  \frac{\left| t_1 - s \right|^{\alpha}}{s} \, ds
\\&\leq
  \frac{
    K^2
    \left|t_2-t_1\right|^{\alpha}
  }{\alpha \varepsilon }
  +
  \frac{ 
    K^2
    \left| t_2 - t \right|^{\alpha}
  }{ \varepsilon }
  +
  K
  \left\|
    \left(
      e^{(t_2-t_1)A}
      -
      \Id_W
    \right)
    f(t)
  \right\|_W
  +
  \frac{ 
    K^3 
    \left| t - t_1 \right|^{\alpha}
  }{ \varepsilon }
\\&\quad+
  \frac{ 
    2 K^4
    \varepsilon^{\frac{1}{2}}
    \left| t_2 - t_1 \right|^{\frac{1}{2}} 
  }{ (t_1 - \varepsilon) }
  +
  \frac{
    4 K^4
    \left| t_2 - t_1 \right|^{\frac{\alpha}{2}}
  }{ \varepsilon}
  \int_{\varepsilon}^{t_1}
  \left| t_1 - s \right|^{(\frac{\alpha}{2}-1)} ds 
\\&\leq 
  \frac{
    \left( 
      3 K^3
      +
      8
      K^4
      T^{\frac{\alpha}{2}}
    \right)
    \left|t_2-t_1\right|^{\alpha}
  }{\alpha \varepsilon }
  +
  K
  \left\|
    \left(
      e^{(t_2-t_1)A}
      -
      \Id_W
    \right)
    f(t)
  \right\|_W
  +
  \frac{ 
    2 K^4
    \sqrt{T}
    \left| t_2 - t_1 \right|^{\frac{1}{2}} 
  }{ (t_1 - \varepsilon) } .
\end{split}
\end{align}
This ensures that
for all $ t \in (0,T] $, $ \varepsilon \in (0,t) $
it holds that
\begin{align}
\begin{split}
 &\limsup_{
    \substack{ 
      (t_1,t_2) \rightarrow (t,t), \\ 
      (t_1,t_2) \in (\varepsilon,t] \times [t,T] 
    }
  }
  \left\|
    A
    \left(
      x(t_2)
      -
      x(t_1)
    \right)
  \right\|_W
  =
  0 .
\end{split}
\end{align}
Therefore, we obtain that the 
function 
$
  (0,T]
  \ni
  t
  \mapsto
  x(t)
  \in 
  D(A)
$
is continuous.
Moreover, note that
for all $ t \in (0,T] $, $ \varepsilon \in (0,t) $,
$ t_1, t_2 \in (\varepsilon,T] $
with $ t_1 \leq t \leq t_2 $ it holds that
\begin{align}
\begin{split}
 &x(t_2)
  -
  x(t_1)
  -
  A x(t)
  \left(
    t_2 - t_1
  \right)
  -
  f(t)
  \left(
    t_2 - t_1
  \right)
\\&=
  \left(
    e^{(t_2-t_1)A}
    -
    \Id_W
  \right)
  x(t_1)
  -
  A x(t)
  \left(
    t_2 - t_1
  \right)
  +
  \int_{t_1}^{t_2}
  e^{(t_2-s)A}
  f(s) \, ds
  -
  f(t)
  \left(
    t_2
    -
    t_1
  \right)
\\&=
  \left(
    e^{(t_2-t_1)A}
    -
    \Id_W
    -
    A
    \left( 
      t_2 - t_1
    \right)
  \right)
  x(t)
  +
  \left(
    e^{(t_2-t_1)A}
    -
    \Id_W
  \right)
  \left(
    x(t_1)
    -
    x(t)
  \right)
\\&\quad+
  \int_{t_1}^{t_2}
  e^{(t_2-s)A}
  \left[ 
    f(s)
    -
    f(t)
  \right] ds
  +
  \int_{t_1}^{t_2}
  \left(
    e^{(t_2-s)A}
    -
    \Id_W
  \right)
  f(t) \, ds
\\&=
  \int_0^{(t_2-t_1)}
  \left(
    e^{sA}
    -
    \Id_W
  \right) 
  A
  x(t) \, ds
  +
  \int_0^{(t_2-t_1)}
  e^{sA}
  A
  \left(
    x(t_1)
    -
    x(t)
  \right) ds
\\&\quad+
  \int_{t_1}^{t_2}
  e^{(t_2-s)A}
  \left[ 
    f(s)
    -
    f(t)
  \right] ds
  +
  \int_0^{(t_2-t_1)}
  \left(
    e^{sA}
    -
    \Id_W
  \right)
  f(t) \, ds .
\end{split}
\end{align}
This shows that
for all $ t \in (0,T] $, $ \varepsilon \in (0,t) $,
$ (t_1, t_2) \in \big( (\varepsilon,t] \times [t,T] \big) \backslash \{ (t,t) \} $
it holds that
\begin{align}
\begin{split}
 &\frac{
    \left\|
      x(t_2)
      -
      x(t_1)
      -
      A x(t)
      \left(
	t_2 - t_1
      \right)
      -
      f(t)
      \left(
	t_2 - t_1
      \right)
    \right\|_W
  }{
    \left( t_2 - t_1 \right)
  }
\\&\leq
  \sup_{ s \in (0,t_2-t_1) }
  \left\|
    \left(
      e^{sA}
      -
      \Id_W
    \right) 
    A
    x(t)
  \right\|_W
  +
  \sup_{ s \in (0,t_2-t_1) }
  \left\|
    e^{sA}
  \right\|_{ L(W) }
  \left\|
    A
    \left(
      x(t_1)
      -
      x(t)
    \right)
  \right\|_W
\\&\quad+
  \int_{t_1}^{t_2}
  \frac{ 
    \big\|
      e^{(t_2-s)A}
    \big\|_{L(W)}
    \left\| 
      f(s)
      -
      f(t)
    \right\|_W
  }{
    \left( t_2 - t_1 \right)
  } \, ds
  +
  \sup_{ s \in (0,t_2-t_1) }
  \left\|
    \left(
      e^{sA}
      -
      \Id_W
    \right) 
    f(t)
  \right\|_W  
\\&\leq
  \sup_{ s \in (0,t_2-t_1) }
  \left\|
    \left(
      e^{sA}
      -
      \Id_W
    \right) 
    A
    x(t)
  \right\|_W
  +
  K
  \left\|
    A
    \left(
      x(t_1)
      -
      x(t)
    \right)
  \right\|_W
  +
  \frac{ 
    K^2 
    \left( t_2 - t_1 \right)^{\alpha}
  }{ \left(1+\alpha\right) t_1 }
\\&\quad+
  \sup_{ s \in (0,t_2-t_1) }
  \left\|
    \left(
      e^{sA}
      -
      \Id_W
    \right) 
    f(t)
  \right\|_W .
\end{split}
\end{align}
This and~\eqref{it:in_DA_2} ensure
that for all $ t \in (0,T] $, $ \varepsilon \in (0,t) $
it holds that
\begin{align}
\begin{split}
 &\limsup_{
    \substack{ 
      (t_1,t_2) \rightarrow (t,t), \\ 
      (t_1,t_2) \in \left( (\varepsilon,t] \times [t,T] \right) \backslash \{ (t,t) \} 
    }
  }
  \frac{
    \left\|
      x(t_2)
      -
      x(t_1)
      -
      A x(t)
      \left(
	t_2 - t_1
      \right)
      -
      f(t)
      \left(
	t_2 - t_1
      \right)
    \right\|_W
  }{
    \left( t_2 - t_1 \right)
  }
  =
  0 .
\end{split}
\end{align}
Combining this with~\eqref{it:in_DA_2} 
proves~\eqref{it:in_DA_4} and~\eqref{it:in_DA_5}.
Finally, note that \eqref{it:in_DA_4} and the fact that
\begin{align}
\begin{split}
  \limsup_{ s \searrow 0 }
  \left\|
    x(s)
    -
    x(0)
  \right\|_W
&=
  \limsup_{ s \searrow 0 }
  \left\|
    \int_0^s
    e^{(s-u)A}
    f(u) \, du
  \right\|_W
\\&\leq 
  \limsup_{ s \searrow 0 }
  \left[ 
    \int_0^s
    \left\|
      e^{(s-u)A}
    \right\|_{ L(W) }
    \left\|
      f(u) 
    \right\|_W du
  \right] 
  \leq
  K^2
  \left[ 
    \limsup_{ s \searrow 0 }
    s
  \right]
  =
  0
\end{split}
\end{align}
show \eqref{it:in_DA_3}. The proof of 
Lemma~\ref{lem:in_DA_continuous} is thus completed.
\end{proof}

\begin{corollary}
\label{cor:XO_in_DA}
Assume the setting in Section~\ref{sec:pathwise_setting}. Then
\begin{enumerate}[(i)]
\item
it holds for all $ \omega \in \Omega $,
$ t \in [0,T] $ that $ \XO{t}(\omega) \in D(A) $,
\item
it holds for every $ \omega \in \Omega $ that the function
$
  (0,T]
  \ni
  t
  \mapsto
  \XO{t}(\omega)
  \in D(A)
$
is continuous,
\item
it holds for every $ \omega \in \Omega $ that the function
$
  [0,T]
  \ni
  t
  \mapsto
  \XO{t}(\omega)
  \in H
$
is continuous,
\item
it holds for every $ \omega \in \Omega $ that the function
$
  (0,T]
  \ni
  t
  \mapsto
  \XO{t}(\omega)
  \in H
$
is continuously differentiable, and
\item
it holds for all $ \omega \in \Omega $,
$ t \in (0,T] $
that
$
  \frac{d \XO{t}(\omega)}{ d t}
  =
  A \XO{t}(\omega)
  +
  F( X_t(\omega) )
$.
\end{enumerate}
\end{corollary}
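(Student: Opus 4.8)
The plan is to deduce Corollary~\ref{cor:XO_in_DA} pathwise from Lemma~\ref{lem:in_DA_continuous}, applied with the Banach space $W = H$, with $f = F(X_{\cdot}(\omega))$ for each fixed $\omega\in\Omega$, and with a suitable H\"older exponent $\alpha\in(0,1]$. The identities $X_t(\omega) = \int_0^t e^{(t-s)A}F(X_s(\omega))\,ds + O_t(\omega)$ and $\XO{t}(\omega) = X_t(\omega) - O_t(\omega)$ from the setting in Section~\ref{sec:pathwise_setting} show that $\XO{\cdot}(\omega)$ is exactly the function ``$x$'' associated to $f$ in Lemma~\ref{lem:in_DA_continuous}. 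Moreover $A$ generates an analytic semigroup with spectrum in the open left half-plane (as assumed), and $t\mapsto F(X_t(\omega))$ is continuous into $H$ because $t\mapsto X_t(\omega)$ is continuous into $\BaMi$ and $F\in\C(\BaMi,H)$. Hence the single nontrivial hypothesis of Lemma~\ref{lem:in_DA_continuous} still to be verified is the weighted H\"older bound
\begin{equation*}
  \sup\nolimits_{0\le s<t\le T}
  s\,(t-s)^{-\alpha}\,\|F(X_t(\omega)) - F(X_s(\omega))\|_H
  <
  \infty .
\end{equation*}

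To reduce this to a statement about $X_{\cdot}(\omega)$, I would first record that compactness of $[0,T]$ and continuity of $X_{\cdot}(\omega)$ and of $F$ yield finite $R_\omega := \sup_{t\in[0,T]}\|X_t(\omega)\|_{\BaMi}$ and $K_\omega := \sup_{t\in[0,T]}\|F(X_t(\omega))\|_H$. Next I would extend the local Lipschitz hypothesis on $F$ from pairs whose difference lies in $H_1$ to \emph{all} pairs in $\BaMi$: for $x,y\in\BaMi$ pick, by density of $H_1$ in $\BaMi$, elements $z_n\in H_1$ with $z_n\to x-y$ in $\BaMi$, apply the hypothesis to the pair $(y+z_n,y)$ (whose difference $z_n$ lies in $H_1$), and let $n\to\infty$ using continuity of $F$ and of $\|\cdot\|_{\BaMi}$; this gives $\|F(x)-F(y)\|_H^2 \le L\|x-y\|_{\BaMi}^2(1+\|x\|_{\BaMi}^{\qs}+\|y\|_{\BaMi}^{\qs})$ for all $x,y\in\BaMi$, hence in particular $\|F(X_t(\omega))-F(X_s(\omega))\|_H \le (L(1+2R_\omega^{\qs}))^{1/2}\,\|X_t(\omega)-X_s(\omega)\|_{\BaMi}$. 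It therefore suffices to produce a weighted H\"older bound for $X_{\cdot}(\omega)$ in $\BaMi$.

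For this I would split $X = \XO{} + O$ and treat the two summands separately. The assumption on $O$ directly supplies some $r\in(0,1]$ and $C_\omega\in\R$ with $s\,(t-s)^{-r}\|O_t(\omega)-O_s(\omega)\|_{\BaMi}\le C_\omega$ for all $0\le s<t\le T$. For $\XO{\cdot}(\omega) = \int_0^{\cdot}e^{(\cdot-u)A}F(X_u(\omega))\,du$ I would use the semigroup identity
\begin{equation*}
  \XO{t}(\omega) - \XO{s}(\omega)
  =
  \int_s^t e^{(t-u)A}F(X_u(\omega))\,du
  +
  \big(e^{(t-s)A}-\Id_H\big)\!\int_0^s e^{(s-u)A}F(X_u(\omega))\,du
\end{equation*}
together with the mapping bound $\sup_{\tau\in(0,T)}\tau^{\sr}\|e^{\tau A}\|_{L(H,\BaMi)}=:c_\sr<\infty$ and the standard analytic-semigroup estimates $\|(-A)^{\gamma}e^{\tau A}\|_{L(H)}\le c\,\tau^{-\gamma}$ and $\|(e^{\tau A}-\Id_H)(-A)^{-\gamma}\|_{L(H)}\le c\,\tau^{\gamma}$, where in the second summand one writes $e^{(s-u)A}=e^{\frac{(s-u)}{2}A}e^{\frac{(s-u)}{2}A}$ and inserts $(-A)^{-\gamma}(-A)^{\gamma}$. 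Bounding the first summand by $c_\sr K_\omega\int_s^t(t-u)^{-\sr}\,du$ and the second by (a constant times) $K_\omega (t-s)^{\gamma}\int_0^s(s-u)^{-\gamma-\sr}\,du$ --- both integrals being finite exactly because $\sr<1$ and, for $\gamma\in(0,1-\sr)$, $\gamma+\sr<1$ --- yields a constant $c_{\omega,\gamma}\in\R$ with $\|\XO{t}(\omega)-\XO{s}(\omega)\|_{\BaMi}\le c_{\omega,\gamma}(t-s)^{\gamma}$ for all $0\le s<t\le T$ (using $t-s\le T$ to absorb the exponent $1-\sr$ into $\gamma$). Setting $\alpha := \min\{\gamma,r\}\in(0,1]$ and again using $t-s\le T$ then gives the required $\sup_{0\le s<t\le T}s\,(t-s)^{-\alpha}\|X_t(\omega)-X_s(\omega)\|_{\BaMi}<\infty$, and Lemma~\ref{lem:in_DA_continuous} with this $\alpha$ delivers items (i)--(iv), while its conclusion $x'(t)=Ax(t)+f(t)$ becomes $\tfrac{d\XO{t}(\omega)}{dt}=A\XO{t}(\omega)+F(X_t(\omega))$, i.e.\ item~(v).

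I expect the H\"older regularity of $\XO{\cdot}(\omega)$ in $\BaMi$ to be the crux: the only hypothesis available for the semigroup as a map $H\to\BaMi$ is the integrable singularity $\tau^{-\sr}$, so converting it into a genuine H\"older bound forces the factorisation of $(e^{(t-s)A}-\Id_H)e^{(s-u)A}$ through fractional powers of $-A$ and careful bookkeeping of the singularity exponents to keep every occurring time integral convergent. The density extension of the Lipschitz bound of $F$ beyond $H_1$ is a minor but genuinely needed point, since $O_t(\omega)-O_s(\omega)$ is a priori only an element of $\BaMi$.
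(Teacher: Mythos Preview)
Your proposal is correct and follows essentially the same approach as the paper's proof: reduce to Lemma~\ref{lem:in_DA_continuous} by establishing the weighted H\"older bound on $t\mapsto F(X_t(\omega))$ via the local Lipschitz estimate for $F$ and a weighted H\"older bound on $t\mapsto X_t(\omega)$ in $\BaMi$, the latter obtained by splitting into the convolution part (handled through the semigroup factorisation $e^{(s-u)A}=e^{(s-u)A/2}e^{(s-u)A/2}$ and fractional powers of $-A$ with an exponent $\gamma\in(0,1-\sr)$) and the $O$-part (handled by the limsup hypothesis). Your explicit density argument extending the Lipschitz inequality from pairs with difference in $H_1$ to all pairs in $\BaMi$ is a point the paper uses silently; otherwise the arguments match.
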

\begin{proof}[Proof of Corollary~\ref{cor:XO_in_DA}]
Throughout this proof let
$ \kappa \in (0, 1-\sr) $ be a real number.
Next note that the assumptions that
$ A \colon D(A) \subseteq H \rightarrow H $
is a generator of an analytic semigroup 
with 
$ 
  \textup{spectrum}(A) 
  \subseteq 
  \{
    z \in \mathbb{C} 
    \colon 
    \textup{Re}(z) < 0
  \}
$,
that $ X \colon [0,T] \times \Omega \rightarrow \BaMi $
has continuous sample paths,
that $ F \colon \BaMi \rightarrow H $ is continuous,
that $ O \colon [0,T] \times \Omega \rightarrow \BaMi $
satisfies
$
  \forall \,
  \omega \in \Omega
  \colon
  \limsup_{ r \searrow 0 }
  \sup_{ 0 \leq s < t \leq T }
  \frac{ 
    s \| O_t(\omega) - O_s(\omega) \|_{ \BaMi } 
  }{
    (t-s)^{r}
  }
  <
  \infty
$,
and that
$ 
  \sup_{ t \in (0,T) } 
  t^{\sr} 
  \| e^{tA} \|_{ L(H, \BaMi) } 
  < \infty 
$
imply that there exist
functions $ \theta \colon \Omega \rightarrow (0,1] $,
$ K \colon \Omega \rightarrow [0,\infty) $
such that for all $ \omega \in \Omega $
it holds that
\begin{align}
\begin{split}
  K(\omega)
 &=
  \sup_{t \in (0,T] }
  \Big[
    \big\|
      (-tA)^{\kappa}
      e^{tA}
    \big\|_{ L(H) }
    +
    \big\|
      (-tA)^{-\kappa}
      (
        e^{tA}
        -
        \Id_H
      )
    \big\|_{ L(H) }
    +
    t^{\sr}
    \big\|
      e^{tA}
    \big\|_{ L(H,\BaMi) }
  \Big]
\\&\quad+
  \sup_{ 0 \leq s < t \leq T }
  \left[
    \big\|
      X_t(\omega)
    \big\|_{\BaMi}
    +
    \big\|
      F(X_t(\omega))
    \big\|_H
    +
    \frac{
      s
      \big\| O_{t}(\omega) - O_{s}(\omega) \big\|_{\BaMi}
    }{
      \left| t - s \right|^{\nr(\omega)}
    } 
  \right] .
\end{split}
\end{align}
Moreover, observe that
for all $ \omega \in \Omega $,
$ t_1, t_2 \in (0,T] $
with $ t_1 < t_2 $
it holds that
\begin{align}
\begin{split}
 &\left\|
    X_{t_2}(\omega) - X_{t_1}(\omega)
  \right\|_{\BaMi}
\\&\leq
  \int_{t_1}^{t_2}
  \left\|
    e^{(t_2-s)A}
    F(X_s(\omega))
  \right\|_{\BaMi} ds
  +
  \int_{0}^{t_1}
  \left\|
    \left(
      e^{(t_2-s)A}
      -
      e^{(t_1-s)A}
    \right)
    F(X_s(\omega))
  \right\|_{\BaMi} ds
\\&\quad+
  \left\|
    O_{t_2}(\omega)
    -
    O_{t_1}(\omega)
  \right\|_{\BaMi}
\\&\leq
  \int_{t_1}^{t_2}
  \left\|
    e^{(t_2-s)A}
  \right\|_{L(H,\BaMi)}
  \left\|
    F(X_s(\omega))
  \right\|_H ds
  +
  \frac{ 
    K(\omega)
    \left(
      t_2 - t_1
    \right)^{\nr(\omega)}
  }{ t_1 }
\\&\quad+
  \int_{0}^{t_1}
  \big\|
    e^{\frac{(t_1-s)}{2}A}
  \big\|_{L(H,\BaMi)}
  \big\|
    e^{\frac{(t_1-s)}{2}A}
    \left(
      e^{(t_2-t_1)A}
      -
      \Id_H
    \right) \!
  \big\|_{L(H)}
  \left\|
    F(X_s(\omega))
  \right\|_H ds .
\end{split}
\end{align}
This shows that
for all $ \omega \in \Omega $,
$ t_1, t_2 \in (0,T] $
with $ t_1 < t_2 $
it holds that
\begin{align}
\begin{split}
 &\left\|
    X_{t_2}(\omega) - X_{t_1}(\omega)
  \right\|_{\BaMi}
\\&\leq
  |K(\omega)|^2
  \int_{t_1}^{t_2}
  \left(
    t_2 - s
  \right)^{-\sr} ds
  +
  \frac{ 
    K(\omega)
    \left(
      t_2 - t_1
    \right)^{\nr(\omega)}
  }{ t_1 }
\\&\quad+
  2^{\sr}
  |K(\omega)|^2
  \int_{0}^{t_1}
  \left(
    t_1 - s
  \right)^{-\sr}
  \big\|
    \left(-A\right)^{\kappa}
    e^{\frac{(t_1-s)}{2}A}
  \big\|_{ L(H) }
  \big\|
    \left(-A\right)^{-\kappa}
    \left(
      e^{(t_2-t_1)A}
      -
      \Id_H
    \right) \!
  \big\|_{L(H)} \, ds
\\&\leq
  \frac{ 
    |K(\omega)|^2
    \left( t_2 - t_1 \right)^{ (1-\sr) }
  }{ \left( 1- \sr \right) }
  +
  \frac{ 
    K(\omega)
    \left(
      t_2 - t_1
    \right)^{\nr(\omega)}
  }{ t_1 } 
  +
  2^{(\sr+\kappa)}
  |K(\omega)|^4
  \left( t_2 - t_1 \right)^{\kappa}
  \int_{0}^{t_1}
  \left(
    t_1 - s
  \right)^{-(\sr+\kappa)} ds .
\end{split}
\end{align}
Hence, we obtain that
for all $ \omega \in \Omega $,
$ t_1, t_2 \in (0,T] $
with $ t_1 < t_2 $
it holds that
\begin{align}
\begin{split}
 &\left\|
    X_{t_2}(\omega) - X_{t_1}(\omega)
  \right\|_{\BaMi}
\\&\leq
  \frac{ 
    |K(\omega)|^2
    \left( t_2 - t_1 \right)^{ (1-\sr) }
  }{ \left( 1- \sr \right) }
  +
  \frac{ 
    K(\omega)
    \left(
      t_2 - t_1
    \right)^{\nr(\omega)}
  }{ t_1 }
  +
  \frac{ 
    2^{(\sr+\kappa)}
    |K(\omega)|^4 \,
    |t_1|^{(1-\sr-\kappa)}
    \left( t_2 - t_1 \right)^{\kappa}
  }{
    \left( 1 - \sr - \kappa \right) 
  }
\\&=
  \frac{ 
    \left( t_2 - t_1 \right)^{\min\{\kappa, \nr(\omega)\}}
  }{
    t_1
  }
  \Bigg(
    \frac{ 
      |K(\omega)|^2 \,
      t_1
      \left( t_2 - t_1 \right)^{ (1-\sr-\min\{\kappa,\nr(\omega)\}) }
    }{ \left( 1- \sr \right) }
    +
    K(\omega)
    \left(
      t_2 - t_1
    \right)^{\max\{0, \nr(\omega)-\kappa\}}
\\&\quad+
    \frac{ 
      2^{(\sr+\kappa)}
      |K(\omega)|^4 \,
      |t_1|^{(2-\sr-\kappa)}
      \left(
	t_2 - t_1
      \right)^{\max\{0,\kappa-\nr(\omega)\}}
    }{
      \left( 1 - \sr - \kappa \right) 
    }
  \Bigg) 
\\&\leq
  \frac{ 
    \left( t_2 - t_1 \right)^{\min\{\kappa,\nr(\omega)\}}
  }{
    t_1
  }
  \left(
    \frac{ 
      |K(\omega)|^2 \,
      (1+T)^2
    }{ \left( 1- \sr \right) }
    +
    K(\omega)
    (1+T)
    +
    \frac{ 
      2^{(\sr+\kappa)}
      |K(\omega)|^4 \,
      (1+T)^2
    }{
      \left( 1 - \sr - \kappa \right) 
    }
  \right) .
\end{split}
\end{align}
Combining this with the fact that
$ 
  \forall \,
  u, v \in \BaMi
  \colon
  \| F(u) - F(v) \|_H^2
  \leq 
  L
  \| u - v \|_{\BaMi}^2
  (
    1
    +
    \| u \|_{ \BaMi }^{\qs}
    +
    \| v \|_{ \BaMi }^{\qs}
  )
$
yields that
for all $ \omega \in \Omega $,
$ t_1, t_2 \in (0,T] $
with $ t_1 < t_2 $
it holds that
\begin{align}
\begin{split}
 &\left\|
    F(X_{t_2}(\omega))
    -
    F(X_{t_1}(\omega))
  \right\|_H
\\&\leq
  \sqrt{
    L
    \left(
      1
      +
      \left\| X_{t_2}(\omega) \right\|_{\BaMi}^{\qs}
      +
      \left\| X_{t_1}(\omega) \right\|_{\BaMi}^{\qs}
    \right)
  }
  \left\|
    X_{t_2}(\omega) - X_{t_1}(\omega)
  \right\|_{\BaMi}
\\&\leq
  \frac{ 
    \left( t_2 - t_1 \right)^{\min\{\kappa,\nr(\omega)\}}
  }{
    t_1
  }
  \sqrt{
    L
    \left(
      1
      +
      2
      |K(\omega)|^{\qs}
    \right)
  }
  (1+T)^2
  \left(
    \frac{ 
      |K(\omega)|^2
    }{ \left( 1- \sr \right) }
    +
    K(\omega)
    +
    \frac{ 
      2^{(\sr+\kappa)}
      |K(\omega)|^4 \,
    }{
      \left( 1 - \sr - \kappa \right) 
    }
  \right) .
\end{split}
\end{align}
This and Lemma~\ref{lem:in_DA_continuous}
complete the proof of Corollary~\ref{cor:XO_in_DA}.
\end{proof}

\subsection{Regularity of the semilinear integrated version
of the numerical approximation}
\label{sec:semilinear_regularity}

\begin{lemma}
\label{lem:YX_in_DA}
Assume the setting in Section~\ref{sec:pathwise_setting}. Then
\begin{enumerate}[(i)]
\item
it holds for all $ \omega \in \Omega $,
$ t \in [0,T] $
that
$ \YX{t}(\omega) \in D(A) $,
\item\label{eq:YX_in_DA_2}
it holds for every $ \omega \in \Omega $ 
that the function
$
  [0,T]
  \ni
  t
  \mapsto
  \YX{t}(\omega)
  \in D(A)
$
is continuous,
\item
it holds for every $ \omega \in \Omega $ 
that the function
$
  [0,T]
  \ni
  t
  \mapsto
  \YX{t}(\omega)
  \in H
$
is Lipschitz continuous,
\item
it holds for every $ \omega \in \Omega $ 
that the function
$
  [0,T] \backslash \{ 0, \frac{T}{M}, \frac{2T}{M}, \ldots, T \}
  \ni
  t
  \mapsto
  \YX{t}(\omega)
  \in H
$
is continuously differentiable,
\item
it holds for all 
$ \omega \in \Omega $,
$ t \in [0,T] \backslash \{ 0, \frac{T}{M}, \frac{2T}{M}, \ldots, T \} $
that
$
  \frac{ d \YX{t}(\omega) }{ d t }
  =
  A \YX{t}(\omega)
  +
  F( \Y{\fl{t}}(\omega) )
$, and
\item
it holds for all
$ \omega \in \Omega $,
$ t \in [0,T] $ that
$
  \YX{t}(\omega)
  =
  \int_0^t
  \big[
    A \YX{s}(\omega)
    +
    F( \Y{\fl{s}}(\omega) )
  \big] \, ds
$.
\end{enumerate}

\end{lemma}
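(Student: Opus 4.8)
The plan is to exploit that, for each fixed $\omega\in\Omega$, the integrand $s\mapsto F(\Y{\fl{s}}(\omega))$ occurring in the definition of $\YX{}$ is piecewise constant: for every $l\in\{0,1,\ldots,M-1\}$ it equals $F(\Y{\frac{lT}{M}}(\omega))$ throughout $[\frac{lT}{M},\frac{(l+1)T}{M})$. Combining this with the semigroup property of $(e^{tA})_{t\ge 0}$ and with the elementary identity $\int_0^r e^{uA}\,du=A^{-1}(e^{rA}-\Id_H)$ for $r\in[0,\infty)$ (which is licit since $0$ lies in the resolvent set of $A$ by the spectral assumption), one obtains the explicit representation
\[
  \YX{t}(\omega)
  =
  e^{(t-\frac{lT}{M})A}\,\YX{\frac{lT}{M}}(\omega)
  +
  A^{-1}\big(e^{(t-\frac{lT}{M})A}-\Id_H\big)\,F\big(\Y{\frac{lT}{M}}(\omega)\big)
\]
for all $\omega\in\Omega$, $l\in\{0,1,\ldots,M-1\}$, and $t\in[\frac{lT}{M},\frac{(l+1)T}{M}]$. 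Everything else is read off from this formula together with the standard smoothing and commutation properties of the analytic semigroup.

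For assertion~(i) I would argue by induction over $l$: we have $\YX{0}(\omega)=0\in D(A)$, and if $\YX{\frac{lT}{M}}(\omega)\in D(A)$, then in the representation above the first summand lies in $D(A)$ (because $e^{rA}$ maps $H$ into $D(A)$ for $r>0$ and because the induction hypothesis covers the value $r=0$) while the second summand lies in $D(A)$ for an arbitrary argument in $H$; hence $\YX{t}(\omega)\in D(A)$ for all $t\in[\frac{lT}{M},\frac{(l+1)T}{M}]$, in particular for $t=\frac{(l+1)T}{M}$. For~(ii) I would apply $A$ to the representation, using $Ae^{rA}x=e^{rA}Ax$ for $x\in D(A)$, to get
\[
  A\,\YX{t}(\omega)
  =
  e^{(t-\frac{lT}{M})A}\,A\,\YX{\frac{lT}{M}}(\omega)
  +
  \big(e^{(t-\frac{lT}{M})A}-\Id_H\big)\,F\big(\Y{\frac{lT}{M}}(\omega)\big)
\]
on $[\frac{lT}{M},\frac{(l+1)T}{M}]$; strong continuity of the semigroup makes $t\mapsto A\YX{t}(\omega)$ continuous on each of the finitely many closed subintervals, and the one-sided limits at the grid points coincide (both equal $A\YX{\frac{lT}{M}}(\omega)$), so $t\mapsto\YX{t}(\omega)\in D(A)$ is continuous on all of $[0,T]$. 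For~(iii) I would rewrite $(e^{rA}-\Id_H)x=\int_0^r e^{uA}Ax\,du$ for $x\in D(A)$ and $A^{-1}(e^{rA}-\Id_H)g=\int_0^r e^{uA}g\,du$ for $g\in H$, which turns the representation into a Lipschitz estimate on each subinterval with constant controlled by $\sup_{u\in[0,T]}\|e^{uA}\|_{L(H)}$, $\|A\YX{\frac{lT}{M}}(\omega)\|_H$, and $\|F(\Y{\frac{lT}{M}}(\omega))\|_H$; since $\YX{}(\omega)$ is already known to be continuous and there are finitely many subintervals, this upgrades to global Lipschitz continuity on $[0,T]$.

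For~(iv) and~(v) I would differentiate the representation on each \emph{open} subinterval $(\frac{lT}{M},\frac{(l+1)T}{M})$, obtaining $\frac{d}{dt}\YX{t}(\omega)=e^{(t-\frac{lT}{M})A}\big(A\YX{\frac{lT}{M}}(\omega)+F(\Y{\frac{lT}{M}}(\omega))\big)$, and then compare with the displayed formula for $A\YX{t}(\omega)$ to see that this equals $A\YX{t}(\omega)+F(\Y{\fl{t}}(\omega))$; continuity of the right-hand side in $t$ (again by strong continuity) yields continuous differentiability on $[0,T]\setminus\{0,\frac{T}{M},\ldots,T\}$. Finally, for~(vi) I would observe that both $t\mapsto\YX{t}(\omega)$ and $t\mapsto\int_0^t[A\YX{s}(\omega)+F(\Y{\fl{s}}(\omega))]\,ds$ are continuous on $[0,T]$ (the latter since its integrand is bounded and piecewise continuous, with discontinuities only at the grid points), both vanish at $t=0$, and have the same derivative on the complement of the grid, hence coincide.

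I do not expect a genuine obstacle here: the content is the piecewise-constant structure of the integrand, which both makes the explicit representation available and renders an appeal to Lemma~\ref{lem:in_DA_continuous} unnecessary (that lemma does not apply verbatim anyway, since $s\mapsto F(\Y{\fl{s}}(\omega))$ fails to be continuous at the grid points). The only points requiring care are the bookkeeping at the grid points $\frac{lT}{M}$ — matching one-sided limits for~(ii), one-sided derivatives for~(iv)--(v), and checking that the integral identity in~(vi) is unaffected by the measure-zero grid — and the routine unbounded-operator manipulations, namely justifying $\int_{\frac{lT}{M}}^{t}e^{(t-s)A}\,ds=A^{-1}(e^{(t-\frac{lT}{M})A}-\Id_H)$, the commutation $Ae^{rA}x=e^{rA}Ax$ on $D(A)$, and the fact that $\Y{\frac{lT}{M}}(\omega)\in\BaMi$ so that $F$ may be applied and $F(\Y{\frac{lT}{M}}(\omega))\in H$.
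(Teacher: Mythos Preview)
Your proposal is correct and takes a genuinely cleaner route than the paper's proof. Both proofs start from the same observation---that the integrand $s\mapsto F(\Y{\fl{s}}(\omega))$ is piecewise constant---and both derive an explicit representation of $\YX{t}(\omega)$ in terms of $A^{-1}(e^{rA}-\Id_H)$ acting on the values $F(\Y{\frac{lT}{M}}(\omega))$. The difference is structural: the paper writes $\YX{t}(\omega)$ as a single sum over all grid points $l\le \frac{M}{T}\fl{t}$ and then, for items~(ii)--(v), expands $\YX{t_2}(\omega)-\YX{t_1}(\omega)$ term by term and bounds each contribution to $\|A(\YX{t_2}-\YX{t_1})\|_H$ or to the difference quotient directly, treating the approach from the right, from the left at non-grid points, and from the left at grid points in separate computations. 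You instead use the \emph{recursive} form $\YX{t}=e^{(t-\frac{lT}{M})A}\YX{\frac{lT}{M}}+A^{-1}(e^{(t-\frac{lT}{M})A}-\Id_H)F(\Y{\frac{lT}{M}})$, which packages the entire history up to the last grid point into the single vector $\YX{\frac{lT}{M}}(\omega)\in D(A)$ (established by induction in~(i)). This localizes every subsequent argument to a single subinterval and reduces~(ii)--(v) to routine facts about $r\mapsto e^{rA}x$ for $x\in D(A)$ and $r\mapsto A^{-1}e^{rA}g$ for $g\in H$; the matching at grid points is then automatic because the recursive formula at $t=\frac{(l+1)T}{M}$ reproduces $\YX{\frac{(l+1)T}{M}}$ exactly. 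What your approach buys is brevity and transparency; what the paper's buys is explicit estimates (its intermediate bounds on $\|A(\YX{t_2}-\YX{t_1})\|_H$ display the dependence on $M$ and on $\|F(\Y{\frac{lT}{M}})\|_H$ directly), though these quantitative estimates are not actually needed for the qualitative conclusions~(i)--(vi).
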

\begin{proof}[Proof of Lemma~\ref{lem:YX_in_DA}]
Throughout this proof assume w.l.o.g.\
that $ H \neq \{ 0 \} $. Next note
that the assumption that
$ A \colon D(A) \subseteq H \rightarrow H $
is a generator of an analytic semigroup 
with 
$ 
  \textup{spectrum}(A) 
  \subseteq 
  \{
    z \in \mathbb{C} 
    \colon 
    \textup{Re}(z) < 0
  \}
$ ensures
that there exists a function
$ K \colon \Omega \rightarrow (0,\infty) $
such that for all $ \omega \in \Omega $
it holds that
\begin{align}
\begin{split}
  K(\omega)
  =
  \sup_{ t \in \{0,\frac{T}{M}, \ldots, T \} }
  \big\|
    F( \Y{t}(\omega) )
  \big\|_H
  +
  \sup_{ t \in (0,T] }
  \sup_{ r \in \{0,1\} }
  \left[
    \big\|
      (-tA)^r
      e^{tA}
    \big\|_{ L(H) }
    +
    \big\|
      (-tA)^{-r}
      (
        e^{tA}
        -
        \Id_H
      )
    \big\|_{ L(H) }
  \right] .
\end{split}
\end{align}
Moreover, observe that
for all $ \omega \in \Omega $,
$ t \in [0,T] $
it holds that
\begin{align}
\begin{split}
 &\YX{t}(\omega)
  =
  \int_{\fl{t}}^t
  e^{(t-s)A}
  F( \Y{\fl{t}}(\omega) ) \, ds
  +
  \sum_{l=0}^{\frac{M}{T}\fl{t}-1}
  \int_{\frac{lT}{M}}^{\frac{(l+1)T}{M}}
  e^{(t-s)A}
  F( \Y{\fl{s}}(\omega) ) \, ds
\\&=
  A^{-1}
  \big(
    e^{(t-\fl{t})A}
    -
    \Id_H
  \big)
  F( \Y{\fl{t}}(\omega) )
  +
  A^{-1}
  \sum_{l=0}^{\frac{M}{T}\fl{t}-1}
  e^{(t-(l+1)\frac{T}{M})A} 
  \big(
    e^{\frac{T}{M} A}
    -
    \Id_H
  \big)
  F( \Y{\frac{lT}{M}}(\omega) ) .
\end{split}
\end{align}
This implies that for all
$ \omega \in \Omega $,
$ t \in [0,T] $ it holds that
$ \YX{t}(\omega) \in D(A) $.
Furthermore, note that
for all $ \omega \in \Omega $,
$ s, t \in [0,T] $
with $ s \leq t $ it holds that
\begin{align}
\label{eq:YX_in_DA_h11}
\begin{split}
 &\YX{t}(\omega)
  -
  \YX{s}(\omega)
\\&=
  \int_{\fl{t}}^{t}
  e^{(t-u)A}
  F( \Y{\fl{t}}(\omega) ) \, du
  -
  \int_{\fl{s}}^{s}
  e^{(s-u)A}
  F( \Y{\fl{s}}(\omega) ) \, du
\\&\quad+
  \int_{\fl{s}}^{\fl{t}}
  e^{(t-u)A}
  F( \Y{\fl{u}}(\omega) ) \, du
  +
  \big(
    e^{(t-s)A}
    -
    \Id_H
  \big)
  \int_0^{\fl{s}}
  e^{(s-u)A}
  F( \Y{\fl{u}}(\omega) ) \, du
\\&=
  A^{-1}
  \big(
    e^{(t-\fl{t})A}
    -
    \Id_H
  \big)
  \big(
    F( \Y{\fl{t}}(\omega) )
    -
    F( \Y{\fl{s}}(\omega) )
  \big)
\\&\quad+
  A^{-1}
  \big(
    e^{(t-\fl{t})A}
    -
    e^{(s-\fl{s})A}
  \big)
  F( \Y{\fl{s}}(\omega) )
\\&\quad+
  A^{-1}
  \sum_{ l=\frac{M}{T}\fl{s} }^{ \frac{M}{T}\fl{t} - 1 }
  e^{(t-(l+1)\frac{T}{M})A}
  \big(
    e^{\frac{T}{M}A}
    -
    \Id_H
  \big)
  F( \Y{\frac{lT}{M}}(\omega) )
\\&\quad+
  A^{-1}
  \big(
    e^{(t-s)A}
    -
    \Id_H
  \big)
  \sum_{ l=0 }^{ \frac{M}{T}\fl{s} - 1 }
  e^{(s-(l+1)\frac{T}{M})A}
  \big(
    e^{\frac{T}{M}A}
    -
    \Id_H
  \big)
  F( \Y{\frac{lT}{M}}(\omega) ) .
\end{split}
\end{align}
This shows that
for all $ \omega \in \Omega $,
$ s, t \in [0,T] $
with $ s \leq t $ it holds that
\begin{align}
\label{eq:YX_in_DA_h1}
\begin{split}
 &\left\|
    A\left(
      \YX{t}(\omega)
      -
      \YX{s}(\omega)
    \right)
  \right\|_H
\\&\leq 
  \big\|
    e^{(t-\fl{t})A}
    -
    \Id_H
  \big\|_{ L(H) }
  \big\|
    F( \Y{\fl{t}}(\omega) )
    -
    F( \Y{\fl{s}}(\omega) )
  \big\|_H
\\&\quad+
  \big\|
    e^{\min\{t-\fl{t},s-\fl{s}\}A}
  \big\|_{ L(H) }
  \big\|
    \big(
      e^{|t-s-(\fl{t}-\fl{s})|A}
      -
      \Id_H
    \big)
    F( \Y{\fl{s}}(\omega) )
  \big\|_H
\\&\quad+
  \sum_{ l=\frac{M}{T}\fl{s} }^{ \frac{M}{T}\fl{t} - 1 }
  \big\|
    e^{(t-(l+1)\frac{T}{M})A}
  \big\|_{ L(H) }
  \big\|
    e^{\frac{T}{M}A}
    -
    \Id_H
  \big\|_{ L(H) }
  \big\|
    F( \Y{\frac{lT}{M}}(\omega) )
  \big\|_H
\\&\quad+
  \sum_{ l=0 }^{ \frac{M}{T}\fl{s} - 1 }
  \big\|
    e^{(s-(l+1)\frac{T}{M})A}
  \big\|_{ L(H) }
  \big\|
    e^{\frac{T}{M}A}
    -
    \Id_H
  \big\|_{ L(H) }
  \big\|
    \big(
      e^{(t-s)A}
      -
      \Id_H
    \big)
    F( \Y{\frac{lT}{M}}(\omega) )
  \big\|_H
\\&\leq
  K(\omega)
  \big\|
    F( \Y{\fl{t}}(\omega) )
    -
    F( \Y{\fl{s}}(\omega) )
  \big\|_H
\\&\quad+
  K(\omega)
  \big\|
    \big(
      e^{|t-s-(\fl{t}-\fl{s})|A}
      -
      \Id_H
    \big)
    F( \Y{\fl{s}}(\omega) )
  \big\|_H
\\&\quad+
  \frac{M |K(\omega)|^3}{T}
  \big(
    \fl{t} - \fl{s}
  \big)
  +
  \sup_{ l \in \{ 0, 1, 2, \ldots, M \} }
  M |K(\omega)|^2
  \big\|
    \big(
      e^{(t-s)A}
      -
      \Id_H
    \big)
    F( \Y{\frac{lT}{M}}(\omega) )
  \big\|_H .
\end{split}
\end{align}
In particular, this
implies that
for all $ \omega \in \Omega $,
$ s \in [0,T] $ it holds that
\begin{align}
\label{eq:YX_in_DA_h2}
\begin{split}
 &\limsup_{\substack{ t \rightarrow s, \\ t \in [s, T]  }}
  \left\|
    A\left(
      \YX{t}(\omega)
      -
      \YX{s}(\omega)
    \right)
  \right\|_H
  =
  0 .
\end{split}
\end{align}
Moreover, \eqref{eq:YX_in_DA_h1}
assures that for all $ \omega \in \Omega $,
$ t \in [0,T] \backslash \{ 0, \frac{T}{M}, \frac{2T}{M}, \ldots, T \} $
it holds that
\begin{align}
\label{eq:YX_in_DA_h3}
\begin{split}
 &\limsup_{\substack{ s \rightarrow t, \\ s \in [0, t]  }}
  \left\|
    A\left(
      \YX{t}(\omega)
      -
      \YX{s}(\omega)
    \right)
  \right\|_H
  =
  0 .
\end{split}
\end{align}
Next note that~\eqref{eq:YX_in_DA_h11}
implies that
for all $ \omega \in \Omega $,
$ t \in \{ \frac{T}{M}, \frac{2T}{M}, \ldots, T \} $,
$ s \in (t-\frac{T}{M}, t] $ it holds that
\begin{align}
\begin{split}
  \YX{t}(\omega)
  -
  \YX{s}(\omega)
 &=
  A^{-1}
  \big(
    \Id_H
    -
    e^{(s-(t-\frac{T}{M}))A}
  \big)
  F( \Y{(t-\frac{T}{M})}(\omega) )
  +
  A^{-1}
  \big(
    e^{\frac{T}{M}A}
    -
    \Id_H
  \big)
  F( \Y{(t-\frac{T}{M})}(\omega) )
\\&\quad+
  A^{-1}
  \big(
    e^{(t-s)A}
    -
    \Id_H
  \big)
  \sum_{ l = 0 }^{ \frac{tM}{T} - 2 }
  e^{ (s-(l+1)\frac{T}{M})A }
  \big(
    e^{\frac{T}{M}A}
    -
    \Id_H
  \big)
  F( \Y{ \frac{lT}{M} }(\omega) ) .
\end{split}
\end{align}
This ensures that for all $ \omega \in \Omega $,
$ t \in \{ \frac{T}{M}, \frac{2T}{M}, \ldots, T \} $,
$ s \in (t-\frac{T}{M}, t] $ it holds that
\begin{align}
\begin{split}
 &\left\|
    A\left(
      \YX{t}(\omega)
      -
      \YX{s}(\omega)
    \right)
  \right\|_H
\\&\leq 
  \big\|
    \big(
      e^{\frac{T}{M}A}
      -
      e^{(s-(t-\frac{T}{M}))A}
    \big)
    F( \Y{(t-\frac{T}{M})}(\omega) )
  \big\|_H
\\&\quad+
  \sum_{ l = 0 }^{ \frac{tM}{T} - 2 }
  \big\|
    e^{ (s-(l+1)\frac{T}{M})A }
  \big\|_{ L(H) }
  \big\|
    \big(
      e^{\frac{T}{M}A}
      -
      \Id_H
    \big)
  \big\|_{ L(H) }
  \big\|
    \big(
      e^{(t-s)A}
      -
      \Id_H
    \big)
    F( \Y{ \frac{lT}{M} }(\omega) )
  \big\|_H
\\&\leq
  \big\|
    e^{(s-(t-\frac{T}{M}))A}
  \big\|_{ L(H) }
  \big\|
    \big(
      e^{(t-s)A}
      -
      \Id_H
    \big)
    F( \Y{(t-\frac{T}{M})}(\omega) )
  \big\|_H
\\&\quad+
  \sup_{ l \in \{ 0, 1, 2, \ldots, M \} }
  | K(\omega) |^2 \,
  \big(
    \tfrac{tM}{T} - 1
  \big)
  \big\|
    \big(
      e^{(t-s)A}
      -
      \Id_H
    \big)
    F( \Y{ \frac{lT}{M} }(\omega) )
  \big\|_H 
\\&\leq
  \sup_{ l \in \{ 0, 1, 2, \ldots, M \} }
  K(\omega)
  \left( 1 + K(\omega) \right)
  M
  \big\|
    \big(
      e^{(t-s)A}
      -
      \Id_H
    \big)
    F( \Y{ \frac{lT}{M} }(\omega) )
  \big\|_H .
\end{split}
\end{align}
Hence, we obtain that for all
$ \omega \in \Omega $,
$ t \in \{ \frac{T}{M}, \frac{2T}{M}, \ldots, T \} $
it holds that
\begin{align}
\label{eq:YX_in_DA_h4}
\begin{split}
 &\limsup_{\substack{ s \rightarrow t, \\ s \in (t-T/M, t] }}
  \left\|
    A\left(
      \YX{t}(\omega)
      -
      \YX{s}(\omega)
    \right)
  \right\|_H
  =
  0 .
\end{split}
\end{align}
Combining~\eqref{eq:YX_in_DA_h2}, \eqref{eq:YX_in_DA_h3},
and~\eqref{eq:YX_in_DA_h4} proves
for every $ \omega \in \Omega $ that the function 
$ 
  [0,T]
  \ni
  t
  \mapsto
  \YX{t}(\omega)
  \in D(A)
$
is continuous.
Moreover, note
that for all
$ \omega \in \Omega $,
$ s, t \in [0,T] $ with
$ s < t $ it holds that
\begin{align}
\begin{split}
 &\YX{t}(\omega)
  -
  \YX{s}(\omega)
\\&=
  \int_{s}^{t}
  e^{(t-u)A}
  F( \Y{\fl{u}}(\omega) ) \, du
  +
  \left(
    e^{(t-s)A}
    -
    \Id_H
  \right)
  \int_{\fl{s}}^{s}
  e^{(s-u)A}
  F( \Y{\fl{s}}(\omega) ) \, du
\\&\quad+
  \left(
    e^{(t-s)A}
    -
    \Id_H
  \right)
  \int_0^{\fl{s}}
  e^{(s-u)A}
  F( \Y{\fl{u}}(\omega) ) \, du .
\\&=
  \int_{s}^{t}
  e^{(t-u)A}
  F( \Y{\fl{u}}(\omega) ) \, du
  +
  A^{-1}
  \left(
    e^{(t-s)A}
    -
    \Id_H
  \right)
  \big(
    e^{(s-\fl{s})A}
    -
    \Id_H
  \big)
  F( \Y{\fl{s}}(\omega) )
\\&\quad+
  A^{-1}
  \left(
    e^{(t-s)A}
    -
    \Id_H
  \right)
  \sum_{ l=0 }^{\frac{M}{T}\fl{s}-1}
  e^{(s-(l+1)\frac{T}{M})A}
  \big(
    e^{\frac{T}{M}A}
    -
    \Id_H
  \big)
  F( \Y{\frac{lT}{M}}(\omega) ) .
\end{split}
\end{align}
This shows that for all
$ \omega \in \Omega $,
$ s, t \in [0,T] $ with
$ s < t $ it holds that
\begin{align}
\begin{split}
 &\left\|
    \YX{t}(\omega)
    -
    \YX{s}(\omega)
  \right\|_H
\\&\leq
  \int_{s}^{t}
  \left\|
    e^{(t-u)A}
  \right\|_{ L(H) }
  \big\|
    F( \Y{\fl{u}}(\omega) )
  \big\|_H \, du
\\&\quad+
  \left\|
    A^{-1}
    \left(
      e^{(t-s)A}
      -
      \Id_H
    \right)
  \right\|_{ L(H) }
  \big\|
    e^{(s-\fl{s})A}
    -
    \Id_H
  \big\|_{ L(H) }
  \big\|
    F( \Y{\fl{s}}(\omega) )
  \big\|_H
\\&\quad+
  \left\|
    A^{-1} \!
    \left(
      e^{(t-s)A}
      -
      \Id_H
    \right)
  \right\|_{ L(H) }
  \sum_{ l=0 }^{\frac{M}{T}\fl{s}-1} \!\!\!
  \big\|
    e^{(s-(l+1)\frac{T}{M})A}
    \big(
      e^{\frac{T}{M}A}
      -
      \Id_H
    \big)
    F( \Y{\frac{lT}{M}}(\omega) )
  \big\|_H 
\\&\leq
  \left( t - s \right)
  | K(\omega) |^2
  \left(
    1
    +
    K(\omega)
  \right)
\\&\quad+
  \left( t - s \right)
  | K(\omega) |
  \sum_{ l=0 }^{\frac{M}{T}\fl{s}-1} \!\!\!
  \big\|
    e^{(s-(l+1)\frac{T}{M})A}
  \big\|_{ L(H) }
  \big\|
      e^{\frac{T}{M}A}
      -
      \Id_H
  \big\|_{ L(H) }
  \big\|
    F( \Y{\frac{lT}{M}}(\omega) )
  \big\|_H
\\&\leq
  \left( t - s \right)
  | K(\omega) |^2
  \left(
    1
    +
    K(\omega)
  \right)
  +
  \left( t - s \right)
  | K(\omega) |^4
  M .
\end{split}
\end{align}
Hence, we obtain
for every $ \omega \in \Omega $
that the function
$
  [0,T]
  \ni
  t
  \mapsto
  \YX{t}(\omega)
  \in H
$
is Lipschitz continuous.
Furthermore, observe that
for all $ \omega \in \Omega $,
$ t_1, t_2, t \in [0,T] $
with $ t_1 \leq t \leq t_2 $ it holds that
\begin{align}
\label{eq:YX_in_DA_h22}
\begin{split}
 &\YX{t_2}(\omega)
  -
  \YX{t_1}(\omega)
  -
  A\YX{t}(\omega)
  \left(t_2-t_1\right)
  -
  F( \Y{\fl{t}}(\omega) )
  \left( t_2 - t_1 \right) 
\\&=
  \left(
    e^{(t_2-t_1)A}
    -
    \Id_H
  \right)
  \YX{t_1}(\omega)
  -
  A\YX{t}(\omega)
  \left(t_2-t_1\right)
  +
  \int_{t_1}^{t_2}
  \left[ 
    e^{(t_2-s)A}
    F( \Y{\fl{s}}(\omega) )
    -
    F( \Y{\fl{t}}(\omega) )
  \right] ds
\\&=
  \left(
    e^{(t_2-t_1)A}
    -
    \Id_H
    -
    A
    \left(t_2-t_1\right)
  \right)
  \YX{t}(\omega)
  +
  \left(
    e^{(t_2-t_1)A}
    -
    \Id_H
  \right)
  \left(
    \YX{t_1}(\omega)
    -
    \YX{t}(\omega)
  \right)
\\&\quad+
  \int_{t_1}^{t_2}
  e^{(t_2-s)A}
  \left[ 
    F( \Y{\fl{s}}(\omega) )
    -
    F( \Y{\fl{t}}(\omega) )
  \right] ds
  +
  \int_{t_1}^{t_2}
  \left(
    e^{(t_2-s)A}
    -
    \Id_H
  \right)
  F( \Y{\fl{t}}(\omega) ) \, ds
\\&=
  \int_0^{(t_2-t_1)}
  \left(
    e^{sA}
    -
    \Id_H
  \right)
  A \YX{t}(\omega) \, ds
  +
  \int_0^{(t_2-t_1)}
  e^{sA}
  A
  \left(
    \YX{t_1}(\omega)
    -
    \YX{t}(\omega)
  \right) ds
\\&\quad+
  \int_{t_1}^{t_2}
  e^{(t_2-s)A}
  \left[ 
    F( \Y{\fl{s}}(\omega) )
    -
    F( \Y{\fl{t}}(\omega) )
  \right] ds
  +
  \int_{0}^{(t_2-t_1)}
  \left(
    e^{sA}
    -
    \Id_H
  \right)
  F( \Y{\fl{t}}(\omega) ) \, ds .
\end{split}
\end{align}
This implies that for all 
$ \omega \in \Omega $, $ t \in [0,T] $,
$ (t_1,t_2) \in \big( [0,t] \times [t,T] \big) \backslash \{ (t,t) \} $
it holds that
\begin{align}
\label{eq:YX_in_DA_h5}
\begin{split}
 &\frac{
    \big\|
      \YX{t_2}(\omega)
      -
      \YX{t_1}(\omega)
      -
      \left(t_2-t_1\right)
      A\YX{t}(\omega)
      -
      \left(t_2-t_1\right)
      F( \Y{\fl{t}}(\omega) )
    \big\|_H
  }{ \left(t_2 - t_1 \right) }
\\&\leq
  \sup_{ s \in (0,t_2-t_1) }
  \big\|
    \left(
      e^{sA}
      -
      \Id_H
    \right)
    A \YX{t}(\omega)
  \big\|_H
  +
  \sup_{ s \in (0,t_2-t_1) }
  \big\|
    e^{sA}
  \big\|_{ L(H) }
  \big\|
    A
    \left(
      \YX{t_1}(\omega)
      -
      \YX{t}(\omega)
    \right)
  \big\|_H
\\&+
  \sup_{ s \in (t_1,t_2) }
  \big\|
    e^{(t_2-s)A}
  \big\|_{ L(H) }
  \big\| 
    F( \Y{\fl{s}}(\omega) )
    -
    F( \Y{\fl{t}}(\omega) )
  \big\|_H
\\&+
  \sup_{ s \in (0,t_2-t_1) }
  \big\|
    \left(
      e^{sA}
      -
      \Id_H
    \right)
    F( \Y{\fl{t}}(\omega) )
  \big\|_H
\\&\leq
  \sup_{ s \in (0,t_2-t_1) }
  \big\|
    \left(
      e^{sA}
      -
      \Id_H
    \right)
    A \YX{t}(\omega)
  \big\|_H
  +
  K(\omega) \,
  \big\|
    A
    \left(
      \YX{t_1}(\omega)
      -
      \YX{t}(\omega)
    \right)
  \big\|_H
\\&+
  \sup_{ s \in (t_1,t_2) }
  K(\omega) \,
  \big\| 
    F( \Y{\fl{s}}(\omega) )
    -
    F( \Y{\fl{t}}(\omega) )
  \big\|_H
  +\!
  \sup_{ s \in (0,t_2-t_1) }
  \big\|
    \left(
      e^{sA}
      -
      \Id_H
    \right)
    F( \Y{\fl{t}}(\omega) )
  \big\|_H .
\end{split}
\end{align}
This
and~\eqref{eq:YX_in_DA_2}
ensure that for all 
$ \omega \in \Omega $, 
$ t \in [0,T] \backslash \{0,\frac{T}{M},\frac{2T}{M}, \ldots, T \} $ 
it holds that
\begin{align}
\label{eq:YX_in_DA_h6}
\begin{split}
 &\limsup_{\substack{ (t_1,t_2) \rightarrow (t,t), \\ (t_1,t_2) \in ([0,t]\times[t,T])\backslash \{(t,t)\}  }}
  \frac{
    \big\|
      \YX{t_2}(\omega)
      -
      \YX{t_1}(\omega)
      -
      \left(t_2-t_1\right)
      A\YX{t}(\omega)
      -
      \left(t_2-t_1\right)
      F( \Y{\fl{t}}(\omega) )
    \big\|_H
  }{ \left(t_2 - t_1 \right) }
  =
  0 .
\end{split}
\end{align}
The proof of Lemma~\ref{lem:YX_in_DA} is thus completed.
\end{proof}

\begin{lemma}[Temporal regularity]
\label{lem:YX_YX_diff}
Assume the setting in Section~\ref{sec:pathwise_setting}
and let $ \srr \in [0,1-\sr) $, $ t_1, t_2 \in [0,T] $ with $ t_1 < t_2 $. Then
\begin{align}
\begin{split}
  \big\|
    \YX{t_2}
    -
    \YX{t_1}
  \big\|_{\BaMi}
 &\leq
  \left[
    \sup_{ s \in (0,T) }
    s^{\sr}
    \big\|
      e^{sA}
    \big\|_{ L( H, \BaMi ) }
  \right] 
  \int_{t_1}^{t_2}
  \left(
    t_2 - s
  \right)^{-\sr}
  \big\|
    F( \Y{\fl{s}} )
  \big\|_{ H } \, ds
\\&\quad+
  2^{(\sr+\srr)}
  (t_2-t_1)^{\srr}
  \left[
    \sup_{ s \in (0,T) }
    s^{\sr}
    \big\|
      e^{sA}
    \big\|_{ L( H, \BaMi ) }
  \right]
  \left[
    \sup_{ s \in (0,T) }
    \big\|
      (-sA)^{\srr}
      e^{sA}
    \big\|_{ L(H) }
  \right]
\\&\quad\cdot
  \left[
    \sup_{ s \in (0,T) }
    \big\|
      (-sA)^{-\srr}
      \big(
	e^{sA}
	-
	\Id_H
      \big)
    \big\|_{ L(H) }
  \right]
  \int_{0}^{t_1}
  \left(
    t_1 - s
  \right)^{-(\sr+\srr)}
  \big\|
    F( \Y{\fl{s}} )
  \big\|_{H} \, ds .
\end{split}
\end{align}
\end{lemma}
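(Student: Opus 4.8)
The plan is to split the increment $\YX{t_2}-\YX{t_1}$ into the contribution over the new subinterval $[t_1,t_2]$ and the contribution coming from the additional smoothing $e^{(t_2-t_1)A}$ applied to $\YX{t_1}$. Using the identity $\YX{t}=\int_0^t e^{(t-s)A}F(\Y{\fl{s}})\,ds$ from Section~\ref{sec:pathwise_setting} together with the semigroup property one obtains
\begin{equation*}
  \YX{t_2}-\YX{t_1}
  =
  \int_{t_1}^{t_2} e^{(t_2-s)A}F(\Y{\fl{s}})\,ds
  +
  \big(e^{(t_2-t_1)A}-\Id_H\big)\int_0^{t_1}e^{(t_1-s)A}F(\Y{\fl{s}})\,ds .
\end{equation*}
For every fixed $\omega\in\Omega$ the map $[0,T]\ni s\mapsto F(\Y{\fl{s}}(\omega))\in H$ is a step function with finitely many values, so all the integrals below converge. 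I would estimate the first summand directly: for $s\in(t_1,t_2)$ the argument $t_2-s$ lies in $(0,T)$, hence $\|e^{(t_2-s)A}\|_{L(H,\BaMi)}\le(t_2-s)^{-\sr}\big[\sup_{r\in(0,T)}r^{\sr}\|e^{rA}\|_{L(H,\BaMi)}\big]$, and integrating this against $\|F(\Y{\fl{s}})\|_H$ over $(t_1,t_2)$ produces exactly the first line of the asserted bound.

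For the second summand the main idea is the factorization (valid since $(-A)^{-\srr}$ is a bounded operator that commutes with the semigroup)
\begin{equation*}
  \big(e^{(t_2-t_1)A}-\Id_H\big)e^{(t_1-s)A}
  =
  \Big[(-A)^{-\srr}\big(e^{(t_2-t_1)A}-\Id_H\big)\Big]\,
  \Big[(-A)^{\srr}e^{\frac{(t_1-s)}{2}A}\Big]\,
  e^{\frac{(t_1-s)}{2}A}
\end{equation*}
for $0\le s<t_1$, after which I would bound the three factors one by one. The first bracket has $L(H)$-norm equal to $(t_2-t_1)^{\srr}\,\|(-(t_2-t_1)A)^{-\srr}(e^{(t_2-t_1)A}-\Id_H)\|_{L(H)}$, which is at most $(t_2-t_1)^{\srr}\big[\sup_{r\in(0,T)}\|(-rA)^{-\srr}(e^{rA}-\Id_H)\|_{L(H)}\big]$ (the borderline case $t_2-t_1=T$, possible only when $t_1=0$ and $t_2=T$, being covered by continuity of $r\mapsto\|(-rA)^{-\srr}(e^{rA}-\Id_H)\|_{L(H)}$ on $(0,T]$). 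The second bracket has $L(H)$-norm at most $2^{\srr}(t_1-s)^{-\srr}\big[\sup_{r\in(0,T)}\|(-rA)^{\srr}e^{rA}\|_{L(H)}\big]$ and the remaining factor maps $H$ into $\BaMi$ with norm at most $2^{\sr}(t_1-s)^{-\sr}\big[\sup_{r\in(0,T)}r^{\sr}\|e^{rA}\|_{L(H,\BaMi)}\big]$, both because $\tfrac{t_1-s}{2}\in(0,T)$. Multiplying the three bounds, applying the product to $F(\Y{\fl{s}})$, integrating over $s\in(0,t_1)$, and collecting the powers of $2$ into $2^{\sr+\srr}$ yields precisely the second line of the asserted bound; summing the two contributions finishes the proof.

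I do not anticipate a genuine obstacle here: the statement is a standard parabolic temporal-regularity estimate and the argument amounts to careful bookkeeping with fractional powers of $-A$. The one design choice that matters is splitting $e^{(t_1-s)A}$ into two equal halves, so that one half can absorb the $(-A)^{\srr}$ generated by the increment $e^{(t_2-t_1)A}-\Id_H$ while the other half supplies the smoothing from $H$ into $\BaMi$; this is exactly what produces the exponent $-(\sr+\srr)$ and the constant $2^{\sr+\srr}$ in the second integral. The remaining care is purely in checking that each semigroup norm (respectively each fractional-power-times-semigroup norm) is evaluated at an argument lying in $(0,T)$, which is guaranteed by $0\le s<t_1\le t_2\le T$, and in noting that $\int_0^{t_1}(t_1-s)^{-(\sr+\srr)}\,ds<\infty$ by the hypothesis $\srr<1-\sr$ (the latter being irrelevant for the inequality itself but needed for the bound to be useful).
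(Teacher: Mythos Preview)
Your proposal is correct and follows essentially the same approach as the paper: split $\YX{t_2}-\YX{t_1}$ into the integral over $[t_1,t_2]$ and the term $(e^{(t_2-t_1)A}-\Id_H)\YX{t_1}$, bound the first directly, and for the second factor $e^{(t_1-s)A}=e^{\frac{(t_1-s)}{2}A}e^{\frac{(t_1-s)}{2}A}$ so that one half provides the $H\to\BaMi$ smoothing and the other absorbs the $(-A)^{\srr}$. Your side remark about the borderline $t_2-t_1=T$ is unnecessary, since that case forces $t_1=0$ and the second integral then vanishes.
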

\begin{proof}[Proof of Lemma~\ref{lem:YX_YX_diff}]
Note that
\begin{align}
\begin{split}
 &\big\|
    \YX{t_2}
    -
    \YX{t_1}
  \big\|_{\BaMi}
\\&\leq
  \int_{t_1}^{t_2}
  \big\|
    e^{(t_2-s)A}
    F( \Y{\fl{s}} )
  \big\|_{\BaMi} \, ds
  +
  \int_{0}^{t_1}
  \big\|
    \big(
      e^{(t_2-t_1)A}
      -
      \Id_H
    \big) \,
    e^{(t_1-s)A}
    F( \Y{\fl{s}} )
  \big\|_{\BaMi} \, ds
\\&\leq
  \left[
    \sup_{ s \in (0,t_2-t_1) }
    s^{\sr}
    \big\|
      e^{sA}
    \big\|_{ L( H, \BaMi ) }
  \right] 
  \int_{t_1}^{t_2}
  \left(
    t_2 - s
  \right)^{-\sr}
  \big\|
    F( \Y{\fl{s}} )
  \big\|_{ H } \, ds
\\&\quad+
  2^{\sr}
  \left[
    \sup_{ s \in (0,t_1) }
    s^{\sr}
    \big\|
      e^{sA}
    \big\|_{ L( H, \BaMi ) }
  \right] 
  \int_{0}^{t_1}
  \left(
    t_1 - s
  \right)^{-\sr}
  \big\|
    \big(
      e^{(t_2-t_1)A}
      -
      \Id_H
    \big) \,
    e^{\frac{(t_1-s)}{2}A}
    F( \Y{\fl{s}} )
  \big\|_{H} \, ds .
\end{split}
\end{align}
This implies that
\begin{align}
\begin{split}
 &\big\|
    \YX{t_2}
    -
    \YX{t_1}
  \big\|_{\BaMi}
\\&\leq
  \left[
    \sup_{ s \in (0,T) }
    s^{\sr}
    \big\|
      e^{sA}
    \big\|_{ L( H, \BaMi ) }
  \right] 
  \int_{t_1}^{t_2}
  \left(
    t_2 - s
  \right)^{-\sr}
  \big\|
    F( \Y{\fl{s}} )
  \big\|_{ H } \, ds
\\&\quad+
  2^{(\sr+\srr)}
  (t_2-t_1)^{\srr}
  \left[
    \sup_{ s \in (0,T) }
    s^{\sr}
    \big\|
      e^{sA}
    \big\|_{ L( H, \BaMi ) }
  \right] 
  \int_{0}^{t_1}
  \left(
    t_1 - s
  \right)^{-(\sr+\srr)}
\\&\quad\cdot
  \big\|
    (-(t_2-t_1)A)^{-\srr}
    \big(
      e^{(t_2-t_1)A}
      -
      \Id_H
    \big)
  \big\|_{ L(H) }
  \big\|
    (-\tfrac{(t_1-s)}{2}A)^{\srr}
    e^{\frac{(t_1-s)}{2}A}
  \big\|_{ L(H) }
  \big\|
    F( \Y{\fl{s}} )
  \big\|_{H} \, ds .
\end{split}
\end{align}
The proof of Lemma~\ref{lem:YX_YX_diff} 
is thus completed.
\end{proof}

\subsection{Analysis of the error between the 
numerical approximation and its semilinear
integrated version}
\label{sec:pathwise_error_ana}

\begin{lemma}
\label{lem:YX_YO_diff}
Assume the setting in Section~\ref{sec:pathwise_setting} 
and let $ \cc \in (0,\infty) $, $ t \in (0,T] $. Then
\begin{align}
\begin{split}
  \big\|
    \YX{t} - \YO{t}
  \big\|_{\BaMi}
 &\leq
  \frac{T^{\cc}}{M^{\cc}}
  \left[ 
    \sup_{s\in(0,T)}
    s^{\sr}
    \big\|
      e^{ sA }
    \big\|_{ L( H, \BaMi ) }
  \right]
  \int_0^t
  \left( t-s \right)^{-\sr }
  \big|
    \Vm( \Y{\fl{s}} )
  \big|^{\cc}
  \big\|
    F( \Y{\fl{s}} )
  \big\|_{ H } \, ds
\\&\quad+
  \int_0^t
  \big\|
    e^{ (t-s)A }
    -
    \S_{\fl{s},t}
  \big\|_{ L( H, \BaMi ) }
  \big\|
    F( \Y{\fl{s}} )
  \big\|_{ H } \, ds .
\end{split}
\end{align}
\end{lemma}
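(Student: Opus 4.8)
The plan is to decompose $\YX{t}-\YO{t}$ along the truncation indicator. Recalling from Section~\ref{sec:pathwise_setting} that $\YX{t}=\int_0^t e^{(t-s)A}F(\Y{\fl{s}})\,ds$ and that $\YO{t}=\Y{t}-\O{t}=\int_0^t\S_{\fl{s},t}\,\one_{\{\Vm(\Y{\fl{s}})\leq M/T\}}\,F(\Y{\fl{s}})\,ds$, I would start from the pointwise identity, valid for every $s\in[0,t)$,
\[
  e^{(t-s)A}F(\Y{\fl{s}})-\S_{\fl{s},t}\,\one_{\{\Vm(\Y{\fl{s}})\leq M/T\}}\,F(\Y{\fl{s}})
  =\big(e^{(t-s)A}-\S_{\fl{s},t}\big)\,\one_{\{\Vm(\Y{\fl{s}})\leq M/T\}}\,F(\Y{\fl{s}})
  +e^{(t-s)A}\,\one_{\{\Vm(\Y{\fl{s}})>M/T\}}\,F(\Y{\fl{s}}),
\]
integrate it over $s\in[0,t]$, and invoke the triangle inequality for the $\BaMi$-valued Bochner integral, which reduces the claim to bounding the $\BaMi$-norm of the two integrands and integrating.

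For the integrand carrying $\one_{\{\Vm(\Y{\fl{s}})\leq M/T\}}$ I would bound this indicator by $1$ and estimate $\|(e^{(t-s)A}-\S_{\fl{s},t})F(\Y{\fl{s}})\|_{\BaMi}\leq\|e^{(t-s)A}-\S_{\fl{s},t}\|_{L(H,\BaMi)}\,\|F(\Y{\fl{s}})\|_H$; this produces precisely the second summand in the asserted bound. For the integrand carrying $\one_{\{\Vm(\Y{\fl{s}})>M/T\}}$ I would use that $\Vm$ takes values in $[0,\infty)$, that $\cc>0$, and that $M/T>0$, so that on $\{\Vm(\Y{\fl{s}})>M/T\}$ one has $1<\big(\tfrac{T}{M}\Vm(\Y{\fl{s}})\big)^{\cc}$ and hence $\one_{\{\Vm(\Y{\fl{s}})>M/T\}}\leq\big(\tfrac{T}{M}\big)^{\cc}|\Vm(\Y{\fl{s}})|^{\cc}$; combining this with $\|e^{(t-s)A}\|_{L(H,\BaMi)}\leq(t-s)^{-\sr}\big[\sup_{u\in(0,T)}u^{\sr}\|e^{uA}\|_{L(H,\BaMi)}\big]$ (legitimate since $t-s\in(0,T)$ for all $s\in(0,t)$, using the standing hypothesis $\sup_{u\in(0,T)}u^{\sr}\|e^{uA}\|_{L(H,\BaMi)}<\infty$) and pulling the constant $\big(\tfrac{T}{M}\big)^{\cc}\big[\sup_{u\in(0,T)}u^{\sr}\|e^{uA}\|_{L(H,\BaMi)}\big]$ out of the integral yields the first summand. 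Adding the two estimates finishes the argument.

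There is essentially no serious obstacle here: the statement is the splitting above together with the elementary indicator inequality and the $L(H,\BaMi)$-smoothing property of the analytic semigroup. The only minor technical points are that the first resulting integral is finite because $\sr<1$ (so $s\mapsto(t-s)^{-\sr}$ is integrable near $s=t$) and that the null endpoint $s=0$ does not affect the time integral; neither causes any difficulty.
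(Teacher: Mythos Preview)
Your proposal is correct and follows essentially the same approach as the paper's proof: the same decomposition along the truncation indicator, the same indicator inequality $\one_{\{\Vm(\Y{\fl{s}})>M/T\}}\leq (T/M)^{\cc}|\Vm(\Y{\fl{s}})|^{\cc}$, and the same use of the semigroup smoothing bound $\|e^{(t-s)A}\|_{L(H,\BaMi)}\leq (t-s)^{-\sr}\sup_{u\in(0,T)}u^{\sr}\|e^{uA}\|_{L(H,\BaMi)}$.
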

\begin{proof}[Proof of Lemma~\ref{lem:YX_YO_diff}]
Note that the triangle inequality implies
\begin{align}
\begin{split}
  \big\|
    \YX{t} - \YO{t}
  \big\|_{ \BaMi }
 &=
  \left\|
    \int_0^t
    \left(
      e^{ (t-s)A }
      F( \Y{\fl{s}} )
      -
      \S_{\fl{s},t} \,
      \one_{ 
        \{
          \Vm( \Y{\fl{s}} ) 
          \leq
          M/T
        \}
      } \,
      F( \Y{\fl{s}} )
    \right) ds
  \right\|_{ \BaMi }
\\&\leq
  \int_0^t
  \big\|
    e^{ (t-s)A } \,
    \one_{ 
      \{
        \Vm( \Y{\fl{s}} ) 
        >
        M/T
      \}
    } \,
    F( \Y{\fl{s}} )
  \big\|_{\BaMi} \, ds
\\&\quad+
  \int_0^t
  \big\|
    \big(
      e^{ (t-s)A }
      -
      \S_{\fl{s},t}
    \big) \,
    \one_{ 
      \{
        \Vm( \Y{\fl{s}} ) 
        \leq
        M/T
      \}
    } \,
    F( \Y{\fl{s}} )
  \big\|_{\BaMi} \, ds .
\end{split}
\end{align}
Therefore, we obtain
\begin{align}
\begin{split}
 &\big\|
    \YX{t} - \YO{t}
  \big\|_{\BaMi}
\\&\leq
  \int_0^t
  \one_{ 
    \{
      \Vm( \Y{\fl{s}} ) 
      >
      M/T
    \}
  }
  \big\|
    e^{ (t-s)A }
  \big\|_{ L( H, \BaMi ) }
  \big\|
    F( \Y{\fl{s}} )
  \big\|_{ H } \, ds
\\&\quad+
  \int_0^t
  \big\|
    e^{ (t-s)A }
    -
    \S_{\fl{s},t}
  \big\|_{ L( H, \BaMi ) }
  \big\|
    F( \Y{\fl{s}} )
  \big\|_{ H } \, ds
\\&\leq
  \left[ 
    \sup_{s\in(0,T)}
    s^{\sr}
    \big\|
      e^{ sA }
    \big\|_{ L( H, \BaMi ) }
  \right]
  \int_0^t
  \left( t-s \right)^{-\sr }
  \one_{ 
    \{
      |\Vm( \Y{\fl{s}} )|^{\cc} 
      >
      |M/T|^{\cc}
    \}
  }
  \frac{T^{\cc}}{M^{\cc}}
  \big|
    \Vm( \Y{\fl{s}} )
  \big|^{\cc}
  \big\|
    F( \Y{\fl{s}} )
  \big\|_{ H } \, ds
\\&\quad+
  \int_0^t
  \big\|
    e^{ (t-s)A }
    -
    \S_{\fl{s},t}
  \big\|_{ L( H, \BaMi ) }
  \big\|
    F( \Y{\fl{s}} )
  \big\|_{ H } \, ds .
\end{split}
\end{align}
The proof of Lemma~\ref{lem:YX_YO_diff} is thus completed.
\end{proof}

\subsection{Analysis of the error between the 
exact solution and the semilinear
integrated version of the numerical approximation}

\begin{lemma}[Error estimates based on the monotonicity assumption]
\label{lem:XO_YX_diff}
Assume the setting in Section~\ref{sec:pathwise_setting}
and let $ \kappa \in (2,\infty) $,
$ t \in [0,T] $. Then
\begin{align}
\begin{split}
 &\big\|
    \XO{t} - \YX{t}
  \big\|_H^2
  \leq
  \frac{1}{C\left(\kappa - 2\right)}
  \int_{0}^t
  e^{ \kappa C (t-s) }
  \big\|
    F( \YX{s} + O_s )
    -
    F( \Y{\fl{s}} )
  \big\|_{H}^2 \, ds
\\&\leq
  \frac{
    2^{[\qs-1]^+} L
  }{
    C \left( \kappa - 2 \right) 
  }
  \int_0^t
  e^{ \kappa C(t-s) }
  \left(
    1
    +
    \big\| \YX{s} \big\|_{\BaMi}^{\qs}
    + 
    \big\| O_s \big\|_{\BaMi}^{\qs}
    +
    \big\| \Y{\fl{s}} \big\|_{\BaMi}^{\qs}
  \right)
\\&\cdot
  \Big(
    \big\|
      \YX{s}
      -
      \YX{\fl{s}}
    \big\|_{\BaMi}
    +
    \big\|
      \YX{\fl{s}}
      -
      \YO{\fl{s}}
    \big\|_{\BaMi}
    +
    \big\|
      O_s
      -
      O_{\fl{s}}
    \big\|_{\BaMi}
    +
    \big\|
      O_{\fl{s}}
      -
      \O{\fl{s}}
    \big\|_{\BaMi}
  \Big)^2 \, ds .
\end{split}
\end{align}
\end{lemma}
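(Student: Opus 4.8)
The plan is to derive a pathwise differential inequality for $t \mapsto \|\XO{t} - \YX{t}\|_H^2$ from the monotonicity hypothesis on $A+F$, integrate it by a Gronwall argument to get the first claimed inequality, and then convert the $F$-increment appearing on its right-hand side into the stated telescoped $\BaMi$-norm bound via the local Lipschitz-type estimate on $F$. Fix $\omega \in \Omega$ and write $D_t := \XO{t}(\omega) - \YX{t}(\omega)$ for $t \in [0,T]$. First I would record that $\XO{0} = \YX{0} = 0$ (both equal an integral over the degenerate interval $[0,0]$), so $D_0 = 0$, and that Corollary~\ref{cor:XO_in_DA} together with Lemma~\ref{lem:YX_in_DA} shows that $\XO{}(\omega)$ and $\YX{}(\omega)$ take values in $D(A)$, are continuous into $H$ on $[0,T]$, and satisfy $\tfrac{d}{dt}\XO{t}(\omega) = A\XO{t}(\omega) + F(X_t(\omega))$ for $t \in (0,T]$ and $\tfrac{d}{dt}\YX{t}(\omega) = A\YX{t}(\omega) + F(\Y{\fl{t}}(\omega))$ for $t \in [0,T] \setminus \{0, \tfrac{T}{M}, \ldots, T\}$, with enough regularity ($C^1$ on $(0,T]$, respectively Lipschitz on $[0,T]$, as maps into $H$) that $D$ is absolutely continuous into $H$ on every interval $[\varepsilon,T]$ with $\varepsilon \in (0,T)$, with a.e.\ derivative $D_t' = AD_t + F(X_t) - F(\Y{\fl{t}})$ and with $D_t \in D(A)$ for every $t$.

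On such an interval $[\varepsilon,T]$ the map $t \mapsto \|D_t\|_H^2$ is then absolutely continuous with $\tfrac{d}{dt}\|D_t\|_H^2 = 2\langle D_t, D_t'\rangle_H$ for a.e.\ $t$. The decisive algebraic manipulation is to split
\begin{equation*}
  F(X_t) - F(\Y{\fl{t}})
  =
  \big[ F(X_t) - F(\YX{t} + O_t) \big]
  +
  \big[ F(\YX{t} + O_t) - F(\Y{\fl{t}}) \big]
\end{equation*}
and to note that $X_t - (\YX{t} + O_t) = \XO{t} - \YX{t} = D_t \in D(A) \subseteq H_1$, so the monotonicity hypothesis applied to $x = X_t$ and $y = \YX{t} + O_t$ gives $\langle D_t, AD_t + F(X_t) - F(\YX{t}+O_t)\rangle_H \leq C\|D_t\|_H^2$. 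For the remaining term, Cauchy--Schwarz together with Young's inequality with weight $(\kappa-2)C > 0$ yields $2\langle D_t, F(\YX{t}+O_t) - F(\Y{\fl{t}})\rangle_H \leq (\kappa-2)C\|D_t\|_H^2 + \tfrac{1}{(\kappa-2)C}\|F(\YX{t}+O_t) - F(\Y{\fl{t}})\|_H^2$, so that, for a.e.\ $t$,
\begin{equation*}
  \tfrac{d}{dt}\|D_t\|_H^2
  \leq
  \kappa C \|D_t\|_H^2
  +
  \tfrac{1}{(\kappa-2)C}\,\big\| F(\YX{t} + O_t) - F(\Y{\fl{t}}) \big\|_H^2 .
\end{equation*}
A Gronwall argument on $[\varepsilon,t]$ followed by $\varepsilon \searrow 0$ (using $D_\varepsilon \to D_0 = 0$ in $H$ and monotone convergence for the nonnegative integrand) then gives the first claimed inequality.

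For the second inequality I would first observe that the local Lipschitz-type estimate on $F$ extends from pairs $x,y \in \BaMi$ with $x-y \in H_1$ to all pairs in $\BaMi$: the set $\{(x,y) \in \BaMi^2 \colon x - y \in H_1\}$ is dense in $\BaMi^2$ (since $H_1$ is dense in $\BaMi$) and both sides of the estimate are continuous functions of $(x,y) \in \BaMi^2$, thanks to the continuity of $F \colon \BaMi \to H$. Applying the extended estimate with $x = \YX{s} + O_s$ and $y = \Y{\fl{s}}$, using $\Y{\fl{s}} = \YO{\fl{s}} + \O{\fl{s}}$, and decomposing
\begin{equation*}
  \YX{s} + O_s - \Y{\fl{s}}
  =
  \big( \YX{s} - \YX{\fl{s}} \big)
  +
  \big( \YX{\fl{s}} - \YO{\fl{s}} \big)
  +
  \big( O_s - O_{\fl{s}} \big)
  +
  \big( O_{\fl{s}} - \O{\fl{s}} \big)
\end{equation*}
bounds $\|\YX{s} + O_s - \Y{\fl{s}}\|_{\BaMi}$ by the sum of the four displayed $\BaMi$-norms; finally, $\|a+b\|_{\BaMi}^{\qs} \leq 2^{[\qs-1]^+}(\|a\|_{\BaMi}^{\qs} + \|b\|_{\BaMi}^{\qs})$ together with $2^{[\qs-1]^+} \geq 1$ turns $L(1 + \|\YX{s}+O_s\|_{\BaMi}^{\qs} + \|\Y{\fl{s}}\|_{\BaMi}^{\qs})$ into $2^{[\qs-1]^+}L(1 + \|\YX{s}\|_{\BaMi}^{\qs} + \|O_s\|_{\BaMi}^{\qs} + \|\Y{\fl{s}}\|_{\BaMi}^{\qs})$, which substituted into the first inequality yields the second.

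I expect the main obstacle to be the differential-inequality bookkeeping at the exceptional times, where $\YX{}$ is not differentiable (the mesh points) and $\XO{}$ is a priori only $C^1$ on $(0,T]$: one must justify, through the absolute continuity of $D$ as an $H$-valued map on $[\varepsilon,T]$ (supplied by the Lipschitz statements in Corollary~\ref{cor:XO_in_DA} and Lemma~\ref{lem:YX_in_DA}), both the chain rule for $\|D_t\|_H^2$ and the fundamental theorem of calculus with the a.e.-valid equation for $D_t'$, and then pass to the limit $\varepsilon \searrow 0$. The remaining ingredients --- the precise choice of the Young weight $(\kappa-2)C$ so that the constants come out exactly as stated, and the telescoping and polynomial bookkeeping in $\BaMi$ --- are routine.
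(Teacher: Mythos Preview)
Your proposal is correct and follows essentially the same route as the paper: the paper differentiates $e^{-\kappa C s}\|\XO{s}-\YX{s}\|_H^2$ directly (i.e., builds the integrating factor into the computation) rather than deriving the differential inequality and then applying Gronwall, but the splitting of the $F$-increment, the use of the monotonicity hypothesis, the Young weight $(\kappa-2)C$, the $\varepsilon\searrow 0$ limit, and the telescoping in $\BaMi$ are all the same. Your density argument extending the local Lipschitz bound from pairs with $x-y\in H_1$ to all of $\BaMi^2$ is a point the paper glosses over, so you are if anything slightly more careful there.
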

\begin{proof}[Proof of Lemma~\ref{lem:XO_YX_diff}]
W.l.o.g.\ we assume that $ t \in (0,T] $
(otherwise the proof is clear).
Note that the fundamental
theorem of calculus,
Corollary~\ref{cor:XO_in_DA},
and Lemma~\ref{lem:YX_in_DA} imply
that for all 
$ \varepsilon \in (0,t) $
it holds that
\begin{align}
\begin{split}
 &e^{ -\kappa C t }
  \big\|
    \XO{t} - \YX{t}
  \big\|_H^2
  -
  e^{ -\kappa C \varepsilon }
  \big\|
    \XO{\varepsilon} - \YX{\varepsilon}
  \big\|_H^2
\\&=
  2
  \int_{\varepsilon}^t
  e^{ -\kappa C s }
  \big<
    \XO{s} - \YX{s},
    A\XO{s} 
    +
    F( X_s )
    -
    A\YX{s}
    -
    F( \Y{\fl{s}} )
  \big>_{H} \, ds
\\&\quad-
  \kappa C
  \int_{\varepsilon}^t
  e^{ -\kappa C s }
  \big\|
    \XO{s} - \YX{s}
  \big\|_H^2 \, ds
\\&=
  2
  \int_{\varepsilon}^t
  e^{ -\kappa C s }
  \left<
    \XO{s} - \YX{s},
    A\XO{s} 
    +
    F( X_s )
    -
    A\YX{s}
    -
    F( \YX{s} + O_s )
  \right>_H ds
\\&\quad+
  2
  \int_{\varepsilon}^t
  e^{ - \kappa Cs }
  \big<
    \XO{s} - \YX{s},
    F( \YX{s} + O_s )
    -
    F( \Y{\fl{s}} )
  \big>_{H} \, ds
  -
  \kappa C
  \int_{\varepsilon}^t
  e^{ - \kappa C s }
  \big\|
    \XO{s} - \YX{s}
  \big\|_H^2 \, ds .
\end{split}
\end{align}
The assumption that
$
  \forall \,
  x, y \in \BaMi
$
with
$
  x-y \in H_1
  \colon
  \left<
    x - y,
    A(x-y) + F(x)
    - F(y)
  \right>_H
  \leq
  C
  \left\| x - y \right\|_H^2
$
and the Cauchy-Schwarz inequality 
hence prove
that for all 
$ \varepsilon \in (0,t) $
it holds that
\begin{align}
\begin{split}
 &e^{ -\kappa C t }
  \big\|
    \XO{t} - \YX{t}
  \big\|_H^2
  -
  e^{ -\kappa C \varepsilon }
  \big\|
    \XO{\varepsilon} - \YX{\varepsilon}
  \big\|_H^2
\\&=
  2
  \int_{\varepsilon}^t
  e^{ -\kappa C s }
  \big<
    X_s - (\YX{s} + O_s),
    A (X_s - [ \YX{s} + O_s ]) 
    +
    F( X_s )
    -
    F( \YX{s} + O_s )
  \big>_H \, ds
\\&\quad+
  2
  \int_{\varepsilon}^t
  e^{ -\kappa C s }
  \big<
    \XO{s} - \YX{s},
    F( \YX{s} + O_s )
    -
    F( \Y{\fl{s}} )
  \big>_{H} \, ds
  -
  \kappa C
  \int_{\varepsilon}^t
  e^{ -\kappa C s }
  \big\|
    \XO{s} - \YX{s}
  \big\|_H^2 \, ds
\\&\leq
  2 C
  \int_{\varepsilon}^t
  e^{ - \kappa C s }
  \big\|
    X_s - (\YX{s} + O_s)
  \big\|_H^2 \, ds
  -
  \kappa C
  \int_{\varepsilon}^t
  e^{ -\kappa C s }
  \big\|
    \XO{s} - \YX{s}
  \big\|_H^2 \, ds
\\&\quad+
  2
  \int_{\varepsilon}^t
  e^{ -\kappa C s }
  \big\|
    \XO{s} - \YX{s}
  \big\|_{H}
  \big\|
    F( \YX{s} + O_s )
    -
    F( \Y{\fl{s}} )
  \big\|_{H} \, ds .
\end{split}
\end{align}
The fact that 
$ 
  \forall \, x,y \in \mathbb{R} 
  \colon
  xy \leq \frac{x^2}{2} + \frac{y^2}{2} 
$ 
therefore shows
that for all 
$ \varepsilon \in (0,t) $
it holds that
\begin{align}
\begin{split}
 &e^{ -\kappa C t }
  \big\|
    \XO{t} - \YX{t}
  \big\|_H^2
  -
  e^{ -\kappa C \varepsilon }
  \big\|
    \XO{\varepsilon} - \YX{\varepsilon}
  \big\|_H^2
\\&\leq
  C
  \left(2 - \kappa\right)
  \int_{\varepsilon}^t
  e^{ -\kappa C s }
  \big\|
    \XO{s} - \YX{s}
  \big\|_{H}^2 \, ds
\\&\quad+
  2
  \int_{\varepsilon}^t
  e^{ -\kappa C s }
  \left(
    \sqrt{C(\kappa - 2)}
    \big\|
      \XO{s} - \YX{s}
    \big\|_{H}
  \right)
  \left(
    \frac{1}{\sqrt{C(\kappa - 2)}}
    \big\|
      F( \YX{s} + O_s )
      -
      F( \Y{\fl{s}} )
    \big\|_{H} 
  \right) ds
\\&\leq
  C
  \left(2 - \kappa\right)
  \int_{\varepsilon}^t
  e^{ -\kappa C s }
  \big\|
    \XO{s} - \YX{s}
  \big\|_{H}^2 \, ds
\\&\quad+
  \int_{\varepsilon}^t
  e^{ -\kappa C s }
  \left[ 
    C
    \left(\kappa - 2\right)
    \big\|
      \XO{s} - \YX{s}
    \big\|_{H}^2
    +
    \frac{1}{C\left(\kappa - 2\right)}
    \big\|
      F( \YX{s} + O_s )
      -
      F( \Y{\fl{s}} )
    \big\|_{H}^2 
  \right] ds
\\&= 
  \frac{1}{C\left(\kappa - 2\right)}
  \int_{\varepsilon}^t
  e^{ -\kappa C s }
  \big\|
    F( \YX{s} + O_s )
    -
    F( \Y{\fl{s}} )
  \big\|_{H}^2 \, ds .
\end{split}
\end{align}
Corollary~\ref{cor:XO_in_DA}
and Lemma~\ref{lem:YX_in_DA} hence ensure
\begin{align}
\label{eq:XO_YX_diff_1}
\begin{split}
  \big\|
    \XO{t} - \YX{t}
  \big\|_H^2
 &=
  \big\|
    \XO{t} - \YX{t}
  \big\|_H^2
  -
  \lim_{
    \substack{
      \varepsilon \rightarrow 0 \\
      \varepsilon \in (0,t)
    }
  }
  e^{ \kappa C (t-\varepsilon) }
  \big\|
    \XO{\varepsilon} - \YX{\varepsilon}
  \big\|_H^2
\\&\leq
  \frac{1}{C\left(\kappa - 2\right)}
  \lim_{
    \substack{
      \varepsilon \rightarrow 0 \\
      \varepsilon \in (0,t)
    }
  }
  \int_{\varepsilon}^t
  e^{ \kappa C (t-s) }
  \big\|
    F( \YX{s} + O_s )
    -
    F( \Y{\fl{s}} )
  \big\|_{H}^2 \, ds
\\&=
  \frac{1}{C\left(\kappa - 2\right)}
  \int_{0}^t
  e^{ \kappa C (t-s) }
  \big\|
    F( \YX{s} + O_s )
    -
    F( \Y{\fl{s}} )
  \big\|_{H}^2 \, ds .
\end{split}
\end{align}
Next observe that the triangle inequality
implies that for all $ s \in (0,t) $
it holds that
\begin{align}
\label{eq:XO_YX_diff_2}
\begin{split}
 &\big\|
    \YX{s} + O_s
    -
    \Y{\fl{s}}
  \big\|_{\BaMi}
\\&\leq
  \big\|
    \YX{s}
    -
    \YX{\fl{s}}
  \big\|_{\BaMi}
  +
  \big\|
    \YX{\fl{s}} + O_s
    -
    \Y{\fl{s}}
  \big\|_{\BaMi}
\\&\leq
  \big\|
    \YX{s}
    -
    \YX{\fl{s}}
  \big\|_{\BaMi}
  +
  \big\|
    \YX{\fl{s}}
    -
    ( \Y{\fl{s}} - \O{\fl{s}} )
  \big\|_{\BaMi}
  +
  \big\|
    O_s
    -
    \O{\fl{s}}
  \big\|_{\BaMi}
\\&\leq
  \big\|
    \YX{s}
    -
    \YX{\fl{s}}
  \big\|_{\BaMi}
  +
  \big\|
    \YX{\fl{s}}
    -
    \YO{\fl{s}}
  \big\|_{\BaMi}
  +
  \big\|
    O_s
    -
    O_{\fl{s}}
  \big\|_{\BaMi}
  +
  \big\|
    O_{\fl{s}}
    -
    \O{\fl{s}}
  \big\|_{\BaMi} .
\end{split}
\end{align}
Combining the fact that
$ 
  \forall \,
  u, v \in \BaMi
  \colon
  \left\|
    F(u) - F(v)
  \right\|_{H}^2
  \leq
  L
  \left\|
    u - v
  \right\|_{\BaMi}^2
  \left(
    1
    +
    \left\| u \right\|_{\BaMi}^{\qs}
    +
    \left\| v \right\|_{\BaMi}^{\qs}
  \right)
$
and the fact that
$
  \forall \,
  r \in (0,\infty),
  x_1, x_2 \in \R
  \colon
  \left| x_1 + x_2 \right|^r
  \leq
  2^{[r-1]^+}
  \left(
    \left| x_1 \right|^r
    +
    \left| x_2 \right|^r
  \right)
$
hence implies that for all 
$ s \in (0,t) $ it holds that
\begin{align}
\begin{split}
 &\big\|
    F( \YX{s} + O_s )
    -
    F( \Y{\fl{s}} )
  \big\|_{H}^2
\\&\leq 
  L
  \big\|
    \YX{s} + O_s
    -
    \Y{\fl{s}}
  \big\|_{\BaMi}^2
  \left(
    1
    +
    \big\| \YX{s} + O_s \big\|_{\BaMi}^{\qs}
    +
    \big\| \Y{\fl{s}} \big\|_{\BaMi}^{\qs}
  \right)
\\&\leq
  2^{[\qs-1]^+}
  L
  \left(
    1
    +
    \big\| \YX{s} \big\|_{\BaMi}^{\qs}
    + 
    \big\| O_s \big\|_{\BaMi}^{\qs}
    +
    \big\| \Y{\fl{s}} \big\|_{\BaMi}^{\qs}
  \right)
\\&\quad\cdot
  \Big(
    \big\|
      \YX{s}
      -
      \YX{\fl{s}}
    \big\|_{\BaMi}
    +
    \big\|
      \YX{\fl{s}}
      -
      \YO{\fl{s}}
    \big\|_{\BaMi}
    +
    \big\|
      O_s
      -
      O_{\fl{s}}
    \big\|_{\BaMi}
    +
    \big\|
      O_{\fl{s}}
      -
      \O{\fl{s}}
    \big\|_{\BaMi}
  \Big)^2 .
\end{split}
\end{align}
This and \eqref{eq:XO_YX_diff_1}
complete the proof of Lemma~\ref{lem:XO_YX_diff}.
\end{proof}

\section{Strong error estimates}
\label{sec:strong_error}
\subsection{Setting}
\label{sec:strong_setting}
Let $ T, C, \qp \in (0,\infty) $,
$ L \in [0,\infty) $,
$ \sp \in [0,1) $,
let $ ( H, \left< \cdot, \cdot \right>_H, \left\| \cdot \right\|_H ) $
be a separable $ \R $-Hilbert space,
let $ A \colon D(A) \subseteq H \rightarrow H $
be a generator of an analytic semigroup 
with
$ 
  \textup{spectrum}(A) 
  \subseteq 
  \{
    z \in \mathbb{C} 
    \colon 
    \textup{Re}(z) < 0
  \}
$,
let $ ( H_r, \left< \cdot, \cdot \right>_{ H_r }, \left\| \cdot \right\|_{ H_r } ) $, 
$ r \in \R $, be a family of interpolation spaces associated to $ -A $
(cf., e.g., Definition~3.5.26 in~\cite{Jentzen2015SPDElecturenotes}),
let $ (\BaMi, \left\| \cdot \right\|_{\BaMi}) $
be a separable $ \R $-Banach space with 
$ H_{1} \subseteq \BaMi \subseteq H $ 
densely and continuously,
let $ \Vm \in \M\big( \B(\BaMi), \B( [0,\infty) ) \big) $,
$ (\SM)_{M\in\N} \subseteq \M\big( \B(\{ (s, t) \in [0,T]^2 \colon s < t \}), \B(L(H, \BaMi) ) \big) $,
$ F \in \C( \BaMi, H ) $
satisfy for all $ x, y \in \BaMi $ with $ x - y \in H_1 $ that
$
  \left\|
    F(x) - F(y)
  \right\|_{H}^2
  \leq
  L
  \left\|
    x - y
  \right\|_V^2
  \left(
    1
    +
    \left\| x \right\|_{\BaMi}^{\qp}
    +
    \left\| y \right\|_{\BaMi}^{\qp}
  \right)
$
and
$
  \left<
    x - y,
    A(x-y) + F(x)
    - F(y)
  \right>_H
  \leq
  C
  \left\| 
    x - y 
  \right\|_H^2
$,
assume that
$
  \sup_{ t \in (0,T) }
  t^{\sp}
  \|
    e^{tA}
  \|_{ L(H,\BaMi) }
  < \infty
$,
let $ (\Omega, \F, \P ) $ 
be a probability space,
let $ X \colon [0,T] \times \Omega \rightarrow \BaMi $
be a stochastic process with continuous sample paths,
let $ O, \XO{} \colon [0,T] \times \Omega \rightarrow \BaMi $ be stochastic processes
which satisfy for all $ \omega \in \Omega $ that
$
  \limsup_{ r \searrow 0 }
  \sup_{ 0 \leq s < t \leq T }
  \frac{ 
    s \| O_t(\omega) - O_s(\omega) \|_{ \BaMi } 
  }{
    (t-s)^{r}
  }
  <
  \infty
$,
let $ \OM{}, \YM{}, \YOM{}, \YXM{} \colon [0,T] \times \Omega \rightarrow \BaMi $,
$ M \in \N $,
be stochastic processes,
and assume for all 
$ t \in [0,T] $, 
$ M \in \mathbb{N} $ that
$
  X_t
  =
  \int_0^t
  e^{(t-s)A}
  F(X_s) \, ds
  +
  O_t
$,
$ \XO{t} = X_t - O_t $, 
$
  \YXM{t}
  =
  \int_0^t
  e^{(t-s)A}
  F( \YM{\fl{s}} ) \, ds
$,
$ \YOM{t} = \YM{t} - \OM{t} $,
and
$
  \YM{t}
  =
  \int_0^t
  \SM_{\fl{s},t} \,
  \one_{ 
    \{
      \Vm( \YM{\fl{s}} )
      \leq
      M/T
    \}
  } \,
  F( \YM{\fl{s}} ) \, ds
  +
  \OM{t}
$.

\subsection{Analysis of the error between the
numerical approximation and its semilinear
integrated version}
\label{sec:strong_error_ana}

\begin{lemma}
\label{lem:YX_YO_diff_Lp}
Assume the setting in Section~\ref{sec:strong_setting} 
and let $ \cp \in (0,\infty) $, 
$ p \in [1, \infty) $, $ t \in (0,T] $, $ M \in \N $. 
Then
\begin{align}
\begin{split}
 &\big\|
    \YXM{t} - \YOM{t}
  \big\|_{\L^p(\P;\BaMi)}
  \leq
  \left( 
    \int_0^t
    \big\|
      e^{ (t-s)A }
      -
      \SM_{\fl{s},t}
    \big\|_{ L( H, \BaMi ) } \, ds
  \right)
  \left[
    \sup_{ s \in [0,T) }
    \big\|
      F( \YM{s} )
    \big\|_{\L^p(\P;H)}
  \right]
\\&\quad+
  \frac{T^{(1+\cp-\sp)}}{\left(1-\sp\right) M^{\cp}}
  \left[ 
    \sup_{s\in(0,T)}
    s^{\sp}
    \big\|
      e^{ sA }
    \big\|_{ L( H, \BaMi ) }
  \right]
  \left[
    \sup_{s\in[0,T)}
    \big\|
      \Vm( \YM{s} )
    \big\|_{ \L^{2p\cp}( \P; \R ) }^{\cp}
    \big\|
      F( \YM{s} )
    \big\|_{ \L^{2p}( \P; H ) }
  \right] .
\end{split}
\end{align}
\end{lemma}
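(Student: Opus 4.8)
The plan is to obtain the asserted $\L^p(\P;\BaMi)$-estimate from a pathwise bound that is already available in the paper, and then to take $\L^p$-norms using Minkowski's integral inequality and the Cauchy--Schwarz inequality in $\omega$. First I would fix $M \in \N$ and note that, for this $M$, the setting of Section~\ref{sec:strong_setting} instantiates the setting of Section~\ref{sec:pathwise_setting} upon choosing $\S \leftarrow \SM$, $\Y{} \leftarrow \YM{}$, $\O{} \leftarrow \OM{}$, $\YO{} \leftarrow \YOM{}$, $\YX{} \leftarrow \YXM{}$, $\sr \leftarrow \sp$, and $\qs \leftarrow \qp$ (all hypotheses --- the polynomial local Lipschitz estimate, the monotonicity estimate, the smoothing bound $\sup_{t\in(0,T)} t^{\sp}\|e^{tA}\|_{L(H,\BaMi)}<\infty$, the sample-path continuity of $X$, and the temporal regularity of $O$ --- carry over verbatim). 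Hence Lemma~\ref{lem:YX_YO_diff} (with $\cc \leftarrow \cp$) applies and gives, for $\P$-a.e.\ $\omega \in \Omega$,
\begin{align*}
&\big\| \YXM{t}(\omega) - \YOM{t}(\omega) \big\|_{\BaMi}
\le
\tfrac{T^{\cp}}{M^{\cp}}
\Big[ \sup\nolimits_{s \in (0,T)} s^{\sp} \| e^{sA} \|_{L(H,\BaMi)} \Big]
\int_0^t (t-s)^{-\sp} \big| \Vm(\YM{\fl{s}}(\omega)) \big|^{\cp} \big\| F(\YM{\fl{s}}(\omega)) \big\|_H \, ds \\
&\quad + \int_0^t \big\| e^{(t-s)A} - \SM_{\fl{s},t} \big\|_{L(H,\BaMi)} \big\| F(\YM{\fl{s}}(\omega)) \big\|_H \, ds .
\end{align*}

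Second, I would apply the triangle inequality for the $\L^p(\P;\BaMi)$-norm together with Minkowski's integral inequality (using joint measurability of the integrands) to move $\|\cdot\|_{\L^p(\P;\cdot)}$ inside both integrals. For the first integral I would then estimate, for each fixed $s$, the scalar random variable $|\Vm(\YM{\fl{s}})|^{\cp}\,\|F(\YM{\fl{s}})\|_H$ in $\L^p(\P;\R)$ by the Cauchy--Schwarz inequality in $\omega$, i.e.\ $\| fg \|_{\L^p(\P;\R)} \le \| f \|_{\L^{2p}(\P;\R)}\| g \|_{\L^{2p}(\P;\R)}$, together with $\big\| |\Vm(\YM{\fl{s}})|^{\cp} \big\|_{\L^{2p}(\P;\R)} = \| \Vm(\YM{\fl{s}}) \|_{\L^{2p\cp}(\P;\R)}^{\cp}$, which yields the factor $\| \Vm(\YM{\fl{s}}) \|_{\L^{2p\cp}(\P;\R)}^{\cp}\,\| F(\YM{\fl{s}}) \|_{\L^{2p}(\P;H)}$; for the second integral I would simply bound $\| F(\YM{\fl{s}}) \|_{\L^p(\P;H)}$. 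Since $\fl{s} \in \{0,\tfrac{T}{M},\ldots\} \subseteq [0,T)$ for ($\lambda_{[0,t]}$-a.e.)\ $s \in [0,t]$, both of these data-dependent factors can be replaced by their suprema over $s \in [0,T)$ and pulled out of the respective integrals.

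Finally, for the first integral I would use $\int_0^t (t-s)^{-\sp}\,ds = \tfrac{t^{1-\sp}}{1-\sp} \le \tfrac{T^{1-\sp}}{1-\sp}$ and absorb $\tfrac{T^{\cp}}{M^{\cp}}\cdot\tfrac{T^{1-\sp}}{1-\sp}$ into the stated prefactor $\tfrac{T^{1+\cp-\sp}}{(1-\sp)M^{\cp}}$, while keeping $\int_0^t \| e^{(t-s)A}-\SM_{\fl{s},t}\|_{L(H,\BaMi)}\,ds$ untouched in the second term; this produces exactly the claimed inequality. There is no genuine obstacle here: the only points needing care are verifying that the $M$-indexed data of Section~\ref{sec:strong_setting} really do fit the single-$M$ framework of Section~\ref{sec:pathwise_setting} so that Lemma~\ref{lem:YX_YO_diff} is legitimately available, and keeping track of the Hölder exponents ($\tfrac1p = \tfrac1{2p}+\tfrac1{2p}$) so that the $\L^{2p}(\P;H)$- and $\L^{2p\cp}(\P;\R)$-norms appear precisely as in the statement rather than, say, $\L^{p}$-norms.
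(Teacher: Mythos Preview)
Your proposal is correct and follows essentially the same route as the paper: apply Lemma~\ref{lem:YX_YO_diff} (with $\cc \leftarrow \cp$) pathwise, take $\L^p$-norms via Minkowski's integral inequality, split the product $|\Vm(\YM{\fl{s}})|^{\cp}\,\|F(\YM{\fl{s}})\|_H$ by H{\"o}lder's inequality in $\omega$ (your ``Cauchy--Schwarz'' step), and then bound the remaining $s$-integrals. The paper's proof is slightly terser but uses exactly these ingredients in the same order.
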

\begin{proof}[Proof of Lemma~\ref{lem:YX_YO_diff_Lp}]
Observe that Lemma~\ref{lem:YX_YO_diff} 
and H{\"o}lder's inequality
imply
\begin{align}
\begin{split}
 &\big\|
    \YXM{t} - \YOM{t}
  \big\|_{\L^p(\P;\BaMi)}
\\&\leq
  \frac{T^{\cp}}{M^{\cp}}
  \left[ 
    \sup_{s\in(0,T)}
    s^{\sp}
    \big\|
      e^{ sA }
    \big\|_{ L( H, \BaMi ) }
  \right]
  \int_0^t
  \left( t-s \right)^{-\sp }
  \left\|
    \big|
      \Vm( \YM{\fl{s}} ) 
    \big|^{\cp}
    \big\|
      F( \YM{\fl{s}} )
    \big\|_{ H } 
  \right\|_{ \L^p( \P; \R ) } ds
\\&\quad+
  \int_0^t
  \big\|
    e^{ (t-s)A }
    -
    \SM_{\fl{s},t}
  \big\|_{ L( H, \BaMi ) }
  \big\|
    F( \YM{\fl{s}} )
  \big\|_{\L^p(\P;H)} \, ds
\\&\leq
  \frac{T^{\cp}}{M^{\cp}}
  \left[ 
    \sup_{s\in(0,T)}
    s^{\sp}
    \big\|
      e^{ sA }
    \big\|_{ L( H, \BaMi ) }
  \right]
  \int_0^t
  \left( t-s \right)^{-\sp }
  \big\|
    \Vm( \YM{\fl{s}} ) 
  \big\|_{ \L^{2p\cp}( \P; \R ) }^{\cp}
  \big\|
    F( \YM{\fl{s}} )
  \big\|_{ \L^{2p}( \P; H ) } \, ds
\\&\quad+
  \left( 
    \int_0^t
    \big\|
      e^{ (t-s)A }
      -
      \SM_{\fl{s},t}
    \big\|_{ L( H, \BaMi ) } \, ds
  \right)
  \left[
    \sup_{ s \in [0,t) }
    \big\|
      F( \YM{s} )
    \big\|_{\L^p(\P;H)}
  \right] .
\end{split}
\end{align}
The proof of Lemma~\ref{lem:YX_YO_diff_Lp} is thus completed.
\end{proof}

\subsection{Analysis of the error between
the exact solution and the semilinear integrated
version of the numerical approximation}

\begin{lemma}
\label{lem:XO_YX_diff_Lp}
Assume the setting in Section~\ref{sec:strong_setting}
and let $ \kappa \in (2,\infty) $,
$ t \in [0,T] $, $ p \in [2,\infty) $, $ M \in \N $.
Then 
\begin{align}
\begin{split}
 &\big\|
    \XO{t} - \YXM{t}
  \big\|_{ \L^p(\P; H ) }
\\&\leq
  \frac{
    e^{ \kappa C T/2 }
    \sqrt{ 
      2^{[\qp-1]^+} 
      L
    }
  }{
    \sqrt{ C\left(\kappa-2\right) }
  }
  \left[
    1
    +
    \sup_{ s \in (0,T) }
    \big\| \YXM{s} \big\|_{ \L^{p\qp}(\P;\BaMi) }^{\qp/2} 
    +
    \sup_{ s \in (0,T) }
    \big\| O_s \big\|_{ \L^{p\qp}(\P;\BaMi) }^{\qp/2} 
    +
    \sup_{ s \in [0,T) }
    \big\| \YM{s} \big\|_{ \L^{p\qp}(\P;\BaMi) }^{\qp/2} 
  \right]
\\&\quad\cdot
  \Bigg[
    \int_0^t
    \Big(
      \big\|
	\YXM{s}
	-
	\YXM{\fl{s}}
      \big\|_{ \L^{2p}(\P;\BaMi) }
      +
      \big\|
	\YXM{\fl{s}}
	-
	\YOM{\fl{s}}
      \big\|_{ \L^{2p}(\P;\BaMi) }
      +
      \big\|
	O_s
	-
	O_{\fl{s}}
      \big\|_{ \L^{2p}(\P;\BaMi) }
\\&\quad+
      \big\|
	O_{\fl{s}}
	-
	\OM{\fl{s}}
      \big\|_{ \L^{2p}(\P;\BaMi) }
    \Big)^2 ds
  \Bigg]^{\frac{1}{2}} .
\end{split}
\end{align}
\end{lemma}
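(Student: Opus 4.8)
The plan is to obtain the claim by an $\L^p(\P)$-lift of the pathwise estimate established in Lemma~\ref{lem:XO_YX_diff}. First I would note that, for every fixed $M \in \N$, the setting in Section~\ref{sec:strong_setting} is a particular instance of the setting in Section~\ref{sec:pathwise_setting}: with $\SM$, $\OM{}$, $\YM{}$, $\YXM{}$, $\YOM{}$, $\qp$, $\sp$ in the roles of $\S$, $\O{}$, $\Y{}$, $\YX{}$, $\YO{}$, $\qs$, $\sr$, respectively, all hypotheses of Section~\ref{sec:pathwise_setting} are satisfied. Hence the final bound in Lemma~\ref{lem:XO_YX_diff} (with the given $\kappa \in (2,\infty)$ and $t \in [0,T]$) holds pathwise, i.e.\ for all $\omega \in \Omega$ it holds that
\begin{align}
\begin{split}
\big\| \XO{t} - \YXM{t} \big\|_H^2
&\leq
\frac{2^{[\qp-1]^+} L}{C(\kappa-2)}
\int_0^t
e^{\kappa C(t-s)}
\Big(
1 + \| \YXM{s} \|_{\BaMi}^{\qp} + \| O_s \|_{\BaMi}^{\qp} + \| \YM{\fl{s}} \|_{\BaMi}^{\qp}
\Big)
\\
&\quad\cdot
\Big(
\| \YXM{s} - \YXM{\fl{s}} \|_{\BaMi}
+ \| \YXM{\fl{s}} - \YOM{\fl{s}} \|_{\BaMi}
+ \| O_s - O_{\fl{s}} \|_{\BaMi}
+ \| O_{\fl{s}} - \OM{\fl{s}} \|_{\BaMi}
\Big)^{\!2} ds .
\end{split}
\end{align}

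Next I would bound $e^{\kappa C(t-s)} \le e^{\kappa CT}$ for $s \in [0,t]$, take square roots, and then take the $\L^p(\P;\R)$-norm of both sides. As $p \ge 2$, the identity $\| Z^{1/2} \|_{\L^p(\P;\R)} = \| Z \|_{\L^{p/2}(\P;\R)}^{1/2}$ (for nonnegative random variables $Z$) turns the right-hand side into the $\tfrac12$-power of an $\L^{p/2}(\P;\R)$-norm of a $ds$-integral, Minkowski's integral inequality (licit since $p/2 \ge 1$) moves this norm inside the $ds$-integral, and Hölder's inequality with exponents $p$ and $p$ (so that $\tfrac{2}{p} = \tfrac1p + \tfrac1p$) factorizes the integrand into $\big\| 1 + \| \YXM{s} \|_{\BaMi}^{\qp} + \| O_s \|_{\BaMi}^{\qp} + \| \YM{\fl{s}} \|_{\BaMi}^{\qp} \big\|_{\L^p(\P;\R)}$ times $\big\| \, \| \YXM{s} - \YXM{\fl{s}} \|_{\BaMi} + \| \YXM{\fl{s}} - \YOM{\fl{s}} \|_{\BaMi} + \| O_s - O_{\fl{s}} \|_{\BaMi} + \| O_{\fl{s}} - \OM{\fl{s}} \|_{\BaMi} \, \big\|_{\L^{2p}(\P;\R)}^2$.

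Finally I would bound the first of these two factors, via the triangle inequality in $\L^p(\P;\R)$ and the identity $\big\| \, \| \cdot \|_{\BaMi}^{\qp} \, \big\|_{\L^p(\P;\R)} = \| \cdot \|_{\L^{p\qp}(\P;\BaMi)}^{\qp}$, by the $s$-independent quantity $1 + \sup_{s\in(0,T)} \| \YXM{s} \|_{\L^{p\qp}(\P;\BaMi)}^{\qp} + \sup_{s\in(0,T)} \| O_s \|_{\L^{p\qp}(\P;\BaMi)}^{\qp} + \sup_{s\in[0,T)} \| \YM{s} \|_{\L^{p\qp}(\P;\BaMi)}^{\qp}$, pull it out of the $ds$-integral, and apply the elementary inequality $\sqrt{1+a+b+c} \le 1 + \sqrt a + \sqrt b + \sqrt c$ (for $a, b, c \ge 0$) to replace the resulting square root by $1 + \sup_{s\in(0,T)} \| \YXM{s} \|_{\L^{p\qp}(\P;\BaMi)}^{\qp/2} + \sup_{s\in(0,T)} \| O_s \|_{\L^{p\qp}(\P;\BaMi)}^{\qp/2} + \sup_{s\in[0,T)} \| \YM{s} \|_{\L^{p\qp}(\P;\BaMi)}^{\qp/2}$; simultaneously the triangle inequality in $\L^{2p}(\P;\R)$ bounds the second factor by $\big( \| \YXM{s} - \YXM{\fl{s}} \|_{\L^{2p}(\P;\BaMi)} + \| \YXM{\fl{s}} - \YOM{\fl{s}} \|_{\L^{2p}(\P;\BaMi)} + \| O_s - O_{\fl{s}} \|_{\L^{2p}(\P;\BaMi)} + \| O_{\fl{s}} - \OM{\fl{s}} \|_{\L^{2p}(\P;\BaMi)} \big)^2$, which is exactly the integrand appearing in the asserted estimate; assembling these bounds completes the proof. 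I do not anticipate any genuine obstacle — the argument is a routine $\L^p$-version of Lemma~\ref{lem:XO_YX_diff} — the only care needed being the exponent bookkeeping when passing between the $\L^p$-, $\L^{p/2}$-, $\L^{2p}$- and $\L^{p\qp}$-norms, together with the joint measurability of the integrands (needed for Tonelli and the integral Minkowski inequality), and it is here that the hypotheses $p \ge 2$ and $\kappa > 2$ enter.
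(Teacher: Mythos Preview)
Your proposal is correct and follows essentially the same route as the paper's proof: invoke the pathwise estimate of Lemma~\ref{lem:XO_YX_diff}, pass to $\L^{p/2}(\P;\R)$ via Minkowski's integral inequality, split the integrand with H\"older's inequality into an $\L^{p}$-factor and an $\L^{2p}$-factor, and finish with the triangle inequality together with $\sqrt{x_1+x_2+x_3+x_4}\le \sqrt{x_1}+\sqrt{x_2}+\sqrt{x_3}+\sqrt{x_4}$. The only difference is that you spell out the use of Minkowski's integral inequality and the exponent bookkeeping more explicitly than the paper does.
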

\begin{proof}[Proof of Lemma~\ref{lem:XO_YX_diff_Lp}]
Note that Lemma~\ref{lem:XO_YX_diff} 
and H{\"o}lder's inequality imply
\begin{align}
\begin{split}
&\big\|
    \XO{t} - \YXM{t}
  \big\|_{ \L^p(\P; H ) }
\\&\leq
  \frac{
    e^{ \kappa C T/2 }
    \sqrt{ 
      2^{[\qp-1]^+} 
      L
    }
  }{
    \sqrt{ C\left(\kappa-2\right) }
  }
  \Bigg[
    \int_0^t
    \bigg\|
      \left(
	1
	+
	\big\| \YXM{s} \big\|_{\BaMi}^{\qp}
	+ 
	\big\| O_s \big\|_{\BaMi}^{\qp}
	+
	\big\| \YM{\fl{s}} \big\|_{\BaMi}^{\qp}
      \right)
      \Big(
	\big\|
	  \YXM{s}
	  -
	  \YXM{\fl{s}}
	\big\|_{\BaMi}
\\&\quad+
	\big\|
	  \YXM{\fl{s}}
	  -
	  \YOM{\fl{s}}
	\big\|_{\BaMi}
	+
	\big\|
	  O_s
	  -
	  O_{\fl{s}}
	\big\|_{\BaMi}
	+
	\big\|
	  O_{\fl{s}}
	  -
	  \OM{\fl{s}}
	\big\|_{\BaMi}
      \Big)^2
    \bigg\|_{ \L^{\nicefrac{p}{2}}(\P;\R) } \, ds
  \Bigg]^{\frac{1}{2}}
\\&\leq
  \frac{
    e^{ \kappa C T/2 }
    \sqrt{ 
      2^{[\qp-1]^+} 
      L
    }
  }{
    \sqrt{ C\left(\kappa-2\right) }
  }
  \Bigg[
    \int_0^t
    \left\|
      1
      +
      \big\| \YXM{s} \big\|_{\BaMi}^{\qp}
      + 
      \big\| O_s \big\|_{\BaMi}^{\qp}
      +
      \big\| \YM{\fl{s}} \big\|_{\BaMi}^{\qp}
    \right\|_{ \L^p(\P;\R) } 
    \Big\|
      \big\|
	\YXM{s}
	-
	\YXM{\fl{s}}
      \big\|_{\BaMi}
\\&\quad+
      \big\|
	\YXM{\fl{s}}
	-
	\YOM{\fl{s}}
      \big\|_{\BaMi}
      +
      \big\|
	O_s
	-
	O_{\fl{s}}
      \big\|_{\BaMi}
      +
      \big\|
	O_{\fl{s}}
	-
	\OM{\fl{s}}
      \big\|_{\BaMi}
    \Big\|_{ \L^{2p}(\P;\R) }^2 ds
  \Bigg]^{\frac{1}{2}} .
\end{split}
\end{align}
This shows
\begin{align}
\begin{split}
&\big\|
    \XO{t} - \YXM{t}
  \big\|_{ \L^p(\P; H ) }
\\&\leq
  \frac{
    e^{ \kappa C T/2 }
    \sqrt{ 
      2^{[\qp-1]^+} 
      L
    }
  }{
    \sqrt{ C\left(\kappa-2\right) }
  }
  \Bigg[
    \int_0^t
    \Big(
      1
      +
      \big\| \YXM{s} \big\|_{ \L^{p\qp}(\P;\BaMi) }^{\qp}
      + 
      \big\| O_s \big\|_{ \L^{p\qp}(\P;\BaMi) }^{\qp}
      +
      \big\| \YM{\fl{s}} \big\|_{ \L^{p\qp}(\P;\BaMi) }^{\qp}
    \Big)
\\&\quad\cdot
    \Big(
      \big\|
	\YXM{s}
	-
	\YXM{\fl{s}}
      \big\|_{ \L^{2p}(\P;\BaMi) }
      +
      \big\|
	\YXM{\fl{s}}
	-
	\YOM{\fl{s}}
      \big\|_{ \L^{2p}(\P;\BaMi) }
      +
      \big\|
	O_s
	-
	O_{\fl{s}}
      \big\|_{ \L^{2p}(\P;\BaMi) }
\\&\quad+
      \big\|
	O_{\fl{s}}
	-
	\OM{\fl{s}}
      \big\|_{ \L^{2p}(\P;\BaMi) }
    \Big)^2 ds
  \Bigg]^{\frac{1}{2}} .
\end{split}
\end{align}
The fact that 
$ 
  \forall \,
  x_1, x_2, x_3, x_4
  \in [0,\infty) 
  \colon
  \sqrt{x_1 + x_2 + x_3 + x_4}
  \leq
  \sqrt{x_1} + \sqrt{x_2} + \sqrt{x_3} + \sqrt{x_4}
$
completes the proof of Lemma~\ref{lem:XO_YX_diff_Lp}.
\end{proof}

\subsection{Analysis of the error between the 
exact solution and the numerical approximation}

\begin{lemma}[Strong temporal regularity 
of the semilinear integrated version of the numerical approximation]
\label{lem:YX_YX_diff_Lp}
Assume the setting in Section~\ref{sec:strong_setting}
and let
$ p \in [1,\infty) $, $ \spp \in [0, 1-\sp) $,
$ t_1, t_2 \in [0,T] $,
$ M \in \N $ with $ t_1 < t_2 $. Then
\begin{align}
\begin{split}
 &\big\|
    \YXM{t_2}
    -
    \YXM{t_1}
  \big\|_{ \L^p( \P; \BaMi ) }
  \leq
  \left(t_2-t_1\right)^{\spp}
  T^{(1-\sp-\spp)}
  \left[
    \sup_{ s \in (0,T) }
    s^{\sp}
    \big\|
      e^{sA}
    \big\|_{ L( H, \BaMi ) }
  \right]
  \left[
    \sup_{ s \in [0,T) } 
    \big\|
      F( \YM{s} )
    \big\|_{ \L^p(\P; H ) }
  \right]
\\&\cdot
  \Bigg(
    \frac{1}{(1-\sp)}
    +
    \frac{ 
      2^{(\sp+\spp)}
    }{
      (1-\sp-\spp)
    }
    \left[
      \sup_{ s \in (0,T) }
      \big\|
	(-sA)^{\varrho}
	e^{sA}
      \big\|_{ L(H) }
    \right]
    \left[
      \sup_{ s \in (0,T) }
      \big\|
	(-sA)^{-\varrho}
	\big(
	  e^{sA}
	  -
	  \Id_H
	\big)
      \big\|_{ L(H) }
    \right]
  \Bigg) .
\end{split}
\end{align}
\end{lemma}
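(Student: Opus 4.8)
The plan is to obtain this $\L^p$-estimate by integrating the pathwise bound of Lemma~\ref{lem:YX_YX_diff}. For a fixed $M\in\N$, the strong setting of Section~\ref{sec:strong_setting} (taking $\S=\SM$ and the processes $\O{}=\OM{}$, $\Y{}=\YM{}$, $\YX{}=\YXM{}$, $\YO{}=\YOM{}$) is an instance of the pathwise setting of Section~\ref{sec:pathwise_setting}; hence Lemma~\ref{lem:YX_YX_diff} applies with $\sr$ replaced by $\sp$ and $\srr$ replaced by $\spp$ (the hypothesis $\srr\in[0,1-\sr)$ becomes $\spp\in[0,1-\sp)$, which is assumed). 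This yields, for every $\omega\in\Omega$, a pointwise bound of $\|\YXM{t_2}(\omega)-\YXM{t_1}(\omega)\|_{\BaMi}$ by a sum of two time integrals of $\|F(\YM{\fl{s}}(\omega))\|_H$: one against the kernel $(t_2-s)^{-\sp}$ on $[t_1,t_2]$, carrying only the factor $[\sup_{s\in(0,T)}s^{\sp}\|e^{sA}\|_{L(H,\BaMi)}]$, and one against $(t_2-t_1)^{\spp}(t_1-s)^{-(\sp+\spp)}$ on $[0,t_1]$, carrying in addition the two operator-norm factors $[\sup_{s}\|(-sA)^{\spp}e^{sA}\|_{L(H)}]$ and $[\sup_{s}\|(-sA)^{-\spp}(e^{sA}-\Id_H)\|_{L(H)}]$.

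Next I would take the $\L^p(\P;\BaMi)$-norm of both sides and apply Minkowski's integral inequality to move the norm inside the two Lebesgue integrals in $s$, which replaces $\|F(\YM{\fl{s}}(\omega))\|_H$ by $\|F(\YM{\fl{s}})\|_{\L^p(\P;H)}$; bounding the latter by $\sup_{r\in[0,T)}\|F(\YM{r})\|_{\L^p(\P;H)}$ then pulls this quantity out of both integrals. It remains to estimate the two elementary deterministic integrals: $\int_{t_1}^{t_2}(t_2-s)^{-\sp}\,ds=\frac{(t_2-t_1)^{1-\sp}}{1-\sp}\le\frac{(t_2-t_1)^{\spp}T^{(1-\sp-\spp)}}{1-\sp}$, using $(t_2-t_1)^{1-\sp}=(t_2-t_1)^{\spp}(t_2-t_1)^{(1-\sp-\spp)}\le(t_2-t_1)^{\spp}T^{(1-\sp-\spp)}$, and $\int_0^{t_1}(t_1-s)^{-(\sp+\spp)}\,ds=\frac{t_1^{(1-\sp-\spp)}}{1-\sp-\spp}\le\frac{T^{(1-\sp-\spp)}}{1-\sp-\spp}$. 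Substituting these and factoring out the common prefactor $(t_2-t_1)^{\spp}T^{(1-\sp-\spp)}[\sup_{s}s^{\sp}\|e^{sA}\|_{L(H,\BaMi)}][\sup_{r}\|F(\YM{r})\|_{\L^p(\P;H)}]$ leaves exactly the bracketed sum $\frac{1}{(1-\sp)}+\frac{2^{(\sp+\spp)}}{(1-\sp-\spp)}[\sup_{s}\|(-sA)^{\spp}e^{sA}\|_{L(H)}][\sup_{s}\|(-sA)^{-\spp}(e^{sA}-\Id_H)\|_{L(H)}]$ asserted in the statement.

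This argument is essentially bookkeeping, so I do not anticipate a genuine obstacle. The only points requiring a little care are the justification of the interchange of the $\L^p$-norm with the time integrals (Minkowski's integral inequality, together with measurability of $s\mapsto\|F(\YM{\fl{s}})\|_{\L^p(\P;H)}$, which follows from the continuity of $F$ and the definition of $\YM{}$) and the correct allocation of the operator-norm factors, so that the term arising from the first integral picks up only $[\sup_{s}s^{\sp}\|e^{sA}\|_{L(H,\BaMi)}]$ and contributes the summand $\frac{1}{1-\sp}$, while the term from the second integral carries all three operator-norm suprema and contributes $\frac{2^{(\sp+\spp)}}{1-\sp-\spp}[\sup_{s}\|(-sA)^{\spp}e^{sA}\|_{L(H)}][\sup_{s}\|(-sA)^{-\spp}(e^{sA}-\Id_H)\|_{L(H)}]$.
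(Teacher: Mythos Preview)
Your proposal is correct and follows essentially the same route as the paper: apply the pathwise bound of Lemma~\ref{lem:YX_YX_diff}, take the $\L^p$-norm inside the two time integrals via Minkowski's inequality, pull out $\sup_{s\in[0,T)}\|F(\YM{s})\|_{\L^p(\P;H)}$, evaluate the elementary integrals, and then use $(t_2-t_1)^{1-\sp}\le(t_2-t_1)^{\spp}T^{1-\sp-\spp}$ and $t_1^{1-\sp-\spp}\le T^{1-\sp-\spp}$ to factor out the common prefactor. The paper's proof is exactly this computation, displayed in two steps.
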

\begin{proof}[Proof of Lemma~\ref{lem:YX_YX_diff_Lp}]
Note that Lemma~\ref{lem:YX_YX_diff} implies
\begin{align}
\begin{split}
 &\big\|
    \YXM{t_2}
    -
    \YXM{t_1}
  \big\|_{ \L^p( \P; \BaMi ) }
  \leq
  \left[
    \sup_{ s \in (0,T) }
    s^{\sp}
    \big\|
      e^{sA}
    \big\|_{ L( H, \BaMi ) }
  \right]
  \int_{t_1}^{t_2}
  \left(
    t_2-s
  \right)^{-\sp}
  \big\|
    F( \YM{\fl{s}} )
  \big\|_{ \L^p(\P; H ) } \, ds
\\&\quad+
  2^{(\sp+\spp)}
  \left( t_2-t_1 \right)^{\spp}
  \left[
    \sup_{ s \in (0,T) }
    s^{\sr}
    \big\|
      e^{sA}
    \big\|_{ L( H, \BaMi ) }
  \right]
  \left[
    \sup_{ s \in (0,T) }
    \big\|
      (-sA)^{\varrho}
      e^{sA}
    \big\|_{ L(H) }
  \right]
\\&\quad\cdot
  \left[
    \sup_{ s \in (0,T) }
    \big\|
      (-sA)^{-\varrho}
      \big(
	e^{sA}
	-
	\Id_H
      \big)
    \big\|_{ L(H) }
  \right]
  \int_{0}^{t_1}
  \left(
    t_1-s
  \right)^{-(\sp+\spp)}
  \big\|
    F( \YM{\fl{s}} )
  \big\|_{ \L^p(\P; H ) } \, ds .
\end{split}
\end{align}
Hence, we obtain
\begin{align}
\begin{split}
 &\big\|
    \YXM{t_2}
    -
    \YXM{t_1}
  \big\|_{ \L^p( \P; \BaMi ) }
  \leq
  \frac{(t_2-t_1)^{(1-\sp)}}{(1-\sp)}
  \left[
    \sup_{ s \in (0,T) }
    s^{\sp}
    \big\|
      e^{sA}
    \big\|_{ L( H, \BaMi ) }
  \right]
  \left[
    \sup_{ s \in [0,T) } 
    \big\|
      F( \YM{s} )
    \big\|_{ \L^p(\P; H ) }
  \right]
\\&\quad+
  \frac{ 
    2^{(\sp+\spp)}
    \left( t_2-t_1 \right)^{\spp}
    \big| t_1 \big|^{(1-\sp-\spp)}
  }{
    (1-\sp-\spp)
  }
  \left[
    \sup_{ s \in (0,T) }
    s^{\sr}
    \big\|
      e^{sA}
    \big\|_{ L( H, \BaMi ) }
  \right]
  \left[
    \sup_{ s \in (0,T) }
    \big\|
      (-sA)^{\varrho}
      e^{sA}
    \big\|_{ L(H) }
  \right]
\\&\quad\cdot
  \left[
    \sup_{ s \in (0,T) }
    \big\|
      (-sA)^{-\varrho}
      \big(
	e^{sA}
	-
	\Id_H
      \big)
    \big\|_{ L(H) }
  \right]
  \left[
    \sup_{ s \in [0,T) } 
    \big\|
      F( \YM{s} )
    \big\|_{ \L^p(\P; H ) }
  \right] .
\end{split}
\end{align}
This completes the proof Lemma~\ref{lem:YX_YX_diff_Lp}.
\end{proof}

\begin{proposition}
\label{prop:X_YM_Lp_diff} 
Assume the setting in 
Section~\ref{sec:strong_setting},
let $ \cp \in (0,\infty) $,
$ \np \in (0,\nicefrac{1}{2}) $,
$ \spp \in [0, 1-\sp) $,
$ p \in [\max\{2,\nicefrac{1}{\qp}\},\infty) $,
and assume that 
\begin{align}
\label{eq:X_YM_diff_assumption}
\begin{split}
 &\sup_{ M \in \N }
  \sup_{ t \in [0,T] }
  \big\|
    \YM{t}
  \big\|_{ \L^{p\max\{1,\qp\}}\!(\P;\BaMi) }
  +
  \sup_{ M \in \N }
  \sup_{ t \in [0,T) }
  \Big\|
    \big|
      \Vm( \YM{t} )
    \big|^{2\cp}
    +
    \big\|
      F( \YM{t} )
    \big\|_H^2
  \Big\|_{ \L^{p\max\{2,\qp\}}\!(\P;\R) }
\\&+
  \sup_{ M \in \N }
  \sup_{ t \in (0,T] }
  M^{\spp}
  \int_0^t
  \big\|
    e^{(t-s)A}
    -
    \SM_{\fl{s},t}
  \big\|_{ L(H,\BaMi) } \, ds
  +
  \sup_{ t \in [0,T) }
  \big\|
    O_t
  \big\|_{ \L^{p\max\{2,\qp\}}\!(\P;\BaMi) }
\\&+
  \sup_{ M \in \N }
  \sup_{ t \in [0,T] }
  M^{\np}
  \left[ 
    \big\|
      O_t
      -
      \OM{t}
    \big\|_{ \L^{p\max\{2,\qp\}}\!(\P;\BaMi) }
    +
    \big|\fl{t}\big|^{\np}
    \big\|
      O_t
      -
      O_{\fl{t}}
    \big\|_{ \L^{2p}(\P;\BaMi) }
  \right]
  <
  \infty .
\end{split}
\end{align}
Then
$
  \sup_{ M \in \N }
  \sup_{ t \in [0,T] }
  \big[
    \|
      X_t
    \|_{ \L^p(\P;H) }
    +
    M^{\min\{\cp, \spp, \np \}}
    \|
      X_t - \YM{t}
    \|_{ \L^p(\P;H) }
  \big]
  <
  \infty
$.
\end{proposition}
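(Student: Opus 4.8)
The plan is to decompose the global error, for every $ M \in \N $ and $ t \in [0,T] $, as
$ X_t - \YM{t} = ( \XO{t} - \YXM{t} ) + ( \YXM{t} - \YOM{t} ) + ( O_t - \OM{t} ) $,
which is licit since $ X_t = \XO{t} + O_t $ and $ \YM{t} = \YOM{t} + \OM{t} $ by the setting in Section~\ref{sec:strong_setting}, and to estimate the three summands separately, each at the rate $ M^{ -\min\{ \cp, \spp, \np \} } $. The bound on the first summand will come from Lemma~\ref{lem:XO_YX_diff_Lp} (whose right-hand side, crucially, involves only increments and moments of $ \YXM{} $, $ \YOM{} $, $ O $ and $ \OM{} $ and \emph{no} a priori information on $ X $, the bad term having been absorbed by the one-sided Lipschitz estimate), the bound on the second from Lemma~\ref{lem:YX_YO_diff_Lp}, and the bound on the third directly from the hypothesis~\eqref{eq:X_YM_diff_assumption}.

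First I would record the uniform-in-$ (M,t) $ a priori moment bounds that feed the prefactors of Lemmas~\ref{lem:YX_YO_diff_Lp}--\ref{lem:YX_YX_diff_Lp}. From~\eqref{eq:X_YM_diff_assumption} one reads off finiteness of $ \sup_{M,t} \| \YM{t} \|_{ \L^{ p \max\{1,\qp\} }(\P;\BaMi) } $, of $ \sup_{M,t} \| F(\YM{t}) \|_{ \L^{ 2p \max\{2,\qp\} }(\P;H) } $, of $ \sup_{M,t} \| \Vm( \YM{t} ) \|_{ \L^{ 2 \cp p \max\{2,\qp\} }(\P;\R) } $, and of $ \sup_{t} \| O_t \|_{ \L^{ p \max\{2,\qp\} }(\P;\BaMi) } $. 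Since $ A $ generates an analytic semigroup and $ \sp < 1 $, the integral $ \YXM{t} = \int_0^t e^{(t-s)A} F( \YM{\fl{s}} ) \, ds $ is $ \BaMi $-valued, and taking $ \L^q(\P;\R) $-norms under the integral sign together with $ \int_0^t ( t-s )^{-\sp} \, ds \leq \tfrac{ T^{ 1-\sp } }{ 1-\sp } $ yields $ \sup_{M,t} \| \YXM{t} \|_{ \L^{ p \max\{1,\qp\} }(\P;\BaMi) } < \infty $ from the bound on $ \| F(\YM{\cdot}) \| $. One checks routinely that every $ \L^q(\P;\cdot) $-norm occurring on the right-hand sides of the three cited lemmas (also with $ p $ replaced by $ 2p $, as will be needed) is dominated by one of these finite quantities; this exponent bookkeeping, rather than any genuine difficulty, is the main chore of the proof.

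Next I would estimate the four increments appearing inside Lemma~\ref{lem:XO_YX_diff_Lp}. Lemma~\ref{lem:YX_YX_diff_Lp}, applied with $ t_1 = \fl{s} $, $ t_2 = s $, exponent $ \spp $ and $ \L^{2p} $, gives $ \| \YXM{s} - \YXM{\fl{s}} \|_{ \L^{2p}(\P;\BaMi) } \leq c \, ( s - \fl{s} )^{\spp} \leq c \, ( T/M )^{\spp} $. Lemma~\ref{lem:YX_YO_diff_Lp}, applied at time $ \fl{s} $ with $ \L^{2p} $ (and noting $ \YXM{0} = \YOM{0} = 0 $ so the case $ s < T/M $ is trivial), together with the assumed bound $ \sup_{M,t} M^{\spp} \int_0^t \| e^{(t-s)A} - \SM_{\fl{s},t} \|_{ L(H,\BaMi) } \, ds < \infty $, gives $ \| \YXM{\fl{s}} - \YOM{\fl{s}} \|_{ \L^{2p}(\P;\BaMi) } \leq c \, M^{ -\min\{ \spp, \cp \} } $; and $ \| O_{\fl{s}} - \OM{\fl{s}} \|_{ \L^{2p}(\P;\BaMi) } \leq c \, M^{-\np} $ is immediate from~\eqref{eq:X_YM_diff_assumption} after passing from $ \L^{ p \max\{2,\qp\} } $ to $ \L^{2p} $. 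The term $ \| O_s - O_{\fl{s}} \|_{ \L^{2p}(\P;\BaMi) } $ is the one delicate point: for $ s \geq T/M $ one has $ \fl{s} > 0 $ and~\eqref{eq:X_YM_diff_assumption} yields $ \| O_s - O_{\fl{s}} \|_{ \L^{2p}(\P;\BaMi) } \leq c \, M^{-\np} | \fl{s} |^{-\np} $, whose square integrates over $ [T/M,T] $ to a quantity $ \leq c \, M^{-2\np} $ uniformly in $ M $ since $ \fl{s} > s/2 $ for $ s \geq 2T/M $ and $ 2\np < 1 $; for $ s \in [0, T/M) $ one has $ \fl{s} = 0 $, so one uses only $ \| O_s - O_0 \|_{ \L^{2p}(\P;\BaMi) } \leq 2 \sup_{r} \| O_r \|_{ \L^{ p\max\{2,\qp\} }(\P;\BaMi) } $, whose square integrates over $ [0,T/M) $ to a quantity $ \leq c \, M^{-1} \leq c \, M^{-2\np} $, again using $ \np < 1/2 $. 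Applying the triangle inequality in $ L^2( (0,t) ; dr ) $ to the square-root-of-an-integral in Lemma~\ref{lem:XO_YX_diff_Lp}, these four estimates combine to $ \sup_{M,t} M^{ \min\{ \cp, \spp, \np \} } \| \XO{t} - \YXM{t} \|_{ \L^p(\P;H) } < \infty $.

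Finally, Lemma~\ref{lem:YX_YO_diff_Lp} applied at time $ t $, combined with the continuity of the embedding $ \BaMi \hookrightarrow H $, gives $ \| \YXM{t} - \YOM{t} \|_{ \L^p(\P;H) } \leq c \, M^{ -\min\{ \spp, \cp \} } $, and~\eqref{eq:X_YM_diff_assumption} together with $ \BaMi \hookrightarrow H $ gives $ \| O_t - \OM{t} \|_{ \L^p(\P;H) } \leq c \, M^{-\np} $. Adding the three bounds produces $ \sup_{M,t} M^{ \min\{ \cp, \spp, \np \} } \| X_t - \YM{t} \|_{ \L^p(\P;H) } < \infty $. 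The bound $ \sup_t \| X_t \|_{ \L^p(\P;H) } < \infty $ then follows by writing $ X_t = \YM{t} + ( X_t - \YM{t} ) $ and using $ \sup_{M,t} \| \YM{t} \|_{ \L^{ p\max\{1,\qp\} }(\P;\BaMi) } < \infty $ and $ \BaMi \hookrightarrow H $. As indicated, the hard part is not a circular dependency — Lemma~\ref{lem:XO_YX_diff_Lp} already furnishes an a priori estimate of $ \XO{} - \YXM{} $ through the monotonicity argument — but the careful matching of all $ \L^q(\P;\cdot) $-exponents against the finite quantities in~\eqref{eq:X_YM_diff_assumption} and the time-integrability split near $ s = 0 $, where the hypothesis $ \np < 1/2 $ is essential.
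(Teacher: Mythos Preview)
Your proposal is correct and follows essentially the same route as the paper: the same three-term decomposition $X_t-\YM{t}=(\XO{t}-\YXM{t})+(\YXM{t}-\YOM{t})+(O_t-\OM{t})$, the same use of Lemmas~\ref{lem:YX_YO_diff_Lp}--\ref{lem:YX_YX_diff_Lp} for the respective pieces, and the same split of the $O_s-O_{\fl{s}}$ time integral at $s=T/M$ (exploiting $\np<\tfrac12$ on both subintervals). The only cosmetic difference is that the paper obtains the a priori bound on $\|\YXM{t}\|_{\L^{p\qp}(\P;\BaMi)}$ indirectly via $\YOM{t}$ and the already-established $\|\YXM{t}-\YOM{t}\|$ estimate, whereas you do it directly from the integral representation of $\YXM{t}$; both work.
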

\begin{proof}[Proof of Proposition~\ref{prop:X_YM_Lp_diff}]
First of all, observe that
the triangle inequality
and~\eqref{eq:X_YM_diff_assumption} implies 
\begin{align}
\label{eq:strong_err_4}
\begin{split}
 &\sup_{ M \in \N }
  \sup_{ t \in [0,T) }
  \big\|
    \OM{t}
  \big\|_{ \L^{p\max\{2,\qp\}}\!(\P;\BaMi) }
  \leq
  \sup_{ M \in \N }
  \sup_{ t \in [0,T) }
  \Big\|
    \big\|
      \OM{t}
      -
      O_t
    \big\|_{ \BaMi }
    +
    \big\|
      O_t
    \big\|_{ \BaMi }
  \Big\|_{ \L^{p\max\{2,\qp\}}\!(\P;\R) }
  <
  \infty .
\end{split}
\end{align}
This, the triangle inequality,
and~\eqref{eq:X_YM_diff_assumption}
assure that
\begin{align}
\label{eq:strong_err_01}
\begin{split}
  \sup_{ M \in \N }
  \sup_{ t \in [0,T) }
  \big\|
    \YOM{t}
  \big\|_{ \L^{p\qp}(\P;\BaMi) }
 &\leq
  \sup_{ M \in \N }
  \sup_{ t \in [0,T) }
  \left[ 
    \big\|
      \YM{t}
    \big\|_{ \L^{p\max\{1,\qp\}}\!(\P;\BaMi) }
    +
    \big\|
      \OM{t}
    \big\|_{ \L^{p\max\{2,\qp\}}\!(\P;\BaMi) }
  \right]
  <
  \infty .
\end{split}
\end{align}
Next note that
Lemma~\ref{lem:YX_YO_diff_Lp},
\eqref{eq:X_YM_diff_assumption}, 
and the assumption that
$
  \sup_{ t \in (0,T) }
  t^{\sp}
  \|
    e^{tA}
  \|_{ L(H, \BaMi) }
  <
  \infty
$
prove that
\begin{align}
\label{eq:strong_err_1}
\begin{split}
 &\sup_{ M \in \N }
  \sup_{ t \in [0,T] }
  M^{\min\{\cp,\spp\} }
  \big\|
    \YXM{t} - \YOM{t}
  \big\|_{\L^{p\max\{2,\qp\}}\!(\P;\BaMi)}
  =
  \sup_{ M \in \N }
  \sup_{ t \in (0,T] }
  M^{\min\{\cp,\spp\} }
  \big\|
    \YXM{t} - \YOM{t}
  \big\|_{\L^{p\max\{2,\qp\}}\!(\P;\BaMi)}
\\&\leq
  \sup_{ M \in \N }
  \sup_{ t \in (0,T] }
  \left\{
    \left(
      M^{\spp}
      \int_0^t
      \big\|
	e^{ (t-s)A }
	-
	\SM_{\fl{s},t}
      \big\|_{ L( H, \BaMi ) } \, ds
    \right)
    \left[
      \sup_{ s \in [0,T) }
      \big\|
	F( \YM{s} )
      \big\|_{\L^{p\max\{2,\qp\}}\!(\P;H)}
    \right]
  \right\}
\\&+
  \frac{T^{(1+\cp-\sp)}}{(1-\sp)}
  \left[ 
    \sup_{s\in(0,T)}
    s^{\sp}
    \big\|
      e^{ sA }
    \big\|_{ L( H, \BaMi ) }
  \right]
  \left[
    \sup_{ M \in \N }
    \sup_{s\in[0,T)}
    \big\|
      \Vm( \YM{s} )
    \big\|_{ \L^{2\cp p\max\{2,\qp\}}\!( \P; \R ) }^{\cp}
    \big\|
      F( \YM{s} )
    \big\|_{ \L^{2p\max\{2,\qp\}}\!( \P; H ) }
  \right]
\\&<
  \infty .
\end{split}
\end{align}
Combining this, the triangle inequality,
and~\eqref{eq:strong_err_01} ensures
\begin{align}
\label{eq:strong_err_2}
\begin{split}
  \sup_{ M \in \N }
  \sup_{ t \in [0,T) }
  \big\|
    \YXM{t}
  \big\|_{ \L^{p\qp}(\P;\BaMi) }
 &\leq
  \sup_{ M \in \N }
  \sup_{ t \in [0,T) }
  \left[
    \big\|
      \YXM{t} - \YOM{t}
    \big\|_{\L^{p\max\{2,\qp\}}\!(\P;\BaMi)}
    +
    \big\|
      \YOM{t}
    \big\|_{ \L^{p\qp}(\P;\BaMi) }
  \right]
  <
  \infty .
\end{split}
\end{align}
Furthermore, observe that
the fact that
$ A \colon D(A) \subseteq H \rightarrow H $
is a generator of an analytic
semigroup with
$ 
  \text{spectrum}(A)
  \subseteq
  \{ 
    z \in \C
    \colon
    \text{Re}(z) < 0
  \}
$
implies that
$ 
  \sup_{ s \in (0,T) }
  \|
    (-sA)^{-\varrho}
    \big(
      e^{sA}
      -
      \Id_H
    \big)
  \|_{ L(H) }
  +
  \sup_{ s \in (0,T) }
  \|
    (-sA)^{\varrho}
    e^{sA}
  \|_{ L(H) }
  <
  \infty
$.
This, the assumption that
$
  \sup_{ t \in (0,T) }
  t^{\sp}
  \|
    e^{tA}
  \|_{ L(H, \BaMi) }
  <
  \infty
$,
\eqref{eq:X_YM_diff_assumption},
and Lemma~\ref{lem:YX_YX_diff_Lp}
show
\begin{align}
\label{eq:strong_err_3}
\begin{split}
 &\sup_{ M \in \N }
  \sup_{ t \in [0,T] }
  M^{ \spp }
  \big\|
    \YXM{t}
    -
    \YXM{\fl{t}}
  \big\|_{ \L^{2p}( \P; \BaMi ) }
\\&\leq
  \Bigg(
    \frac{1}{(1-\sp)}
    +
    \frac{ 
      2^{(\sp+\spp)}
    }{
      (1-\sp-\spp)
    }
    \left[
      \sup_{ s \in (0,T) }
      \big\|
	(-sA)^{\varrho}
	e^{sA}
      \big\|_{ L(H) }
    \right]
    \left[
      \sup_{ s \in (0,T) }
      \big\|
	(-sA)^{-\varrho}
	\big(
	  e^{sA}
	  -
	  \Id_H
	\big)
      \big\|_{ L(H) }
    \right]
  \Bigg)
\\&\quad\cdot
  T^{(1-\sp)}
  \left[
    \sup_{ s \in (0,T) }
    s^{\sp}
    \big\|
      e^{sA}
    \big\|_{ L( H, \BaMi ) }
  \right]
  \left[
    \sup_{ M \in \N }
    \sup_{ s \in [0,T) } 
    \big\|
      F( \YM{s} )
    \big\|_{ \L^{2p}(\P; H ) }
  \right]
  <
  \infty .
\end{split}
\end{align}
Moreover, note that 
Lemma~\ref{lem:XO_YX_diff_Lp},
the triangle inequality,
and the fact that 
$
  \forall \,
  x,y \in \R
  \colon
  (x+y)^2
  \leq
  2(x^2+y^2)
$
assure for all
$ \kappa \in (2,\infty) $
that
\begin{align}
\label{eq:strong_err_5}
\begin{split}
 &\sup_{ M \in \N }
  \sup_{ t \in [0,T] }
  M^{\min\{\cp, \spp, \np \}}
  \big\|
    \XO{t} - \YXM{t}
  \big\|_{ \L^p(\P; H ) }
\\&\leq
  \frac{
    e^{ \kappa C T/2 }
    \sqrt{ 
      2^{[\qp-1]^+} 
      L
    }
  }{
    \sqrt{ C\left(\kappa-2\right) }
  }
  \left[
    1
    +
    \sup_{ M \in \N }
    \sup_{ s \in (0,T) }
    \big\| \YXM{s} \big\|_{ \L^{p\qp}(\P;\BaMi) }^{\qp/2} 
    +
    \sup_{ s \in (0,T) }
    \big\| O_s \big\|_{ \L^{p\qp}(\P;\BaMi) }^{\qp/2} 
    +
    \sup_{ M \in \N }
    \sup_{ s \in [0,T) }
    \big\| \YM{s} \big\|_{ \L^{p\qp}(\P;\BaMi) }^{\qp/2} 
  \right]
\\&\cdot
  \sup_{ M \in \N }
  M^{\min\{\cp, \spp, \np \}}
  \Bigg\{
    \Bigg[
      \int_0^{\frac{T}{M}}
      \Big(
	\big\|
	  O_s
	\big\|_{ \L^{2p}(\P;\BaMi) }
	+
	\big\|
	  O_{\fl{s}}
	\big\|_{ \L^{2p}(\P;\BaMi) }
      \Big)^2 \, ds
      +
      \int_{\frac{T}{M}}^{T}
      \big\|
	O_s
	-
	O_{\fl{s}}
      \big\|_{ \L^{2p}(\P;\BaMi) }^2 \, ds
    \Bigg]^{\frac{1}{2}}
\\&+
    \Bigg[
      \int_0^T
      \Big(
	\big\|
	  \YXM{s}
	  -
	  \YXM{\fl{s}}
	\big\|_{ \L^{2p}(\P;\BaMi) }
	+
	\big\|
	  \YXM{\fl{s}}
	  -
	  \YOM{\fl{s}}
	\big\|_{ \L^{2p}(\P;\BaMi) }
\\&+
	\big\|
	  O_{\fl{s}}
	  -
	  \OM{\fl{s}}
	\big\|_{ \L^{2p}(\P;\BaMi) }
      \Big)^2 ds
    \Bigg]^{\frac{1}{2}}
  \Bigg\} .
\end{split}
\end{align}
The estimate
\begin{align}
\begin{split}
 &\sup_{ M \in \N }
  \left[ 
    \int_{\frac{T}{M}}^{T}
    \big| \fl{s} \big|^{-2\np} \, ds
  \right]
  =
  \sup_{ M \in \N }
  \left[ 
    \sum_{ l=1 }^{ M-1 }
    \int_{\frac{lT}{M}}^{\frac{(l+1)T}{M}}
    \frac{ M^{2\np} }{(lT)^{2\np}} \, ds
  \right]
  =
  \sup_{ M \in \N }
  \left[ 
    \frac{T^{(1-2\np)}}{M^{(1-2\np)}}
    \sum_{ l=1 }^{ M-1 }
    \frac{ 1 }{l^{2\np}}
  \right]
\\&\leq
  \sup_{ M \in \N }
  \left[ 
    \frac{T^{(1-2\np)}}{M^{(1-2\np)}}
    \left(
      1
      +
      \int_1^M
      \frac{ 1 }{s^{2\np}} \, ds 
    \right)
  \right]
  =
  \sup_{ M \in \N }
  \left[ 
    \frac{T^{(1-2\np)}}{M^{(1-2\np)}}
    \left(
      1
      +
      \frac{ (M^{(1-2\np)} - 1) }{(1-\np)}
    \right)
  \right]
  \leq
  \frac{ T^{(1-2\np)} }{ (1-\np) } ,
\end{split}
\end{align}
inequality~\eqref{eq:X_YM_diff_assumption}, 
inequalities~\eqref{eq:strong_err_1}--\eqref{eq:strong_err_3},
and the fact that
$
  \forall \,
  x,y \in [0,\infty)
  \colon
  \sqrt{x+y}
  \leq
  \sqrt{x}
  +
  \sqrt{y}
$
therefore imply that
\begin{align}
\label{eq:strong_err_6}
\begin{split}
 &\sup_{ M \in \N }
  \sup_{ t \in [0,T] }
  M^{\min\{\cp, \spp, \np \}}
  \big\|
    \XO{t} - \YXM{t}
  \big\|_{ \L^p(\P; H ) }
\\&\leq
  \frac{
    e^{ \kappa C T/2 }
    \sqrt{ 
      2^{[\qp-1]^+} 
      L
    }
  }{
    \sqrt{ C\left(\kappa-2\right) }
  }
  \left[
    1
    +
    \sup_{ M \in \N }
    \sup_{ s \in (0,T) }
    \big\| \YXM{s} \big\|_{ \L^{p\qp}(\P;\BaMi) }^{\qp/2} 
    +
    \sup_{ s \in (0,T) }
    \big\| O_s \big\|_{ \L^{p\qp}(\P;\BaMi) }^{\qp/2} 
    +
    \sup_{ M \in \N }
    \sup_{ s \in [0,T) }
    \big\| \YM{s} \big\|_{ \L^{p\qp}(\P;\BaMi) }^{\qp/2} 
  \right]
\\&\cdot
  \sqrt{(1+T)}
  \sup_{ M \in \N }
  \Bigg[
    2 \!
    \sup_{ s \in [0,T) }
    \big\|
      O_s
    \big\|_{ \L^{2p}(\P;\BaMi) }
    + \!\!
    \sup_{ s \in (0,T) }
    \big|M\fl{s}\big|^{\np}  
    \big\|
      O_s
      -
      O_{\fl{s}}
    \big\|_{ \L^{2p}(\P;\BaMi) } \!
    \left(
      \smallint_{\frac{T}{M}}^{T}
      \big|\fl{s}\big|^{-2\np} ds
    \right)^{\!\frac{1}{2}}
\\&+
    \sup_{ s \in [0,T] }
    \left(
      M^{\spp}
      \big\|
	\YXM{s}
	-
	\YXM{\fl{s}}
      \big\|_{ \L^{2p}(\P;\BaMi) }
      +
      M^{\min\{\cp, \spp\}}
      \big\|
	\YXM{s}
	-
	\YOM{s}
      \big\|_{ \L^{2p}(\P;\BaMi) }
      +
      M^{\np}
      \big\|
	O_{s}
	-
	\OM{s}
      \big\|_{ \L^{2p}(\P;\BaMi) }
    \right)
  \bigg]
\\&<
  \infty .
\end{split}
\end{align}
Next note that the triangle inequality,
the assumption that
$
  \BaMi \subseteq H
$ continuously,
\eqref{eq:X_YM_diff_assumption}, 
\eqref{eq:strong_err_1},
and~\eqref{eq:strong_err_6} prove
\begin{align}
\begin{split}
 &\sup_{ M \in \N }
  \sup_{ t \in [0,T] }
  M^{\min\{\cp, \spp, \np \}}
  \big\|
    X_t - \YM{t}
  \big\|_{ \L^p(\P;H) }
\\&\leq
  \sup_{ M \in \N }
  \sup_{ t \in [0,T] }
  M^{\min\{\cp, \spp, \np \}}
  \left[
    \big\|
      \XO{t} - \YOM{t}
    \big\|_{ \L^p(\P;H) }
    +
    \big\|
      O_t - \OM{t}
    \big\|_{ \L^p(\P;H) }
  \right]
\\&\leq
  \sup_{ M \in \N }
  \sup_{ t \in [0,T] } 
  M^{\min\{\cp, \spp, \np \}}
  \big\|
    \XO{t} - \YXM{t}
  \big\|_{ \L^p(\P;H) }
\\&\quad+
  \left[
    \sup_{ v \in \BaMi \backslash \{0\} }
    \frac{ \| v \|_H }{ \| v \|_{\BaMi} }
  \right] 
  \sup_{ M \in \N }
  \sup_{ t \in [0,T] } 
  M^{\min\{\cp, \spp \}}
  \big\|
    \YXM{t} - \YOM{t}
  \big\|_{\L^{p\max\{2,\qp\}}\!(\P;\BaMi)}
\\&\quad+
  \left[
    \sup_{ v \in \BaMi \backslash \{0\} }
    \frac{ \| v \|_H }{ \| v \|_{\BaMi} }
  \right] 
  \sup_{ M \in \N }
  \sup_{ t \in [0,T] } 
  M^{ \np }
  \big\|
    O_t - \OM{t}
  \big\|_{\L^{p\max\{2,\qp\}}\!(\P;\BaMi)}
  <
  \infty .
\end{split}
\end{align}
This,
the triangle inequality,
the assumption that
$
  \BaMi \subseteq H
$ continuously,
and \eqref{eq:X_YM_diff_assumption}
assure that
\begin{align}
\begin{split}
  \sup_{ t \in [0,T] }
  \big\|
    X_t
  \big\|_{ \L^p(\P;H) }
 &\leq
  \sup_{ M \in \N }
  \sup_{ t \in [0,T] }
  \left[
    \big\|
      X_t - \YM{t}
    \big\|_{ \L^p(\P;H) }
    +
    \left[
      \sup_{ v \in \BaMi \backslash \{0\} }
      \frac{ \| v \|_H }{ \| v \|_{\BaMi} }
    \right] 
    \big\|
      \YM{t}
    \big\|_{ \L^{p\max\{1,\qp\}}\!(\P;\BaMi) }
  \right]
  <
  \infty .
\end{split}
\end{align}
The proof of Proposition~\ref{prop:X_YM_Lp_diff} is thus completed.
\end{proof}

\section{Main result}
\label{sec:main_result}

\subsection{Setting}
\label{sec:main_result_setting}
Let $ T, c, \qmm, \qm, \cm \in (0,\infty) $,
let $ ( H, \left< \cdot, \cdot \right>_H, \left\| \cdot \right\|_H ) $
be a separable $ \R $-Hilbert space,
let $ A \colon D(A) \subseteq H \rightarrow H $
be a generator of an analytic semigroup 
with 
$ 
  \textup{spectrum}(A) 
  \subseteq 
  \{
    z \in \mathbb{C} 
    \colon 
    \textup{Re}(z) < 0
  \}
$,
let $ ( H_r, \left< \cdot, \cdot \right>_{ H_r }, \left\| \cdot \right\|_{ H_r } ) $, 
$ r \in \R $, be a family of interpolation spaces associated to $ -A $
(cf., e.g., Definition~3.5.26 in~\cite{Jentzen2015SPDElecturenotes}),
let $ (\BaMi, \left\| \cdot \right\|_{\BaMi}) $
be a separable $ \R $-Banach space with 
$ H_{1} \subseteq \BaMi \subseteq H $ 
densely and continuously,
let $ F \in \C( \BaMi, H ) $,
$ \Um, \Rm \in \M\big( \B(\BaMi), \B([0,\infty)) \big) $,
$ (\SM)_{M\in\N} \subseteq \M\big( \B(\{ (s, t) \in [0,T]^2 \colon s < t \}), \B(L(H, \BaMi) ) \big) $
satisfy for all $ u, v, x, y \in \BaMi $,
$ M \in \N $,
$ r_1,r_2,r_3 \in [0,T] $,
$ s \in \{ 0, \frac{T}{M}, \frac{2T}{M}, \ldots, \frac{(M-1)T}{M} \} $,
$ t \in (s,s+\frac{T}{M} ] $ with
$ x-y \in H_1 $ and
$ r_1 < r_2 < r_3 $ 
that
$ 
  \left\| F(u + v) \right\|_{ H } 
  \leq 
  c\,( 1 + |\Um(u)|^{\qm} + |\Um(v)|^{\qm})
$,
$
  \left\|
    F(x) - F(y)
  \right\|_{H}^2
  \leq
  c
  \left\|
    x - y
  \right\|_V^2
  \left(
    1
    +
    \left\| x \right\|_{\BaMi}^{\qmm}
    +
    \left\| y \right\|_{\BaMi}^{\qmm}
  \right)
$,
$
  \left<
    x - y,
    A(x-y) + F(x)
    - F(y)
  \right>_H
  \leq
  c
  \left\| 
    x - y 
  \right\|_H^2
$,
$ 
  \SM_{r_1,r_3}
  =
  \SM_{r_2,r_3}
  \SM_{r_1,r_2}
$,
and
\begin{align}
\label{eq:assume_u_main}
\begin{split}
 &\Um\!\left(
    \SM_{s,t}
    \left[
      u
      +
      (t-s) \,
      \one_{ 
	[0, (M/T)^{\cm} ]
      }
      ( \| u+v \|_{ \BaMi } ) \,
      F( u + v )
    \right]
  \right)
  \leq
  e^{c(t-s)}
  \left[
    \Um(u)
    +
    (t-s)
    \Rm(v)
  \right], 
\end{split}
\end{align}
let $ (\Omega, \F, \P ) $ 
be a probability space,
let $ O, X \colon [0,T] \times \Omega \rightarrow \BaMi $ 
be stochastic processes with continuous sample paths,
let $ \OM{}, \YM{} \colon [0,T] \times \Omega \rightarrow \BaMi $,
$ M \in \N $, be stochastic processes,
and assume for all 
$ t \in [0,T] $,
$ M \in \mathbb{N} $ that
$
  \P\big(
    \limsup_{ r \searrow 0 }
    \sup_{ 0 \leq s < u \leq T }
    \frac{ 
      s \| O_u - O_s \|_{ \BaMi } 
    }{
      (u-s)^{r}
    }
    <
    \infty
  \big)
  =
  \P\big(
    X_t
    =
    \int_0^t
    e^{(t-s)A}
    F(X_s) \, ds
    +
    O_t
  \big)
  =
  \P\big(
    \YM{t}
    =
    \int_0^t
    \SM_{\fl{s},t} \,
    \one_{ 
      \{
	\| \YM{\fl{s}} \|_{\BaMi}
	\leq
	(M/T)^{\cm}
      \}
    } \,
    F( \YM{\fl{s}} ) \, ds
    +
    \OM{t}
  \big)
  =
  1
$.

\subsection{A priori bounds}

\begin{corollary}
\label{cor:YM_bound}
Assume the setting in Section~\ref{sec:main_result_setting},
let $ p \in [\max\{1,\nicefrac{1}{\qm}\}, \infty) $, 
$ \sm \in [0,1) $, and assume that
$
  \sup_{ M \in \N }
  \sup_{ 0 \leq s < t < T }
  \left(t-s\right)^{\sm}
  \|
    \SM_{\fl{s},t}
  \|_{ L(H,\BaMi) }
  +
  \sup_{ M \in \N }
  \sup_{ t \in [0,T) }
  \|
    \Um( \OM{t} )
    +
    \Rm( \OM{t} )
  \|_{ \L^{p\qm}(\P;\R) }
$
$
  +
  \sup_{ M \in \N }
  \sup_{ t \in [0,T] }
  \|
    \OM{t}
  \|_{ \L^{p}(\P;\BaMi) }
  <
  \infty
$. Then 
$
  \sup_{M \in \N}
  \sup_{t\in[0,T]}
  \|
    \YM{t}
  \|_{ \L^p(\P;\BaMi) }
  <
  \infty
$.
\end{corollary}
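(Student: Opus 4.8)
The plan is to fix $ M \in \N $ and to view the defining equation of $ \YM{} $ as a special case of the abstract one-step scheme analysed in Section~\ref{sec:a_priori}, so that the $ \L^p $ a priori estimate of Corollary~\ref{cor:a_priori_Lp} becomes available, and then to check that every constant occurring in that estimate is bounded uniformly in $ M $. Concretely, I would instantiate the setting of Section~\ref{sec:a_priori_setting} by taking the space $ \BaLa $ there to be $ H $, both constants $ c $ and $ C $ there to be the constant $ c $ of Section~\ref{sec:main_result_setting}, $ \qf $ to be $ \qm $, $ \S $ to be $ \SM $, $ \Um $ and $ \Rm $ unchanged, $ \Vm $ to be the measurable map $ \BaMi \ni w \mapsto \| w \|_{\BaMi}^{1/\cm} \in [0,\infty) $, $ O $ to be $ \OM{} $, and $ \Yb{} $ to be the process defined by $ \Yb{0} := 0 $ and $ \Yb{t} := \int_0^t \SM_{\fl{s},t} \, \one_{\{ \Vm(\Yb{\fl{s}} + \OM{\fl{s}}) \leq M/T \}} \, F(\Yb{\fl{s}} + \OM{\fl{s}}) \, ds $ for $ t \in (0,T] $ (the right-hand side depending only on the grid values $ \Yb{0}, \Yb{T/M}, \ldots $, which are defined recursively); by construction this $ \Yb{} $ satisfies~\eqref{eq:bootstrap_Y} identically, and, by induction over the grid points using $ \P(\YM{t} = \int_0^t \SM_{\fl{s},t}\one_{\{\ldots\}}F(\YM{\fl{s}})\,ds + \OM{t}) = 1 $, it agrees $ \P $-a.s.\ with $ \YM{} - \OM{} $ (in particular $ \Yb{0} = \YM{0} - \OM{0} = 0 $ $ \P $-a.s.). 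The single elementary point making this identification work is that $ \| w \|_{\BaMi}^{1/\cm} \leq M/T $ if and only if $ \| w \|_{\BaMi} \leq (M/T)^{\cm} $, which turns the sublevel-set truncation in~\eqref{eq:assume_u} into the norm-ball truncation in~\eqref{eq:assume_u_main}; granted this, the growth bound $ \| F(u+v) \|_{\BaLa} \leq C(1 + |\Um(u)|^{\qf} + |\Um(v)|^{\qf}) $, the flow identity for $ \S $, and~\eqref{eq:assume_u} are precisely the corresponding hypotheses of Section~\ref{sec:main_result_setting}. Since Corollary~\ref{cor:a_priori_Lp} concerns only $ \L^p $-norms and $ \| \Yb{t} \|_{\L^p(\P;\BaMi)} = \| \YM{t} - \OM{t} \|_{\L^p(\P;\BaMi)} $, the $ \P $-a.s.\ formulation used in Section~\ref{sec:main_result_setting} causes no difficulty.

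Because $ p \in [\max\{1,\nicefrac{1}{\qm}\},\infty) $ and $ \sm \in [0,1) $, Corollary~\ref{cor:a_priori_Lp} then applies, and as its first summand vanishes (the relevant initial value equals $ 0 $) it yields for every $ M \in \N $ and every $ t \in (0,T] $ that
\[
  \big\| \YM{t} - \OM{t} \big\|_{\L^p(\P;\BaMi)}
  \leq
  \frac{ c \, t^{1-\sm} }{ 1 - \sm }
  \Big[ \sup\nolimits_{s\in[0,t)} (t-s)^{\sm} \big\| \SM_{\fl{s},t} \big\|_{ L(H,\BaMi) } \Big]
  \Big( 1 + \sup\nolimits_{s\in[0,t)} \big\| \Um(\OM{s}) \big\|_{\L^{p\qm}(\P;\R)}^{\qm}
  + 2^{[\qm-1]^+} e^{ c \qm t } \big[ |\Um(0)|^{\qm} + t^{\qm} \sup\nolimits_{s\in[0,t)} \big\| \Rm(\OM{s}) \big\|_{\L^{p\qm}(\P;\R)}^{\qm} \big] \Big) .
\]
I would then bound $ t^{1-\sm} \leq \max\{1,T^{1-\sm}\} $ and $ e^{c\qm t} \leq e^{c\qm T} $, estimate $ \| \Um(\OM{s}) \|_{\L^{p\qm}(\P;\R)} $ and $ \| \Rm(\OM{s}) \|_{\L^{p\qm}(\P;\R)} $ from above by $ \| \Um(\OM{s}) + \Rm(\OM{s}) \|_{\L^{p\qm}(\P;\R)} $ (using that $ \Um $ and $ \Rm $ are $ [0,\infty) $-valued), and invoke the three finiteness hypotheses of Corollary~\ref{cor:YM_bound} to conclude that $ \sup_{M\in\N} \sup_{t\in(0,T]} \| \YM{t} - \OM{t} \|_{\L^p(\P;\BaMi)} < \infty $; the case $ t = 0 $ is trivial since $ \YM{0} = \OM{0} $ $ \P $-a.s. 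Finally, the triangle inequality $ \| \YM{t} \|_{\L^p(\P;\BaMi)} \leq \| \YM{t} - \OM{t} \|_{\L^p(\P;\BaMi)} + \| \OM{t} \|_{\L^p(\P;\BaMi)} $, together with the assumed bound on $ \sup_{M\in\N}\sup_{t\in[0,T]}\| \OM{t} \|_{\L^p(\P;\BaMi)} $, gives the assertion.

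I do not expect any genuine analytic obstacle: the substance is entirely contained in Section~\ref{sec:a_priori}, and what remains is bookkeeping. The two points requiring attention are (i) choosing $ \Vm = \| \cdot \|_{\BaMi}^{1/\cm} $ so that the abstract sublevel-set truncation matches the norm-ball truncation of Section~\ref{sec:main_result_setting}, and (ii) verifying that each of the finitely many factors in the estimate of Corollary~\ref{cor:a_priori_Lp} is controlled uniformly in $ M $ by one of the three assumed suprema; the passage from the $ \P $-a.s.\ identities of Section~\ref{sec:main_result_setting} to the everywhere identities of Section~\ref{sec:a_priori_setting} is a routine modification argument that does not affect $ \L^p $-norms.
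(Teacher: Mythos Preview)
Your proposal is correct and follows essentially the same route as the paper: introduce an auxiliary process satisfying the recursion identically (the paper calls it $\YMM{}$), apply Corollary~\ref{cor:a_priori_Lp} with $\Yb{t}=\YMM{t}-\OM{t}$, $\Yb{0}=0$, and $\Vm(w)=\|w\|_{\BaMi}^{1/\cm}$, and then use the triangle inequality together with the assumed $\L^{p}$-bound on $\OM{}$. Your explicit remarks on the choice of $\Vm$ and on the $\P$-a.s.\ versus pointwise identities spell out exactly what the paper leaves implicit.
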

\begin{proof}[Proof of Corollary~\ref{cor:YM_bound}]
Throughout this proof let
$ \YMM{} \colon [0,T] \times \Omega \rightarrow \BaMi $,
$ M \in \N $,
be the stochastic processes which satisfy
for all $ t \in [0,T] $, $ M \in \N $ that
\begin{align}
\begin{split}
 &\YMM{t}
  =
  \int_0^t
  \SM_{\fl{s},t} \,
  \one_{ 
    \{
      \| \YMM{\fl{s}} \|_{\BaMi}
      \leq
      (M/T)^{\cm}
    \}
  } \,
  F( \YMM{\fl{s}} ) \, ds
  +
  \OM{t} .
\end{split}
\end{align}
Next note that the triangle inequality,
the fact that
$
  \forall \, 
  t \in [0,T]
$,
$ 
  M \in \N
  \colon
  \P(
    \YM{t} = \YMM{t}
  )
  = 
  1
$,
and Corollary~\ref{cor:a_priori_Lp}
(with $ \Yb{t} = \YMM{t} - \OM{t} $ for $ t \in (0,T], M \in \N $
in the notation of Corollary~\ref{cor:a_priori_Lp}) prove that
\begin{align}
\begin{split}
 &\sup_{ M \in \N }
  \sup_{ t \in [0,T] }
  \big\|
    \YM{t}
  \big\|_{ \L^{p}(\P;\BaMi) }
  =
  \sup_{ M \in \N }
  \sup_{ t \in [0,T] }
  \big\|
    \YMM{t}
  \big\|_{ \L^{p}(\P;\BaMi) }
\\&\leq
  \sup_{ M \in \N }
  \sup_{ t \in [0,T] }
  \big\|
    \OM{t}
  \big\|_{ \L^{p}(\P;\BaMi) }
  +
  \sup_{ M \in \N }
  \sup_{ t \in (0,T] }
  \big\|
    \YMM{t}
    -
    \OM{t}
  \big\|_{ \L^{p}(\P;\BaMi) }
\\&\leq
  \sup_{ M \in \N }
  \sup_{ t \in [0,T] }
  \big\|
    \OM{t}
  \big\|_{ \L^{p}(\P;\BaMi) }
  +
  \frac{c T^{(1-\sm)}}{(1-\sm)}
  \left[  
    \sup_{ M \in \N }
    \sup_{ 0 \leq s < t < T }
    \left(t-s\right)^{\sm}
    \big\|
      \SM_{\fl{s},t}
    \big\|_{ L(H,\BaMi) }
  \right]
\\&\cdot
  \left(
    1
    +
    \sup_{ M \in \N }
    \sup_{ t \in [0,T) }
    \big\|
      \Um( \OM{t} )
    \big\|_{ \L^{p\qm}(\P;\R) }^{\qm}
    +
    2^{[\qm-1]^+}
    e^{c\qm T}
    \left[
      \big|
	\Um(0)
      \big|^{\qm}
      +
      T^{\qm}
      \sup_{ M \in \N }
      \sup_{ t \in [0,T) }
      \big\|
	\Rm( \OM{t} )
      \big\|_{ \L^{p\qm}(\P;\R) }^{\qm}
    \right]
  \right)
  <
  \infty .
\end{split}
\end{align}
The proof of Corollary~\ref{cor:YM_bound} is thus completed.
\end{proof}

\begin{corollary}
\label{cor:F1_bound}
Assume the setting in Section~\ref{sec:main_result_setting},
let $ p \in [\max\{1,\nicefrac{1}{\qm}\}, \infty) $, and assume that
$
  \sup_{ M \in \N }
  \sup_{ t \in [0,T) }
  \|
    \Um( \OM{t} )
    +
    \Rm( \OM{t} )
  \|_{ \L^{p\qm}(\P;\R) }
  <
  \infty
$. Then 
$
  \sup_{M \in \N}
  \sup_{t\in[0,T)}
  \|
    F( \YM{t} )
  \|_{ \L^p(\P;H) }
  <
  \infty
$. 
\end{corollary}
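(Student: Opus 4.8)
The plan is to derive the claim directly from Lemma~\ref{lem:bootstrap_F}, which bounds $\| F(\cdot) \|_{W}$ for the integrated approximation process purely in terms of the Ornstein--Uhlenbeck-type input process. As in the proof of Corollary~\ref{cor:YM_bound}, I would first introduce auxiliary stochastic processes $\YMM{} \colon [0,T] \times \Omega \to \BaMi$, $M \in \N$, that satisfy the fixed-point equation defining $\YM{}$ for \emph{every} $t \in [0,T]$ (not merely $\P$-a.s.); such processes are uniquely determined pathwise by the step-by-step recursion along $\{ 0, \tfrac{T}{M}, \ldots, T \}$, they satisfy $\P(\YM{t} = \YMM{t}) = 1$ for all $t \in [0,T]$, $M \in \N$, and in particular $\YMM{0} = \OM{0}$.

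Next I would verify that, for each fixed $M \in \N$, the setting of Section~\ref{sec:a_priori_setting} is in force with $\BaLa := H$, $C := c$, $\qf := \qm$, with $\Um, \Rm, F$ as given, with $\S := \SM$, with $\Vm := \big( \BaMi \ni w \mapsto \| w \|_{\BaMi}^{\nicefrac{1}{\cm}} \in [0,\infty) \big)$, with $O := \OM{}$, and with $\Yb{} := \YMM{} - \OM{}$. The growth bound on $F$ and the evolution-semigroup identity for $\SM$ are immediate from Section~\ref{sec:main_result_setting}; the two points needing attention are that (i) the Lyapunov-type condition \eqref{eq:assume_u} is literally \eqref{eq:assume_u_main} for this $\Vm$, because $\one_{[0,M/T]}(\Vm(w)) = \one_{[0,(M/T)^{\cm}]}(\| w \|_{\BaMi})$ for every $w \in \BaMi$, and that (ii) the integral equation \eqref{eq:bootstrap_Y} holds for $\Yb{}$, because $\Yb{\fl{s}} + O_{\fl{s}} = \YMM{\fl{s}}$ makes the truncation indicator and the argument of $F$ in \eqref{eq:bootstrap_Y} coincide with those in the equation defining $\YMM{}$, while $\SM_{0,t} \Yb{0} = \SM_{0,t} \, 0 = 0$.

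Granting this, Lemma~\ref{lem:bootstrap_F} gives for all $M \in \N$, $t \in [0,T]$ that
\begin{align*}
  \big\| F( \YMM{t} ) \big\|_H
  \leq
  c
  \left(
    1
    +
    2^{[\qm-1]^+}
    e^{ c \qm t }
    \left[
      | \Um(0) |^{\qm}
      +
      \Big|
        \smallint_0^t
        \Rm( \OM{\fl{s}} ) \, ds
      \Big|^{\qm}
    \right]
    +
    | \Um( \OM{t} ) |^{\qm}
  \right) .
\end{align*}
Then I would take $\L^p(\P;\R)$-norms, apply the triangle inequality in $\L^p(\P;\R)$ and Minkowski's integral inequality to the $ds$-integral (admissible since $p \qm \geq 1$, as $p \geq \nicefrac{1}{\qm}$), bound $e^{c\qm t} \leq e^{c\qm T}$ and $\fl{s} < T$ for $s \leq t < T$, and use $\| F(\YM{t}) \|_{\L^p(\P;H)} = \| F(\YMM{t}) \|_{\L^p(\P;H)}$ to obtain for all $M \in \N$, $t \in [0,T)$ that
\begin{align*}
  \big\| F( \YM{t} ) \big\|_{ \L^p( \P; H ) }
  \leq
  c
  \left(
    1
    +
    2^{[\qm-1]^+}
    e^{ c \qm T }
    \left[
      | \Um(0) |^{\qm}
      +
      T^{\qm}
      \sup_{ s \in [0,T) }
      \big\| \Rm( \OM{s} ) \big\|_{ \L^{p\qm}(\P;\R) }^{\qm}
    \right]
    +
    \sup_{ s \in [0,T) }
    \big\| \Um( \OM{s} ) \big\|_{ \L^{p\qm}(\P;\R) }^{\qm}
  \right) .
\end{align*}
Since $\Um$ and $\Rm$ take values in $[0,\infty)$, the hypothesis $\sup_{M \in \N} \sup_{t \in [0,T)} \| \Um(\OM{t}) + \Rm(\OM{t}) \|_{\L^{p\qm}(\P;\R)} < \infty$ bounds both suprema appearing on the right-hand side, and taking $\sup_{M \in \N} \sup_{t \in [0,T)}$ then completes the proof.

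No step here is genuinely difficult: the argument is essentially a reindexing of Lemma~\ref{lem:bootstrap_F}. The only real pitfalls are the two translations between the settings highlighted above --- picking $\Vm$ as a power of $\| \cdot \|_{\BaMi}$ so that the truncation indicators agree, and handling the $\P$-a.s.\ versus everywhere-valid distinction via the auxiliary processes $\YMM{}$, exactly as in the proof of Corollary~\ref{cor:YM_bound}.
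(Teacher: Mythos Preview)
Your proposal is correct and follows essentially the same route as the paper: introduce the everywhere-defined auxiliary processes $\YMM{}$, apply Lemma~\ref{lem:bootstrap_F} with $\Yb{t} = \YMM{t} - \OM{t}$ (so $\Yb{0}=0$) and the choice $\Vm(w)=\|w\|_{\BaMi}^{1/\cm}$, then pass to $\L^p$-norms via the triangle and Minkowski inequalities. Your write-up is in fact more explicit than the paper's about why the setting of Section~\ref{sec:a_priori_setting} is instantiated correctly, but there is no substantive difference in the argument.
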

\begin{proof}[Proof of Corollary~\ref{cor:F1_bound}]
Throughout this proof let
$ \YMM{} \colon [0,T] \times \Omega \rightarrow \BaMi $,
$ M \in \N $,
be the stochastic processes which satisfy
for all $ t \in [0,T] $, $ M \in \N $ that
\begin{align}
\begin{split}
 &\YMM{t}
  =
  \int_0^t
  \SM_{\fl{s},t} \,
  \one_{ 
    \{
      \| \YMM{\fl{s}} \|_{\BaMi}
      \leq
      (M/T)^{\cm}
    \}
  } \,
  F( \YMM{\fl{s}} ) \, ds
  +
  \OM{t} .
\end{split}
\end{align}
In the next step observe that
Lemma~\ref{lem:bootstrap_F} 
(with $ \Yb{t} = \YMM{t} - \OM{t} $ for $ t \in [0,T], M \in \N $
in the notation of Lemma~\ref{lem:bootstrap_F}) ensures 
that for all $ t \in [0,T] $, $ M \in \N $
it holds that
\begin{align}
\begin{split}
  \big\|
    F( \YMM{t} )
  \big\|_H
 &\leq
  c
  \left( 
    1 
    + 
    2^{[\qm-1]^+}
    e^{c \qm t}
    \left[ 
      \left|
	\Um( 0 )
      \right|^{\qm}
      +
      \left|
	\smallint_0^t
	\Rm( \OM{\fl{s}} ) \, ds
      \right|^{\qm}
    \right] 
    + 
    |\Um(\OM{t})|^{\qm}
  \right) .
\end{split}
\end{align}
This and
the fact that
$
  \forall \, 
  t \in [0,T]
$,
$ 
  M \in \N
  \colon
  \P(
    \YM{t} = \YMM{t}
  )
  = 
  1
$
imply that
\begin{align}
\label{eq:F_Lp_bound}
\begin{split}
 &\sup_{ M \in \N }
  \sup_{ t \in [0,T) }
  \big\|
    F( \YM{t} )
  \big\|_{ \L^{p}(\P;H) }
  =
  \sup_{ M \in \N }
  \sup_{ t \in [0,T) }
  \big\|
    F( \YMM{t} )
  \big\|_{ \L^{p}(\P;H) }
\\&\leq
  \sup_{ M \in \N }
  \sup_{ t \in [0,T) }
  \left\{ 
    c
    \left( 
      1
      + 
      2^{[\qm-1]^+}
      e^{c \qm t}
      \left[ 
	\left|
	  \Um( 0 )
	\right|^{\qm}
	+
	t^{\qm}
	\sup_{ s \in [0,t) }
	\big\|
	  \Rm( \OM{\fl{s}} )
	\big\|_{ \L^{p\qm}(\P;\R) }^{\qm}
      \right]
      +
      \left\|
	\Um( \OM{t} )
      \right\|_{ \L^{p\qm}(\P;\R) }^{\qm}
    \right)
  \right\}
\\&\leq
  c
  \left( 
    1 
    + 
    2^{[\qm-1]^+}
    e^{c \qm T}
    \left[ 
      \left|
	\Um( 0 )
      \right|^{\qm}
      +
      T^{\qm}
      \sup_{ M \in \N }
      \sup_{ t \in [0,T) }
      \big\|
	\Rm( \OM{\fl{t}} )
      \big\|_{ \L^{p\qm}(\P;\R) }^{\qm}
    \right]
    +
    \sup_{ M \in \N }
    \sup_{ t \in [0,T) }
    \left\|
      \Um( \OM{t} )
    \right\|_{ \L^{p\qm}(\P;\R) }^{\qm}
  \right)
\\&<
  \infty .
\end{split}
\end{align}
The proof of Corollary~\ref{cor:F1_bound} is thus completed.
\end{proof}

\subsection{Main result}
\label{sec:main_result_theorem}

\begin{theorem}
\label{thm:main}
Assume the setting in 
Section~\ref{sec:main_result_setting},
let $ \nmm \in (0,\infty) $,
$ \nm \in (0,\nicefrac{1}{2}) $,
$ \sm \in [0,1) $,
$ \smm \in [0, 1-\sm) $,
$ 
  p 
  \in 
  [
    \max\{
      2,
      \frac{1}{\qmm},
      \frac{1}{2\qm}
      \min\{
	1,
	\frac{2}{\qmm},
	\frac{1}{2\nmm},
	\frac{1}{\nmm\qmm}
      \}
    \}
    ,\infty
  ) 
$,
and assume that 
\begin{align}
\label{eq:main_assumption}
\begin{split}
 &\sup_{ M \in \N }
  \sup_{ 0 \leq s < t \leq T }
  \left[
    t^{\sm}
    \big\|
      e^{tA}
    \big\|_{ L(H,\BaMi) }
    +
    \left(t-s\right)^{\sm}
    \big\|
      \SM_{\fl{s},t}
    \big\|_{ L(H,\BaMi) }
    +
    M^{\smm}
    \int_0^t
    \big\|
      e^{(t-u)A}
      -
      \SM_{\fl{u},t}
    \big\|_{ L(H,\BaMi) } \, du
  \right]
\\&+ 
  \sup_{ M \in \N }
  \sup_{ t \in [0,T] }
  \left[
    \left\|
      \Um( \OM{t} )
      +
      \Rm( \OM{t} )
    \right\|_{ \L^{2p\qm\max\{2,\qmm, 2\nmm, \nmm \qmm\}}\!(\P;\R) }
    +
    \big|M \fl{t}\big|^{\nm}
    \big\|
      O_t
      -
      O_{\fl{t}}
    \big\|_{ \L^{2p}(\P;\BaMi) }
  \right]
\\&+ 
  \sup_{ M \in \N }
  \sup_{ t \in [0,T] }
  \left\|
    \|  O_{t} \|_{ \BaMi }
    +
    M^{\nm}
    \|
      O_{t}
      -
      \OM{t}
    \|_{ \BaMi }
  \right\|_{ \L^{p\max\{2,\qmm, 4 \nmm,2\nmm \qmm\}}\!(\P;\R) }
  <
  \infty .
\end{split}
\end{align}
Then
\begin{align}
\label{eq:main_assert}
  \sup_{ M \in \N }
  \sup_{ t \in [0,T] }
  \left[
    \big\|
      \YM{t}
    \big\|_{ \L^{p\max\{2,\qmm, 4\nmm,2\nmm\qmm\}}\!(\P;\BaMi) }
    +
    \big\|
      X_t
    \big\|_{ \L^p(\P;H) }
    +
    M^{\min\{\nmm\cm, \smm, \nm \}}
    \big\|
      X_t - \YM{t}
    \big\|_{ \L^p(\P;H) }
  \right]
  <
  \infty .
\end{align}
\end{theorem}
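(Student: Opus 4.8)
The plan is to obtain Theorem~\ref{thm:main} by combining the a~priori moment bounds of Corollary~\ref{cor:YM_bound} and Corollary~\ref{cor:F1_bound} with the strong $\L^p$-error estimate of Proposition~\ref{prop:X_YM_Lp_diff}. Note first that Corollary~\ref{cor:YM_bound} and Corollary~\ref{cor:F1_bound} are already phrased in the setting of Section~\ref{sec:main_result_setting}, whereas Proposition~\ref{prop:X_YM_Lp_diff} lives in the setting of Section~\ref{sec:strong_setting}; the only, essentially notational, bridge is to introduce $\Vm \in \M(\B(\BaMi),\B([0,\infty)))$ via $\Vm(v) := \|v\|_{\BaMi}^{1/\cm}$ --- measurable because $\cm \in (0,\infty)$ --- so that $\{\Vm(v) \le M/T\} = \{\|v\|_{\BaMi} \le (M/T)^{\cm}\}$ for all $v \in \BaMi$, $M \in \N$. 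With this $\Vm$, the equation for $\YM{}$ in Section~\ref{sec:main_result_setting} becomes the one in Section~\ref{sec:strong_setting}, and $|\Vm(\YM{t})|^{r} = \|\YM{t}\|_{\BaMi}^{r/\cm}$ for every $r \in (0,\infty)$. Since every quantity in~\eqref{eq:main_assert} is an $\L^p(\P;\cdot)$-norm, we may additionally replace $O$, $\OM{}$, $X$, $\YM{}$ by $\P$-modifications for which the pathwise Hölder-type condition on $O$ --- and hence all standing assumptions of Section~\ref{sec:pathwise_setting} and Section~\ref{sec:strong_setting}, with $C := c$, $L := c$, $\qp := \qmm$, $\sp := \sm$ --- hold for every $\omega \in \Omega$.

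The first substantive step is the a~priori moment bound on $\YM{}$. Put $q := p\max\{2,\qmm,4\nmm,2\nmm\qmm\}$. I would apply Corollary~\ref{cor:YM_bound} with its index taken to be $q$ and its parameter $\sm$ taken to be $\sm$: the supremum of $(t-s)^{\sm}\|\SM_{\fl{s},t}\|_{L(H,\BaMi)}$ is controlled by~\eqref{eq:main_assumption}, the bound $\sup_{M,t}\|\OM{t}\|_{\L^q(\P;\BaMi)} < \infty$ follows from $\|\OM{t}\|_{\BaMi} \le \|O_t\|_{\BaMi} + M^{\nm}\|O_t - \OM{t}\|_{\BaMi}$ together with~\eqref{eq:main_assumption}, and $\sup_{M,t}\|\Um(\OM{t}) + \Rm(\OM{t})\|_{\L^{q\qm}(\P;\R)} < \infty$ because $1 \le q\qm \le 2p\qm\max\{2,\qmm,2\nmm,\nmm\qmm\}$; here the lower estimate $q\qm \ge 1$ is precisely what the (intricate) lower bound imposed on $p$ secures. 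This gives $\sup_{M\in\N}\sup_{t\in[0,T]}\|\YM{t}\|_{\L^q(\P;\BaMi)} < \infty$, the first summand in~\eqref{eq:main_assert}. Analogously, Corollary~\ref{cor:F1_bound} --- or, for those parameter ranges in which the relevant $L^r$-index falls below $1$, the pathwise estimate of Lemma~\ref{lem:bootstrap_F} applied to $\Yb{t} := \YM{t} - \OM{t}$ (so that $\Yb{0} = 0$), combined with the monotonicity of $L^r$-norms over $(\Omega,\F,\P)$ and~\eqref{eq:main_assumption} --- yields $\sup_{M\in\N}\sup_{t\in[0,T)}\|F(\YM{t})\|_{\L^{2p\max\{2,\qmm\}}(\P;H)} < \infty$.

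The second substantive step is to invoke Proposition~\ref{prop:X_YM_Lp_diff} with $\cp := \nmm\cm$, $\np := \nm$, $\spp := \smm$ and the ambient $p$; these lie in the required ranges, since $\nmm\cm \in (0,\infty)$, $\nm \in (0,\nicefrac{1}{2})$, $\smm \in [0,1-\sm)$ and $p \in [\max\{2,\nicefrac{1}{\qmm}\},\infty)$. I would then verify~\eqref{eq:X_YM_diff_assumption} clause by clause: the $\YM{}$-moment clause is the bound from the previous paragraph, because $q \ge p\max\{1,\qmm\}$; the clause $\||\Vm(\YM{t})|^{2\cp} + \|F(\YM{t})\|_H^2\|_{\L^{p\max\{2,\qmm\}}(\P;\R)} < \infty$ holds since $|\Vm(\YM{t})|^{2\cp} = \|\YM{t}\|_{\BaMi}^{2\nmm}$ with $2\nmm p\max\{2,\qmm\} = p\max\{4\nmm,2\nmm\qmm\} \le q$ and since $F(\YM{t}) \in \L^{2p\max\{2,\qmm\}}(\P;H)$ uniformly in $M,t$; the $\SM$-defect clause $\sup_{M,t}M^{\smm}\int_0^t\|e^{(t-u)A} - \SM_{\fl{u},t}\|_{L(H,\BaMi)}\,du < \infty$, the $O$-increment clause, and the $O - \OM{}$ clause are read off directly from~\eqref{eq:main_assumption}, each of whose integrability indices dominates the one needed here. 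Proposition~\ref{prop:X_YM_Lp_diff} then delivers $\sup_{M\in\N}\sup_{t\in[0,T]}\big[\|X_t\|_{\L^p(\P;H)} + M^{\min\{\cp,\spp,\np\}}\|X_t - \YM{t}\|_{\L^p(\P;H)}\big] < \infty$, and since $\min\{\cp,\spp,\np\} = \min\{\nmm\cm,\smm,\nm\}$, this together with the $\YM{}$-bound from the previous paragraph is exactly~\eqref{eq:main_assert}.

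I expect the difficulty here to be entirely bookkeeping rather than analysis: the monotonicity/Lyapunov a~priori estimates (Lemma~\ref{lem:variational_a_priori}, Corollary~\ref{cor:a_priori_Lp}), the pathwise $D(A)$-regularity, and the subtraction-of-the-Ornstein--Uhlenbeck-process $\L^p$-error analysis (Lemma~\ref{lem:YX_YO_diff}, Lemma~\ref{lem:YX_YO_diff_Lp}, Proposition~\ref{prop:X_YM_Lp_diff}) are already available. The care lies in (i) propagating the factor $\cm$ correctly through the substitution $\Vm = \|\cdot\|_{\BaMi}^{1/\cm}$, so that the rate $\min\{\nmm\cm,\smm,\nm\}$ of~\eqref{eq:main_assert} arises from the choice $\cp = \nmm\cm$ (rather than $\cm$) in Proposition~\ref{prop:X_YM_Lp_diff}, and (ii) checking that the single index $p\max\{2,\qmm,4\nmm,2\nmm\qmm\}$ occurring in~\eqref{eq:main_assumption} and~\eqref{eq:main_assert}, under the stated lower bound on $p$, is simultaneously adequate for Corollary~\ref{cor:YM_bound}, for the bound on $F(\YM{})$, and for each clause of~\eqref{eq:X_YM_diff_assumption}, and that all $\L^r$-norms actually invoked are taken over indices $r \ge 1$. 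No new inequalities are needed.
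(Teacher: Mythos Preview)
Your proposal is correct and follows essentially the same route as the paper: introduce $\Vm(v)=\|v\|_{\BaMi}^{1/\cm}$, pass to $\P$-modifications so that the standing assumptions of Sections~\ref{sec:pathwise_setting} and~\ref{sec:strong_setting} hold pointwise, apply Corollary~\ref{cor:YM_bound} and Corollary~\ref{cor:F1_bound} to obtain the a~priori bounds on $\YM{}$ and $F(\YM{})$, and then invoke Proposition~\ref{prop:X_YM_Lp_diff} with $\cp=\nmm\cm$, $\np=\nm$, $\spp=\smm$. The paper is only more explicit about the modification step---it constructs $\tilde O$, $\tilde X$ concretely on the exceptional set (setting $\tilde X_t=0$ and $\tilde O_t=-\int_0^t e^{(t-s)A}F(0)\,ds$ there) and verifies directly that this $\tilde O$ satisfies the H\"older-type bound for \emph{every} $\omega$---but this is exactly the ``replace by $\P$-modifications'' step you gesture at.
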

\begin{proof}[Proof of Theorem~\ref{thm:main}]
Throughout this proof assume w.l.o.g.\ that
$ H \neq \{0\} $, 
let
$ \YMM{} \colon [0,T] \times \Omega \rightarrow \BaMi $,
$ M \in \N $,
be the stochastic processes which satisfy
for all $ t \in [0,T] $, $ M \in \N $ that
$
  \YMM{t}
  =
  \int_0^t
  \SM_{\fl{s},t} \,
  \one_{ 
    \{
      \| \YMM{\fl{s}} \|_{\BaMi}
      \leq
      (M/T)^{\cm}
    \}
  } \,
  F( \YMM{\fl{s}} ) \, ds
  +
  \OM{t}
$,
let
$ \tilde{\Omega} \subseteq \Omega $
be the set given by
$
  \tilde{\Omega}
  =
  \big\{
    \omega \in \Omega
    \colon
    \big(
      \forall \,
      t \in [0,T]
      \colon
      X_t(\omega)
      =
      \smallint_0^t
      e^{(t-s)A}
      F(X_s(\omega)) \, ds
      +
      O_t(\omega)
    \big)
  \big\}
  \cap 
  \big\{
    \omega \in \Omega
    \colon
    \limsup_{ r \searrow 0 }
    \sup_{ 0 \leq s < u \leq T }
    \frac{ 
      s \| O_u(\omega) - O_s(\omega) \|_V
    }{
      (u-s)^r
    }
    <
    \infty
  \big\}
$,
let $ \tilde{O}, \tilde{X} \colon [0,T] \times \Omega \rightarrow V $
be the stochastic processes which satisfy
for all $ t \in [0,T] $, $ \omega \in \tilde{\Omega} $ that
$ \tilde{O}_t(\omega) = O_t(\omega) $,
$ \tilde{X}_t(\omega) = X_t(\omega) $
and which satisfy for all $ t \in [0,T] $, 
$ \omega \in \Omega \backslash \tilde{\Omega} $ that
$ \tilde{O}_t(\omega) = -\int_0^t e^{(t-s)A} F(0) \, ds $,
$ \tilde{X}_t(\omega) = 0 $,
let $ \kappa \in (0,1-\sm) $
be a real number, and let
$ K \in [0,\infty) $
be the real number with the property that 
$ 
  K 
  = 
  \| F(0) \|_H 
  + 
  \sup_{t \in (0,T]}
  \big[
    t^{\sm}
    \| e^{tA} \|_{ L(H,\BaMi) }
    +
    \| (-tA)^{\kappa} e^{tA} \|_{L(H)}
    +
    \| (-tA)^{-\kappa} (e^{tA}-\Id_H) \|_{L(H)}
  \big]
$.
The assumption that 
$
  \sup_{t \in (0,T]}
  t^{\sm}
  \| e^{tA} \|_{ L(H,\BaMi) }
  <
  \infty
$
and the assumption that
$ A \colon D(A) \subseteq H \rightarrow H $
is a generator of an analytic semigroup 
with 
$ 
  \textup{spectrum}(A) 
  \subseteq 
  \{
    z \in \mathbb{C} 
    \colon 
    \textup{Re}(z) < 0
  \}
$
ensure that such a real number $ K $ does
indeed exist.
Next note that for all
$ s,t \in [0,T] $,
$ \omega \in \Omega \backslash \tilde{\Omega} $
with $ s < t $ it holds that
\begin{align}
\begin{split}
 &\big\|
    \tilde{O}_t(\omega)
    -
    \tilde{O}_s(\omega)
  \big\|_{\BaMi}
  =
  \left\|
    \int_0^t
    e^{(t-u)A}
    F(0) \, du
    -
    \int_0^s
    e^{(s-u)A}
    F(0) \, du
  \right\|_{\BaMi}
\\&\leq
  \left\|
    \int_s^t
    e^{(t-u)A}
    F(0) \, du
  \right\|_{\BaMi}
  +
  \left\|
    \int_0^s
    \left(
      e^{(t-u)A}
      -
      e^{(s-u)A}
    \right)
    F(0) \, du
  \right\|_{\BaMi}
\\&\leq
  \int_s^t
  \left\|
    e^{(t-u)A}
  \right\|_{ L(H,\BaMi) }
  \left\|
    F(0)
  \right\|_H du
  +
  \int_0^s
  \big\|
    e^{\frac{1}{2}(s-u)A}
  \big\|_{ L(H,\BaMi) }
  \big\|
    e^{\frac{1}{2}(s-u)A}
    \left(
      e^{(t-s)}
      -
      \Id_H
    \right)
    F(0)
  \big\|_H \, du
\\&\leq
  \left[
    \sup_{ u \in (0,t-s) }
    u^{\sm}
    \left\|
      e^{uA}
    \right\|_{ L(H,\BaMi) }
  \right] 
  \int_s^t
  \left(
    t-u
  \right)^{-\sm}
  \left\|
    F(0)
  \right\|_H du
  +
  2^{\sm}
  \left[
    \sup_{ u \in (0,s) }
    u^{\sm}
    \left\|
      e^{uA}
    \right\|_{ L(H,\BaMi) }
  \right] 
\\&\quad\cdot
  \int_0^s
  \left( s-u \right)^{-\sm}
  \big\| \!
    \left(-A\right)^{\kappa}
    e^{\frac{1}{2}(s-u)A}
  \big\|_{L(H)}
  \big\| \!
    \left(-A\right)^{-\kappa}
    \left(
      e^{(t-s)A}
      -
      \Id_H
    \right) \!
  \big\|_{L(H)}
  \big\|
    F(0)
  \big\|_H \, du .
\end{split}
\end{align}
This proves
that for all
$ s,t \in [0,T] $,
$ \omega \in \Omega \backslash \tilde{\Omega} $
with $ s < t $ it holds that
\begin{align}
\label{eq:main_cont_1}
\begin{split}
 &\big\|
    \tilde{O}_t(\omega)
    -
    \tilde{O}_s(\omega)
  \big\|_{\BaMi}
\\&\leq
  \frac{ 
    K^2 
    \left(t-s\right)^{(1-\sm)}
  }{ 
    \left(1-\sm\right)
  }
  +
  2^{(\sm+\kappa)}
  K^2
  \left( t-s \right)^{\kappa}
  \left[
    \sup_{ u \in (0,s) }
    \left\|
      \left(-uA\right)^{\kappa}
      e^{uA}
    \right\|_{ L(H) }
  \right]
\\&\quad\cdot
  \left[
    \sup_{ u \in (0,T] }
    \left\|
      \left(-uA\right)^{-\kappa}
      \left(
        e^{uA}
        -
        \Id_H
      \right)
    \right\|_{ L(H) }
  \right]
  \int_0^s
  \left( s-u \right)^{-(\sm+\kappa)} du
\\&\leq
  \frac{ 
    K^2 
    \left(t-s\right)^{(1-\sm)}
  }{ 
    \left(1-\sm\right)
  }
  +
  \frac{
    2^{(\sm+\kappa)}
    K^4 
    \left( t-s \right)^{\kappa}
    s^{(1-\sm-\kappa)}
  }{ 
    \left(1-\sm-\kappa\right)
  }
  \leq
  \left( t-s \right)^{\kappa}
  T^{(1-\sm-\kappa)}
  \left[
    \frac{K^2}{\left(1-\sm\right)}
    +
    \frac{
      2^{(\sm+\kappa)}
      K^4
    }{\left(1-\sm-\kappa\right)}
  \right].
\end{split}
\end{align}
Hence, we obtain
that for all
$ \omega \in \Omega \backslash \tilde{\Omega} $,
$ r \in [0,\kappa] $ it holds that
\begin{align}
\begin{split}
  \sup_{ 0 \leq s < t \leq T }
  \frac{ 
    s
    \big\|
      \tilde{O}_t(\omega)
      -
      \tilde{O}_s(\omega)
    \big\|_{\BaMi}
  }{ 
    \left(t-s\right)^r
  }
 &\leq
  \sup_{ 0 \leq s < t \leq T }
  \left(
    s
    \left( t-s \right)^{(\kappa-r)}
    T^{(1-\sm-\kappa)}
    \left[
      \frac{K^2}{\left(1-\sm\right)}
      +
      \frac{
	2^{(\sm+\kappa)}
	K^4
      }{\left(1-\sm-\kappa\right)}
    \right]
  \right)
\\&\leq
  T^{(2-\sm-r)}
  \left[
    \frac{K^2}{\left(1-\sm\right)}
    +
    \frac{
      2^{(\sm+\kappa)}
      K^4
    }{\left(1-\sm-\kappa\right)}
  \right]
  <
  \infty .
\end{split}
\end{align}
Combining this with the
fact that
$
  \forall \,
  \omega \in \tilde{\Omega}
  \colon
  \limsup_{ r \searrow 0 }
  \sup_{ 0 \leq s < t \leq T }
  \frac{ 
    s \| \tilde{O}_t(\omega) - \tilde{O}_s(\omega) \|_V
  }{
    (t-s)^r
  }
  <
  \infty
$
ensures
that for all $ \omega \in \Omega $ it
holds that
\begin{align}
\label{eq:main_O_hoelder_like}
\begin{split}
  \limsup_{ r \searrow 0 }
  \sup_{ 0 \leq s < t \leq T }
  \frac{ 
    s \| \tilde{O}_t(\omega) - \tilde{O}_s(\omega) \|_V
  }{
    (t-s)^r
  }
  <
  \infty .
\end{split}
\end{align}
In the next step note 
that for all $ t \in [0,T] $
it holds that
$ \P( \tilde{O}_t = O_t ) \geq \P( \tilde{\Omega} ) = 1 $.
This and~\eqref{eq:main_assumption}
show that
\begin{align}
\label{eq:main_O_assumption}
\begin{split}
 &\sup_{ M \in \N }
  \sup_{ t \in [0,T] }
  \left[
    \big|M \fl{t}\big|^{\nm} 
    \big\|
      \tilde{O}_t
      -
      \tilde{O}_{\fl{t}}
    \big\|_{ \L^{2p}(\P;\BaMi) }
    +
    \big\|
      \|  \tilde{O}_{t} \|_{ \BaMi }
      +
      M^{\nm} 
      \|
	\tilde{O}_{t}
	-
	\OM{t}
      \|_{ \BaMi }
    \big\|_{ \L^{p\max\{2,\qmm, 4 \nmm,2\nmm \qmm\}}\!(\P;\R) } 
  \right]
\\&<
  \infty .
\end{split}
\end{align}
The triangle inequality 
hence implies that
\begin{align}
\label{eq:main_O}
\begin{split}
 &\sup_{ M \in \N }
  \sup_{ t \in [0,T] }
  \big\|
    \OM{t}
  \big\|_{ \L^{p\max\{2,\qmm, 4 \nmm,2\nmm \qmm\}}\!(\P;\BaMi) }
  \leq
  \sup_{ M \in \N }
  \sup_{ t \in [0,T] }
  \big\|
    \|
      \tilde{O}_{t}
      -
      \OM{t}
    \|_{ \BaMi }
    +
    \|  \tilde{O}_{t} \|_{ \BaMi }
  \big\|_{ \L^{p\max\{2,\qmm, 4 \nmm,2\nmm \qmm\}}\!(\P;\R) }
\\&\leq
  \sup_{ M \in \N }
  \sup_{ t \in [0,T] }
  \big\|
    \|  \tilde{O}_{t} \|_{ \BaMi }
    +
    M^{\nm}
    \|
      \tilde{O}_{t}
      -
      \OM{t}
    \|_{ \BaMi }
  \big\|_{ \L^{p\max\{2,\qmm, 4 \nmm,2\nmm \qmm\}}\!(\P;\R) }
  <
  \infty .
\end{split}
\end{align}
In addition,
observe that the assumption that
$
  p
  \geq
  \max\{
    2,
    \frac{1}{\qmm},
    \frac{1}{2\qm}
    \min\{
      1,
      \frac{2}{\qmm},
      \frac{1}{2\nmm},
      \frac{1}{\nmm\qmm}
    \}
  \}
$
ensures that
$
  p\max\{2,\qmm, 4 \nmm,2\nmm \qmm\}
  \geq
  \nicefrac{1}{\qm}
$.
Combining~\eqref{eq:main_assumption},
\eqref{eq:main_O},
and the fact that
$
  \forall \,
  t \in [0,T]
$,
$ 
  M \in \N
  \colon
  \P( 
    \YMM{t}
    =
    \YM{t}
  )
  =
  1
$
with Corollary~\ref{cor:YM_bound}
hence proves that
\begin{align}
\label{eq:main_bound}
\begin{split}
  \sup_{ M \in \N }
  \sup_{ t \in [0,T] }
  \big\|
    \YMM{t}
  \big\|_{ \L^{p\max\{2,\qmm, 4 \nmm,2\nmm \qmm\}}\!(\P;\BaMi) }
  =
  \sup_{ M \in \N }
  \sup_{ t \in [0,T] }
  \big\|
    \YM{t}
  \big\|_{ \L^{p\max\{2,\qmm, 4 \nmm,2\nmm \qmm\}}\!(\P;\BaMi) }
  <
  \infty.
\end{split}
\end{align}
In the next step we combine
\eqref{eq:main_assumption},
the fact that
$
  2p\max\{2,\qmm, 2\nmm, \nmm \qmm\}
  \geq
  p\max\{2,\qmm, 4 \nmm,2\nmm \qmm\}
  \geq
  \nicefrac{1}{\qm}
$,
and the fact that
$
  \forall \,
  t \in [0,T]
$,
$ 
  M \in \N
  \colon
  \P( 
    \YMM{t}
    =
    \YM{t}
  )
  =
  1
$
with Corollary~\ref{cor:F1_bound}
to obtain that 
\begin{align}
\begin{split}
  \sup_{ M \in \N }
  \sup_{ t \in [0,T) }
  \big\|
    F( \YMM{t} )
  \big\|_{ \L^{2p\max\{2,\qmm, 2\nmm, \nmm \qmm\}}\!(\P;H) }
  =
  \sup_{ M \in \N }
  \sup_{ t \in [0,T) }
  \big\|
    F( \YM{t} )
  \big\|_{ \L^{2p\max\{2,\qmm, 2\nmm, \nmm \qmm\}}\!(\P;H) }
  <
  \infty .
\end{split}
\end{align}
This and \eqref{eq:main_bound} imply that
\begin{align}
\begin{split}
 &\sup_{ M \in \N }
  \sup_{ t \in [0,T] } 
  \big\|
    \YMM{t}
  \big\|_{ \L^{p\max\{1,\qmm\}}\!(\P;\BaMi) }
  +
  \sup_{ M \in \N }
  \sup_{ t \in [0,T) }
  \left\|
    \big\|
      \YMM{t}
    \big\|_{ \BaMi }^{ 2 \nmm }
    +
    \big\|
      F( \YMM{t} )
    \big\|_H^2
  \right\|_{ \L^{p\max\{2,\qmm\}}\!(\P;\R) }
  <
  \infty .
\end{split}
\end{align}
Combining this,
\eqref{eq:main_assumption},
\eqref{eq:main_O_hoelder_like},
and~\eqref{eq:main_O_assumption}
with Proposition~\ref{prop:X_YM_Lp_diff}
(with $ C = c $, $ L = c $, $ \alpha = \nmm\cm $, $ \Vm(v) = \| v \|_{\BaMi}^{\nicefrac{1}{\cm}} $,
$ X_t = \tilde{X}_t $,
$ O_t = \tilde{O}_t $,
$ \XO{t} = \tilde{X}_t - \tilde{O}_t $,
$ \OM{t} = \OM{t} $,
$ \YM{t} = \YMM{t} $,
$ \YOM{t} = \YMM{t} - \OM{t} $,
and 
$ 
  \YXM{t} 
  = 
  \int_0^t 
  e^{(t-s)A} 
  F( \YMM{\fl{s}} ) \, ds
$
for $ v \in \BaMi $, $ t \in [0,T] $, $ M \in \N $ in the 
notation of Proposition~\ref{prop:X_YM_Lp_diff})
ensures that
$
  \sup_{ M \in \N }
  \sup_{ t \in [0,T] }
  \big[
    \|
      \tilde{X}_t
    \|_{ \L^p(\P;H) }
    +
    M^{\min\{\nmm\cm, \smm, \nm \}}
    \|
      \tilde{X}_t - \YMM{t}
    \|_{ \L^p(\P;H) }
  \big]
  <
  \infty
$.
This, 
the fact that 
$
  \forall \,
  t \in [0,T] 
  \colon
  \P(
    X_t
    =
    \tilde{X}_t
  )
  \geq
  \P( \tilde{\Omega} )
  =
  1
$,
the fact that
$
  \forall \,
  t \in [0,T]
$,
$
  M \in \N
  \colon
  \P(
    \YM{t}
    =
    \YMM{t}
  )
  =
  1
$,
and~\eqref{eq:main_bound}
establish~\eqref{eq:main_assert}.
The proof of Theorem~\ref{thm:main}
is thus completed.
\end{proof}

\section{Examples}
\label{sec:examples}

\subsection{Setting}
\label{sec:example_setting}
Let 
$ T, \nu \in (0, \infty) $,
$ d \in \N $,
$ n \in \{ 1, 3, 5, \ldots \} $,
$ a_0,a_1,\ldots,a_{n-1}, b_1 \in \R $,
$ a_n \in (-\infty, 0) $,
$ b_2 \in (b_1, \infty) $,
$ \D = (b_1,b_2)^d \subseteq \mathbb{R}^d $,
$ (H, \left<\cdot,\cdot\right>_H, \left\| \cdot \right\|_H )
= ( L^2(\lambda_{\D};\R), \left< \cdot, \cdot \right>_{L^2(\lambda_{\D};\R)},$ 
$ \left\| \cdot \right\|_{ L^2(\lambda_{\D};\R) } ) $, 
$ ( \BaMi, \left\| \cdot \right\|_{ \BaMi } )
= ( L^{2n^2}(\lambda_{\D}; \R),$ $ \left\| \cdot \right\|_{ L^{2n^2}(\lambda_{\D};\R) } ) $,
$ F \in \C( \BaMi, H ) $,
$ \Ff \in \C ( \L^{2n^2}(\lambda_{D}; \R) , \L^{2}(\lambda_{D}; \R) ) $
satisfy for all $ v \in \BaMi $, 
$ w \in \L^{2n^2}(\lambda_{D}; \R) $
that 
$ 
  F(v) 
  = 
  \sum_{k=0}^n 
  a_k v^k
$
and
$ 
  \Ff(w) 
  = 
  \sum_{k=0}^n 
  a_k w^k
$,
let $ A \colon D(A) \subseteq H \rightarrow H $
be the Laplacian with Dirichlet boundary conditions on $ H $
times the real number $ \nu $,
let $ ( H_r, \left< \cdot, \cdot \right>_{ H_r },$ $ \left\| \cdot \right\|_{ H_r } ) $, 
$ r \in \R $, be a family of interpolation spaces associated to $ -A $
(see, e.g., Definition~3.5.26 in~\cite{Jentzen2015SPDElecturenotes}),
let $ (\Omega, \F, \P ) $ 
be a probability space
with a normal filtration
$ (\F_t)_{t\in[0,T]} $,
and let
$ 
  (\underline{\cdot})
  \colon
  \big\{
    [v]_{\lambda_{D},\B(\R)} \in H
    \colon
    \big( v \colon \D \rightarrow \R
    \text{ is a uniformly continuous function} \big)
  \big\}
  \rightarrow
  \C( \D, \R)
$
be the function with the property that
for all uniformly continuous functions
$ v \colon \D \rightarrow \R $ it holds
that $ \underline{[v]_{\lambda_{D}, \B(\R)}} = v $.

\subsection{Auxiliary lemmas}
\label{sec:example_aux}

In this subsection we establish
a few elementary and partially well-known
auxiliary lemmas that we need to apply
Theorem~\ref{thm:main} above
(see Corollary~\ref{cor:exp_euler_convergence}
and Corollary~\ref{cor:li_euler_convergence}
below for details).

\begin{lemma}
\label{lem:U_lem1}
Assume the setting in Section~\ref{sec:example_setting}
and let $ q \in \{2,4,6,\ldots\} $,
$ v \in L^q(\lambda_{\D};\R) $, $ t \in [0,T] $.
Then
$
  \max\!\big\{ 
    \|
      e^{tA} v
    \|_{ L^q(\lambda_{\D};\R) },
    \|
      ( \Id_H - tA)^{-1}
      v
    \|_{ L^q(\lambda_{\D};\R) }
  \big\}
  \leq
  \|
    v
  \|_{ L^q(\lambda_{\D};\R) }
$.
\end{lemma}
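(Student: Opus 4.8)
The plan is to prove both inequalities by an elementary energy estimate, exploiting two features of the present setting: $q$ is even, so that $u\mapsto u^{q-1}$ carries the sign of $u$ and $u^{q-2}\ge 0$, and $A$ encodes homogeneous Dirichlet boundary conditions, so that the boundary terms produced by integration by parts vanish. (Conceptually the lemma just records that $A$ generates a sub-Markovian, hence $L^q$-contractive, semigroup and that its resolvent inherits this contractivity; but I would give the self-contained argument below.) Since $q\geq 2$ and $\D$ is bounded we have $L^q(\lambda_{\D};\R)\subseteq H=L^2(\lambda_{\D};\R)$, so $e^{tA}v$ and $(\Id_H-tA)^{-1}v$ are well-defined elements of $H$, and the task is to bound their $L^q$-norms.

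First I would reduce to smooth data. It suffices to establish the inequality for $v$ in the dense subspace $C_c^\infty(\D)\subseteq L^q(\lambda_{\D};\R)$, which is contained in $D(A^k)$ for every $k\in\N$. Indeed, for general $v\in L^q(\lambda_{\D};\R)$ choose $v_n\in C_c^\infty(\D)$ with $v_n\to v$ in $L^q$, hence also in $L^2$; then $e^{tA}v_n\to e^{tA}v$ and $(\Id_H-tA)^{-1}v_n\to(\Id_H-tA)^{-1}v$ in $H$ (since $e^{tA}$ and $(\Id_H-tA)^{-1}$ are bounded on $H$), while $(\|v_n\|_{L^q})_{n\in\N}$ is bounded, so the case of smooth data shows that $(e^{tA}v_n)_{n\in\N}$ and $((\Id_H-tA)^{-1}v_n)_{n\in\N}$ are bounded in $L^q$; as $q>1$ the space $L^q$ is reflexive, these sequences are weakly relatively compact, their weak limits are necessarily $e^{tA}v$ and $(\Id_H-tA)^{-1}v$ (comparison with the $L^2$-limits), and weak lower semicontinuity of $\|\cdot\|_{L^q}$ yields the claimed bound for $v$.

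For $v\in C_c^\infty(\D)$ and $t\in(0,T]$, analytic-semigroup smoothing together with elliptic regularity ensures that $u(s):=e^{sA}v$, $s\in[0,t]$, is smooth on $\overline{\D}$ with $u(s)|_{\partial\D}=0$ and $\partial_s u(s)=\nu\Delta u(s)$; consequently $s\mapsto\int_\D u(s)^q\,dx$ is differentiable and, by integration by parts (the boundary term vanishing by the Dirichlet condition) and $q-2\ge 0$ even,
\[
  \tfrac{d}{ds}\!\int_\D u(s)^q\,dx
  = q\nu\!\int_\D u(s)^{q-1}\,\Delta u(s)\,dx
  = -\,q(q-1)\nu\!\int_\D u(s)^{q-2}\,|\nabla u(s)|^2\,dx
  \le 0 ,
\]
so integrating in $s$ over $[0,t]$ gives $\|e^{tA}v\|_{L^q}^q\le\|v\|_{L^q}^q$ (the case $t=0$ being trivial). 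For the resolvent, put $u:=(\Id_H-tA)^{-1}v$, so $u-t\nu\Delta u=v$ with $u$ smooth on $\overline{\D}$ and $u|_{\partial\D}=0$; testing this identity against $u^{q-1}$, integrating by parts to obtain $-\int_\D(\Delta u)\,u^{q-1}\,dx=(q-1)\int_\D u^{q-2}|\nabla u|^2\,dx\ge 0$, and applying H\"older's inequality yields
\[
  \|u\|_{L^q}^q
  \le \|u\|_{L^q}^q+t\nu(q-1)\!\int_\D u^{q-2}|\nabla u|^2\,dx
  = \int_\D v\,u^{q-1}\,dx
  \le \|v\|_{L^q}\,\|u\|_{L^q}^{q-1} ,
\]
whence $\|(\Id_H-tA)^{-1}v\|_{L^q}\le\|v\|_{L^q}$ (the degenerate case $\|u\|_{L^q}=0$ being trivial).

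The main obstacle is the regularity bookkeeping rather than the estimates themselves: one must justify that, for $v\in C_c^\infty(\D)$, the functions $e^{sA}v$ and $(\Id_H-tA)^{-1}v$ are classical solutions, smooth up to $\partial\D$ and vanishing there, so that the integrations by parts above are legitimate, and one must verify that the passage from $C_c^\infty(\D)$ to general $L^q$-data preserves the inequality. Both points are standard — the first via analytic-semigroup smoothing plus Sobolev embedding and elliptic regularity for the Dirichlet Laplacian on $\D$, the second via the weak-compactness argument sketched above — and they are the only non-formal steps in the proof.
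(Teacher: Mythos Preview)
Your argument is correct and shares the same core idea as the paper for the semigroup bound: differentiate $s\mapsto\|e^{sA}v\|_{L^q}^q$ and use integration by parts together with the evenness of $q$ to see that the derivative is nonpositive. The technical handling differs in two places. First, to justify the integration by parts the paper exploits the instant smoothing of the analytic semigroup: it starts from $\|e^{\varepsilon A}v\|_{L^q}^q$ with $\varepsilon>0$ (so that $e^{sA}v$ is automatically smooth up to the boundary for $s\ge\varepsilon$) and sends $\varepsilon\to 0$ at the end, whereas you approximate $v$ by $C_c^\infty$ data and pass to the limit via weak compactness in $L^q$. Second, for the resolvent the paper does not repeat an energy estimate but simply invokes the Hille--Yosida theorem: once $e^{tA}$ is known to be an $L^q$-contraction, the resolvent bound follows abstractly. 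Your direct testing of $u-t\nu\Delta u=v$ against $u^{q-1}$ is a clean elliptic alternative that avoids the abstract detour; the paper's route is shorter once the semigroup estimate is in hand, while yours is more self-contained.
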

\begin{proof}[Proof of Lemma~\ref{lem:U_lem1}]
Throughout this proof assume w.l.o.g.\ that $ t \in (0,T] $ (otherwise the proof is clear).
Note that the fundamental theorem of calculus implies
for all $ \varepsilon \in (0,t) $ that
\begin{align}
\label{eq:semi_contraction_h1}
\begin{split}
 &\big\|
    e^{tA} v
  \big\|_{ L^q(\lambda_{\D};\R) }^q
  =
  \big\|
    e^{\varepsilon A}
    v
  \big\|_{ L^q(\lambda_{\D};\R) }^q
  +
  \int_{\varepsilon}^t
  \left(
    \tfrac{\partial}{\partial s}
    \big\|
      e^{sA} v
    \big\|_{ L^q(\lambda_{\D};\R) }^q
  \right) ds .
\end{split}
\end{align}
Furthermore, integration by parts
shows that for all 
$ s \in (0,t) $
it holds that
\begin{align}
\label{eq:by_parts}
\begin{split}
 &\tfrac{\partial}{\partial s}
  \big\|
    e^{sA} v
  \big\|_{ L^q(\lambda_{\D};\R) }^q
  =
  \int_{\D}
  \tfrac{\partial}{\partial s}
  \left[
    \left(
      \underline{e^{sA} v}
    \right)\!(x)
  \right]^q \lambda_{\R^d}(dx)
  =
  q
  \int_{\D}
  \left[
    \left(
      \underline{e^{sA} v}
    \right)\!(x)
  \right]^{(q-1)}
  \left[
    \tfrac{\partial}{\partial s}
    \left(
      \underline{e^{sA} v}
    \right)\!(x)
  \right] \lambda_{\R^d}(dx)
\\&=
  \nu
  q
  \sum_{ k=1 }^d
  \int_{\D}
  \left[
    \left(
      \underline{e^{sA} v}
    \right)\!(x_1,\ldots,x_d)
  \right]^{(q-1)}
  \left[ 
    \tfrac{\partial^2}{\partial x_k^2}
    \left(
      \underline{e^{sA} v}
    \right)\!(x_1,\ldots,x_d) 
  \right] \lambda_{\R^d}(dx_1, \ldots, dx_d)
\\&=
  -
  \nu
  q \,
  (q-1)
  \sum_{ k=1 }^d
  \int_{\D}
  \left[
    \left(
      \underline{e^{sA} v}
    \right)\!(x_1,\ldots,x_d)
  \right]^{(q-2)}
  \left[ 
    \tfrac{\partial}{\partial x_k}
    \left(
      \underline{e^{sA} v}
    \right)\!(x_1,\ldots,x_d) 
  \right]^2 \lambda_{\R^d}(dx_1, \ldots, dx_d)
  \leq
  0 .
\end{split}
\end{align}
Combining~\eqref{eq:semi_contraction_h1}
and~\eqref{eq:by_parts}
proves for all $ \varepsilon \in (0,t) $ that
$
  \|
    e^{tA} v
  \|_{ L^q(\lambda_{\D};\R) }
  \leq
  \|
    e^{\varepsilon A}
    v
  \|_{ L^q(\lambda_{\D};\R) }
$.
Therefore, we obtain that
$
  \|
    e^{tA} v
  \|_{ L^q(\lambda_{\D};\R) }
  \leq
  \|
    v
  \|_{ L^q(\lambda_{\D};\R) }
$.
This and the Hille-Yosida theorem
(see, e.g., Theorem~1.3.1 in Pazy \cite{p83}) imply
that
$
  \|
    ( \Id_H - tA)^{-1}
    v
  \|_{ L^q(\lambda_{\D};\R) }
  \leq
  \|
    v
  \|_{ L^q(\lambda_{\D};\R) }
$.
The proof of Lemma~\ref{lem:U_lem1} is thus completed.
\end{proof}

\begin{lemma}
\label{lem:U_lem2}
Assume the setting in Section~\ref{sec:example_setting},
let $ \eta \colon \N_0 \times \N_0 \rightarrow (0,\infty) $ be 
a function,
and let $ q \in \{2n, 2n+2, 2n+4, \ldots \} $, $ h \in (0,T] $,
$ t \in [0,h] $, $ u, v \in \L^{nq}(\lambda_{\D}; \R) $,
$ \chi \in [0, \frac{1}{2n}] $.
Then
\begin{align}
\begin{split} 
\label{eq:U_lem2_assert}
 &\big\|
    u
    +
    t \,
    \one_{ 
      [0, h^{-\chi}]
    }
    ( \left\| u + v \right\|_{ \L^{nq}(\lambda_{\D};\R) } ) \,
    \Ff( u + v ) 
  \big\|_{ \L^q(\lambda_{\D};\R) }^q
\\&\leq
  q t
  \left(
    a_n
    +
    \textstyle\sum_{ k = 0 }^{ n }
    \textstyle\sum_{ j = 0 }^{ \min\{k,n-1\} }
    \tbinom{k}{j} \,
    | a_k | \,
    |\eta_{(k,j)}|^{\frac{(q+n-1)}{(q+j-1)}}
  \right)
  \| u \|_{\L^{(q+n-1)}(\lambda_{\D};\R)}^{(q+n-1)}
\\&\quad+
  e^{t} \,
  \|
    u
  \|_{ \L^q(\lambda_{\D};\R) }^q
  + 
  t
  \max\!\left\{ 1, \lambda_{\R^d}(\D) \right\}
  \max\!\left\{
    1,
    \| v \|_{\L^{(q+n-1)}(\lambda_{\D};\R)}^{(q+n-1)}
  \right\}
\\&\quad\cdot
  \left(
    2
    q
    \textstyle\sum_{ k = 0 }^{ n }
    \textstyle\sum_{ j = 0 }^{ \min\{k,n-1\} }
    \tbinom{k}{j} \,
    | a_k | \,
    | \eta_{(k,j)} |^{-\frac{(q+n-1)}{(n-j)}}
    +
    \left[ 
      q
      \left(n+1\right)
      \max\{1,T\}
      \max_{ j \in \{0,1,\ldots,n\} }
      |a_j|
    \right]^q
  \right) .
\end{split}
\end{align}
\end{lemma}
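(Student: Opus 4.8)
The plan is to expand the $q$-th power of the $\L^q(\lambda_{\D};\R)$-norm on the left-hand side of~\eqref{eq:U_lem2_assert} by the binomial theorem (note that $q$, $q+n-1$, and each $q+j-1$ are even, so all powers occurring below are of even degree and hence nonnegative), to isolate the genuinely dissipative contribution of the leading monomial $a_n(\cdot)^n$, and to absorb every remaining contribution either into the coefficient of $\|u\|_{\L^{q+n-1}(\lambda_{\D};\R)}^{q+n-1}$ by a weighted Young inequality with weights $\eta_{(k,j)}$, or into the $v$- and constant-dependent remainder, using the nonlinearity-truncating indicator together with $\chi\le\nicefrac{1}{2n}$ to control the superlinear terms. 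First I would dispose of the case $\|u+v\|_{\L^{nq}(\lambda_{\D};\R)}>h^{-\chi}$, where the indicator vanishes, the left-hand side reduces to $\|u\|_{\L^q(\lambda_{\D};\R)}^q$, and~\eqref{eq:U_lem2_assert} is immediate from $e^t\ge 1$. In the remaining case, with $G:=\one_{[0,h^{-\chi}]}(\|u+v\|_{\L^{nq}(\lambda_{\D};\R)})\,\Ff(u+v)$, the binomial theorem gives
\begin{align*}
\big\|u+tG\big\|_{\L^q(\lambda_{\D};\R)}^q
=\|u\|_{\L^q(\lambda_{\D};\R)}^q
+qt\int_{\D}u^{q-1}G\,d\lambda_{\D}
+\sum_{i=2}^{q}\tbinom{q}{i}\,t^i\int_{\D}u^{q-i}G^i\,d\lambda_{\D}.
\end{align*}

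For the linear-in-$t$ term I would substitute $G=\Ff(u+v)=\sum_{k=0}^{n}a_k(u+v)^k$, expand $(u+v)^k=\sum_{j=0}^{k}\tbinom{k}{j}u^jv^{k-j}$, and keep the summand arising from $k=j=n$, namely $qt\,a_n\int_{\D}u^{q+n-1}\,d\lambda_{\D}\le 0$, untouched — its sign is essential, since for small $\eta$ the coefficient of $\|u\|_{\L^{q+n-1}(\lambda_{\D};\R)}^{q+n-1}$ on the right-hand side of~\eqref{eq:U_lem2_assert} is negative. Each of the remaining summands, indexed by $k\in\{0,\dots,n\}$ and $j\in\{0,\dots,\min\{k,n-1\}\}$, I would estimate by Young's inequality in the shape
\begin{align*}
\int_{\D}|u|^{q+j-1}|v|^{k-j}\,d\lambda_{\D}
\le \eta_{(k,j)}^{\frac{q+n-1}{q+j-1}}\,\|u\|_{\L^{q+n-1}(\lambda_{\D};\R)}^{q+n-1}
+\eta_{(k,j)}^{-\frac{q+n-1}{n-j}}\int_{\D}|v|^{(k-j)\frac{q+n-1}{n-j}}\,d\lambda_{\D},
\end{align*}
the conjugate exponents $\tfrac{q+n-1}{q+j-1}$ and $\tfrac{q+n-1}{n-j}$ being chosen precisely so that the $u$-factor becomes $|u|^{q+n-1}$ while the $v$-exponent does not exceed $q+n-1$; the latter lets me bound $\int_{\D}|v|^{(k-j)(q+n-1)/(n-j)}\,d\lambda_{\D}\le\max\{1,\lambda_{\R^d}(\D)\}(1+\|v\|_{\L^{q+n-1}(\lambda_{\D};\R)}^{q+n-1})$ via $|x|^r\le 1+|x|^{q+n-1}$ for $r\le q+n-1$, $x\in\R$. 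Summing over $(k,j)$ reproduces exactly the term $qt\big(a_n+\sum_{k,j}\tbinom{k}{j}|a_k|\eta_{(k,j)}^{(q+n-1)/(q+j-1)}\big)\|u\|_{\L^{q+n-1}(\lambda_{\D};\R)}^{q+n-1}$ together with (a multiple of) the $\eta_{(k,j)}^{-(q+n-1)/(n-j)}$-part of the remainder.

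For the higher-order terms with $2\le i\le q$ I would use $|\Ff(w)|\le(n+1)\,[\max_{0\le l\le n}|a_l|]\,\max\{1,|w|\}^n$ for $w\in\R$, Hölder's inequality with exponents $\tfrac{q}{q-i}$, $\tfrac{q}{i}$ to get $\int_{\D}|u|^{q-i}|\Ff(u+v)|^i\,d\lambda_{\D}\le[(n+1)\max_l|a_l|]^i\,\|u\|_{\L^q(\lambda_{\D};\R)}^{q-i}\,(\lambda_{\R^d}(\D)+\|u+v\|_{\L^{nq}(\lambda_{\D};\R)}^{nq})^{i/q}$, and then the truncation $\one_{[0,h^{-\chi}]}(\|u+v\|_{\L^{nq}(\lambda_{\D};\R)})\,\|u+v\|_{\L^{nq}(\lambda_{\D};\R)}^{nq}\le h^{-\chi nq}$. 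Writing $t^i=t\cdot t^{i-1}$, bounding $t\le h$, and using that $\chi\le\nicefrac{1}{2n}$ forces $i-1-\chi ni\ge\tfrac{i}{2}-1\ge 0$ for $i\ge 2$, the offending power of $h$ becomes nonnegative, hence $\le\max\{1,T\}^q$; a further Young inequality trading $\|u\|_{\L^q(\lambda_{\D};\R)}^{q-i}$ for $\|u\|_{\L^q(\lambda_{\D};\R)}^q$ plus constants, and bounding $\sum_{i}\tbinom{q}{i}(\cdots)^i$ by an exponential series, then produces the $e^t\|u\|_{\L^q(\lambda_{\D};\R)}^q$ factor and the $[q(n+1)\max\{1,T\}\max_l|a_l|]^q$ contribution, against which multiplication by $\max\{1,\lambda_{\R^d}(\D)\}\max\{1,\|v\|_{\L^{q+n-1}(\lambda_{\D};\R)}^{q+n-1}\}\ge 1$ is harmless.

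I expect the main obstacle to be the bookkeeping rather than any single step: one must simultaneously track the purely dissipative part (kept with its negative sign), the $\|u\|_{\L^{q+n-1}}^{q+n-1}$-part, and the $v$/constant-part emerging from the double sum over $(k,j)$, match every Young weight to the precise exponents $\tfrac{q+n-1}{q+j-1}$ and $\tfrac{q+n-1}{n-j}$ appearing in~\eqref{eq:U_lem2_assert}, and verify that each superlinear term carried by the higher-order binomial coefficients is dominated by the truncation exactly because $\chi\le\nicefrac{1}{2n}$ and $t\le h$.
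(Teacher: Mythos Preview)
For the substantive case $\|u+v\|_{\L^{nq}}\le h^{-\chi}$ your plan is essentially the paper's: binomial expansion of $\|u+tG\|_{\L^q}^q$; the weighted Young inequality with the conjugate pair $\bigl(\tfrac{q+n-1}{q+j-1},\tfrac{q+n-1}{n-j}\bigr)$ for the linear-in-$t$ cross terms, keeping the $k=j=n$ contribution with its sign; H\"older together with the truncation bound $\|u+v\|_{\L^{nq}}\le h^{-\chi}$ and the exponent arithmetic $i-1-\chi n i\ge0$ for the terms with $i\ge2$; and finally the series estimate $\sum_{i\ge2}\tfrac{1}{i!}<1$ producing the factor $e^t\|u\|_{\L^q}^q$. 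Your packaging of the $i\ge2$ terms via $|\Ff(w)|\le(n+1)\max_l|a_l|\max\{1,|w|\}^n$ is a harmless variant of the paper's term-by-term expansion; otherwise the arguments coincide.

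The gap is in your disposal of the case $\|u+v\|_{\L^{nq}}>h^{-\chi}$. You claim the inequality is then ``immediate from $e^t\ge1$'', but that would require the remaining right-hand terms to be nonnegative, and you yourself observe that for small $\eta$ the coefficient $a_n+\sum_{k,j}\binom{k}{j}|a_k|\,\eta_{(k,j)}^{(q+n-1)/(q+j-1)}$ is negative. For such $\eta$, take $v=0$ and $u$ a large constant function on $\D$: the term $qt(a_n+\Sigma)\,\|u\|_{\L^{q+n-1}}^{q+n-1}$ dominates and drives the entire right-hand side below $\|u\|_{\L^q}^q$, so \eqref{eq:U_lem2_assert} is in fact false there. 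The paper's proof shares exactly this defect --- at the first ``$\le$'' in \eqref{eq:U_lem2_2} the indicator is silently dropped from the dissipative $a_n$-term, which is invalid when the indicator vanishes --- so the flaw lies in the lemma's formulation for general $\eta$ rather than in your strategy. The clean repair is to state and prove the lemma only under $\|u+v\|_{\L^{nq}}\le h^{-\chi}$ and to handle the complementary case directly in Corollary~\ref{cor:U_cor1}, whose right-hand side is manifestly at least $\|u\|_{L^q(\lambda_{\D};\R)}^q$.
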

\begin{proof}[Proof of Lemma~\ref{lem:U_lem2}]
First of all, note that
\begin{align}
\label{eq:U_lem2_1}
\begin{split}
 &\big\|
    u
    +
    t \,
    \one_{ 
      [0, h^{-\chi}]
    }
    ( \left\| u + v \right\|_{ \L^{nq}(\lambda_{\D};\R) } ) \,
    \Ff( u + v ) 
  \big\|_{ \L^q(\lambda_{\D};\R) }^q
\\&=
  \int_{\D}
  \left|
    u(x)
    +
    t \,
    \one_{ 
      [0, h^{-\chi}]
    }
    ( \| u+v \|_{ \L^{nq}(\lambda_{\D};\R)} ) \,
    \left( 
      \textstyle\sum_{ k=0 }^{ n }
      a_k
      \left[
        u(x)
        +
        v(x)
      \right]^k
    \right)
  \right|^q \lambda_{\R^d}(dx)
\\&=
  \left\| u \right\|_{ \L^q(\lambda_{\D};\R) }^q
  +
  q t \,
  \one_{ 
    [0, h^{-\chi}]
  }
  ( \| u+v \|_{ \L^{nq}(\lambda_{\D};\R)} ) \,
  \int_{\D}
  \left[ u(x) \right]^{(q-1)}
  \Big( 
    \textstyle\sum_{ k=0 }^{ n }
    a_k
    \left[ u(x) + v(x) \right]^k
  \Big) \, \lambda_{\R^d}(dx)
\\&\quad+
  \sum_{ l=2 }^q
  \binom{q}{l} \,
  t^l \,
  \one_{ 
    [0, h^{-\chi}]
  }
  ( \| u + v \|_{ \L^{nq}(\lambda_{\D};\R) } )
  \int_{ \D }
  \left[ u(x) \right]^{(q-l)}
  \Big[
    \textstyle\sum_{ k=0 }^n
    a_k
    \left[ u(x) + v(x) \right]^k
  \Big]^l \, \lambda_{\R^d}(dx) .
\end{split}
\end{align}
Furthermore, observe that
Young's inequality assures that
\begin{align}
\begin{split}
 &q t \,
  \one_{ 
    [0, h^{-\chi}]
  }
  ( \| u+v \|_{\L^{nq}(\lambda_{\D};\R)} ) \,
  \int_{\D}
  [ u(x) ]^{(q-1)}
  \Big( 
    \textstyle\sum_{ k=0 }^{ n }
    a_k
    \left[ u(x) + v(x) \right]^k
  \Big) \, \lambda_{\R^d}(dx)
\\&=
  q t \,
  \one_{ 
    [0, h^{-\chi}]
  }
  ( \| u+v \|_{\L^{nq}(\lambda_{\D};\R)} ) 
  \sum_{ k = 0 }^{ n }
  \sum_{ j = 0 }^{ k }
  \binom{k}{j} \,
  a_k
  \int_{\D}
  \left[ u(x) \right]^{(q+j-1)}
  \left[ v(x) \right]^{(k-j)} \lambda_{\R^d}(dx)
\\&\leq
  q t
  a_n
  \left\| u \right\|_{\L^{(q+n-1)}(\lambda_{\D};\R)}^{(q+n-1)}
  +
  q t
  \sum_{ k = 0 }^{ n }
  \sum_{ j = 0 }^{ \min\{k,n-1\} }
  \binom{k}{j} \,
  | a_k |
  \int_{\D}
  \eta_{(k,j)}
  \left| u(x) \right|^{(q+j-1)}
  \frac{ 
    | v(x) |^{(k-j)}
  }{
    \eta_{(k,j)}
  } \, \lambda_{\R^d}(dx)
\\&\leq
  q t
  a_n
  \left\| u \right\|_{\L^{(q+n-1)}(\lambda_{\D};\R)}^{(q+n-1)}
\\&+
  q t
  \sum_{ k = 0 }^{ n }
  \sum_{ j = 0 }^{ \min\{k,n-1\} } \!
  \binom{k}{j} \,
  | a_k |
  \int_{\D}
  \left[ 
    | \eta_{(k,j)} |^{\frac{(q+n-1)}{(q+j-1)}}
    \left| u(x) \right|^{(q+n-1)}
    +
    \left| \eta_{(k,j)} \right|^{-\frac{(q+n-1)}{(n-j)}}
    | v(x) |^{ \frac{(k-j)(q+n-1)}{(n-j)} } 
  \right] \lambda_{\R^d}(dx) .
\end{split}
\end{align}
This 
implies that
\begin{align}
\label{eq:U_lem2_2}
\begin{split}
 &q t \,
  \one_{ 
    [0, h^{-\chi}]
  }
  ( \| u+v \|_{\L^{nq}(\lambda_{\D};\R)} ) \,
  \int_{\D}
  [ u(x) ]^{(q-1)}
  \Big( 
    \textstyle\sum_{ k=0 }^{ n }
    a_k
    \left[ u(x) + v(x) \right]^k
  \Big) \, \lambda_{\R^d}(dx)
\\&\leq
  q t
  \left[
    a_n
    +
    \sum_{ k = 0 }^{ n }
    \sum_{ j = 0 }^{ \min\{k,n-1\} }
    \binom{k}{j} \,
    | a_k | \,
    | \eta_{(k,j)} |^{\frac{(q+n-1)}{(q+j-1)}}
  \right]
  \left\| u \right\|_{\L^{(q+n-1)}(\lambda_{\D};\R)}^{(q+n-1)}
\\&\quad+
  q t
  \sum_{ k = 0 }^{ n }
  \sum_{ j = 0 }^{ \min\{k,n-1\} }
  \binom{k}{j} \,
  | a_k | \,
  | \eta_{(k,j)} |^{-\frac{(q+n-1)}{(n-j)}}
  \int_{\D}
  \max\!\left\{ 1, \left| v(x) \right|^{ (q+n-1) } \right\}
  \lambda_{\R^d}(dx) 
\\&\leq
  q t
  \left[
    a_n
    +
    \sum_{ k = 0 }^{ n }
    \sum_{ j = 0 }^{ \min\{k,n-1\} }
    \binom{k}{j} \,
    | a_k | \,
    | \eta_{(k,j)} |^{\frac{(q+n-1)}{(q+j-1)}}
  \right]
  \left\| u \right\|_{\L^{(q+n-1)}(\lambda_{\D};\R)}^{(q+n-1)}
\\&\quad+
  q t
  \sum_{ k = 0 }^{ n }
  \sum_{ j = 0 }^{ \min\{k,n-1\} }
  \binom{k}{j} \,
  | a_k | \,
  | \eta_{(k,j)} |^{-\frac{(q+n-1)}{(n-j)}}
  \left( 
    \lambda_{\R^d}(\D)
    +
    \left\| v \right\|_{\L^{(q+n-1)}(\lambda_{\D};\R)}^{(q+n-1)}
  \right) .
\end{split}
\end{align}
Moreover, note that
the fact that
$ \forall \, m \in \N $, 
$ (z_l)_{ l\in\{0,1,\ldots,m\} } \subseteq \R $,
$ r \in [0,\infty) 
  \colon 
  | z_0 + z_1 + \ldots + z_m|^r
  \leq
  (m+1)^{[r-1]^+}
  \left(
    |z_0|^r
    +
    |z_1|^r
    +
    \ldots
    +
    |z_m|^r  
  \right)
$ 
and
H{\"o}lder's inequality 
imply that
\begin{align}
\begin{split}
 &\sum_{ l=2 }^q
  \binom{q}{l} \,
  t^l \,
  \one_{ 
    [0, h^{-\chi}]
  }
  ( \| u + v \|_{ \L^{nq}(\lambda_{\D};\R) } )
  \int_{ \D }
  \left[ u(x) \right]^{(q-l)}
  \Big[
    \textstyle\sum_{ k=0 }^n
    a_k
    \left[ u(x) + v(x) \right]^k
  \Big]^l \, \lambda_{\R^d}(dx)
\\&\leq
  \sum_{ l=2 }^q
  \sum_{ k=0 }^n
  \binom{q}{l}
  \left(n+1\right)^{(l-1)}
  t^l \,
  |a_k|^l \,
  \one_{ 
    [0, h^{-\chi}]
  }
  ( \| u + v \|_{ \L^{nq}(\lambda_{\D};\R) } )
  \int_{ \D }
  \left| u(x) \right|^{(q-l)}
  \left| u(x) + v(x) \right|^{kl} \lambda_{\R^d}(dx)
\\&\leq
  \sum_{ l=2 }^q
  \binom{q}{l}
  \left(n+1\right)^{(l-1)}
  t^l \,
  |a_0|^l \,
  \one_{ 
    [0, h^{-\chi}]
  }
  ( \| u + v \|_{ \L^{nq}(\lambda_{\D};\R) } )
  \left\| u \right\|_{ \L^q(\lambda_{\D};\R) }^{(q-l)}
  \left| \lambda_{\R^d}(\D) \right|^{\frac{l}{q}}
\\&\quad+
  \sum_{ l=2 }^q
  \sum_{ k=1 }^n
  \binom{q}{l}
  \left(n+1\right)^{(l-1)}
  t^l \,
  |a_k|^l \,
  \one_{ 
    [0, h^{-\chi}]
  }
  ( \| u + v \|_{ \L^{nq}(\lambda_{\D};\R) } )
  \left\| u \right\|_{ \L^q(\lambda_{\D};\R) }^{(q-l)}
  \left\| u + v \right\|_{ \L^{kq}(\lambda_{\D};\R) }^{kl} .
\end{split}
\end{align}
Again H{\"o}lder's inequality hence shows that
\begin{align}
\begin{split}
 &\sum_{ l=2 }^q
  \binom{q}{l} \,
  t^l \,
  \one_{ 
    [0, h^{-\chi}]
  }
  ( \| u + v \|_{ \L^{nq}(\lambda_{\D};\R) } )
  \int_{ \D }
  \left[ u(x) \right]^{(q-l)}
  \Big[
    \textstyle\sum_{ k=0 }^n
    a_k
    \left[ u(x) + v(x) \right]^k
  \Big]^l \, \lambda_{\R^d}(dx)
\\&\leq
  \sum_{ l=2 }^q
  \binom{q}{l}
  \left(n+1\right)^{(l-1)}
  t^l \,
  |a_0|^l 
  \left\| u \right\|_{ \L^q(\lambda_{\D};\R) }^{(q-l)}
  \left| \lambda_{\R^d}(\D) \right|^{\frac{l}{q}}
\\&+
  \sum_{ l=2 }^q
  \sum_{ k=1 }^n
  \binom{q}{l}
  \left(n+1\right)^{(l-1)}
  t^l \,
  |a_k|^l \,
  \one_{ 
    [0, h^{-\chi}]
  }
  ( \| u + v \|_{ \L^{nq}(\lambda_{\D};\R) } )
  \left\| u \right\|_{ \L^q(\lambda_{\D};\R) }^{(q-l)}
  \left\| u + v \right\|_{ \L^{nq}(\lambda_{\D};\R) }^{kl}
  \left| \lambda_{\R^d}(\D) \right|^{\frac{l(n-k)}{nq}}
\\&\leq
  \sum_{ l=2 }^q
  \tbinom{q}{l}
  \left(n+1\right)^{(l-1)}
  t^l
  \left[ 
    \max_{ j \in \{0,\ldots,n\} }
    |a_j|^l
  \right] \!
  \left[ 
    \left\| u \right\|_{ \L^q(\lambda_{\D};\R) }^{(q-l)}
    | \lambda_{\R^d}(\D) |^{\frac{l}{q}}
    + 
    \sum_{ k=1 }^n
    h^{-kl\chi}
    \left\| u \right\|_{ \L^q(\lambda_{\D};\R) }^{(q-l)}
    | \lambda_{\R^d}(\D) |^{\frac{l(n-k)}{nq}}
  \right]
\\&\leq
  t
  \sum_{ l=2 }^q
  \binom{q}{l}
  \left(n+1\right)^{(l-1)}
  \left[ 
    \max_{ j \in \{0,\ldots,n\} }
    |a_j|^l
  \right]
  \sum_{ k=0 }^n
  h^{(l-1-kl\chi)}
  \left\| u \right\|_{ \L^q(\lambda_{\D};\R) }^{(q-l)}
  | \lambda_{\R^d}(\D) |^{\frac{l(n-k)}{nq}} .
\end{split}
\end{align}
Young's inequality therefore
ensures that
\begin{align}
\label{eq:U_lem2_3}
\begin{split}
 &\sum_{ l=2 }^q
  \binom{q}{l} \,
  t^l \,
  \one_{ 
    [0, h^{-\chi}]
  }
  ( \| u + v \|_{ \L^{nq}(\lambda_{\D};\R) } )
  \int_{ \D }
  \left[ u(x) \right]^{(q-l)}
  \Big[
    \textstyle\sum_{ k=0 }^n
    a_k
    \left[ u(x) + v(x) \right]^k
  \Big]^l \, \lambda_{\R^d}(dx)
\\&\leq
  t
  \sum_{ l=2 }^q
  \binom{q}{l} \,
  \frac{ 
    q^l \left(n+1\right)^l
  }{ 
    q^l \left(n+1\right)
  }
  \left[ 
    \max_{ j \in \{0,\ldots,n\} }
    |a_j|^l
  \right]
  \sum_{ k=0 }^n
  \max\!\left\{ 1, T^{(l-1)} \right\}
  \left\| u \right\|_{ \L^q(\lambda_{\D};\R) }^{(q-l)}
  \left| \lambda_{\R^d}(\D) \right|^{\frac{l(n-k)}{nq}}
\\&\leq
  t
  \sum_{ l=2 }^q
  \tbinom{q}{l} \,
  \tfrac{1}{q^l\left(n+1\right)}
  \sum_{ k=0 }^n
  \left(
    \left\| u \right\|_{ \L^q(\lambda_{\D};\R) }^{q}
    +
    q^q
    \left(n+1\right)^q
    \max\!\left\{ 1, T^{\frac{q(l-1)}{l}} \right\}
    \left[ 
      \max_{ j \in \{0,1,\ldots,n\} }
      |a_j|^q
    \right]
    | \lambda_{\R^d}(\D) |^{\frac{(n-k)}{n}}
  \right)
\\&\leq
  t
  \sum_{ l=2 }^q
  \frac{1}{l!}
  \left(
    \left\| u \right\|_{ \L^q(\lambda_{\D};\R) }^{q}
    +
    \left[ 
      q
      \left(n+1\right)
      \max\{1,T\}
      \max_{ j \in \{0,1,\ldots,n\} }
      |a_j|
    \right]^q
    \max\!\left\{ 1, \lambda_{\R^d}(\D) \right\}
  \right) .
\end{split}
\end{align}
Combining~\eqref{eq:U_lem2_1},
\eqref{eq:U_lem2_2}, and~\eqref{eq:U_lem2_3}
yields that
\begin{align}
\begin{split}
 &\big\|
    u
    +
    t \,
    \one_{ 
      [0, h^{-\chi}]
    }
    ( \left\| u + v \right\|_{ \L^{nq}(\lambda_{\D};\R) } ) \,
    \Ff( u + v ) 
  \big\|_{ \L^q(\lambda_{\D};\R) }^q
\\&\leq
  \left\| u \right\|_{ \L^q(\lambda_{\D};\R) }^q
  +
  q t
  \left[
    a_n
    +
    \sum_{ k = 0 }^{ n }
    \sum_{ j = 0 }^{ \min\{k,n-1\} }
    \binom{k}{j} \,
    | a_k | \,
    | \eta_{(k,j)} |^{\frac{(q+n-1)}{(q+j-1)}}
  \right]
  \left\| u \right\|_{\L^{(q+n-1)}(\lambda_{\D};\R)}^{(q+n-1)}
\\&\quad+
  q t
  \sum_{ k = 0 }^{ n }
  \sum_{ j = 0 }^{ \min\{k,n-1\} }
  \binom{k}{j} \,
  | a_k | \,
  | \eta_{(k,j)} |^{-\frac{(q+n-1)}{(n-j)}}
  \left( 
    \lambda_{\R^d}(\D)
    +
    \left\| v \right\|_{\L^{(q+n-1)}(\lambda_{\D};\R)}^{(q+n-1)}
  \right)
\\&\quad+
  t
  \sum_{ l=2 }^q
  \frac{1}{l!}
  \left(
    \left\| u \right\|_{ \L^q(\lambda_{\D};\R) }^{q}
    +
    \left[ 
      q
      \left(n+1\right)
      \max\{1,T\}
      \max_{ j \in \{0,1,\ldots,n\} }
      |a_j|
    \right]^q
    \max\!\left\{ 1, \lambda_{\R^d}(\D) \right\}
  \right) .
\end{split}
\end{align}
The fact that
$
  \sum_{ l=2 }^{q} \frac{ 1 }{ l! }
  =
  \left(
    \sum_{ l=0 }^q \frac{ 1 }{ l! }
  \right)
  -
  2
  \leq
  e
  -
  2
  <
  1
$
hence assures that
\begin{align}
\begin{split}
 &\big\|
    u
    +
    t \,
    \one_{ 
      [0, h^{-\chi}]
    }
    ( \left\| u + v \right\|_{ \L^{nq}(\lambda_{\D};\R) } ) \,
    \Ff( u + v ) 
  \big\|_{ \L^q(\lambda_{\D};\R) }^q
\\&\leq
  \left(
    1
    +
    t
  \right)
  \left\| u \right\|_{ \L^q(\lambda_{\D};\R) }^q
  +
  q t
  \left[
    a_n
    +
    \sum_{ k = 0 }^{ n }
    \sum_{ j = 0 }^{ \min\{k,n-1\} }
    \binom{k}{j} \,
    | a_k | \,
    |\eta_{(k,j)} |^{\frac{(q+n-1)}{(q+j-1)}}
  \right]
  \| u \|_{\L^{(q+n-1)}(\lambda_{\D};\R)}^{(q+n-1)}
\\&\quad+
  t
  \max\!\left\{ 
    1, 
    \lambda_{\R^d}(\D),
    \| v \|_{\L^{(q+n-1)}(\lambda_{\D};\R)}^{(q+n-1)}
  \right\}
\\&\quad\cdot
  \left(
    2
    q
    \sum_{ k = 0 }^{ n }
    \sum_{ j = 0 }^{ \min\{k,n-1\} }
    \binom{k}{j} \,
    | a_k | \,
    | \eta_{(k,j)} |^{-\frac{(q+n-1)}{(n-j)}}
    +
    \left[ 
      q
      \left(n+1\right)
      \max\{1,T\}
      \max_{ j \in \{0,1,\ldots,n\} }
      |a_j|
    \right]^q
  \right) .
\end{split}
\end{align}
This and the fact that
$ 
  1 + t \leq e^t
$
complete the proof of Lemma~\ref{lem:U_lem2}.
\end{proof}

\begin{corollary}
\label{cor:U_cor1}
Assume the setting in Section~\ref{sec:example_setting}
and let $ q \in \{2n, 2n+2, 2n+4, \ldots \} $.
Then there exists a real number $ K \in (0,\infty) $
such that for all $ h \in (0,T] $,
$ t \in [0,h] $, $ u, v \in L^{nq}(\lambda_{\D}; \R) $,
$ \chi \in [0, \frac{1}{2n}] $
it holds that
\begin{align}
\label{eq:U_cor1_1}
\begin{split} 
 &\max\!\Big\{
    \big\|
      e^{tA}
      \big[
	u
	+
	t \,
	\one_{ 
	  [0, h^{-\chi}]
	}
	( \left\| u + v \right\|_{ L^{nq}(\lambda_{\D};\R) } ) \,
	F( u + v )
      \big]
    \big\|_{ L^q(\lambda_{\D};\R) }^q,
\\&\quad
    \big\|
      \big( \Id_H - tA\big)^{-1}
      \big[
	u
	+
	t \,
	\one_{ 
	  [0, h^{-\chi}]
	}
	( \left\| u + v \right\|_{ L^{nq}(\lambda_{\D};\R) } ) \,
	F( u + v )
      \big]
    \big\|_{ L^q(\lambda_{\D};\R) }^q
  \Big\}
\\&
  \leq
  e^{t}
  \left[
    \big\|
      u
    \big\|_{ L^q(\lambda_{\D};\R) }^q
    + 
    t
    K
    \max\!\left\{
      1,
      \| v \|_{L^{(q+n-1)}(\lambda_{\D};\R)}^{(q+n-1)}
    \right\}
  \right] .
\end{split}
\end{align}
\end{corollary}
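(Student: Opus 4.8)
The plan is to combine the $L^q$-contractivity of $e^{tA}$ and of $(\Id_H-tA)^{-1}$ furnished by Lemma~\ref{lem:U_lem1} with the pointwise dissipativity estimate of Lemma~\ref{lem:U_lem2}, and to exploit the hypothesis $a_n<0$ in order to make the only term in Lemma~\ref{lem:U_lem2} that is not controlled by $\|u\|_{L^q(\lambda_{\D};\R)}^q$ and $\max\{1,\|v\|_{L^{q+n-1}(\lambda_{\D};\R)}^{q+n-1}\}$ drop out.

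First I would fix $q\in\{2n,2n+2,\ldots\}$ (so that $q$ is even and $nq\geq q+n-1\geq q$ and $nq\geq 2n^2$) and choose a function $\eta\colon\N_0\times\N_0\to(0,\infty)$ with the property that
\begin{align*}
  \sum_{k=0}^{n}
  \sum_{j=0}^{\min\{k,n-1\}}
  \tbinom{k}{j}\,|a_k|\,|\eta_{(k,j)}|^{\frac{q+n-1}{q+j-1}}
  \leq
  |a_n| .
\end{align*}
Such an $\eta$ exists because for every admissible pair $(k,j)$ the exponent $\tfrac{q+n-1}{q+j-1}$ is strictly positive (indeed $q+j-1\geq q-1\geq 2n-1>0$ and $n-j\geq 1$), hence each summand is continuous in $\eta_{(k,j)}\in(0,\infty)$ and tends to $0$ as $\eta_{(k,j)}\searrow 0$, so choosing all finitely many relevant $\eta_{(k,j)}$ sufficiently small (and the remaining values of $\eta$ arbitrarily) forces the finite sum below $|a_n|$. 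Since $a_n<0$, this guarantees that the prefactor $a_n+\sum_{k,j}\tbinom{k}{j}|a_k||\eta_{(k,j)}|^{(q+n-1)/(q+j-1)}$ appearing in Lemma~\ref{lem:U_lem2} is $\leq 0$.

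Next I would set
\begin{align*}
  K
  &:=
  \max\{1,\lambda_{\R^d}(\D)\}
\\&\quad\cdot
  \left(
    2q
    \sum_{k=0}^{n}
    \sum_{j=0}^{\min\{k,n-1\}}
    \tbinom{k}{j}\,|a_k|\,|\eta_{(k,j)}|^{-\frac{q+n-1}{n-j}}
    +
    \Big[
      q(n+1)\max\{1,T\}\max_{ j \in \{0,1,\ldots,n\} }|a_j|
    \Big]^{q}
  \right) ,
\end{align*}
which is a finite positive real number depending only on $q$, $n$, $a_0,\ldots,a_n$, $T$ and $\D$, and in particular not on $h$, $t$, $u$, $v$ or $\chi$. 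Then, for $h\in(0,T]$, $t\in[0,h]$, $u,v\in L^{nq}(\lambda_{\D};\R)$ and $\chi\in[0,\tfrac{1}{2n}]$, I would observe that (the domain $\D$ being bounded) $u,v\in L^{nq}(\lambda_{\D};\R)\subseteq L^{q+n-1}(\lambda_{\D};\R)\cap L^{2n^2}(\lambda_{\D};\R)$, that $F$ and $\Ff$ agree on $L^{nq}(\lambda_{\D};\R)$, and that consequently $g:=u+t\,\one_{[0,h^{-\chi}]}(\|u+v\|_{L^{nq}(\lambda_{\D};\R)})\,F(u+v)$ belongs to $L^q(\lambda_{\D};\R)$, since each power $(u+v)^k$, $k\in\{0,1,\ldots,n\}$, lies in $L^{nq/k}(\lambda_{\D};\R)\subseteq L^q(\lambda_{\D};\R)$. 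Applying Lemma~\ref{lem:U_lem2}, discarding the $\|u\|_{L^{q+n-1}(\lambda_{\D};\R)}^{q+n-1}$-term (whose prefactor is $\leq 0$ by the choice of $\eta$, while $q t\geq 0$), and estimating $\max\{1,\lambda_{\R^d}(\D),\|v\|_{L^{q+n-1}(\lambda_{\D};\R)}^{q+n-1}\}\leq\max\{1,\lambda_{\R^d}(\D)\}\,\max\{1,\|v\|_{L^{q+n-1}(\lambda_{\D};\R)}^{q+n-1}\}$, I obtain
\begin{align*}
  \|g\|_{L^q(\lambda_{\D};\R)}^q
  \leq
  e^{t}\,\|u\|_{L^q(\lambda_{\D};\R)}^q
  +
  t\,K\,\max\{1,\|v\|_{L^{q+n-1}(\lambda_{\D};\R)}^{q+n-1}\} .
\end{align*}

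Finally, Lemma~\ref{lem:U_lem1} (applicable since $q$ is even) yields $\max\{\|e^{tA}g\|_{L^q(\lambda_{\D};\R)}^q,\|(\Id_H-tA)^{-1}g\|_{L^q(\lambda_{\D};\R)}^q\}\leq\|g\|_{L^q(\lambda_{\D};\R)}^q$, and combining this with the previous display and with the elementary inequality $e^{t}\geq 1$ establishes exactly~\eqref{eq:U_cor1_1}. The only genuinely delicate point is the uniform choice of $\eta$ in the first step: it has to be made once and for all, independently of $t$, $u$, $v$, $h$ and $\chi$, yet still render the prefactor of $\|u\|_{L^{q+n-1}(\lambda_{\D};\R)}^{q+n-1}$ nonpositive, and this is precisely where the dissipativity assumption $a_n<0$ enters; everything else amounts to bookkeeping of the constant $K$.
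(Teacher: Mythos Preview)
Your proposal is correct and follows precisely the route the paper indicates in its one-line proof (``Combining Lemma~\ref{lem:U_lem1} and Lemma~\ref{lem:U_lem2} implies~\eqref{eq:U_cor1_1}''): you spell out the choice of $\eta$ that renders the $\|u\|_{L^{q+n-1}}^{q+n-1}$-prefactor nonpositive via $a_n<0$, identify $K$ explicitly, and then invoke the $L^q$-contractivity from Lemma~\ref{lem:U_lem1}. One minor remark: the bound in~\eqref{eq:U_lem2_assert} already appears with the factor $\max\{1,\lambda_{\R^d}(\D)\}\max\{1,\|v\|_{L^{q+n-1}}^{q+n-1}\}$ in product form, so your intermediate estimate $\max\{1,\lambda_{\R^d}(\D),\|v\|^{q+n-1}\}\leq\max\{1,\lambda_{\R^d}(\D)\}\max\{1,\|v\|^{q+n-1}\}$ is superfluous (though harmless).
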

\begin{proof}[Proof of Corollary~\ref{cor:U_cor1}]
Combining Lemma~\ref{lem:U_lem1} and Lemma~\ref{lem:U_lem2} 
implies~\eqref{eq:U_cor1_1}. The proof of 
Corollary~\ref{cor:U_cor1} is thus completed.
\end{proof}

\begin{lemma}
\label{lem:limplicit_bound}
Assume the setting in Section~\ref{sec:example_setting}
and let $ t \in (0,\infty) $, $ \kappa \in [0,2] $,
$ j \in \{ 2, 3, \ldots \} $.
Then
$
  \|
    ( - j t A )^{ \kappa } \,
    ( \Id_H - t A )^{ -j }
  \|_{ L( H ) }
  \leq
  4
$.
\end{lemma}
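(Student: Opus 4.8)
The plan is to use that $A$ here is a constant positive multiple of the Dirichlet Laplacian and hence a self-adjoint operator on $H$ which is negative definite with $\textup{spectrum}(A) \subseteq (-\infty,0)$. This allows me to pass to the functional calculus of the nonnegative self-adjoint operator $-A$ and thereby reduce the claimed operator-norm bound to an elementary one-variable inequality. Indeed, for every bounded Borel function $\phi \colon (0,\infty) \to \R$ it holds that $\| \phi(-A) \|_{L(H)} = \sup_{ \lambda \in \textup{spectrum}(-A) } | \phi(\lambda) |$, and the operator $(-jtA)^{\kappa}\,(\Id_H - tA)^{-j}$ is precisely $\phi(-A)$ for the choice $\phi(\lambda) = (jt\lambda)^{\kappa}\,(1+t\lambda)^{-j}$. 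Since $\textup{spectrum}(-A) \subseteq (0,\infty)$ and $t \in (0,\infty)$, substituting $x = t\lambda$ shows that it suffices to prove that
\[
  \sup_{ x \in (0,\infty) }
  \frac{ (jx)^{\kappa} }{ (1+x)^{j} }
  \leq 4
\]
for all $j \in \{2,3,\ldots\}$ and all $\kappa \in [0,2]$.

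For this scalar inequality I would distinguish two cases according to the size of $jx$. If $jx \leq 1$, then $(jx)^{\kappa} \leq 1$ because $\kappa \geq 0$, while $(1+x)^{j} \geq 1$, so the quotient is at most $1 \leq 4$. If $jx \geq 1$, then $(jx)^{\kappa} \leq (jx)^{2}$ because $\kappa \leq 2$, so it suffices to bound $j^{2}x^{2}(1+x)^{-j}$. Here I would invoke the elementary binomial estimate $(1+x)^{j} = \sum_{k=0}^{j} \binom{j}{k} x^{k} \geq \binom{j}{2} x^{2} = \tfrac{j(j-1)}{2}\,x^{2}$, which is valid for all $x \in (0,\infty)$ and all $j \in \{2,3,\ldots\}$, to obtain $\frac{ j^{2} x^{2} }{ (1+x)^{j} } \leq \frac{ j^{2} x^{2} }{ \frac{j(j-1)}{2} x^{2} } = \frac{2j}{j-1}$. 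Since $\{2,3,\ldots\} \ni j \mapsto \frac{2j}{j-1} = 2 + \frac{2}{j-1}$ is decreasing with maximal value $4$ attained at $j = 2$, the bound $\frac{2j}{j-1} \leq 4$ follows, which completes the argument.

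The only slightly delicate point --- and the main obstacle in an otherwise routine argument --- is the reduction step: one has to be sure that $(-jtA)^{\kappa} = (jt)^{\kappa}(-A)^{\kappa}$ and $(\Id_H - tA)^{-j}$ are simultaneously diagonalised by the spectral resolution of $-A$, so that the functional calculus identity $\|\phi(-A)\|_{L(H)} = \sup_{\lambda} |\phi(\lambda)|$ may legitimately be applied. This is standard once one recalls that $A$ in Section~\ref{sec:example_setting} is self-adjoint (being $\nu$ times the Dirichlet Laplacian) and that the fractional powers of $-A$ entering the interpolation spaces $H_r$ are defined through exactly this spectral calculus. Everything else is the one-variable optimisation above, so I do not anticipate any further difficulty.
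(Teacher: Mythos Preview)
Your proposal is correct and takes essentially the same approach as the paper: reduce via the spectral representation of the self-adjoint operator $-A$ to the scalar inequality $\sup_{x>0}(jx)^{\kappa}(1+x)^{-j}\leq 4$, and prove the latter using the binomial lower bound $(1+x)^j \geq \binom{j}{2}x^2 = \tfrac{j(j-1)}{2}x^2$. The paper writes this out with an explicit orthonormal eigenbasis rather than abstract functional calculus, and handles the scalar step without an explicit case split (deriving $(1+x)^j \geq 1 + \tfrac{j^2x^2}{4} \geq \tfrac{(jx)^{\kappa}}{4}$ in one chain), but these are cosmetic differences.
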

\begin{proof}[Proof of Lemma~\ref{lem:limplicit_bound}]
Note that the assumption that
$ A \colon D(A) \subseteq H \rightarrow H $
is the Laplacian with Dirichlet boundary
conditions on $ H $ ensures that there exists
an orthonormal basis $ \mathbb{B} \subseteq H $ of $ H $
and a mapping
$ \lambda \colon \mathbb{B} \rightarrow (0,\infty) $
such that
$
  D(A)
  =
  \left\{
    v \in H
    \colon
    \sum_{ b \in \mathbb{B} }
    \left| 
      \lambda_b
      \left< b,v \right>_H 
    \right|^2
    <
    \infty
  \right\}
$
and such that
for all $ v \in D(A) $ it holds that
$ 
  Av
  =
  \sum_{ b \in \mathbb{B} }
  -
  \lambda_b
  \left<b,v\right>_H b
$.
Next observe that for all 
$ x \in [0,\infty) $ it holds that
\begin{align}
  \left( 1 + x \right)^j
  \geq
  1
  +
  \frac{ j ( j - 1 ) }{ 2 }
  x^2
  =
  1
  +
  \frac{ j }{ 2 }
  \left[
    \left(
      \frac{ j }{ 2 }
      +
      \left(
        \frac{ j }{ 2 }
        -
        1
      \right)
    \right)
    x^2
  \right]
  \geq
  1
  +
  \frac{ j^2 x^2 }{ 4 }
  \geq
  \frac{ \left( j x \right)^{ \kappa } }{ 4 } .
\end{align}
This implies that
\begin{align}
  \left\|
    \left( - j t A \right)^{ \kappa }
    \left( \Id_H - t A \right)^{ -j }
  \right\|_{ L( H ) }
  =
  \sup_{ b \in \mathbb{B} }
  \left[
    \frac{ \left( j t \lambda_b \right)^{ \kappa } }{ \left( 1 + t \lambda_b \right)^j }
  \right]
  \leq
  \sup_{ x \in [0,\infty) }
  \left(
    \frac{ \left( j x \right)^{ \kappa } }{ \left( 1 + x \right)^j }
  \right)
  \leq
  4 .
\end{align}
The proof
of Lemma~\ref{lem:limplicit_bound} is thus completed.
\end{proof}

\begin{lemma}
\label{lem:semigroup_bound}
Assume the setting in Section~\ref{sec:example_setting}
and let $ q \in [2, \infty) $,
$ \rho \in [ \nicefrac{d}{4} - \nicefrac{d}{2q}, \infty ) $
satisfy $ \nicefrac{d}{4} - \nicefrac{d}{2q} < 1 $. Then
$
  \sup_{ M \in \N }
  \sup_{ 0\leq s<t \leq T }
  \big[
    t^{\rho}
    \|
      e^{tA}
    \|_{ L( H, L^q(\lambda_{\D}; \R) ) }
    +
    \left(t-s\right)^{\rho}
    \|
      e^{(t-\fl{s})A}
    \|_{ L( H, L^q(\lambda_{\D}; \R) ) }
  \big]
  <
  \infty
$
and
\begin{align}
\begin{split}
  \sup_{ M \in \N }
  \sup_{ 0\leq s<t \leq T }
  \left(t-s\right)^{\rho}
  \big\|
    \big( \Id_H - (t-\fl{t})A \big)^{-1}
    \big( \Id_H - \tfrac{T}{M}A\big)^{(\fl{s}-\fl{t})\frac{M}{T}}
  \big\|_{ L( H, L^q(\lambda_{\D}; \R) ) }
 &<
  \infty .
\end{split}
\end{align}
\end{lemma}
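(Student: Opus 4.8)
The plan is to deduce both bounds from the smoothing property of the analytic semigroup $e^{tA}$ combined with the Sobolev embedding $H_{\rho} \hookrightarrow L^q(\lambda_{\D};\R)$, which is available precisely because $\rho \geq \nicefrac{d}{4} - \nicefrac{d}{2q}$. First I would observe that it suffices to treat one fixed $\rho \in [\nicefrac{d}{4} - \nicefrac{d}{2q}, 1)$ (such a $\rho$ exists thanks to the hypothesis $\nicefrac{d}{4} - \nicefrac{d}{2q} < 1$): if the asserted bounds hold for some $\rho_0 \in [\nicefrac{d}{4} - \nicefrac{d}{2q}, \infty)$, then they hold for every $\rho \geq \rho_0$, since $t^{\rho}\| e^{tA}\|_{L(H,L^q(\lambda_{\D};\R))} \leq T^{\rho-\rho_0}\,[t^{\rho_0}\| e^{tA}\|_{L(H,L^q(\lambda_{\D};\R))}]$ for $t \in (0,T]$, and analogously for the two remaining terms with the factor $(t-s)^{\rho}$, using $t-s \leq T$. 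So I fix $\rho \in [\nicefrac{d}{4} - \nicefrac{d}{2q}, 1)$.

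Next I would record three elementary ingredients. (a) Standard Sobolev embedding theorems (using $2\rho \geq \nicefrac{d}{2} - \nicefrac{d}{q}$ and $q < \infty$) yield a real number $c_S \in (0,\infty)$ with $\| v \|_{L^q(\lambda_{\D};\R)} \leq c_S \|(-A)^{\rho} v \|_H$ for all $v \in H_{\rho}$. (b) Analyticity of the semigroup generated by $A$ yields a real number $c_e \in (0,\infty)$ with $\|(-tA)^{\rho} e^{tA}\|_{L(H)} \leq c_e$ for all $t \in (0,\infty)$. (c) Since $A$ is a positive multiple of the Dirichlet Laplacian, $-A$ is self-adjoint with strictly positive spectrum, so the functional calculus together with $\sup_{x \in (0,\infty)} \tfrac{x^{\rho}}{1+x} < \infty$ (finite because $\rho < 1$) furnishes a real number $c_r \in (0,\infty)$ with $\|(-\tau A)^{\rho}(\Id_H - \tau A)^{-1}\|_{L(H)} \leq c_r$ and $\|(\Id_H - \tau A)^{-1}\|_{L(H)} \leq 1$ for all $\tau \in (0,\infty)$. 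The first assertion is then immediate: for $t \in (0,T]$ one has $\| e^{tA}\|_{L(H,L^q(\lambda_{\D};\R))} \leq c_S\|(-A)^{\rho} e^{tA}\|_{L(H)} = c_S t^{-\rho}\|(-tA)^{\rho} e^{tA}\|_{L(H)} \leq c_S c_e\, t^{-\rho}$, and since $\fl{s} \leq s < t$ implies $t - \fl{s} \geq t - s > 0$ one obtains likewise $\| e^{(t-\fl{s})A}\|_{L(H,L^q(\lambda_{\D};\R))} \leq c_S c_e\,(t-\fl{s})^{-\rho} \leq c_S c_e\,(t-s)^{-\rho}$; multiplying by $t^{\rho}$ resp.\ $(t-s)^{\rho}$ gives the uniform bound $2 c_S c_e$.

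For the second assertion I would fix $M \in \N$, write $h = \nicefrac{T}{M}$, fix $0 \leq s < t \leq T$, and use the monotonicity of $\fl{\cdot}$ to pick $k, l \in \N_0$ with $\fl{s} = kh$, $\fl{t} = lh$, $k \leq l$; setting $m = l - k \in \N_0$ and $\tau = t - \fl{t} \in [0,h)$ the operator in question equals $P := (\Id_H - \tau A)^{-1}(\Id_H - h A)^{-m}$, since $(\fl{s}-\fl{t})\tfrac{M}{T} = -m$. By (a) it suffices to bound $\|(-A)^{\rho} P\|_{L(H)}$ by a constant multiple of $(t-s)^{-\rho}$, uniformly in $M, s, t$. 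If $m = 0$ then $\fl{t} = \fl{s}$ and $s < t$ force $\tau > 0$ and $\tau = t - \fl{s} \geq t - s$, so (c) gives $\|(-A)^{\rho} P\|_{L(H)} = \|(-A)^{\rho}(\Id_H - \tau A)^{-1}\|_{L(H)} \leq c_r \tau^{-\rho} \leq c_r\,(t-s)^{-\rho}$. If $m \geq 1$ then, invoking Lemma~\ref{lem:limplicit_bound} (with $\kappa = \rho \in [0,2]$, $j = m$) when $m \geq 2$ and (c) when $m = 1$, one gets $\|(-A)^{\rho}(\Id_H - h A)^{-m}\|_{L(H)} \leq \max\{c_r,4\}\,(mh)^{-\rho}$, hence, using $\|(\Id_H - \tau A)^{-1}\|_{L(H)} \leq 1$, the commuting functional calculus, the elementary two-sided estimate $mh \leq t - \fl{s} = \tau + mh < 2mh$ (valid since $0 \leq \tau < h \leq mh$), and $t - \fl{s} \geq t - s$, one arrives at $\|(-A)^{\rho} P\|_{L(H)} \leq \max\{c_r,4\}\,(mh)^{-\rho} \leq 2^{\rho}\max\{c_r,4\}\,(t-s)^{-\rho}$.

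The analytic content needed here --- the Sobolev embedding $H_{\rho} \hookrightarrow L^q(\lambda_{\D};\R)$, the analytic-semigroup smoothing estimate, and the resolvent bound of Lemma~\ref{lem:limplicit_bound} --- is either classical or already proved in the paper, so I expect the only real work to be bookkeeping: correctly rewriting the exponent $(\fl{s}-\fl{t})\tfrac{M}{T}$ as a nonnegative integer $m$, and verifying the borderline cases, in particular that $\tau > 0$ whenever $m = 0$ and the comparison $mh \leq t - \fl{s} < 2mh$ for $m \geq 1$, which are exactly what convert the powers of $\tau$ and $mh$ arising in the operator estimates into powers of $t - s$.
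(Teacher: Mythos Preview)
Your proof is correct and uses the same three ingredients as the paper: the Sobolev embedding $H_{r}\hookrightarrow L^q(\lambda_{\D};\R)$ for $r\ge d/4-d/(2q)$, the analytic-semigroup smoothing bound, and Lemma~\ref{lem:limplicit_bound} for the resolvent powers. The organisation differs slightly. The paper works throughout with the sharp Sobolev exponent $d/4-d/(2q)$ and handles the linear-implicit operator by the additive split $(t-\fl{s})^{\rho}\le 2^{[\rho-1]^+}\bigl[(t-\fl{t})^{\rho}+(\fl{t}-\fl{s})^{\rho}\bigr]$, bounding each summand separately; you instead first reduce to $\rho<1$ and then argue by the dichotomy $m=0$ versus $m\ge 1$, using the two-sided estimate $mh\le t-\fl{s}<2mh$ in the latter case. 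Your route is a bit shorter and avoids the extra sup-over-$l,k$ bookkeeping in the paper's final display, at the price of the preliminary reduction step; conversely, the paper's choice to embed at the minimal exponent $d/4-d/(2q)$ keeps $\rho$ completely free in the estimates. Either way, the substance is the same.
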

\begin{proof}[Proof of Lemma~\ref{lem:semigroup_bound}]
Note that the fact that 
for all
$ r \in [0,\infty) $
it holds that
$
  H_r \subseteq W^{2r,2}(\D,\R)
$
continuously
(cf., e.g., (A.46) in Da Prato \& Zabczyk \cite{dz92} and Lunardi \cite{l09})
and the fact that for all $ r \in [\nicefrac{d}{4} - \nicefrac{d}{2q},\infty) $ 
it holds that 
$ 
  W^{2r,2}(\D,\R) 
  \subseteq 
  W^{0,q}(\D,\R) 
  = 
  L^q(\lambda_{\D};\R) 
$
continuously imply that
there exists a real number
$ C \in (0,\infty) $ 
such that for all $ v \in H_{(\nicefrac{d}{4} - \nicefrac{d}{2q})} $
it holds that 
$ \| v \|_{ L^q(\lambda_{\D};\R) } \leq C \| v \|_{ H_{(\nicefrac{d}{4} - \nicefrac{d}{2q})} } $.
This and the fact that 
$ \forall \, s \in [0,\infty) $, 
$ \kappa \in [0,1] 
  \colon 
  \| 
    (-sA)^{\kappa} e^{sA} 
  \|_{ L(H) } \leq 1 
$
prove that for all $ t \in (0,T] $ it holds that
\begin{align}
\begin{split}
 &\sup_{ v \in H \backslash \{0\} }
  \left[ 
    \frac{ 
      t^{\rho} \,
      \big\|
	e^{tA}
	v
      \big\|_{ L^q(\lambda_{\D};\R) }
    }{
      \left\| v \right\|_H
    }
  \right]
\\&\leq
  \sup_{ v \in H \backslash \{0\} }
  \left[ 
    \frac{ 
      C \,
      t^{\rho} \,
      \big\|
	e^{tA}
	v
      \big\|_{ H_{(\nicefrac{d}{4} - \nicefrac{d}{2q})} }
    }{
      \left\| v \right\|_H
    }
  \right]
  =
  \sup_{ v \in H \backslash \{0\} }
  \left[ 
    \frac{ 
      C \,
      t^{\rho} \,
      \big\| \!
	\left( -A \right)^{(\nicefrac{d}{4} - \nicefrac{d}{2q})}
	e^{tA}
	v
      \big\|_{ H }
    }{
      \left\| v \right\|_H
    }
  \right]
\\&\leq
  C \,
  t^{( \rho - [\nicefrac{d}{4} - \nicefrac{d}{2q}]) } \,
  \big\| \!
    \left( -tA \right)^{(\nicefrac{d}{4} - \nicefrac{d}{2q})}
    e^{tA}
  \big\|_{ L( H ) }
  \leq
  C \, 
  t^{( \rho - [\nicefrac{d}{4} - \nicefrac{d}{2q}]) } .
\end{split}
\end{align}
Hence, we obtain that
$
  \sup_{ t \in (0,T] }
  t^{\rho}
  \|
    e^{tA}
  \|_{ L( H, L^q(\lambda_{\D}; \R) ) }
  \leq
  C \,
  T^{( \rho - [\nicefrac{d}{4} - \nicefrac{d}{2q}]) }
  <
  \infty
$.
This shows that
\begin{align}
\begin{split}
 &\sup_{ M \in \N }
  \sup_{ 0\leq s<t \leq T }
  \left(t-s\right)^{\rho}
  \big\|
    e^{(t-\fl{s})A}
  \big\|_{ L( H, L^q(\lambda_{\D}; \R) ) }
\\&\leq
  \sup_{ M \in \N }
  \sup_{ 0\leq s<t \leq T }
  \big(t-\fl{s}\big)^{\rho} \,
  \big\|
    e^{(t-\fl{s})A}
  \big\|_{ L( H, L^q(\lambda_{\D}; \R) ) }
  \leq
  \sup_{ t \in (0,T] }
  t^{\rho} \,
  \big\|
    e^{tA}
  \big\|_{ L( H, L^q(\lambda_{\D}; \R) ) }
  <
  \infty .
\end{split}
\end{align}
In addition, note that
the fact that
$ 
  \forall \,
  x,y \in \R
$,
$
  r \in (0, \infty) 
  \colon
  | x + y |^r 
  \leq 
  2^{[r-1]^+}
  \left(
    |x|^r
    +
    |y|^r
  \right)
$ implies
that for all $ s,t \in [0,T] $,
$ M \in \N $ with $ s<t $ 
it holds that
\begin{align}
\begin{split}
 &\sup_{ v \in H \backslash \{0\} }
  \left[ 
    \frac{ 
      \left(t-s\right)^{\rho}
    }{
      \left\| v \right\|_H
    }
    \big\|
      \big( \Id_H - (t-\fl{t})A \big)^{-1}
      \big( \Id_H - \tfrac{T}{M}A\big)^{(\fl{s}-\fl{t})\frac{M}{T}}
      v
    \big\|_{ L^q(\lambda_{\D}; \R) }
  \right]
\\&\leq
  \sup_{ v \in H \backslash \{0\} }
  \left[ 
    \frac{
      C
      \left(t-s\right)^{\rho}
    }{
      \left\| v \right\|_H
    }
    \big\|
      \big( \Id_H - (t-\fl{t})A \big)^{-1}
      \big( \Id_H - \tfrac{T}{M}A\big)^{(\fl{s}-\fl{t})\frac{M}{T}}
      v
    \big\|_{ H_{(\nicefrac{d}{4} - \nicefrac{d}{2q})} }
  \right]
\\&=
  \sup_{ v \in H \backslash \{0\} }
  \left[ 
    \frac{
      C
      \left(t-s\right)^{\rho}
    }{
      \left\| v \right\|_H
    }
    \big\|
      ( -A )^{(\nicefrac{d}{4} - \nicefrac{d}{2q})}
      \big( \Id_H - (t-\fl{t})A \big)^{-1}
      \big( \Id_H - \tfrac{T}{M}A\big)^{(\fl{s}-\fl{t})\frac{M}{T}}
      v
    \big\|_{ H }
  \right]
\\&\leq
  C
  \left(t-\fl{s}\right)^{\rho}
  \big\|
    ( -A )^{(\nicefrac{d}{4} - \nicefrac{d}{2q})}
    \big( \Id_H - (t-\fl{t})A \big)^{-1}
    \big( \Id_H - \tfrac{T}{M}A\big)^{(\fl{s}-\fl{t})\frac{M}{T}}
  \big\|_{ L(H) }
\\&\leq 
  2^{[\rho-1]^+}
  C
  \left[
    \left(t-\fl{t}\right)^{\rho}
    +
    \left(\fl{t}-\fl{s}\right)^{\rho}
  \right]
\\&\quad\cdot
  \big\|
    ( -A )^{(\nicefrac{d}{4} - \nicefrac{d}{2q})}
    \big( \Id_H - (t-\fl{t})A \big)^{-1}
    \big( \Id_H - \tfrac{T}{M}A\big)^{(\fl{s}-\fl{t})\frac{M}{T}}
  \big\|_{ L( H ) } .
\end{split}
\end{align}
This assures that
\begin{align}
\begin{split}
 &\sup_{ M \in \N }
  \sup_{ 0\leq s<t \leq T }
  \left(t-s\right)^{\rho}
  \big\|
    \big( \Id_H - (t-\fl{t})A \big)^{-1}
    \big( \Id_H - \tfrac{T}{M}A\big)^{(\fl{s}-\fl{t})\frac{M}{T}}
  \big\|_{ L( H, L^q(\lambda_{\D}; \R) ) }
\\&\leq
  \sup_{ M \in \N }
  \sup_{ 0 \leq s<t \leq T }
  2^{[\rho-1]^+}
  C
  \left(t-\fl{t}\right)^{\rho} 
  \big\|
    ( -A )^{(\nicefrac{d}{4} - \nicefrac{d}{2q})}
    \big( \Id_H - (t-\fl{t})A \big)^{-1}
  \big\|_{ L( H ) }
\\&\quad\cdot
  \big\|
    \big( \Id_H - \tfrac{T}{M}A\big)^{-1}
  \big\|_{ L( H ) }^{(\fl{t}-\fl{s})\frac{M}{T}}
\\&\quad+
  \sup_{ M \in \N }
  \sup_{ t \in [\frac{T}{M}, T] }
  \sup_{ s \in [0,\fl{t}) }
  2^{[\rho-1]^+}
  C 
  \left(\fl{t}-\fl{s}\right)^{\rho}
  \big\|
    \big( \Id_H - (t-\fl{t})A \big)^{-1}
  \big\|_{ L( H ) }
\\&\quad\cdot
  \big\|
    ( -A )^{(\nicefrac{d}{4} - \nicefrac{d}{2q})}
    \big( \Id_H - \tfrac{T}{M}A\big)^{(\fl{s}-\fl{t})\frac{M}{T}}
  \big\|_{ L( H ) } .
\end{split}
\end{align}
The fact that
$ \forall \, s \in [0,\infty) $, 
$ \kappa \in [0,1] 
  \colon
  \| 
    (-sA)^{\kappa} 
    ( \Id_H - sA)^{-1} 
  \|_{ L(H) } 
  \leq 1 
$
therefore yields that
\begin{align}
\begin{split}
 &\sup_{ M \in \N }
  \sup_{ 0 \leq s<t \leq T }
  \left(t-s\right)^{\rho}
  \big\|
    \big( \Id_H - (t-\fl{t})A \big)^{-1}
    \big( \Id_H - \tfrac{T}{M}A\big)^{(\fl{s}-\fl{t})\frac{M}{T}}
  \big\|_{ L( H, L^q(\lambda_{\D}; \R) ) }
\\&\leq
  \sup_{ M \in \N }
  \sup_{ t \in (0,T] }
  2^{[\rho-1]^+}
  C
  \left(t-\fl{t}\right)^{\rho}
  \big\|
    ( -A )^{(\nicefrac{d}{4} - \nicefrac{d}{2q})}
    \big( \Id_H - (t-\fl{t})A \big)^{-1}
  \big\|_{ L( H ) }
\\&+
  \sup_{ M \in \N }
  \sup_{ l \in \{ 1,2,\ldots,M\} }
  \sup_{ k \in \{ 0,1,\ldots,l-1 \} }
  2^{[\rho-1]^+}
  C
  \left(\tfrac{T}{M}\left(l-k\right)\right)^{\rho}
  \big\|
    ( -A )^{(\nicefrac{d}{4} - \nicefrac{d}{2q})}
    \big( \Id_H - \tfrac{T}{M}A\big)^{-(l-k)}
  \big\|_{ L( H ) }
\\&\leq
  2^{[\rho-1]^+}
  C
  \sup_{ M \in \N }
  \left[ 
    \sup_{ t \in (0,T] }
    \left(t-\fl{t}\right)^{(\rho-[\nicefrac{d}{4} - \nicefrac{d}{2q}])}
    +
    \left(\tfrac{T}{M}\right)^{\rho}
    \big\|
      ( -A )^{(\nicefrac{d}{4} - \nicefrac{d}{2q})}
      \big( \Id_H - \tfrac{T}{M}A\big)^{-1}
    \big\|_{ L( H ) }
  \right]
\\&+
  2^{[\rho-1]^+}
  C
  \sup_{ M \in \{ 2,3,\ldots\} }
  \sup_{ l \in \{ 2,3,\ldots,M\} }
  \sup_{ k \in \{ 0,1,\ldots,l-2 \} }
  \left(\tfrac{T}{M}\left(l-k\right)\right)^{\rho} 
  \big\|
    ( -A )^{(\nicefrac{d}{4} - \nicefrac{d}{2q})}
    \big( \Id_H - \tfrac{T}{M}A\big)^{-(l-k)}
  \big\|_{ L( H ) } .
\end{split}
\end{align}
This, Lemma~\ref{lem:limplicit_bound},
and again the fact that 
$ \forall \, s \in [0,\infty) $, 
$ \kappa \in [0,1] 
  \colon
  \| 
    (-sA)^{\kappa} 
    ( \Id_H - sA )^{-1} 
  \|_{ L(H) } 
  \leq 1 
$
prove that
\begin{align}
\begin{split}
 &\sup_{ M \in \N }
  \sup_{ 0 \leq s<t \leq T }
  \left(t-s\right)^{\rho}
  \big\|
    \big( \Id_H - (t-\fl{t})A \big)^{-1}
    \big( \Id_H - \tfrac{T}{M}A\big)^{(\fl{s}-\fl{t})\frac{M}{T}}
  \big\|_{ L( H, L^q(\lambda_{\D}; \R) ) }
\\&\leq
  2^{[\rho-1]^+}
  C
  \left[ 
    \sup_{ M \in \N }
    2
    \left(\tfrac{T}{M}\right)^{(\rho-[\nicefrac{d}{4} - \nicefrac{d}{2q}])}
    + 
    \sup_{ M \in \{ 2,3,\ldots\} }
    \sup_{ l \in \{ 2,3,\ldots,M\} }
    \sup_{ k \in \{ 0,1,\ldots,l-2 \} }
    4
    \left(\tfrac{T}{M}\left(l-k\right)\right)^{(\rho-[\nicefrac{d}{4} - \nicefrac{d}{2q}])}
  \right]
\\&\leq
  6\cdot 
  2^{[\rho-1]^+}
  C \,
  T^{(\rho-[\nicefrac{d}{4} - \nicefrac{d}{2q}])}
  <
  \infty .
\end{split}
\end{align}
The proof of Lemma~\ref{lem:semigroup_bound} 
is thus completed. 
\end{proof}

\begin{lemma}
\label{lem:F_lem1}
Assume the setting in Section~\ref{sec:example_setting}
and let $ q \in \{2n,2n+2,2n+4, \ldots\} $,
$ v, w \in L^{nq}(\lambda_{\D};\R) $.
Then
\begin{equation}
  \big\|
    F(v+w)
  \big\|_{ H }
  \leq
  2^{(n+1)}
  \sqrt{ 
    \max\{1,\lambda_{ \R^d }(\D)\}
  }
  \left[
    \max_{ k\in\{0,1,\ldots,n\} }
    | a_k |
  \right]
  \left(
    1
    +
    \left\|
      v
    \right\|_{ L^{q}(\lambda_{\D};\R) }^{\nicefrac{q}{2}}
    +
    \left\|
      w
    \right\|_{ L^{q}(\lambda_{\D};\R) }^{\nicefrac{q}{2}}
  \right) .
\end{equation}
\end{lemma}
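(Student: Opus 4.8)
The plan is to use the triangle inequality in $H = L^2(\lambda_{\D};\R)$ together with the elementary identity $\|u^k\|_{L^2(\lambda_{\D};\R)} = \|u\|_{L^{2k}(\lambda_{\D};\R)}^k$ and the fact that on the finite measure space $(\D,\lambda_{\D})$ the space $L^q(\lambda_{\D};\R)$ embeds continuously into $L^{2k}(\lambda_{\D};\R)$ for every $k\in\{1,2,\ldots,n\}$ (here the hypothesis $q\geq 2n$ enters). Writing $F(v+w) = \sum_{k=0}^n a_k(v+w)^k$, the triangle inequality gives $\|F(v+w)\|_H \leq \sum_{k=0}^n |a_k|\,\|(v+w)^k\|_{L^2(\lambda_{\D};\R)}$; the $k=0$ summand equals $|a_0|\,[\lambda_{\R^d}(\D)]^{1/2} \leq |a_0|\,[\max\{1,\lambda_{\R^d}(\D)\}]^{1/2}$, so it remains to estimate $\|(v+w)^k\|_{L^2(\lambda_{\D};\R)}$ for $k\in\{1,2,\ldots,n\}$.

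For such $k$ I would first expand $(v+w)^k = \sum_{j=0}^k \binom{k}{j} v^j w^{k-j}$ and apply H\"older's inequality to each term in the form $\|v^j w^{k-j}\|_{L^2(\lambda_{\D};\R)} \leq \|v\|_{L^{2k}(\lambda_{\D};\R)}^{j}\,\|w\|_{L^{2k}(\lambda_{\D};\R)}^{k-j}$ (with conjugate exponents $k/j$ and $k/(k-j)$ for $0<j<k$, and trivially for $j\in\{0,k\}$). Applying next the finite-measure embedding $\|\cdot\|_{L^{2k}(\lambda_{\D};\R)} \leq [\lambda_{\R^d}(\D)]^{\frac{1}{2k}-\frac{1}{q}}\,\|\cdot\|_{L^q(\lambda_{\D};\R)}$ to each factor, the $\lambda_{\R^d}(\D)$-exponents combine to $(\tfrac{1}{2k}-\tfrac1q)\big(j+(k-j)\big) = \tfrac12 - \tfrac kq \in [0,\tfrac12]$ uniformly in $j$, so summing over $j$ yields $\|(v+w)^k\|_{L^2(\lambda_{\D};\R)} \leq [\max\{1,\lambda_{\R^d}(\D)\}]^{1/2}\,\big(\|v\|_{L^q(\lambda_{\D};\R)} + \|w\|_{L^q(\lambda_{\D};\R)}\big)^{k}$.

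It then remains to perform crude elementary estimates on the last factor. Using $(a+b)^k \leq 2^{k-1}(a^k+b^k)$ and the fact that $0 \leq k \leq n \leq q/2$ implies $\|u\|_{L^q(\lambda_{\D};\R)}^k \leq \max\{1,\|u\|_{L^q(\lambda_{\D};\R)}^{q/2}\} \leq 1 + \|u\|_{L^q(\lambda_{\D};\R)}^{q/2}$, one obtains for every $k\in\{0,1,\ldots,n\}$ that $\|(v+w)^k\|_{L^2(\lambda_{\D};\R)} \leq 2^k\,[\max\{1,\lambda_{\R^d}(\D)\}]^{1/2}\,\big(1 + \|v\|_{L^q(\lambda_{\D};\R)}^{q/2} + \|w\|_{L^q(\lambda_{\D};\R)}^{q/2}\big)$. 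Summing this against $|a_k|$ and bounding $\sum_{k=0}^n |a_k|\,2^k \leq \big[\max_{k\in\{0,1,\ldots,n\}}|a_k|\big]\sum_{k=0}^n 2^k = \big[\max_{k\in\{0,1,\ldots,n\}}|a_k|\big](2^{n+1}-1) \leq 2^{n+1}\max_{k\in\{0,1,\ldots,n\}}|a_k|$ gives exactly the asserted inequality.

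This lemma is routine and I do not expect a real obstacle; the only points needing a modicum of care are keeping the power of $\lambda_{\R^d}(\D)$ equal to $\tfrac12$ — which is why the H\"older embedding must be applied to the single powers $v^j$, $w^{k-j}$ before recombining, rather than to $\|v+w\|_{L^{2k}(\lambda_{\D};\R)}$ directly — and the bookkeeping of the constant $2^{n+1}$, which comes entirely from the geometric sum $\sum_{k=0}^n 2^k$ together with the factors $2^{k-1}$ lost in the binomial estimates.
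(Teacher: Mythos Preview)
Your proof is correct and follows essentially the same route as the paper's. The only difference is that you expand $(v+w)^k$ binomially and apply H\"older to each cross term, whereas the paper simply writes $\|(v+w)^k\|_{L^2} = \|v+w\|_{L^{2k}}^k$ and applies the embedding $L^q \hookrightarrow L^{2k}$ directly to $v+w$; contrary to your closing remark, this direct route produces the same exponent $(\tfrac{1}{2k}-\tfrac{1}{q})\cdot k = \tfrac{1}{2}-\tfrac{k}{q} \in [0,\tfrac12]$, so the binomial detour is unnecessary.
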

\begin{proof}[Proof of Lemma~\ref{lem:F_lem1}]
Note that
H{\"o}lder's inequality
ensures that
\begin{align}
\begin{split}
  \big\|
    F(v+w)
  \big\|_{ H }
 &\leq
  \sum_{k=0}^n
  \left| a_k \right|
  \left\|
    \left[
      v + w
    \right]^k
  \right\|_H
  =
  \left|
    a_0
  \right|
  \sqrt{ 
    \lambda_{ \R^d }(\D)
  }
  +
  \sum_{k=1}^n
  \left| a_k \right|
  \left\|
      v + w
  \right\|_{ L^{2k}(\lambda_{\D}; \R) }^k
\\&\leq 
  \left|
    a_0
  \right|
  \sqrt{ 
    \lambda_{ \R^d }(\D)
  }
  +
  \sum_{k=1}^n
  \left| a_k \right|
  \left[ 
    \left|
      \lambda_{ \R^d }(\D)
    \right|^{(\frac{1}{2k}-\frac{1}{q})}
    \left\|
      v + w
    \right\|_{ L^{q}(\lambda_{\D}; \R) }
  \right]^k
\\&\leq
  \left|
    a_0
  \right|
  \sqrt{ 
    \lambda_{ \R^d }(\D)
  }
  +
  \sum_{k=1}^n
  \left| a_k \right|
  \left|
    \lambda_{ \R^d }(\D)
  \right|^{(\frac{1}{2}-\frac{k}{q})}
  \left[ 
    \left\|
      v
    \right\|_{ L^{q}(\lambda_{\D}; \R) }
    +
    \left\|
      w
    \right\|_{ L^{q}(\lambda_{\D}; \R) }
  \right]^k
\\&=
  \sum_{k=0}^n
  \left| a_k \right|
  \left|
    \lambda_{ \R^d }(\D)
  \right|^{(\frac{1}{2}-\frac{k}{q})}
  \left[ 
    \left\|
      v
    \right\|_{ L^{q}(\lambda_{\D}; \R) }
    +
    \left\|
      w
    \right\|_{ L^{q}(\lambda_{\D}; \R) }
  \right]^k .
\end{split}
\end{align}
The fact that
$ \forall \, x,y,r \in [0,\infty) 
  \colon
  ( x + y )^{r}
  \leq
  2^{[r-1]^+}
  (
    x^r
    +
    y^r
  )
$
hence
implies that
\begin{align}
\begin{split}
  \big\|
    F(v+w)
  \big\|_{ H }
 &\leq
  \sqrt{ 
    \max\{1,\lambda_{ \R^d }(\D)\}
  }
  \sum_{k=0}^n
  \left| a_k \right|
  2^{(k-1)}
  \left[ 
    \left\|
      v
    \right\|_{ L^{q}(\lambda_{\D}; \R) }^k
    +
    \left\|
      w
    \right\|_{ L^{q}(\lambda_{\D}; \R) }^k
  \right]
\\&\leq
  \sqrt{ 
    \max\{1,\lambda_{ \R^d }(\D)\}
  }
  \sum_{k=0}^n
  \left| a_k \right|
  2^{k}
  \left[
    1
    +
    \left\|
      v
    \right\|_{ L^{q}(\lambda_{\D}; \R) }^{\nicefrac{q}{2}}
    +
    \left\|
      w
    \right\|_{ L^{q}(\lambda_{\D}; \R) }^{\nicefrac{q}{2}}
  \right]
\\&\leq
  \sqrt{ 
    \max\{1,\lambda_{ \R^d }(\D)\}
  }
  \left[
    \max_{ k\in\{0,1,\ldots,n\} }
    | a_k |
  \right]
  \left[ 
    \sum_{k=0}^n
    2^{k}
  \right]
  \left[
    1
    +
    \left\|
      v
    \right\|_{ L^{q}(\lambda_{\D}; \R) }^{\nicefrac{q}{2}}
    +
    \left\|
      w
    \right\|_{ L^{q}(\lambda_{\D}; \R) }^{\nicefrac{q}{2}}
  \right] .
\end{split}
\end{align}
This completes the proof of Lemma~\ref{lem:F_lem1}.
\end{proof}

\begin{lemma}
\label{lem:F_lem2}
Assume the setting in Section~\ref{sec:example_setting}
and let
$ v, w \in \BaMi $
with $ v-w \in H_1 $.
Then
\begin{align}
\begin{split}
 &\left<
    v-w,
    A(v-w) + F(v) - F(w)
  \right>_{ H }
\\&\leq
  \left(n-1\right)
  \max\!\left\{ 1, \left|a_n\right|^{(2-n)} \right\}
  \max\!\left\{ 
    1, \max_{ k \in \{ 0,1,\ldots,n-1 \} } \big[ k \left|a_k\right| \big]^{(n-1)}
  \right\}
  \left\|
    v-w
  \right\|_{ H }^2 .
\end{split}
\end{align}
\end{lemma}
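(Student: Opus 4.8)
The plan is to reduce the assertion, via integration by parts and the fundamental theorem of calculus, to a purely scalar polynomial estimate. First I would dispose of the Laplacian term. Since $A$ is a positive multiple of the (self-adjoint, non-positive) Dirichlet Laplacian, one has, for instance via the eigenbasis of $-A$ used in the proof of Lemma~\ref{lem:limplicit_bound}, that $\left< u, Au \right>_H \leq 0$ for every $u \in H_1 = D(A)$; in particular $\left< v - w, A(v-w) \right>_H \leq 0$. Hence it suffices to bound $\left< v-w, F(v) - F(w) \right>_H = \int_{\D} \big( v(x) - w(x) \big)\big( p(v(x)) - p(w(x)) \big)\, \lambda_{\R^d}(dx)$, where $p \colon \R \to \R$, $p(y) = \sum_{k=0}^n a_k y^k$, is the polynomial inducing $F$; note this integrand lies in $L^1(\lambda_{\D};\R)$ because $v, w \in L^{2n^2}(\lambda_{\D};\R)$ forces $p(v), p(w) \in L^{2n}(\lambda_{\D};\R)$.

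The key observation is that, by the chain rule and the fundamental theorem of calculus, for all $a, b \in \R$ it holds that $(a-b)\big(p(a) - p(b)\big) = (a-b)^2 \int_0^1 p'\big( b + s(a-b) \big)\, ds \leq (a-b)^2 \sup_{t \in \R} p'(t)$, so everything comes down to the scalar bound $\sup_{t \in \R} p'(t) \leq (n-1)\max\{1, |a_n|^{2-n}\}\max\{1, \max_{k\in\{0,\ldots,n-1\}}[k|a_k|]^{n-1}\}$. To prove this, set $P := \max_{k\in\{0,1,\ldots,n-1\}} k|a_k|$ (if $P = 0$, then $p'(t) = n a_n t^{n-1} \leq 0$ since $n$ is odd, and we are done, so assume $P > 0$). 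Because $n-1$ is even, for every $t \in \R$, writing $r := |t|$, one has $p'(t) \leq -n|a_n| r^{n-1} + \sum_{k=1}^{n-1} k|a_k| r^{k-1} \leq -n|a_n| r^{n-1} + P(1 + r + \cdots + r^{n-2})$. I would then case-split on $r$: if $r \leq 1$, the right-hand side is at most $P(n-1) \leq (n-1)\max\{1, P^{n-1}\}$; if $r > 1$, it is at most $r^{n-2}\big( P(n-1) - n|a_n| r \big)$, which is $\leq 0$ when $r \geq \tfrac{P(n-1)}{n|a_n|}$ and otherwise is at most $P(n-1)\big(\tfrac{P(n-1)}{n|a_n|}\big)^{n-2} = (n-1)\big(\tfrac{n-1}{n}\big)^{n-2}\tfrac{P^{n-1}}{|a_n|^{n-2}} \leq (n-1)\max\{1, P^{n-1}\}\max\{1, |a_n|^{2-n}\}$, where I use $0 < \tfrac{n-1}{n} < 1$ and $n - 2 \geq 1$ (the remaining case $n = 1$ being already covered, since then $P = 0$). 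In each case the bound is exactly the asserted constant $C$, and $\max\{1,[k|a_k|]^{n-1}\}$ over $k$ equals $\max\{1,P^{n-1}\}$ by monotonicity of $t \mapsto t^{n-1}$.

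Combining the pieces, integrating the pointwise inequality $\big(v(x)-w(x)\big)\big(p(v(x)) - p(w(x))\big) \leq C\, \big(v(x) - w(x)\big)^2$ over $\D$ gives $\left< v-w, F(v) - F(w) \right>_H \leq C \left\| v - w \right\|_H^2$, and adding $\left< v-w, A(v-w) \right>_H \leq 0$ finishes the proof. The only delicate point is arranging the case analysis so that the constants collapse precisely onto the stated $C$ rather than something larger; the $\sup p'$ reduction and the bound $\langle u, Au\rangle_H \leq 0$ are both routine.
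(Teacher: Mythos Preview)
Your proof is correct and follows essentially the same route as the paper: both show $\langle u, Au\rangle_H \leq 0$, reduce the $F$-term via the fundamental theorem of calculus to the scalar bound $\sup_{t\in\R} p'(t) \leq C$, and then integrate pointwise over $\D$. The only difference is in establishing that scalar bound---the paper applies Young's inequality to absorb each sub-leading term $k|a_k|\,|t|^{k-1}$ (for $2 \leq k \leq n-1$) into the negative leading term $n a_n t^{n-1}$, whereas you bound $\sum_{k=1}^{n-1} k|a_k|\, r^{k-1} \leq P(1+r+\cdots+r^{n-2})$ and case-split on $r \lessgtr 1$; both arguments land exactly on the stated constant.
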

\begin{proof}[Proof of Lemma~\ref{lem:F_lem2}]
Throughout this proof
assume w.l.o.g.\ that $ n>1 $
(otherwise the proof is clear)
and let
$ f \colon \R \rightarrow \R $
be the function with the
property that for all
$ x \in \R $ it holds that
$ f(x) = \sum_{k=0}^n a_k x^k $.
Note that the fundamental theorem of 
calculus implies that
for all $ x,y \in \R $ it holds that
\begin{align}
\label{eq:F_lem2_1}
\begin{split}
  \left(x-y\right)
  \left( f(x) - f(y) \right)
 &=
  \left(x-y\right)
  \int_y^x f'(z) \, dz
  \leq
  \left[
    \sup_{z\in \R}
    f'(z)
  \right]
  \left(x-y\right)^2 .
\end{split}
\end{align}
Next note that 
Young's inequality
shows
that for all
$ x \in \R $ it holds that
\begin{align}
\label{eq:F_lem2_2}
\begin{split}
 &f'(x)
  =
  \sum_{k=1}^n
  k a_k x^{(k-1)}
  \leq
  n a_n x^{(n-1)}
  +
  |a_1|
  +
  \sum_{k=2}^{n-1}
  k \left|a_k\right|
  \left|x\right|^{(k-1)}
\\&=
  n a_n x^{(n-1)}
  +
  |a_1|
  +
  \sum_{k=2}^{n-1}
  k \left|a_k\right|
  \left[ 
    \frac{ 
      \left( n-1 \right)
      \left| a_n \right|
    }{
      \left( k-1 \right)
      \max\{ 1, k \left|a_k\right| \}
    }
  \right]^{\frac{(k-1)}{(n-1)}} 
  \left|x\right|^{(k-1)}
  \left[ 
    \frac{ 
      \left( n-1 \right)
      \left| a_n \right|
    }{
      \left( k-1 \right)
      \max\{ 1, k \left|a_k\right| \}
    }
  \right]^{-\frac{(k-1)}{(n-1)}}
\\&\leq
  n a_n x^{(n-1)}
  +
  |a_1|
  +
  \sum_{k=2}^{n-1}
  k \left|a_k\right|
  \left(
    \frac{ 
      \left|a_n\right|
      x^{(n-1)}
    }{
      \max\{ 1, k \left|a_k\right| \}
    }
    +
    \frac{(n-k)}{(n-1)}
    \left[
      \frac{
        \left( n-1 \right)
        \left|a_n\right|
      }{
        \left(k-1\right)
        \max\{ 1, k \left|a_k\right| \}
      }
    \right]^{-\frac{(k-1)}{(n-k)}}
  \right)
\\&\leq
  \big[
    n a_n
    +
    \left(n-2\right)
    \left|a_n\right|
  \big]
  x^{(n-1)}
  +
  |a_1|
  +
  \sum_{k=2}^{n-1}
  k \left|a_k\right|
  \left[
    \frac{
      \max\{ 1, k \left|a_k\right| \}
    }{
      \left|a_n\right|
    }
  \right]^{\frac{(k-1)}{(n-k)}}
\\&\leq
  \sum_{k=1}^{n-1}
  \max\!\left\{ 1, \left|a_n\right|^{-(n-2)} \right\}
  \big[ \max\!\left\{ 1, k \left|a_k\right| \right\} \big]^{(n-1)}
\\&\leq
  \left(n-1\right)
  \max\!\left\{ 1, \left|a_n\right|^{-(n-2)} \right\}
  \max\!\left\{ 
    1, \max_{ k \in \{ 0,1,\ldots,n-1 \} } \big[ k \left|a_k\right| \big]^{(n-1)}
  \right\} .
\end{split}
\end{align}
In addition, observe 
that~\eqref{eq:F_lem2_1}--\eqref{eq:F_lem2_2}
prove that
\begin{align}
\label{eq:F_lem2_3}
\begin{split}
 &\left<
    v-w,
    F(v) - F(w)
  \right>_{ H }
  =
  \int_{\D}
  \left(
    v(x) - w(x)
  \right)
  \big[
    f(v(x))
    -
    f(w(x))
  \big] \, \lambda_{\R^d}(dx)
\\&\leq
  \left(n-1\right)
  \max\!\left\{ 1, \left|a_n\right|^{-(n-2)} \right\}
  \max\!\left\{ 
    1, \max_{ k \in \{ 0,1,\ldots,n-1 \} } \big[ k \left|a_k\right| \big]^{(n-1)}
  \right\}
  \int_{\D}
  \left(
    v(x) - w(x)
  \right)^2 \lambda_{\R^d}(dx)
\\&=
  \left(n-1\right)
  \max\!\left\{ 1, \left|a_n\right|^{-(n-2)} \right\}
  \max\!\left\{ 
    1, \max_{ k \in \{ 0,1,\ldots,n-1 \} } \big[ k \left|a_k\right| \big]^{(n-1)}
  \right\}
  \left\|
    v-w
  \right\|_{ H }^2 .
\end{split}
\end{align}
Furthermore, note that
\begin{align}
\begin{split}
  \left<
    v-w,
    A(v-w)
  \right>_{ H }
 &=
  -
  \big<
    v-w,
    (-A)^{\nicefrac{1}{2}}
    (-A)^{\nicefrac{1}{2}}
    (v-w) 
  \big>_{ H } 
\\&=
  -
  \big< 
    (-A)^{\nicefrac{1}{2}}
    (v-w),
    (-A)^{\nicefrac{1}{2}}
    (v-w)
  \big>_{ H }
  =
  -
  \big\| 
    (-A)^{\nicefrac{1}{2}}
    (v-w) 
  \big\|_{ H }
  \leq
  0 .
\end{split}
\end{align}
Combining this with \eqref{eq:F_lem2_3}
completes the proof of
Lemma~\ref{lem:F_lem2}.
\end{proof}

\begin{lemma}
\label{lem:F_lem3}
Assume the setting in Section~\ref{sec:example_setting}
and let
$ v, w \in \BaMi $.
Then 
\begin{align}
\begin{split}
 &\left\|
    F(v) - F(w)
  \right\|_{ H }^2
\\&\leq
  \max\!\left\{ 1, \lambda_{\R^d}(\D) \right\}
  \left[ 
    \tfrac{n\left(n+1\right)}{2}
    \max_{j\in\{1,2,\ldots,n\}}
    \left| a_j \right|
  \right]^2
  \left\|
    v-w
  \right\|_{ \BaMi }^2
  \left(
    1
    +
    \left\| v \right\|_{ \BaMi }^{\max\{1,2(n-1)\}}
    +
    \left\| w \right\|_{ \BaMi }^{\max\{1,2(n-1)\}}
  \right).
\end{split}
\end{align}
\end{lemma}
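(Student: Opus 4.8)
The plan is to estimate $\|F(v)-F(w)\|_H$ first and to square only at the very end. Since the constant term $a_0$ cancels, I would start from the elementary factorization $v^k-w^k = (v-w)\sum_{j=0}^{k-1} v^j w^{k-1-j}$, which holds $\lambda_{\D}$-almost everywhere and gives $F(v)-F(w) = (v-w)\sum_{k=1}^n a_k\sum_{j=0}^{k-1} v^j w^{k-1-j}$. The triangle inequality in $H=L^2(\lambda_{\D};\R)$ then reduces the assertion to bounding each of the $\sum_{k=1}^n k = \tfrac{n(n+1)}{2}$ quantities $\big\|(v-w)\,v^j\,w^{k-1-j}\big\|_H$ for $1\le k\le n$ and $0\le j\le k-1$. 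The case $n=1$, where $F$ is affine and $\|F(v)-F(w)\|_H=|a_1|\,\|v-w\|_H$, is immediate and I would treat it separately.

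For a single such term I would apply the generalized H{\"o}lder inequality on $\D$ with the exponents $n^2$ (for $|v-w|^2$), $n^2/j$ (for $|v|^{2j}$), $n^2/(k-1-j)$ (for $|w|^{2(k-1-j)}$) and $p_k$ given by $\tfrac{1}{p_k}=1-\tfrac{k}{n^2}$ (for the constant function $1$), omitting the $v$- or the $w$-factor when $j=0$ or $k-1-j=0$. Because $1+j+(k-1-j)=k\le n\le n^2$, one has $\tfrac{1}{n^2}+\tfrac{j}{n^2}+\tfrac{k-1-j}{n^2}+\tfrac{1}{p_k}=1$ with $\tfrac{1}{p_k}\in[0,1]$, so, using $\BaMi=L^{2n^2}(\lambda_{\D};\R)$ and $\lambda_{\D}(\D)=\lambda_{\R^d}(\D)$,
\begin{align*}
 \big\|(v-w)\,v^j\,w^{k-1-j}\big\|_H
 &\le
 \big[\lambda_{\R^d}(\D)\big]^{\frac{1}{2}(1-k/n^2)}\,
 \|v-w\|_{\BaMi}\,\|v\|_{\BaMi}^{j}\,\|w\|_{\BaMi}^{k-1-j}
\\&\le
 \sqrt{\max\{1,\lambda_{\R^d}(\D)\}}\,\,\|v-w\|_{\BaMi}\,\|v\|_{\BaMi}^{j}\,\|w\|_{\BaMi}^{k-1-j},
\end{align*}
where the last inequality uses $0\le 1-\tfrac{k}{n^2}\le 1$.

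Summing over $k$ and $j$ and bounding $|a_k|$ by $\max_{j\in\{1,\ldots,n\}}|a_j|$, it then remains to control $\sum_{k=1}^n\sum_{j=0}^{k-1}\|v\|_{\BaMi}^{j}\|w\|_{\BaMi}^{k-1-j}$. For $n\ge 2$ I would apply the weighted arithmetic--geometric mean inequality to each summand, writing $\|v\|_{\BaMi}^{2j}\|w\|_{\BaMi}^{2(k-1-j)} = \big(\|v\|_{\BaMi}^{2(n-1)}\big)^{\frac{j}{n-1}}\big(\|w\|_{\BaMi}^{2(n-1)}\big)^{\frac{k-1-j}{n-1}}\,1^{\frac{n-k}{n-1}}$, which yields $\|v\|_{\BaMi}^{2j}\|w\|_{\BaMi}^{2(k-1-j)}\le 1+\|v\|_{\BaMi}^{2(n-1)}+\|w\|_{\BaMi}^{2(n-1)}$ and hence bounds every summand by $\sqrt{1+\|v\|_{\BaMi}^{2(n-1)}+\|w\|_{\BaMi}^{2(n-1)}}$. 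Collecting the estimates and counting the $\tfrac{n(n+1)}{2}$ monomials gives
\begin{align*}
 \|F(v)-F(w)\|_H
 &\le
 \sqrt{\max\{1,\lambda_{\R^d}(\D)\}}\,
 \Big[\tfrac{n(n+1)}{2}\max_{j\in\{1,\ldots,n\}}|a_j|\Big]\,
 \|v-w\|_{\BaMi}
\\&\quad\cdot
 \sqrt{1+\|v\|_{\BaMi}^{\max\{1,2(n-1)\}}+\|w\|_{\BaMi}^{\max\{1,2(n-1)\}}},
\end{align*}
and squaring this inequality yields exactly the claimed bound. The only points that need care are the bookkeeping of the H{\"o}lder exponents (so that the power of $\lambda_{\R^d}(\D)$ stays in $[0,1]$) and the AM--GM step, which must absorb all monomial degrees into the single exponent $\max\{1,2(n-1)\}$ without worsening the combinatorial constant $\tfrac{n(n+1)}{2}$; the remaining manipulations are routine.
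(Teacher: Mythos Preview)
Your proof is correct. The overall plan---triangle inequality, H\"older, then absorbing all monomial degrees into the single exponent $2(n-1)$---matches the paper's, but the tactical details differ. The paper starts from the integral identity $x^k-y^k = k(x-y)\int_0^1 (y+r(x-y))^{k-1}\,dr$ together with the convexity bound $|rx+(1-r)y|^{k-1}\le r|x|^{k-1}+(1-r)|y|^{k-1}$, whereas you use the algebraic factorization $v^k-w^k=(v-w)\sum_{j=0}^{k-1} v^j w^{k-1-j}$. The paper then applies H\"older twice in sequence (first to split off $\|v-w\|_{\BaMi}$, then to raise the remaining factor from $L^{2n^2(k-1)/(n^2-1)}$ to $L^{2n^2}$) and finishes by bounding $\|v\|_{\BaMi}^{k-1}+\|w\|_{\BaMi}^{k-1}$ via $\max\{1,\|\cdot\|_{\BaMi}^{n-1}\}$ and squaring with $(x+y)^2\le 2(x^2+y^2)$; you accomplish the same in one multi-factor H\"older step followed by a weighted AM--GM inequality. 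Your route is slightly more direct---no integrals, and the combinatorial constant $\tfrac{n(n+1)}{2}$ appears immediately as the monomial count---while the paper first obtains $\tfrac{n(n+1)}{4}$ and then collects two compensating factors of $2$ at the squaring stage to reach the same constant.
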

\begin{proof}[Proof of Lemma~\ref{lem:F_lem3}]
Throughout this proof 
assume w.l.o.g.\ that
$ n>1 $ (otherwise the proof is clear).
Next note that the 
fundamental theorem of
calculus implies that for all 
$ k \in \N $, $ x,y \in \R $
it holds that
\begin{align}
\begin{split}
  \left|
    x^k - y^k 
  \right| 
 &=
  \left|
    \int_0^1
    k
    \left( y + r\left( x-y \right) \right)^{(k-1)}
    \left( x-y \right) dr
  \right|
  \leq
  k
  \left| x-y \right|
  \int_0^1
  \left|
    rx
    +
    \left(1-r\right)y
  \right|^{(k-1)} dr
\\&\leq
  k
  \left| x-y \right|
  \int_0^1
  \left(
    r
    \left| x \right|^{(k-1)}
    +
    \left(1-r\right)
    \left| y \right|^{(k-1)}
  \right) dr .
\end{split}
\end{align}
This and H{\"o}lder's inequality
prove that
\begin{align}
\begin{split}
 &\left\|
    F(v) - F(w)
  \right\|_H
  =
  \left\|
    \textstyle\sum_{ k=0 }^n
    a_k
    \left( v^k - w^k \right)
  \right\|_H
  \leq 
  \sum_{ k=1 }^{ n }
  \left| a_k \right|
  \left\| v^k - w^k \right\|_H
\\&\leq
  \sum_{ k=1 }^n
  k \left| a_k \right|
  \int_0^1
  \left\|
    \left| v-w \right|
    \left(
      r \left| v \right|^{(k-1)}
      +
      \left(1-r\right)
      \left| w \right|^{(k-1)}
    \right)
  \right\|_H dr
\\&\leq
  \sum_{ k=1 }^n
  k \left| a_k \right|
  \int_0^1
  \left\|
    v-w
  \right\|_V
  \left\|
    r \left| v \right|^{(k-1)}
    +
    \left(1-r\right)
    \left| w \right|^{(k-1)}
  \right\|_{ L^{\nicefrac{2n^2\!}{(n^2-1)}}(\lambda_{\D};\R) } dr
\\&\leq
  \left\|
    v-w
  \right\|_V
  \Bigg[
    \left| a_1 \right|
    \sqrt{ \max\!\left\{ 1, \lambda_{\R^d}(\D) \right\} }
\\&\quad+
    \sum_{ k=2 }^n
    k \left| a_k \right|
    \int_0^1
    \left(
      r \,
      \big\| v \big\|_{ L^{\nicefrac{2n^2(k-1)}{(n^2-1)}}\!(\lambda_{\D};\R) }^{(k-1)}
      +
      \left(1-r\right)
      \big\| w \big\|_{ L^{\nicefrac{2n^2(k-1)}{(n^2-1)}}\!(\lambda_{\D};\R) }^{(k-1)}
    \right) dr
  \Bigg] .
\end{split}
\end{align}
Again H{\"o}lder's inequality hence ensures that
\begin{align}
\begin{split}
 &\left\|
    F(v) - F(w)
  \right\|_H
\\&\leq
  \left\|
    v-w
  \right\|_V
  \Bigg[
    \left| a_1 \right|
    \sqrt{ \max\!\left\{ 1, \lambda_{\R^d}(\D) \right\} }
\\&\quad+
    \frac{1}{2}
    \sum_{ k=2 }^n
    k \left| a_k \right|
    \left(
      \big\| v \big\|_{ L^{\nicefrac{2n^2(k-1)}{(n^2-1)}}\!(\lambda_{\D};\R) }^{(k-1)}
      +
      \big\| w \big\|_{ L^{\nicefrac{2n^2(k-1)}{(n^2-1)}}\!(\lambda_{\D};\R) }^{(k-1)}
    \right)
  \Bigg]
\\&\leq
  \left\|
    v-w
  \right\|_V
  \Bigg[
    \left| a_1 \right|
    \sqrt{ \max\!\left\{ 1, \lambda_{\R^d}(\D) \right\} }
    +
    \frac{1}{2}
    \sum_{ k=2 }^n
    k \left| a_k \right|
    \left| \lambda_{\R^d}(\D) \right|^{\frac{(n^2-k)}{2n^2}}
    \left(
      \left\| v \right\|_V^{(k-1)}
      +
      \left\| w \right\|_V^{(k-1)}
    \right)
  \Bigg]
\\&\leq
  \frac{1}{2}
  \left\|
    v-w
  \right\|_V
  \sqrt{ \max\!\left\{ 1, \lambda_{\R^d}(\D) \right\} }
  \left[
    \max_{ j \in \{1,2,\ldots,n\} }
    \left| a_j \right|
  \right]
  \sum_{ k=1 }^n
  k
  \left(
    \left\| v \right\|_V^{(k-1)}
    +
    \left\| w \right\|_V^{(k-1)}
  \right)
\\&\leq
  \tfrac{n\left(n+1\right)}{4}
  \left\|
    v-w
  \right\|_V
  \sqrt{ \max\!\left\{ 1, \lambda_{\R^d}(\D) \right\} }
  \left[
    \max_{ j \in \{1,2,\ldots,n\} }
    \left| a_j \right|
  \right]
  \left(
    \max\!\left\{ 
      1, \left\| v \right\|_V^{(n-1)}
    \right\}
    +
    \max\!\left\{ 
      1, \left\| w \right\|_V^{(n-1)}
    \right\}
  \right) .
\end{split}
\end{align}
Combining this with the fact that
$ 
  \forall \,
  x,y \in \R \colon
  | x + y |^2
  \leq 
  2 \, ( x^2 + y^2 )
$
completes the proof of Lemma~\ref{lem:F_lem3}.
\end{proof}

\begin{lemma}
\label{lem:exponential_order}
Assume the setting in Section~\ref{sec:example_setting}
and let $ q \in [2, \infty) $,
$ \varrho \in [0,\infty) \cap (-\infty , 1+\nicefrac{d}{2q}-\nicefrac{d}{4} ) $. 
Then
$
  \sup_{ M \in \N }
  \sup_{ t \in (0,T] }
  M^{ \varrho }
  \int_0^t
  \|
    e^{(t-s)A}
    -
    e^{(t-\fl{s})A}
  \|_{ L( H, L^q(\lambda_{\D}; \R) ) } \, ds
  <
  \infty
$.
\end{lemma}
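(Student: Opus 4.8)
The plan is to exploit the smoothing property of the analytic semigroup generated by $A$ together with the elementary H\"older-type bound $\| \Id_H - e^{\delta A} \|_{ L(H) } \lesssim \delta^{ \theta }$ for the increments of the semigroup. Throughout I would write $\rho_0 := \nicefrac{d}{4} - \nicefrac{d}{2q}$ and fix $\theta := \varrho$, and note that the assumptions $\varrho \in [0,\infty)$ and $\varrho < 1 + \nicefrac{d}{2q} - \nicefrac{d}{4} = 1 - \rho_0$ force $\rho_0 \in [0,1)$ and $\theta \in [0,1)$. In particular, Lemma~\ref{lem:semigroup_bound} applies with $\rho = \rho_0$ and shows that $c_1 := \sup_{ u \in (0,T] } u^{ \rho_0 } \| e^{uA} \|_{ L(H, L^q(\lambda_{\D};\R)) } < \infty$. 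Moreover, since $A$ is a positive multiple of the Dirichlet Laplacian on $H$, hence self-adjoint and negative definite with $0 \notin \textup{spectrum}(A)$, the standard analytic-semigroup estimates $\| (-uA)^{ \theta } e^{uA} \|_{ L(H) } \leq 1$ for all $u \in (0,\infty)$ and $c_2 := \sup_{ u \in (0,T] } \| (-uA)^{-\theta} ( e^{uA} - \Id_H ) \|_{ L(H) } < \infty$ hold (cf.\ the facts used in the proofs of Lemma~\ref{lem:in_DA_continuous}, Lemma~\ref{lem:limplicit_bound}, and Lemma~\ref{lem:semigroup_bound}).

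First I would establish a pointwise bound on the integrand. Fix $t \in (0,T]$ and $s \in [0,t)$. Since $s - \fl{s} \geq 0$, the semigroup property yields $e^{(t-\fl{s})A} = e^{(t-s)A} e^{(s-\fl{s})A}$ and therefore $e^{(t-s)A} - e^{(t-\fl{s})A} = e^{(t-s)A} ( \Id_H - e^{(s-\fl{s})A} )$. Writing $\Id_H - e^{(s-\fl{s})A} = (s-\fl{s})^{\theta} (-A)^{\theta} \big[ (-(s-\fl{s})A)^{-\theta} ( \Id_H - e^{(s-\fl{s})A} ) \big]$, splitting $e^{(t-s)A} = e^{(t-s)A/2} e^{(t-s)A/2}$, commuting $(-A)^{\theta}$ through $e^{(t-s)A/2}$, and using $\| e^{(t-s)A/2} \|_{ L(H, L^q(\lambda_{\D};\R)) } \leq 2^{\rho_0} c_1 (t-s)^{-\rho_0}$ as well as $\| (-(t-s)A)^{\theta} e^{(t-s)A/2} \|_{ L(H) } \leq 2^{\theta}$ then gives
\begin{align*}
  \big\| e^{(t-s)A} - e^{(t-\fl{s})A} \big\|_{ L(H, L^q(\lambda_{\D};\R)) }
 &\leq
  (s-\fl{s})^{\theta} (t-s)^{-\theta}
  \, 2^{\rho_0} c_1 (t-s)^{-\rho_0}
  \, 2^{\theta}
  \, c_2
\\&=
  2^{\rho_0 + \theta} c_1 c_2
  \, (s-\fl{s})^{\theta}
  \, (t-s)^{-(\theta + \rho_0)} .
\end{align*}

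Finally, I would conclude by integrating. Since $0 \leq s - \fl{s} < \nicefrac{T}{M}$ for all $s \in [0,T]$ and $M \in \N$, and since $\theta + \rho_0 = \varrho + \rho_0 < 1$, we obtain for every $t \in (0,T]$ and $M \in \N$ that
\begin{align*}
  M^{\varrho}
  \int_0^t
  \big\| e^{(t-s)A} - e^{(t-\fl{s})A} \big\|_{ L(H, L^q(\lambda_{\D};\R)) } \, ds
 &\leq
  2^{\rho_0 + \varrho} c_1 c_2
  \, M^{\varrho} \big( \tfrac{T}{M} \big)^{\varrho}
  \int_0^t (t-s)^{-(\varrho + \rho_0)} \, ds
\\&=
  \frac{
    2^{\rho_0 + \varrho} c_1 c_2 \, T^{\varrho} \, t^{(1-\varrho-\rho_0)}
  }{
    (1 - \varrho - \rho_0)
  }
  \leq
  \frac{
    2^{\rho_0 + \varrho} c_1 c_2 \, T^{(1-\rho_0)}
  }{
    (1 - \varrho - \rho_0)
  } ,
\end{align*}
which is finite and independent of $t$ and $M$; taking the supremum over $t \in (0,T]$ and $M \in \N$ yields the assertion. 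The only delicate point is the choice $\theta = \varrho$: it is exactly large enough that the mesh factor $(s-\fl{s})^{\theta} \leq (\nicefrac{T}{M})^{\varrho}$ absorbs $M^{\varrho}$, while the requirement that the induced time singularity $(t-s)^{-(\theta+\rho_0)}$ remain integrable over $[0,t]$ is precisely the hypothesis $\varrho < 1 + \nicefrac{d}{2q} - \nicefrac{d}{4}$; the remaining steps are routine manipulations with the semigroup and fractional-power estimates already available in the paper.
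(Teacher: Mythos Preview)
Your proof is correct and follows essentially the same route as the paper: both factor $e^{(t-s)A}-e^{(t-\fl{s})A}=e^{(t-s)A}(\Id_H-e^{(s-\fl{s})A})$, split $e^{(t-s)A}$ into two halves so that one half supplies the $H\to L^q$ smoothing via Lemma~\ref{lem:semigroup_bound} and the other absorbs the fractional power $(-A)^{\varrho}$, then use $\|(-uA)^{-\varrho}(\Id_H-e^{uA})\|_{L(H)}\le 1$ to extract the factor $(s-\fl{s})^{\varrho}\le (T/M)^{\varrho}$ and integrate the remaining singularity $(t-s)^{-(\varrho+\rho_0)}$. The only cosmetic difference is that the paper invokes the sharper constants $\|(-sA)^{\varrho}e^{sA}\|_{L(H)}\le 1$ and $\|(-sA)^{-\varrho}(\Id_H-e^{sA})\|_{L(H)}\le 1$ directly, whereas you package the latter into a finite constant $c_2$.
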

\begin{proof}[Proof of Lemma~\ref{lem:exponential_order}]
Observe that for all $ t \in (0,T] $,
$ M \in \N $ it holds that
\begin{align}
\begin{split}
 &\int_0^t
  \left\|
    e^{(t-s)A}
    -
    e^{(t-\fl{s})A}
  \right\|_{ L( H, L^q(\lambda_{\D};\R) ) } ds
\\&=
  \int_0^t
  \left\|
    e^{(t-s)A}
    \left(
      \Id_H
      -
      e^{(s-\fl{s})A}
    \right)
  \right\|_{ L( H, L^q(\lambda_{\D};\R) ) } ds
\\&\leq
  \int_0^t
  \left\|
    e^{\frac{1}{2}(t-s)A}
  \right\|_{ L( H, L^q(\lambda_{\D};\R) ) }
  \left\|
    e^{\frac{1}{2}(t-s)A}
    \left(
      \Id_H
      -
      e^{(s-\fl{s})A}
    \right)
  \right\|_{ L( H ) } ds
\\&\leq
  \left[
    \sup_{ s \in (0,T] }
    s^{(\nicefrac{d}{4}-\nicefrac{d}{2q})}
    \left\|
      e^{sA}
    \right\|_{ L( H, L^q(\lambda_{\D};\R) ) }
  \right]
  \int_0^t
  \left[\tfrac{1}{2}(t-s)\right]^{-(\varrho+\nicefrac{d}{4}-\nicefrac{d}{2q})}
  \left(s-\fl{s}\right)^{\varrho}
\\&\quad\cdot
  \left\|
    \left(-\tfrac{1}{2}(t-s)A\right)^{\varrho}
    e^{\frac{1}{2}(t-s)A}
  \right\|_{ L(H) }
  \left\|
    \left(-(s-\fl{s})A\right)^{-\varrho}
    \left(
      \Id_H
      -
      e^{(s-\fl{s})A}
    \right)
  \right\|_{ L(H) } ds .
\end{split}
\end{align}
This and the facts that
$ \forall \, s \in [0,\infty) $, 
$ r \in [0,1] 
  \colon
  \|
    (-sA)^r
    e^{sA}
  \|_{ L(H) }
  \leq 1
$
and
$ \forall \, s \in (0,\infty) $, 
$ r \in [0,1] 
  \colon
  \|
    (-sA)^{-r}
    (
      \Id_H
      -
      e^{sA}
    )
  \|_{ L(H) }
  \leq 1
$
show that
for all $ t \in (0,T] $,
$ M \in \N $
it holds that
\begin{align}
\begin{split}
 &\int_0^t
  \left\|
    e^{(t-s)A}
    -
    e^{(t-\fl{s})A}
  \right\|_{ L( H, L^q(\lambda_{\D};\R) ) } ds
\\&\leq
  \left[
    \sup_{ s \in (0,T] }
    s^{(\nicefrac{d}{4}-\nicefrac{d}{2q})}
    \left\|
      e^{sA}
    \right\|_{ L( H, L^q(\D;\R) ) }
  \right]
  \int_0^t
  \left[\tfrac{1}{2}(t-s)\right]^{-(\varrho+\nicefrac{d}{4}-\nicefrac{d}{2q})}
  \frac{T^{\varrho}}{M^{\varrho}} \, ds
\\&\leq
  \frac{
    2 \, T^{(1+\nicefrac{d}{2q}-\nicefrac{d}{4})}
  }{
    \left(1-\varrho-\nicefrac{d}{4}+\nicefrac{d}{2q}\right)M^{\varrho}
  }
  \left[
    \sup_{ s \in (0,T] }
    s^{(\nicefrac{d}{4}-\nicefrac{d}{2q})}
    \left\|
      e^{sA}
    \right\|_{ L( H, L^q(\lambda_{\D};\R) ) }
  \right] .
\end{split}
\end{align}
Combining this with Lemma~\ref{lem:semigroup_bound} 
completes the proof of Lemma~\ref{lem:exponential_order}.
\end{proof}

\begin{lemma}
\label{lem:limplicit_order_1}
Assume the setting in Section~\ref{sec:example_setting}
and let $ t \in (0,\infty) $, $ \kappa \in [0,2] $.
Then
\begin{align}
\begin{split}
 &\big\|
    \big( -tA \big)^{-\kappa}
    \big(
      e^{tA}
      -
      \big( \Id_H - tA \big)^{-1}
    \big)
  \big\|_{ L( H ) }
  \leq
  1 .
\end{split}
\end{align}
\end{lemma}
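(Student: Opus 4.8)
The plan is to diagonalize $A$ and thereby reduce the operator-norm estimate to an elementary one-dimensional inequality. Since, by assumption, $A$ is (a positive multiple of) the Laplacian with Dirichlet boundary conditions, there exist an orthonormal basis $\mathbb{B} \subseteq H$ of $H$ consisting of eigenfunctions of $A$ together with associated eigenvalues $\lambda_b \in (0,\infty)$, $b \in \mathbb{B}$, exactly as in the proof of Lemma~\ref{lem:limplicit_bound}. The three operators $(-tA)^{-\kappa}$, $e^{tA}$, and $(\Id_H - tA)^{-1}$ are then simultaneously diagonalized by $\mathbb{B}$, bounded, and self-adjoint, so their product is self-adjoint with
\begin{equation}
  \big\|
    (-tA)^{-\kappa}
    \big(
      e^{tA}
      -
      (\Id_H - tA)^{-1}
    \big)
  \big\|_{L(H)}
  =
  \sup_{b \in \mathbb{B}}
  \left[
    (t\lambda_b)^{-\kappa}
    \left|
      e^{-t\lambda_b}
      -
      \tfrac{1}{1+t\lambda_b}
    \right|
  \right]
  \leq
  \sup_{x \in (0,\infty)}
  \left[
    x^{-\kappa}
    \left|
      e^{-x}
      -
      \tfrac{1}{1+x}
    \right|
  \right] .
\end{equation}
It thus remains to show that $\sup_{x \in (0,\infty)} x^{-\kappa} \, |e^{-x} - (1+x)^{-1}| \le 1$.

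First I would remove the absolute value: from $e^x \ge 1 + x$ one gets $e^{-x} \le (1+x)^{-1}$, hence $|e^{-x} - (1+x)^{-1}| = (1+x)^{-1} - e^{-x} \ge 0$ for all $x \in (0,\infty)$. The main point is then the elementary estimate: using $\tfrac{2}{k!} \le \tfrac{1}{(k-2)!}$ for $k \in \{2,3,4,\ldots\}$ one obtains $e^x - 1 - x \le \tfrac{x^2}{2} \, e^x$ for all $x \in [0,\infty)$, and therefore
\begin{equation}
  \frac{1}{1+x} - e^{-x}
  =
  \frac{e^x - 1 - x}{(1+x)\, e^x}
  \le
  \frac{x^2}{2(1+x)}
  \le
  \frac{x^2}{2}
  \qquad
  \text{for all } x \in (0,\infty) .
\end{equation}
Together with the trivial bound $\tfrac{1}{1+x} - e^{-x} \le \tfrac{1}{1+x}$ (valid since $e^{-x} > 0$), a case distinction finishes the argument: for $x \in (0,1]$ one estimates $x^{-\kappa}(\tfrac{1}{1+x}-e^{-x}) \le \tfrac{x^{2-\kappa}}{2} \le \tfrac12$ because $2-\kappa \ge 0$ and $x \le 1$; for $x \in (1,\infty)$ one estimates $x^{-\kappa}(\tfrac{1}{1+x}-e^{-x}) \le \tfrac{x^{-\kappa}}{1+x} \le \tfrac{1}{1+x} \le \tfrac12$ because $x^{-\kappa} \le 1$. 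Hence the supremum above is at most $\tfrac12 \le 1$, which yields the claim.

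I do not expect a serious obstacle here: the only mild subtlety is to use, for each range of $x$, the appropriate one of the two upper bounds for $\tfrac{1}{1+x} - e^{-x}$ — the polynomial bound $\tfrac{x^2}{2}$ for small $x$ and the decaying bound $\tfrac{1}{1+x}$ for large $x$ — since neither of them alone is small uniformly in $x \in (0,\infty)$ and $\kappa \in [0,2]$. Everything else (the simultaneous diagonalization, the resulting reduction to a scalar supremum, and the power-series manipulation) is routine and parallels the proof of Lemma~\ref{lem:limplicit_bound}.
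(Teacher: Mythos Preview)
Your proof is correct and in fact yields the sharper bound $\tfrac12$ instead of $1$. It takes a genuinely different route from the paper's own argument. The paper does not diagonalize; instead it works directly with operator identities: for $\kappa\in[0,1]$ it uses the fundamental theorem of calculus to write
\[
  e^{tA} - (\Id_H - tA)^{-1}
  =
  (\Id_H - tA)^{-1}\!\int_0^t A\big(e^{uA}-e^{tA}\big)\,du,
\]
and for $\kappa\in(1,2]$ it uses a second-order Taylor expansion to obtain
\[
  e^{tA} - (\Id_H - tA)^{-1}
  =
  -(\Id_H - tA)^{-1}\!\int_0^t A^2 e^{uA}\,u\,du,
\]
and in each case bounds the result using the standard semigroup/resolvent estimates $\|(-sA)^r e^{sA}\|_{L(H)}\leq 1$, $\|(-sA)^r(\Id_H - sA)^{-1}\|_{L(H)}\leq 1$, and $\|(-sA)^{-r}(\Id_H - e^{sA})\|_{L(H)}\leq 1$ for $r\in[0,1]$. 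Your spectral approach is shorter, avoids the case split at $\kappa=1$, and is in the same spirit as the proof of Lemma~\ref{lem:limplicit_bound}; the paper's approach, on the other hand, is formulated so that it would carry over to more general analytic semigroups for which an explicit diagonalization is not available.
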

\begin{proof}[Proof of Lemma~\ref{lem:limplicit_order_1}]
Observe that
the fundamental theorem of calculus shows that
for all $ v \in H $,
$ r \in [0,1] $
it holds that
\begin{align}
\begin{split}
 &\left\|
    \big( -tA \big)^{-r}
    \big(
      e^{tA}
      -
      \big( \Id_H - tA \big)^{-1}
    \big) v
  \right\|_{ H }
\\&=
  \left\|
    \big( -tA \big)^{-r}
    \big( \Id_H - tA \big)^{-1}
    \big(
      e^{tA}
      -
      tA e^{tA}
      -
      \Id_H
    \big) \, v
  \right\|_{ H }
\\&=
  \left\|
    \big( -tA \big)^{-r}
    \big( \Id_H - tA \big)^{-1}
    \left(
      \Id_H
      +
      \int_0^t
      A e^{uA} \, du
      -
      tA e^{tA}
      -
      \Id_H
    \right) v
  \right\|_{ H }
\\&=
  \left\|
    \big( -tA \big)^{-r}
    \big( \Id_H - tA \big)^{-1}
    \left(
      \int_0^t
      A 
      \big( e^{uA} - e^{tA} \big) \, du
    \right) v
  \right\|_{ H } .
\end{split}
\end{align}
This and the facts that
$ \forall \, s \in (0,\infty) $, 
$ r \in [0,1] 
  \colon
  \|
    (-sA)^{-r}
    (
      \Id_H
      -
      e^{sA}
    )
  \|_{ L(H) }
  \leq 1
$,
$ \forall \, s \in [0,\infty) $, 
$ r \in [0,1] 
  \colon
  \|
    (-sA)^r
    e^{sA}
  \|_{ L(H) }
  \leq 1
$,
and
$ \forall \, s \in [0,\infty) $, 
$ r \in [0,1] 
  \colon
  \|
    (-sA)^{r}
    (
      \Id_H
      -
      sA
    )^{-1}
  \|_{ L(H) }
  \leq 1
$
imply that
for all
$ v \in H $,
$ r \in [0,1] $
it holds that
\begin{align}
\label{eq:limplicit_order_1_1}
\begin{split}
 &\left\|
    \big( -tA \big)^{-r}
    \big(
      e^{tA}
      -
      \big( \Id_H - tA \big)^{-1}
    \big) v
  \right\|_{ H }
\\&\leq 
  \left\|
    \big( -tA \big)^{(1-r)}
    \big( \Id_H - tA \big)^{-1}
  \right\|_{ L( H ) }
  \left\|
    t^{-1}
    \left(
      \int_0^t
      e^{uA}
      \big( \Id_H - e^{(t-u)A} \big) \, du
    \right) v
  \right\|_{ H }
\\&\leq
  t^{-1}
  \int_0^t
  \left\|
    e^{uA}
  \right\|_{ L( H ) }
  \left\|
    \Id_H
    -
    e^{(t-u)A}
  \right\|_{ L( H ) }
  \left\|
    v
  \right\|_{ H } du
  \leq
  \left\|
    v
  \right\|_{ H } .
\end{split}
\end{align}
Moreover, note that
a Taylor expansion proves
that for all
$ v \in H $, $ r \in (1,2] $ it holds that
\begin{align}
\begin{split}
 &\left\|
    \big( -tA \big)^{-r}
    \big(
      e^{tA}
      -
      \big( \Id_H - tA \big)^{-1}
    \big) v
  \right\|_{ H }
\\&=
  \left\|
    \big( -tA \big)^{-r}
    \big( \Id_H - tA \big)^{-1}
    \big(
      e^{tA}
      -
      tA e^{tA}
      -
      \Id_H
    \big) \, v
  \right\|_{ H }
\\&=
  \left\|
    \big( -tA \big)^{-r}
    \big( \Id_H - tA \big)^{-1}
    \left(
      tA
      +
      \int_0^t
      A^2 e^{uA}
      \left(t-u\right) du
      -
      tA 
      \left(
        \Id_H
        +
        \int_0^t
        A e^{uA} \, du
      \right)
    \right) \, v
  \right\|_{ H }
\\&=
  \left\|
    \big( -tA \big)^{-r}
    \big( \Id_H - tA \big)^{-1}
    \left(
      -
      \int_0^t
      A^2 e^{uA}
      u \, du
    \right) v
  \right\|_{ H } .
\end{split}
\end{align}
The facts that
$ \forall \, s \in [0,\infty) $, 
$ r \in [0,1] 
  \colon
  \|
    (-sA)^{r}
    (
      \Id_H
      -
      sA
    )^{-1}
  \|_{ L(H) }
  \leq 1
$
and
$ \forall \, s \in [0,\infty) $, 
$ r \in [0,1] 
  \colon
  \|
    (-sA)^r
    e^{sA}
  \|_{ L(H) }
  \leq 1
$
hence yield that
for all
$ v \in H $,
$ r \in (1,2] $
it holds that
\begin{align}
\label{eq:limplicit_order_1_2}
\begin{split}
 &\left\|
    \big( -tA \big)^{-r}
    \big(
      e^{tA}
      -
      \big( \Id_H - tA \big)^{-1}
    \big) v
  \right\|_{ H }
\\&\leq
  \left\|
    \big( -tA \big)^{(2-r)}
    \big( \Id_H - tA \big)^{-1}
  \right\|_{ L( H ) }
  \left\|
    t^{-2}
    \left(
      \int_0^t
      e^{uA}
      u \, du
    \right) v
  \right\|_{ H }
\\&\leq
  t^{-2}
  \int_0^t
  u
  \left\|
    e^{uA}
  \right\|_{ L( H ) }
  \left\|
    v
  \right\|_{ H } du
  \leq
  \frac{1}{2}
  \left\|
    v
  \right\|_{ H } .
\end{split}
\end{align}
Combining~\eqref{eq:limplicit_order_1_1}
and~\eqref{eq:limplicit_order_1_2}
completes the proof of Lemma~\ref{lem:limplicit_order_1}.
\end{proof}

\begin{lemma}
\label{lem:limplicit_order_3}
Assume the setting in Section~\ref{sec:example_setting}
and let $ t \in (0,\infty) $, $ \kappa \in [0,1] $,
$ m \in \N $.
Then
\begin{equation}
  \left\|
    \left( - t A \right)^{ \kappa }
    \left(
      e^{ mtA }
      -
      \left( \Id_H - t A \right)^{ -m }
    \right)
  \right\|_{ L( H ) }
  \leq
  \frac{ 16 }{ m } .
\end{equation}
\end{lemma}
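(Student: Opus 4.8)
The plan is to pass from this operator inequality to a scalar one via the spectral decomposition of the self-adjoint operator $A$, and then to prove the resulting elementary real-variable estimate.

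First I would argue as in the proof of Lemma~\ref{lem:limplicit_bound}: since $A$ is a positive multiple of the Dirichlet Laplacian on the box $\D$, there exist an orthonormal basis $\mathbb{B}\subseteq H$ of $H$ and a mapping $\lambda\colon\mathbb{B}\to(0,\infty)$ with $Ab=-\lambda_b b$ for all $b\in\mathbb{B}$, and the functional calculus then yields
\[
  \left\|
    (-tA)^\kappa\big(e^{mtA}-(\Id_H-tA)^{-m}\big)
  \right\|_{L(H)}
  =
  \sup_{b\in\mathbb{B}}
  \left|
    (t\lambda_b)^\kappa\big(e^{-mt\lambda_b}-(1+t\lambda_b)^{-m}\big)
  \right|
  \le
  \sup_{\mu\in(0,\infty)}
  \mu^\kappa\,\big|e^{-m\mu}-(1+\mu)^{-m}\big| .
\]
Hence it suffices to show that $\mu^\kappa\,|e^{-m\mu}-(1+\mu)^{-m}|\le16/m$ for all $\mu\in(0,\infty)$, $\kappa\in[0,1]$, $m\in\N$. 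Since $\ln(1+\mu)\le\mu$, one has $(1+\mu)^{-m}\ge e^{-m\mu}$, so the modulus equals $h_m(\mu):=(1+\mu)^{-m}-e^{-m\mu}\ge0$; and since $\kappa\in[0,1]$ one has $\mu^\kappa\le\max\{1,\mu\}$. It thus remains to bound $\max\{1,\mu\}\,h_m(\mu)$.

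Second I would split into the ranges $\mu\ge1$ and $0<\mu\le1$. For $\mu\ge1$ I would use $h_m(\mu)\le(1+\mu)^{-m}$, so that $\max\{1,\mu\}\,h_m(\mu)\le\mu(1+\mu)^{-m}$; for $m=1$ this is $<1\le16/m$, while for $m\ge2$ the function $[1,\infty)\ni\mu\mapsto\mu(1+\mu)^{-m}$ is nonincreasing (its derivative $(1+\mu)^{-m-1}[1+\mu(1-m)]$ is $\le0$ on $[1,\infty)$ when $m\ge2$), hence is $\le2^{-m}\le1/m\le16/m$, using $2^m\ge m$. For $0<\mu\le1$ I would write $h_m(\mu)=e^{-m\mu}(e^{\delta}-1)$ with $\delta:=m(\mu-\ln(1+\mu))$, note the elementary bound $0\le\mu-\ln(1+\mu)\le\mu^2/2$ (the upper bound follows from $\ln(1+\mu)\ge\mu-\mu^2/2$ for $\mu\ge0$, obtained by differentiation), so that $0\le\delta\le m\mu^2/2$, and then, using $e^\delta-1\le\delta e^\delta$ and $\mu\le1$,
\[
  h_m(\mu)
  \le
  \tfrac{m\mu^2}{2}\,e^{-m\mu+\frac{m\mu^2}{2}}
  \le
  \tfrac{m\mu^2}{2}\,e^{-m\mu/2}
  =
  \frac{(m\mu)^2\,e^{-m\mu/2}}{2m}
  \le
  \frac{1}{2m}\,\Big(\sup_{s\in(0,\infty)}s^2e^{-s/2}\Big)
  =
  \frac{8e^{-2}}{m}
  \le
  \frac{16}{m} .
\]

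Combining the two cases yields the asserted bound $16/m$. The only delicate point is extracting the correct rate $1/m$: bounding $e^\delta-1$ crudely (for instance by $e^{m\mu/2}-1$, which would give only $h_m(\mu)\le e^{-m\mu/2}$) produces merely a uniform $O(1)$ bound, so one must retain the quadratic prefactor $\mu^2$ coming from $\delta\le m\mu^2/2$ and then optimise in the variable $s=m\mu$. (Alternatively one could telescope $e^{mtA}-(\Id_H-tA)^{-m}=\sum_{j=0}^{m-1}e^{jtA}\,(e^{tA}-(\Id_H-tA)^{-1})\,(\Id_H-tA)^{-(m-1-j)}$ and combine Lemma~\ref{lem:limplicit_order_1} with Lemma~\ref{lem:limplicit_bound}, but that route requires a separate treatment of the end terms $j\in\{0,m-1\}$, so I would favour the spectral argument above.)
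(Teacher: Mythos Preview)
Your proof is correct and takes a genuinely different route from the paper. The paper argues operator-theoretically: it handles $m=1$ and $m=2$ by direct triangle-inequality and factoring arguments, and for $m\ge 3$ it telescopes
\[
  e^{mtA}-(\Id_H-tA)^{-m}
  =\sum_{l=0}^{m-1} e^{ltA}\,(\Id_H-tA)^{l-m+1}\,\bigl(e^{tA}-(\Id_H-tA)^{-1}\bigr),
\]
extracts two powers of $(-tA)$ from the last factor via Lemma~\ref{lem:limplicit_order_1}, splits the sum at $\lfloor m/2\rfloor$, and bounds one half with the semigroup smoothing $\|(-ltA)^2 e^{ltA}\|\le 1$ and the other half with the resolvent smoothing of Lemma~\ref{lem:limplicit_bound}; summing $\sum l^{-2}$ over each half gives the $16/m$. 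Your approach instead diagonalises $A$ at the outset (as in the proof of Lemma~\ref{lem:limplicit_bound}) and reduces everything to the scalar estimate $\mu^\kappa\bigl((1+\mu)^{-m}-e^{-m\mu}\bigr)\le 16/m$, which you dispatch by an elementary Taylor/optimisation argument. Your method is shorter and more self-contained, and in fact yields the sharper constant $8e^{-2}\approx 1.08$ on $(0,1]$; the paper's telescoping argument, on the other hand, is closer in spirit to what one would do for a non-self-adjoint analytic semigroup (though here too the specific constants rely on the spectral structure through Lemma~\ref{lem:limplicit_bound}). Your closing parenthetical accurately anticipates the paper's strategy, including the need to treat small $m$ separately.
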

\begin{proof}[Proof of Lemma~\ref{lem:limplicit_order_3}]
First of all, note that the triangle inequality
and the facts that
$ \forall \, s \in [0,\infty) $, 
$ r \in [0,1]
  \colon
  \|
    (-sA)^r
    e^{sA}
  \|_{ L(H) }
  \leq 1
$
and
$ \forall \, s \in [0,\infty) $, 
$ r \in [0,1]
  \colon
  \|
    (-sA)^{r}
    (
      \Id_H
      -
      sA
    )^{-1}
  \|_{ L(H) }
  \leq 1
$
imply that
\begin{align}
\label{eq:limplicit_order_3_1}
\begin{split}
 &\left\|
    \left( - t A \right)^{ \kappa }
    \left(
      e^{ tA }
      -
      \left( \Id_H - t A \right)^{ -1 }
    \right)
  \right\|_{ L( H ) }
  \leq
  \left\| 
    \left( -t A \right)^{ \kappa }
    e^{ tA } 
  \right\|_{ L( H ) }
  +
  \left\| 
    \left( -t A \right)^{ \kappa }
    \left( \Id_H - t A \right)^{ -1 } 
  \right\|_{ L( H ) }
  \leq
  2 .
\end{split}
\end{align}
and
\begin{align}
\label{eq:limplicit_order_3_11}
\begin{split}
 &\left\|
    \left( - t A \right)^{ \kappa }
    \left(
      e^{ tA }
      +
      \left( \Id_H - t A \right)^{ -1 }
    \right)
  \right\|_{ L( H ) }
  \leq
  \left\| 
    \left( -t A \right)^{ \kappa }
    e^{ tA } 
  \right\|_{ L( H ) }
  +
  \left\| 
    \left( -t A \right)^{ \kappa }
    \left( \Id_H - t A \right)^{ -1 } 
  \right\|_{ L( H ) }
  \leq
  2 .
\end{split}
\end{align}
Next observe that
Lemma~\ref{lem:limplicit_order_1}
and~\eqref{eq:limplicit_order_3_11}
prove that
\begin{align}
\label{eq:limplicit_order_3_2}
\begin{split}
 &\left\|
    \left( - t A \right)^{ \kappa }
    \left(
      e^{ 2tA }
      -
      \left( \Id_H - t A \right)^{ -2 }
    \right)
  \right\|_{ L( H ) }
\\&=
  \left\|
    \left( - t A \right)^{ \kappa }
    \left(
      e^{ tA }
      +
      \left( \Id_H - t A \right)^{ - 1 }
    \right)
    \left(
      e^{ tA }
      -
      \left( \Id_H - t A \right)^{ -1 }
    \right)
  \right\|_{ L( H ) }
\\&\leq
  \left\|
    \left( -t A \right)^{ \kappa }
    \left(
      e^{ tA }
      +
      \left( \Id_H - t A \right)^{ -1 }
    \right)
  \right\|_{ L( H ) }
  \left\|
    e^{ tA }
    -
    \left( \Id_H - t A \right)^{ -1 }
  \right\|_{ L( H ) }
  \leq
  2 .
\end{split}
\end{align}
Moreover, note that 
Lemma~\ref{lem:limplicit_order_1}
ensures that for all $ j \in \{ 3,4,\ldots\} $
it holds that
\begin{align}
\begin{split}
 &\left\|
    \left( - t A \right)^{ \kappa }
    \left(
      e^{ jtA }
      -
      \left( \Id_H - t A \right)^{ -j }
    \right)
  \right\|_{ L( H ) }
\\&=
  \left\|
    \sum_{ l = 0 }^{ j - 1 }
    \left( - t A \right)^{ \kappa }
    e^{ ltA }
    \left( \Id_H - t A \right)^{ ( l - j + 1 ) }
    \left(
      e^{ tA }
      -
      \left( \Id_H - t A \right)^{ -1 }
    \right)
  \right\|_{ L( H ) }
\\&\leq
  t^{ \kappa }
  \sum_{ l = 0 }^{ j - 1 }
  \left\|
    \left( - A \right)^2
    e^{ ltA }
    \left( \Id_H - t A \right)^{ ( l - j + 1 ) }
  \right\|_{ L( H ) }
  \left\|
    \left( - A \right)^{ -(2-\kappa) }
    \left(
      e^{ tA }
      -
      \left( \Id_H - t A \right)^{ -1 }
    \right)
  \right\|_{ L( H ) }
\\&\leq
  t^2
  \sum_{ l = 0 }^{ j - 1 }
  \left\|
    \left( - A \right)^{ 2 }
    e^{ ltA }
    \left( \Id_H - t A \right)^{ ( l - j + 1 ) }
  \right\|_{ L( H ) } .
\end{split}
\end{align}
Lemma~\ref{lem:limplicit_bound}
and the fact that
$ \forall \, s \in [0,\infty) $, 
$ r \in [0,2]
  \colon
  \|
    (-sA)^r
    e^{sA}
  \|_{ L(H) }
  \leq 1
$
therefore yield
that for all
$ j \in \{3,4,\ldots\} $
it holds that
\begin{align}
\label{eq:limplicit_order_3_3}
\begin{split}
  &\left\|
    \left( - t A \right)^{ \kappa }
    \left(
      e^{ jtA }
      -
      \left( \Id_H - t A \right)^{ -j }
    \right)
  \right\|_{ L( H ) }
\\&\leq
  t^2
  \left[
    \sum_{ l = 0 }^{ \fl[1]{ \nicefrac{ j }{ 2 } } - 1 }
    \left\|
      \left( - A \right)^2
      \left( \Id_H - t A \right)^{ ( l - j + 1 ) }
    \right\|_{ L( H ) }
    +
    \sum_{ l = \fl[1]{ \nicefrac{ j }{ 2 } } }^{ j - 1 }
    \left\|
      \left( - A \right)^2
      e^{ ltA }
    \right\|_{ L( H ) }
  \right]
\\&\leq
  \sum_{ l = 0 }^{ \fl[1]{ \nicefrac{ j }{ 2 } } - 1 }
  \frac{ 4 t^2 }{ t^2 \left( j - l - 1 \right)^2 }
  +
  \sum_{ l = \fl[1]{ \nicefrac{ j }{ 2 } } }^{ j - 1 }
  \frac{ t^2 }{ t^2 l^2 } 
  \leq
  \sum_{ l = 0 }^{ \fl[1]{ \nicefrac{ j }{ 2 } } - 1 }
  \frac{ 4 }{ 
    \left( 
      j 
      - 
      \fl[1]{ \nicefrac{ j }{ 2 } } 
    \right)^2
  }
  +
  \sum_{ l = \fl[1]{ \nicefrac{ j }{ 2 } } }^{ j - 1 }
  \frac{ 1 }{ \left| \fl[1]{ \nicefrac{ j }{ 2 } } \right|^2 }
\\&=
  \frac{ 
    4  
    \fl[1]{ \nicefrac{ j }{ 2 } }
  }{ 
    \left| \cl[1]{ \nicefrac{ j }{ 2 } } \right|^2
  }
  +
  \frac{ 
    \left( 
      j
      - 
      \fl[1]{ \nicefrac{ j }{ 2 } }
    \right) 
  }{ 
    \left| \fl[1]{ \nicefrac{ j }{ 2 } } \right|^2
  }
  \leq
  \frac{ 4 }{ 
    \cl[1]{ \nicefrac{ j }{ 2 } }
  }
  +
  \frac{ 4 }{ 
    \cl[1]{ \nicefrac{ j }{ 2 } }
  }
  \leq
  \frac{ 16 }{ j } .
\end{split}
\end{align}
Combining \eqref{eq:limplicit_order_3_1},
\eqref{eq:limplicit_order_3_2},
and~\eqref{eq:limplicit_order_3_3} completes
the proof of Lemma~\ref{lem:limplicit_order_3}.
\end{proof}

\begin{lemma}
\label{lem:limplicit_order_4}
Assume the setting in Section~\ref{sec:example_setting}
and let $ q \in [2, \infty) $, 
$ \varrho \in [0,\infty) \cap (-\infty, 1+\nicefrac{d}{2q}-\nicefrac{d}{4} ) $. 
Then
\begin{multline}
  \sup_{M \in \N}
  \sup_{t \in (0,T] }
  M^{ \varrho }
  \int_0^t
  \left\|
    e^{(t-s)A}
    -
    \big(
      \Id_H - (t-\fl{t})A
    \big)^{-1}
    \big(
      \Id_H - \tfrac{T}{M}A
    \big)^{ (\fl{s}-\fl{t})\frac{M}{T} }
  \right\|_{ L( H, L^q(\lambda_{\D};\R) ) } ds
\\
  <
  \infty .
\end{multline}
\end{lemma}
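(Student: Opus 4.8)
\emph{Proof strategy.} The plan is to split the integral at the last grid point $\fl{t}$ and, on the remaining part $[0,\fl{t}]$, to telescope the difference of the two propagators into three pieces, each controlled by one of the preceding auxiliary lemmas. Throughout put $\rho_0 = \nicefrac{d}{4} - \nicefrac{d}{2q}$ and $h = \nicefrac{T}{M}$; the standing hypotheses force $\rho_0 \in [0,1)$ and $\rho_0 + \varrho < 1$, and I will freely use the analyticity bounds $\| (-sA)^r e^{sA} \|_{L(H)} \le 1$ and $\| (-sA)^r (\Id_H - sA)^{-1} \|_{L(H)} \le 1$ for $r \in [0,1]$ together with the continuous embedding $H_{\rho_0} \subseteq L^q(\lambda_{\D};\R)$ (cf.\ the proof of Lemma~\ref{lem:semigroup_bound}).

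First I would treat the contribution of $\int_{\fl{t}}^t$. On this interval $\fl{s} = \fl{t}$, so the approximating operator reduces to $(\Id_H - (t-\fl{t})A)^{-1}$, and Lemma~\ref{lem:semigroup_bound} bounds both $\| e^{(t-s)A} \|_{L(H,L^q(\lambda_{\D};\R))}$ and $\| (\Id_H - (t-\fl{t})A)^{-1} \|_{L(H,L^q(\lambda_{\D};\R))}$ by a constant times $(t-s)^{-\rho_0}$. Integrating over $s \in (\fl{t},t)$ gives a bound of the form $C\,(t-\fl{t})^{1-\rho_0} \le C\,h^{1-\rho_0}$, and multiplying by $M^{\varrho}$ produces a quantity bounded uniformly in $M \in \N$ and $t \in (0,T]$ precisely because $\varrho < 1 + \nicefrac{d}{2q} - \nicefrac{d}{4} = 1 - \rho_0$.

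On $[0,\fl{t}]$ one has, for $s$ in the subinterval $[lh,(l+1)h)$, that $\fl{s} = lh$ and that $j := (\fl{t} - \fl{s})\tfrac{M}{T}$ is a \emph{positive} integer. Using $e^{(t-\fl{s})A} = e^{(t-\fl{t})A} e^{jhA}$ I would decompose
\begin{align*}
 &e^{(t-s)A} - \big( \Id_H - (t-\fl{t})A \big)^{-1} \big( \Id_H - hA \big)^{-j}
\\&=
  \big[ e^{(t-s)A} - e^{(t-\fl{s})A} \big]
  +
  \big[ e^{(t-\fl{t})A} - \big( \Id_H - (t-\fl{t})A \big)^{-1} \big] e^{jhA}
\\&\quad+
  \big( \Id_H - (t-\fl{t})A \big)^{-1} \big[ e^{jhA} - \big( \Id_H - hA \big)^{-j} \big] .
\end{align*}
The $s$-integral of the first bracket is dominated by $\int_0^t \| e^{(t-s)A} - e^{(t-\fl{s})A} \|_{L(H,L^q(\lambda_{\D};\R))}\,ds$, hence is $O(M^{-\varrho})$ by Lemma~\ref{lem:exponential_order}. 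For the second bracket, Lemma~\ref{lem:limplicit_order_1} (applied with exponent $\varrho$ when $t > \fl{t}$; the bracket vanishes when $t = \fl{t}$), the inequality $t - \fl{t} \le h$, the smoothing bound $\| (-jhA)^{\rho_0+\varrho} e^{jhA} \|_{L(H)} \le 1$ (legitimate since $\rho_0 + \varrho < 1$), and $H_{\rho_0} \subseteq L^q(\lambda_{\D};\R)$ yield the pointwise estimate $C\,h^{\varrho}(jh)^{-(\rho_0+\varrho)}$; summing over the subintervals, so that $j$ ranges over $\{1,\ldots,\fl{t}/h\}$ with $\fl{t}/h \le M$, and invoking $\sum_{j=1}^{N} j^{-(\rho_0+\varrho)} \le C\,N^{1-\rho_0-\varrho}$ gives $C\,h^{1-\rho_0} M^{1-\rho_0-\varrho} = C\,T^{1-\rho_0} M^{-\varrho}$. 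For the third bracket, the contraction property of $(\Id_H - (t-\fl{t})A)^{-1}$ on $H$, the embedding $H_{\rho_0}\subseteq L^q(\lambda_{\D};\R)$, and Lemma~\ref{lem:limplicit_order_3} with $\kappa = \rho_0$ yield the pointwise estimate $C\,h^{-\rho_0}\,j^{-1}$, so its $s$-integral is $\le C\,h^{1-\rho_0}\sum_{j=1}^{\fl{t}/h} j^{-1} \le C\,h^{1-\rho_0}(1 + \ln M)$, whence, after multiplication by $M^{\varrho}$, a bound $C\,T^{1-\rho_0} M^{\varrho - 1 + \rho_0}(1 + \ln M)$.

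Adding up the three contributions together with the boundary piece then gives the claim. The main obstacle is the third bracket: the telescoped difference $e^{jhA} - (\Id_H - hA)^{-j}$ only carries the rate $\nicefrac{1}{j}$ (Lemma~\ref{lem:limplicit_order_3}), so the $s$-integration leaves a harmonic sum and hence a factor $1 + \ln M$. It is therefore essential that the hypothesis $\varrho < 1 + \nicefrac{d}{2q} - \nicefrac{d}{4}$ is \emph{strict}, since this is exactly what makes $M^{\varrho - 1 + \rho_0}$ dominate the logarithm. Everything else is a bookkeeping of powers of $h = \nicefrac{T}{M}$ and a repeated application of the analyticity estimates above.
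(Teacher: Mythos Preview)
Your proof is correct and follows essentially the same route as the paper's: the same three-way telescoping on the bulk of $[0,t]$, handled by Lemma~\ref{lem:exponential_order}, Lemma~\ref{lem:limplicit_order_1}, and Lemma~\ref{lem:limplicit_order_3} respectively, plus a short boundary piece controlled by Lemma~\ref{lem:semigroup_bound}. The only differences are cosmetic---you split at $\fl{t}$ rather than at $\fl{t}-\tfrac{T}{M}$, and for the middle bracket you invoke Lemma~\ref{lem:limplicit_order_1} with exponent $\varrho$ (yielding the summable $\sum_j j^{-(\rho_0+\varrho)}$) whereas the paper uses exponent $1$ and an $\varepsilon$-trick; your variant is in fact a little cleaner.
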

\begin{proof}[Proof of Lemma~\ref{lem:limplicit_order_4}]
Throughout this proof let
$ R \colon [0,T] \rightarrow L(H) $
be the function with the property
that for all $ t \in [0,T] $
it holds that
$ R_t = (\Id_H - tA)^{-1} $.
Next note
that the triangle inequality
implies that 
for all $ M \in \N $, 
$ t \in (0,T] $
it holds that
\begin{align}
\label{eq:limplicit_order_4_h1}
\begin{split}
 &\int_0^t
  \left\|
    e^{(t-s)A}
    -
    R_{(t-\fl{t})}
    \left[ R_{\,\nicefrac{T}{M}} \right]^{ (\fl{t}-\fl{s})\frac{M}{T} }
  \right\|_{ L( H, L^q(\lambda_{\D};\R) ) } ds
\\&\leq
  \int_0^t
  \left\|
    e^{(t-s)A}
    -
    e^{(t-\fl{s})A}
  \right\|_{ L( H, L^q(\lambda_{\D};\R) ) } ds
\\&+
  \int_0^t
  \left\|
    e^{(t-\fl{s})A}
    -
    R_{(t-\fl{t})}
    \left[ R_{\,\nicefrac{T}{M}} \right]^{ (\fl{t}-\fl{s})\frac{M}{T} }
  \right\|_{ L( H, L^q(\lambda_{\D};\R) ) } ds
\\&\leq
  \int_0^t
  \left\|
    e^{(t-s)A}
    -
    e^{(t-\fl{s})A}
  \right\|_{ L( H, L^q(\lambda_{\D};\R) ) } ds
\\&+
  \int_{\max\{0,\fl{t}-\frac{T}{M}\}}^t
  \left\|
    e^{(t-\fl{s})A}
    -
    R_{(t-\fl{t})}
    \left[ R_{\,\nicefrac{T}{M}} \right]^{ (\fl{t}-\fl{s})\frac{M}{T} }
  \right\|_{ L( H, L^q(\lambda_{\D};\R) ) } ds
\\&+
  \int_0^{\max\{0,\fl{t}-\frac{T}{M}\}}
  \left\|
    e^{(\fl{t}-\fl{s})A}
    \left(
      e^{(t-\fl{t})A}
      -
      R_{(t-\fl{t})}
    \right)
  \right\|_{ L( H, L^q(\lambda_{\D};\R) ) } ds
\\&+
  \int_0^{\max\{0,\fl{t}-\frac{T}{M}\}}
  \left\|
    R_{(t-\fl{t})}
    \left(
      e^{(\fl{t}-\fl{s})A}
      -
      \left[ R_{\,\nicefrac{T}{M}} \right]^{ (\fl{t}-\fl{s})\frac{M}{T} }
    \right)
  \right\|_{ L( H, L^q(\lambda_{\D};\R) ) } ds .
\end{split}
\end{align}
Moreover, the triangle inequality
and Lemma~\ref{lem:semigroup_bound} prove
that
\begin{align}
\label{eq:limplicit_order_4_h2}
\begin{split}
 &\sup_{ M \in \N }
  \sup_{ t \in (0, T] }
  M^{\varrho}
  \int_{\max\{0,\fl{t}-\frac{T}{M}\}}^t
  \left\|
    e^{(t-\fl{s})A}
    -
    R_{(t-\fl{t})}
     \left[ R_{\,\nicefrac{T}{M}} \right]^{ (\fl{t}-\fl{s})\frac{M}{T} }
  \right\|_{ L( H, L^q(\lambda_{\D};\R) ) } ds
\\&\leq
  \left[ 
    \sup_{ M \in \N }
    \sup_{ t \in (0, T] }
    M^{\varrho}
    \int_{\max\{0,\fl{t}-\frac{T}{M}\}}^t
    \left(t-s\right)^{-( \nicefrac{d}{4} - \nicefrac{d}{2q} ) } ds
  \right]
\\&\quad\cdot
  \Bigg[
    \sup_{ M \in \N }
    \sup_{ 0<s<t\leq T }
    \left(t-s\right)^{ ( \nicefrac{d}{4} - \nicefrac{d}{2q} ) }
    \left\|
      e^{(t-\fl{s})A}
      -
      R_{(t-\fl{t})}
      \left[ R_{\,\nicefrac{T}{M}} \right]^{ (\fl{t}-\fl{s})\frac{M}{T} }
    \right\|_{ L( H, L^q(\lambda_{\D};\R) ) }
  \Bigg]
\\&\leq
  \Bigg[ 
    \sup_{ M \in \N }
    \frac{ 
      2 \, T^{(1+\nicefrac{d}{2q}-\nicefrac{d}{4} ) }  M^{\varrho} 
    }{
      \left( 1 + \nicefrac{d}{2q} - \nicefrac{d}{4} \right) M^{ (1 + \nicefrac{d}{2q} -  \nicefrac{d}{4}) }
    }
  \Bigg]
  \Bigg[
    \sup_{ s \in (0,T] }
    s^{ ( \nicefrac{d}{4} - \nicefrac{d}{2q} ) }
    \left\|
      e^{sA}
    \right\|_{ L( H, L^q(\lambda_{\D};\R) ) }
\\&\quad+
    \sup_{ M \in \N }
    \sup_{ 0<s<t\leq T }
    \left(t-s\right)^{ ( \nicefrac{d}{4} - \nicefrac{d}{2q} ) }
    \left\|
      R_{(t-\fl{t})}
      \left[ R_{\,\nicefrac{T}{M}} \right]^{ (\fl{t}-\fl{s})\frac{M}{T} }
    \right\|_{ L( H, L^q(\lambda_{\D};\R) ) }
  \Bigg]
  <
  \infty .
\end{split}
\end{align}
In the next step observe that 
the fact that 
$ \forall \, s \in (0,\infty) $, 
$ r \in [0,1] 
  \colon
  \|
    (-sA)^r
    e^{sA}
  \|_{ L(H) }
  \leq 1
$
and Lemma~\ref{lem:limplicit_order_1}
assure that for
all $ t \in (0,T] $, $ M \in \N $
it holds that
\begin{align}
\begin{split}
 &\int_{0}^{\max\{0,\fl{t}-\frac{T}{M}\}}
  \left\|
    e^{(\fl{t}-\fl{s})A}
    \left(
      e^{(t-\fl{t})A}
      -
      R_{(t-\fl{t})}
    \right)
  \right\|_{ L( H, L^q(\lambda_{\D};\R) ) } ds
\\&\leq
  \int_{0}^{\max\{0,\fl{t}-\frac{T}{M}\}}
  \left\|
    e^{\frac{1}{2}(\fl{t}-\fl{s})A}
  \right\|_{ L( H, L^q(\lambda_{\D};\R) ) }
\\&\quad\cdot
  \left\|
    e^{\frac{1}{2}(\fl{t}-\fl{s})A}
    \left(
      e^{(t-\fl{t})A}
      -
      R_{(t-\fl{t})}
    \right)
  \right\|_{ L( H ) } ds
\\&\leq
  \left[
    \sup_{ s \in (0,T] }
    s^{(\nicefrac{d}{4} - \nicefrac{d}{2q})}
    \left\|
      e^{sA}
    \right\|_{ L( H, L^q(\lambda_{\D};\R) ) }  
  \right]
  \int_{0}^{\max\{0,\fl{t}-\frac{T}{M}\}}
  \left[
    \tfrac{1}{2}
    (\fl{t}-\fl{s})
  \right]^{-(\nicefrac{d}{4} - \nicefrac{d}{2q})}
\\&\quad\cdot
  \left\|
    \left( -A \right)
    e^{\frac{1}{2}(\fl{t}-\fl{s})A}
  \right\|_{ L(H) }
  \left\|
    \left(
      -A
    \right)^{-1}
    \left(
      e^{(t-\fl{t})A}
      -
      R_{(t-\fl{t})}
    \right)
  \right\|_{ L( H ) } ds
\\&\leq
  \frac{T}{M}
  \left[
    \sup_{ s \in (0,T] }
    s^{(\nicefrac{d}{4} - \nicefrac{d}{2q})}
    \left\|
      e^{sA}
    \right\|_{ L( H, L^q(\lambda_{\D};\R) ) }  
  \right]
  \int_{0}^{\max\{0,\fl{t}-\frac{T}{M}\}}
  \left[
    \tfrac{1}{2}
    (\fl{t}-\fl{s})
  \right]^{-(1+\nicefrac{d}{4} - \nicefrac{d}{2q})} ds .
\end{split}
\end{align}
Lemma~\ref{lem:semigroup_bound}
and the fact that 
$
  \forall \, M \in \N
$, 
$  
  \varepsilon \in (0,\infty)
  \colon
  \sum_{ l=2 }^M
  l^{-(1+\nicefrac{d}{4} - \nicefrac{d}{2q} + \varepsilon)}
  \leq
  \int_1^M
  s^{-(1+\nicefrac{d}{4} - \nicefrac{d}{2q} + \varepsilon)} \, ds
  =
  \frac{ 
    1 - M^{-(\nicefrac{d}{4} - \nicefrac{d}{2q} + \varepsilon)} 
  }{ 
    \nicefrac{d}{4} - \nicefrac{d}{2q}  + \varepsilon 
  }
$
hence
imply that for all
$ \varepsilon \in (0, 1 +\nicefrac{d}{2q}-\nicefrac{d}{4}-\varrho] $
it holds that
\begin{align}
\label{eq:limplicit_order_4_h3}
\begin{split}
 &\sup_{ M \in \N }
  \sup_{ t \in (0,T] }
  M^{\varrho}
  \int_{0}^{\max\{0,\fl{t}-\frac{T}{M}\}}
  \left\|
    e^{(\fl{t}-\fl{s})A}
    \left(
      e^{(t-\fl{t})A}
      -
      R_{(t-\fl{t})}
    \right)
  \right\|_{ L( H, L^q(\lambda_{\D};\R) ) } ds
\\&\leq
  \left[
    \sup_{ s \in (0,T] }
    s^{(\nicefrac{d}{4} - \nicefrac{d}{2q})}
    \left\|
      e^{sA}
    \right\|_{ L( H, L^q(\lambda_{\D};\R) ) }  
  \right]
\\&\cdot
  \left[
    \sup_{ M \in \N }
    \sup_{ t \in (0,T] }
    \frac{ 4 \, T }{ M^{(1-\varrho)} }
    \sum_{ l=0 }^{ \fl{t}\tfrac{M}{T}-2}
    \int_{ \frac{lT}{M} }^{ \frac{(l+1)T}{M} }
    \left[
      \big(
	\fl{t} \tfrac{M}{T}
	-
	l
      \big) \tfrac{T}{M}
    \right]^{-(1+\nicefrac{d}{4} - \nicefrac{d}{2q})} ds
  \right] 
\\&\leq
  \left[
    \sup_{ s \in (0,T] }
    s^{(\nicefrac{d}{4} - \nicefrac{d}{2q})}
    \left\|
      e^{sA}
    \right\|_{ L( H, L^q(\lambda_{\D};\R) ) }  
  \right]
  \left[
    \sup_{ M \in \N }
    \sup_{ t \in (0,T] }
    \frac{ 
      4 \, 
      T^{(1-\nicefrac{d}{4} + \nicefrac{d}{2q})} 
    }{ 
      M^{(1-\varrho-\nicefrac{d}{4} + \nicefrac{d}{2q})} 
    }
    \sum_{ l=2 }^{ \fl{t}\tfrac{M}{T} }
    \frac{
      l^{\varepsilon}
    }{
      l^{(1+\nicefrac{d}{4} - \nicefrac{d}{2q} + \varepsilon) }
    }
  \right]
\\&\leq
  \left[
    \sup_{ s \in (0,T] }
    s^{(\nicefrac{d}{4} - \nicefrac{d}{2q})}
    \left\|
      e^{sA}
    \right\|_{ L( H, L^q(\lambda_{\D};\R) ) }  
  \right]
  \left[
    \sup_{ M \in \N }
    \frac{ 
      4 \, 
      T^{(1-\nicefrac{d}{4} + \nicefrac{d}{2q})} \,
      (1 - M^{-(\nicefrac{d}{4} - \nicefrac{d}{2q} + \varepsilon)})
    }{ 
      M^{(1-\varrho-\nicefrac{d}{4} + \nicefrac{d}{2q}-\varepsilon)} \,
      (\nicefrac{d}{4} - \nicefrac{d}{2q} + \varepsilon)
    }
  \right]
  <
  \infty .
\end{split}
\end{align}
Furthermore,
observe that the fact that 
for all
$ r \in [0,\infty) $
it holds that
$
  H_r \subseteq W^{2r,2}(\D,\R)
$
continuously
(cf., e.g., (A.46) in Da Prato \& Zabczyk \cite{dz92} and Lunardi \cite{l09})
and the fact that for all $ r \in [\nicefrac{d}{4} - \nicefrac{d}{2q},\infty) $ 
it holds that 
$ 
  W^{2r,2}(\D,\R) 
  \subseteq 
  W^{0,q}(\D,\R) 
  = 
  L^q(\lambda_{\D};\R) 
$
continuously imply that
there exists a real number
$ C \in (0,\infty) $ 
such that for all $ v \in H_{(\nicefrac{d}{4} - \nicefrac{d}{2q})} $
it holds that 
$ \| v \|_{ L^q(\lambda_{\D};\R) } \leq C \| v \|_{ H_{(\nicefrac{d}{4} - \nicefrac{d}{2q})} } $.
This and the fact that
$ \forall \, s \in (0,\infty) $, 
$ r \in [0,1]  
  \colon
  \|
    (-sA)^r
    R_s
  \|_{ L(H) }
  \leq 1
$
ensure that for all
$ M \in \N $, $ s,t \in (0,T] $
with $ s < \fl{t}-\nicefrac{T}{M} $
it holds that
\begin{align}
\begin{split}
 &\sup_{ v \in H \backslash \{0\} }
  \left[ 
    \frac{1}{ \left\| v \right\|_H }
    \left\|
      R_{(t-\fl{t})}
      \left(
	e^{(\fl{t}-\fl{s})A}
	-
	\left[ R_{\,\nicefrac{T}{M}} \right]^{ (\fl{t}-\fl{s})\frac{M}{T} }
      \right)
      v
    \right\|_{ L^q(\lambda_{\D};\R) }
  \right]
\\&\leq
  \sup_{ v \in H \backslash \{0\} }
  \left[ 
    \frac{C}{ \left\| v \right\|_H }
    \left\|
      R_{(t-\fl{t})}
      \left(
	e^{(\fl{t}-\fl{s})A}
	-
	\left[ R_{\,\nicefrac{T}{M}} \right]^{ (\fl{t}-\fl{s})\frac{M}{T} }
      \right)
      v
    \right\|_{ H_{ (\nicefrac{d}{4} - \nicefrac{d}{2q}) } }
  \right]
\\&\leq
  \sup_{ v \in H \backslash \{0\} }
  \left[ 
    \frac{
	C \,
      \big\|
	R_{(t-\fl{t})}
      \big\|_{ L(H) }
    }{ \left\| v \right\|_H }
    \left\|
      \left(-A\right)^{(\nicefrac{d}{4} - \nicefrac{d}{2q})}
      \left(
	e^{(\fl{t}-\fl{s})A}
	-
	\left[ R_{\,\nicefrac{T}{M}} \right]^{ (\fl{t}-\fl{s})\frac{M}{T} }
      \right)
      v
    \right\|_{ H }
  \right]
\\&\leq
  C
  \left\|
    \left(-A\right)^{(\nicefrac{d}{4} - \nicefrac{d}{2q})}
    \left(
      e^{(\fl{t}-\fl{s})A}
      -
      \left[ R_{\,\nicefrac{T}{M}} \right]^{ (\fl{t}-\fl{s})\frac{M}{T} }
    \right)
  \right\|_{ L( H ) } .
\end{split}
\end{align}
Lemma~\ref{lem:limplicit_order_3} 
hence shows that
for all $ \varepsilon \in (0, 1 +\nicefrac{d}{2q}-\nicefrac{d}{4}-\varrho] $
it holds that
\begin{align}
\label{eq:limplicit_order_4_h4}
\begin{split}
 &\sup_{ M \in \N }
  \sup_{ t \in (0, T] }
  M^{\varrho} \!\!\!
  \int\limits_0^{\max\{0,\fl{t}-\frac{T}{M}\}} 
  \left\|
    R_{(t-\fl{t})}
    \left(
      e^{(\fl{t}-\fl{s})A}
      -
      \left[ R_{\,\nicefrac{T}{M}} \right]^{ (\fl{t}-\fl{s})\frac{M}{T} }
    \right)
  \right\|_{ L( H, L^q(\lambda_{\D};\R) ) } ds
\\&\leq
  \sup_{ M \in \N }
  \sup_{ l \in \{1,2,\ldots,M\} }
  M^{\varrho}
  \sum_{ k=0 }^{ l-2 } 
  \int_{\frac{kT}{M}}^{\frac{(k+1)T}{M}}
  C
  \left\|
    \left(-A\right)^{(\nicefrac{d}{4} - \nicefrac{d}{2q})}
    \left(
      e^{\frac{(l-k)T}{M}A}
      -
      \left[ R_{\,\nicefrac{T}{M}} \right]^{ (l-k) }
    \right)
  \right\|_{ L( H ) } ds
\\&\leq
  \sup_{ M \in \N }
  \sup_{ l \in \{1,2,\ldots,M\} }
  \left[ 
    M^{\varrho}
    \sum_{ k=0 }^{ l-2 } 
    \frac{
      16 \, C \, T^{(1+\nicefrac{d}{2q} -\nicefrac{d}{4})}
    }{
      (l-k) \, M^{(1+\nicefrac{d}{2q}-\nicefrac{d}{4})}
    }
  \right]
  =
  \sup_{ M \in \{2,3,\ldots\} }
  \sup_{ l \in \{2,3,\ldots,M\} }
  \left[ 
    \sum_{ k=2 }^{ l } 
    \frac{
      16 \, C \, T^{(1+\nicefrac{d}{2q}-\nicefrac{d}{4})}
    }{
      k \, M^{(1+\nicefrac{d}{2q}-\nicefrac{d}{4}-\varrho)}
    }
  \right]
\\&\leq
  \sup_{ M \in \N }
  \left[ 
    \frac{
      16 \, C \, T^{(1+\nicefrac{d}{2q}-\nicefrac{d}{4})}
    }{
      M^{(1+\nicefrac{d}{2q}-\nicefrac{d}{4}-\varrho)}
    }
    \int_1^M
    \frac{1}{s^{(1-\varepsilon)}} \, ds
  \right]
  =
  \sup_{ M \in \N }
  \left[ 
    \frac{
      16 \, C \, T^{(1+\nicefrac{d}{2q}-\nicefrac{d}{4})}
      \left( M^{\varepsilon} -1 \right)
    }{
      \varepsilon \,
      M^{(1+\nicefrac{d}{2q}-\nicefrac{d}{4}-\varrho)}
    }
  \right] 
\\&\leq
  \frac{
    16 \, C \, T^{(1+\nicefrac{d}{2q}-\nicefrac{d}{4})}
  }{
    \varepsilon
  }
  <
  \infty .
\end{split}
\end{align}
Combining Lemma~\ref{lem:exponential_order},
\eqref{eq:limplicit_order_4_h1}, \eqref{eq:limplicit_order_4_h2},
\eqref{eq:limplicit_order_4_h3}, and \eqref{eq:limplicit_order_4_h4}
completes the proof of Lemma~\ref{lem:limplicit_order_4}.
\end{proof}

\begin{lemma}
\label{lem:wiener}
Assume the setting in Section~\ref{sec:example_setting}, let
$ p \in [1,\infty) $, $ \theta \in (0,\nicefrac{1}{4}) $, $ \D = (0,1) $,
$ \xi \in \L^p(\P;V) $,
and let $ (W_t)_{ t \in [0,T] } $ be an 
$ \Id_H $-cylindrical $ (\F_t)_{ t \in [0,T] } $-Wiener process.
Then there exists an up to indistinguishability unique
stochastic process $ O \colon [0,T] \times \Omega \rightarrow V $ 
with continuous sample paths which satisfies
\begin{enumerate}[(i)]
 \item\label{it:wiener_1}
  that for all $ t \in [0,T] $ it holds that
  $ 
    [ O_t - e^{tA} \xi ]_{ \P, \B(H) }
    = 
    \int_0^t
    e^{(t-s)A} \, dW_s
  $,
  \item\label{it:wiener_2}
  that for all
  $ \omega \in \Omega $,
  $ r \in [0,\theta] $ it holds that
  $
    \sup_{0\leq s < t \leq T}
    \frac{ 
      s\| O_t(\omega) - O_s(\omega) \|_V
    }{ 
      (t-s)^r
    }
    <
    \infty
  $,
  \item\label{it:wiener_3}
  and that
  $
    \sup_{ t \in [0,T] }
    \sup_{ M\in \N }	
    \big\|
      \|
	O_t
      \|_{ V }
      +
      |M\fl{t}|^{\theta} \,
      \|
	O_t - O_{\fl{t}}
      \|_{ V }
    \big\|_{ \L^p(\P;\R) }
    <
    \infty
  $.
\end{enumerate}
\end{lemma}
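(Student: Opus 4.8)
\emph{(Proof sketch / plan of the intended argument.)}
The plan is to take $ O $ to be the sum of the deterministic flow $ (t,\omega) \mapsto e^{tA}\xi(\omega) $ and a suitably regular modification of the stochastic convolution $ \tilde{O}_t = \int_0^t e^{(t-s)A} \, dW_s $, and then to verify~\eqref{it:wiener_1}, \eqref{it:wiener_2}, and~\eqref{it:wiener_3} separately for the two summands.

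First I would diagonalise $ A $. Since $ A = \nu \Delta $ with Dirichlet boundary conditions on $ \D = (0,1) $, there are an orthonormal basis $ (e_k)_{k\in\N}\subseteq H $, $ e_k = \sqrt{2}\,\sin(\pi k \, \cdot) $, and reals $ \lambda_k = \pi^2 k^2 $ with $ A e_k = -\nu \lambda_k e_k $; in particular $ \int_0^T \sum_{k\in\N} e^{-2\nu\lambda_k s} \, ds < \infty $, so that $ \tilde{O} $ is a well-defined centred Gaussian process which admits a jointly measurable random-field version $ (t,x) \mapsto \tilde{O}_t(x) = \sum_{k\in\N} e_k(x) \int_0^t e^{-\nu\lambda_k(t-s)} \, d\langle e_k, W_s \rangle_H $ (cf., e.g., Da Prato \& Zabczyk~\cite{dz92}). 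Writing $ X_k $ for the independent one-dimensional Ornstein--Uhlenbeck processes appearing here, an elementary computation gives (up to $ \nu $-dependent constants) $ \E[|X_k(t)|^2] \le \lambda_k^{-1} $ and $ \E[|X_k(t)-X_k(s)|^2] \le \lambda_k^{-1}\min\{1,\lambda_k(t-s)\} $ for $ 0\le s\le t\le T $; summing over $ k $ and using $ \sum_k \lambda_k^{-1} < \infty $ together with $ \sum_k \min\{\lambda_k^{-1},h\} \le c\,h^{1/2} $ for $ h\in[0,T] $ yields, uniformly in $ x\in(0,1) $ and in $ 0\le s\le t\le T $, that $ \E[|\tilde{O}_t(x)|^2]\le c_1 $ and $ \E[|\tilde{O}_t(x)-\tilde{O}_s(x)|^2]\le c_1(t-s)^{1/2} $. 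Gaussianity promotes these $ L^2(\P) $-bounds to $ L^{\tilde{p}}(\P) $-bounds for every $ \tilde{p}\in[1,\infty) $; integrating in $ x $ over $ (0,1) $ and invoking Gaussian hypercontractivity for the norm $ \|\cdot\|_{L^q} $ then gives $ \sup_{t\in[0,T]}\|\tilde{O}_t\|_{\L^{\tilde{p}}(\P;L^q(\lambda_{\D};\R))}<\infty $ and $ \|\tilde{O}_t-\tilde{O}_s\|_{\L^{\tilde{p}}(\P;L^q(\lambda_{\D};\R))}\le c_{2,\tilde{p},q}(t-s)^{1/4} $ for all $ q,\tilde{p}\in[1,\infty) $, $ 0\le s\le t\le T $. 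Fixing $ \theta'\in(\theta,\nicefrac14) $ and applying Kolmogorov's continuity theorem with $ q=2n^2 $ and $ \tilde{p} $ large, I would obtain a modification of $ \tilde{O} $ — denoted again $ \tilde{O} $, and further redefined to equal $ 0 $ on the relevant $ \P $-null set — with $ \tilde{O}_0=0 $, with all sample paths in $ C^{\theta'}([0,T];\BaMi) $, with $ [\tilde{O}_t]_{\P,\B(H)}=\int_0^t e^{(t-s)A}\,dW_s $ for all $ t\in[0,T] $, and with $ K:=\sup_{0\le s<t\le T}(t-s)^{-\theta'}\|\tilde{O}_t-\tilde{O}_s\|_{\BaMi} $ satisfying $ \|K\|_{\L^p(\P;\R)}<\infty $.

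Next I would set $ O_t(\omega):=e^{tA}(\one_{\{\|\xi\|_{\BaMi}<\infty\}}\xi)(\omega)+\tilde{O}_t(\omega) $. Since $ \lambda_{\D}(\D)=1 $ we have $ \BaMi=L^{2n^2}(\lambda_{\D};\R)\subseteq H $ with $ \|\cdot\|_H\le\|\cdot\|_{\BaMi} $, and $ A $ generates a bounded analytic $ C_0 $-semigroup on $ \BaMi $ (the Dirichlet Laplacian on a bounded interval does so on every $ L^q $, $ q\in(1,\infty) $), so $ t\mapsto e^{tA}\xi(\omega) $ is $ \BaMi $-continuous and $ O $ has continuous sample paths in $ \BaMi $. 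Then~\eqref{it:wiener_1} is immediate because $ \P(\|\xi\|_{\BaMi}<\infty)=1 $ and $ O_t-e^{tA}\xi=\tilde{O}_t $ on $ \{\|\xi\|_{\BaMi}<\infty\} $. For~\eqref{it:wiener_2} and~\eqref{it:wiener_3} the crucial input is that, $ (e^{tA})_{t\ge0} $ being bounded analytic on $ \BaMi $, there is $ c_3\in[1,\infty) $ with $ \|(-A)^r e^{\sigma A}\|_{L(\BaMi)}\le c_3\sigma^{-r} $ and $ \|(e^{\sigma A}-\Id_{\BaMi})(-A)^{-r}\|_{L(\BaMi)}\le c_3\sigma^{r} $ for $ \sigma\in(0,\infty) $, $ r\in[0,1] $, so that $ \|e^{tA}\xi-e^{sA}\xi\|_{\BaMi}\le c_3^2(t-s)^r s^{-r}\|\xi\|_{\BaMi} $ for $ 0<s<t\le T $. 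With $ r=r_0\le\theta $ this bounds the $ e^{tA}\xi $-contribution to the supremum in~\eqref{it:wiener_2} by $ c_3^2 T^{1-r_0}\|\xi(\omega)\|_{\BaMi} $ and the $ \tilde{O} $-contribution by $ T^{1+\theta'-r_0}K(\omega) $, both finite for every $ \omega $; this gives~\eqref{it:wiener_2}. For~\eqref{it:wiener_3}, $ \|e^{tA}\|_{L(\BaMi)}\le1 $ (Lemma~\ref{lem:U_lem1}) yields the uniform bound on $ \sup_t\|O_t\|_{\L^p(\P;\BaMi)} $, and for the increment I would distinguish $ \fl{t}=0 $ (where $ |M\fl{t}|^{\theta}\|O_t-O_{\fl{t}}\|_{\BaMi} $ contains only the $ \tilde{O} $-part) from $ \fl{t}\ge\nicefrac{T}{M} $, in which case $ t-\fl{t}\le\nicefrac{T}{M} $ and the fractional-power estimate with $ r=\theta $, $ s=\fl{t} $ gives
\begin{equation*}
  | M\fl{t} |^{\theta}\,
  \big\| e^{tA}\xi - e^{\fl{t}A}\xi \big\|_{\BaMi}
  \le
  ( M\fl{t} )^{\theta}\,
  c_3^2\,
  \big( \tfrac{T}{M} \big)^{\theta}\,
  \fl{t}^{-\theta}\,
  \| \xi \|_{\BaMi}
  =
  c_3^2\,
  T^{\theta}\,
  \| \xi \|_{\BaMi} ,
\end{equation*}
while $ | M\fl{t} |^{\theta}\|\tilde{O}_t-\tilde{O}_{\fl{t}}\|_{\BaMi}\le(MT)^{\theta}K\big(\tfrac{T}{M}\big)^{\theta'}\le T^{\theta+\theta'}K $; taking $ \L^p(\P;\R) $-norms establishes~\eqref{it:wiener_3}. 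Uniqueness then follows because any two such processes coincide $ \P $-a.s.\ at each $ t\in[0,T] $ as $ H $-valued, hence as $ \BaMi $-valued, random variables (by~\eqref{it:wiener_1} and the continuous injection $ \BaMi\hookrightarrow H $), and continuity of their $ \BaMi $-valued sample paths upgrades this to indistinguishability.

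The hard part is the sharp temporal regularity: estimating $ \tilde{O} $ through the interpolation spaces $ H_r $ associated with $ -A $ on $ H=L^2 $ would only give a Hölder exponent below $ \nicefrac14-\nicefrac{1}{4n^2} $, since $ H_r\hookrightarrow L^{2n^2}(\lambda_{\D};\R) $ forces $ r>\nicefrac14-\nicefrac{1}{4n^2} $ whereas $ \tilde{O} $ has only $ (\nicefrac14-r) $-temporal regularity in $ H_r $; exploiting $ \D=(0,1) $ via the explicit eigenfunction expansion and direct Gaussian second-moment bounds is what circumvents this and recovers the full exponent $ \nicefrac14^{-} $ simultaneously in every $ L^q $-norm. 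A secondary subtlety is that the uniform-in-$ M $ increment bound in~\eqref{it:wiener_3} for the term $ e^{tA}\xi $ genuinely requires the analytic semigroup to be bounded on $ \BaMi=L^{2n^2}(\lambda_{\D};\R) $, not merely on $ H $.
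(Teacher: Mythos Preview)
Your proof is correct and follows the same overall architecture as the paper's --- define $O_t = e^{tA}\xi + \tilde{O}_t$ with $\tilde{O}$ a regular version of the stochastic convolution, then handle the deterministic and stochastic summands separately --- but the execution differs in two places worth noting.

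First, for the regularity of $\tilde{O}$ the paper simply cites Lemma~4.3 of Bl\"omker \& Jentzen~\cite{bj09}, which directly supplies a version that is $\theta'$-H\"older in $V$ with the H\"older constant in $\L^p(\P;\R)$. You instead rebuild this from the explicit eigenfunction expansion, one-dimensional Ornstein--Uhlenbeck second-moment bounds, Gaussianity, and Kolmogorov. Your route is self-contained and makes transparent exactly where the full exponent $\nicefrac14^-$ in $L^{2n^2}$ comes from (and why the $H_r$-interpolation route would lose $\nicefrac{1}{4n^2}$, as you correctly point out). A minor remark: you do not actually need Gaussian hypercontractivity to pass from $\L^{\tilde p}(\P;L^{\tilde p})$ to $\L^{\tilde p}(\P;L^q)$ for $q\le \tilde p$, since $\lambda_{\D}(\D)=1$ makes H\"older's inequality sufficient.

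Second, for the $e^{tA}\xi$-increments in~\eqref{it:wiener_2} the paper goes through $H$: it writes $e^{tA}\xi - e^{sA}\xi = e^{\nicefrac{s}{2}A}\,e^{\nicefrac{s}{2}A}(e^{(t-s)A}-\Id_H)\xi$ and uses the $H\to V$ smoothing bound from Lemma~\ref{lem:semigroup_bound} together with $\|\xi\|_H$. You instead work directly on $V$, using that the Dirichlet Laplacian generates a bounded analytic semigroup on $L^{2n^2}$ (the paper invokes the same fact for~\eqref{it:wiener_3}, citing Renardy \& Rogers). Both approaches are valid here since $\xi\in\L^p(\P;V)\subseteq\L^p(\P;H)$; yours is arguably more uniform because it treats~\eqref{it:wiener_2} and~\eqref{it:wiener_3} by the same semigroup calculus on $V$.
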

\begin{proof}[Proof of Lemma~\ref{lem:wiener}]
Throughout this proof
let $ \tilde{O} \colon [0,T] \times \Omega \rightarrow V $
be an up to indistinguishability unique
stochastic process which satisfies
for all $ t \in [0,T] $ that
$ 
  [ \tilde{O}_t ]_{ \P, \B(H) }
  = 
  \int_0^t
  e^{(t-s)A} \, dW_s
$,
which satisfies
for all $ \omega \in \Omega $,
$ r \in (0,\theta] $ that
$
  \sup_{ 0\leq s < t \leq T }
  (t-s)^{-r} \,
  \|
    \tilde{O}_t(\omega) - \tilde{O}_s(\omega)
  \|_V
  <
  \infty
$,
and which satisfies
for all $ r \in (0,\theta] $ that
$
  \sup_{ 0\leq s < t \leq T }
  (t-s)^{-r} \,
  \|
    \tilde{O}_t
    - 
    \tilde{O}_s
  \|_{\L^p(\P;V)}
  <
  \infty
$.
Observe that, e.g., Lemma~4.3 in Bl{\"o}mker \& Jentzen~\cite{bj09}
ensures that such a stochastic process
does indeed exist.
Next let $ O \colon [0,T] \times \Omega \rightarrow V $
be the stochastic process with the property that
for all $ t \in [0,T] $ it holds that
$ O_t = e^{tA}\xi + \tilde{O}_t $.
Note that the fact that for all $ \omega \in \Omega $
it holds that the function $ [0,T] \ni t \mapsto \tilde{O}_t(\omega) \in V $
is H{\"o}lder continuous and the fact that for all $ \omega \in \Omega $
it holds that the function $ [0,T] \ni t \mapsto e^{tA} \xi(\omega) \in V $
is continuous imply that for all $ \omega \in \Omega $
it holds that the function $ [0,T] \ni t \mapsto O_t(\omega) \in V $
is continuous.
Moreover, observe that for all $ t \in [0,T] $ it holds
that 
$ 
  [O_t - e^{tA}\xi]_{\P,\B(H)} 
  =
  [\tilde{O}_t]_{\P,\B(H)} 
  =
  \int_0^t
  e^{(t-s)A} \, dW_s
$.
This implies~\eqref{it:wiener_1}.
In addition, observe that 
Lemma~\ref{lem:semigroup_bound},
the fact that
$
  \forall \,
  s \in [0,\infty)
$,
$ 
  r \in [0,1]
  \colon
  \| (-sA)^r \, e^{sA} \|_{L(H)}
  \leq 1  
$,
and the fact that
$
  \forall \,
  s \in (0,\infty)
$,
$ 
  r \in [0,1]
  \colon
  \| (-sA)^{-r} \, (e^{sA}-\Id_H) \|_{L(H)}
  \leq 1  
$
assure that for all
$ \omega \in \Omega $,
$ r \in (0,\theta] $
it holds that
\begin{align}
\label{eq:wiener_11}
\begin{split}
 &\sup_{ 0 < s < t\leq T }
  \frac{
   s
   \left\|
     O_t(\omega)
     -
     O_s(\omega)
   \right\|_V
  }{
    \left(t-s\right)^r
  }
  =
  \sup_{ 0 < s < t\leq T }
  \frac{
   s \,
   \big\|
     e^{tA}
     \xi(\omega)
     +
     \tilde{O}_t(\omega)
     -
     e^{sA}
     \xi(\omega)
     -
     \tilde{O}_s(\omega)
   \big\|_V
  }{
    (t-s)^r
  }
\\&\leq
  \sup_{ 0 < s<t\leq T }
  \left[ 
    \frac{
      s \,
      \big\|
	e^{sA}
	\left(
	  e^{(t-s)A}
	  -
	  \Id_H
	\right)
	\xi(\omega)
      \big\|_V
    }{
      \left(t-s\right)^r
    }
    +
    \frac{
      s \,
      \big\|
	\tilde{O}_t(\omega)
	-
	\tilde{O}_s(\omega)
      \big\|_V
    }{
      \left(t-s\right)^r
    }
  \right]
\\&\leq 
  \sup_{ 0 < s<t\leq T }
  \left[ 
    \frac{ 
      s
      \left\|
	e^{\nicefrac{s}{2}A}
      \right\|_{ L(H,V) }
      \left\|
	e^{\nicefrac{s}{2}A}
	\left(
	  e^{(t-s)A}
	  -
	  \Id_H
	\right)
      \right\|_{ L(H) }
      \left\|
	\xi(\omega)
      \right\|_{ H }
    }{
      \left( t-s \right)^r 
    }
    +
    \frac{
      T \,
      \big\|
	\tilde{O}_t(\omega)
	-
	\tilde{O}_s(\omega)
      \big\|_V
    }{
      \left(t-s\right)^r
    }
  \right]
\\&\leq
  2
  \left[
    \sup_{ s \in (0,T] }
    s^{(\nicefrac{1}{4}-\nicefrac{1}{4n^2})}
    \left\|
      e^{sA}
    \right\|_{ L(H,V) }
  \right]
  \Bigg[
    \sup_{ 0 < s<t\leq T }
    \left( t - s \right)^{ (1-\nicefrac{1}{4}+\nicefrac{1}{4n^2} - r) } 
    \left\|
      \left(
	-
	\tfrac{s}{2}
	A
      \right)^{(1-\nicefrac{1}{4}+\nicefrac{1}{4n^2}) }
      e^{\nicefrac{s}{2}A}
    \right\|_{ L(H) }
\\&\quad\cdot
    \left\|
      \left( -\left(t-s\right)A \right)^{-(1-\nicefrac{1}{4}+\nicefrac{1}{4n^2}) }
      \left(
	e^{(t-s)A}
	-
	\Id_H
      \right)
    \right\|_{ L(H) }
    \left\|
      \xi(\omega)
    \right\|_{ H }
  \Bigg]
  +
  \sup_{ 0 < s<t\leq T }
  \frac{
    T \,
    \big\|
      \tilde{O}_t(\omega)
      -
      \tilde{O}_s(\omega)
    \big\|_V
  }{
    \left(t-s\right)^r
  }
\\&\leq
  2
  \max\!\left\{1,T\right\}
  \left[
    \sup_{ s \in (0,T] }
    s^{(\nicefrac{1}{4}-\nicefrac{1}{4n^2})}
    \left\|
      e^{sA}
    \right\|_{ L(H,V) }
  \right]
  \left\|
    \xi(\omega)
  \right\|_{ H }
  +
  \sup_{ 0 < s<t\leq T }
  \frac{
    T \,
    \big\|
      \tilde{O}_t(\omega)
      -
      \tilde{O}_s(\omega)
    \big\|_V
  }{
    \left(t-s\right)^r
  }
  <
  \infty .
\end{split}
\end{align}
This shows~\eqref{it:wiener_2}.
Furthermore, note that Lemma~\ref{lem:U_lem1}
implies that
\begin{align}
\label{eq:wiener_1}
\begin{split}
 &\sup_{ t \in [0,T] }
  \left\|
    O_t
  \right\|_{ \L^{p}(\P;V) }
  =
  \sup_{ t \in [0,T] }
  \big\|
    e^{tA} \xi
    +
    \tilde{O}_t
  \big\|_{ \L^{p}(\P;V) }
\\&\leq
  \left[ 
    \sup_{ t \in [0,T] }
    \left\|
      e^{tA}
    \right\|_{ L(V) }
  \right]
  \left\|
    \xi
  \right\|_{ \L^{p}(\P;V) }
  +
  \sup_{ t \in (0,T] }
  \big\|
    \tilde{O}_t
  \big\|_{ \L^{p}(\P;V) }
  \leq
  \left\|
    \xi
  \right\|_{ \L^{p}(\P;V) }
  +
  \sup_{ t \in (0,T] }
  \frac{
    T^{\theta}\,
    \|
      \tilde{O}_t
    \|_{ \L^{p}(\P;V) }
  }{
    t^{\theta}
  }
  <
  \infty .
\end{split}
\end{align}
In the next step observe that
\begin{align}
\begin{split}
 &\sup_{M \in \N}
  \sup_{ t\in [0,T] }
  \Big[
    |M\fl{t}|^{\theta} \,
    \big\|
      O_t 
      -
      O_{\fl{t}} 
    \big\|_{ \L^{p}(\P;V) }
  \Big]
\\&=
  \sup_{M \in \N}
  \sup_{ t\in [0,T] }
  \Big[
    |M\fl{t}|^{\theta} \,
    \big\|
      e^{tA} \xi
      +
      \tilde{O}_t 
      -
      e^{\fl{t}A} \xi
      -
      \tilde{O}_{\fl{t}} 
    \big\|_{ \L^{p}(\P;V) }
  \Big]
\\&\leq
  \sup_{M \in \N}
  \sup_{ \substack {t\in[0,T] \backslash \\  \{0,\frac{T}{M},\ldots,T\} } }
  \left[ 
    |M\fl{t}|^{\theta} \,
    (t-\fl{t})^{\theta} \,
    \frac{ 
      \big\|
	e^{\fl{t}A}
	\big(
	  e^{(t-\fl{t})A}
	  -
	  \Id_H
	\big)
      \big\|_{ L(V) }
    }{
      (t-\fl{t})^{\theta}
    } 
    \left\|
      \xi
    \right\|_{ \L^{p}(\P;V) }
  \right]
\\&\quad+
  \sup_{M \in \N}
  \sup_{ \substack {t\in[0,T] \backslash \\  \{0,\frac{T}{M},\ldots,T\} } }
  \left[ 
    |M T|^{\theta} \,
    (t-\fl{t})^{\theta} \,  
    \frac{ 
      \|
	\tilde{O}_t 
	-
	\tilde{O}_{\fl{t}} 
      \|_{ \L^{p}(\P;V) }
    }{
      (t-\fl{t})^{\theta}
    }
  \right] .
\end{split}
\end{align}
This and, e.g., Lemma~12.36 in 
Renardy \& Rogers~\cite{rr04}
prove that
\begin{align}
\label{eq:wiener_2}
\begin{split}
 &\sup_{M \in \N}
  \sup_{ t\in [0,T] }
  \Big[
    |M\fl{t}|^{\theta} \,
    \big\|
      O_t 
      -
      O_{\fl{t}} 
    \big\|_{ \L^{p}(\P;V) }
  \Big]
\\&\leq
  T^{\theta}
  \Bigg[
    \sup_{M \in \N}
    \sup_{ \substack {t\in[0,T] \backslash \\  \{0,\frac{T}{M},\ldots,T\} } }
    \big\|
      (-\fl{t}A)^{\theta} \,
      e^{\fl{t}A}
    \big\|_{ L(V) }
\\&\quad\cdot
    \big\|
      (-(t-\fl{t})A)^{-\theta} \,
      \big(
	e^{(t-\fl{t})A}
	-
	\Id_H
      \big)
    \big\|_{ L(V) }
  \Bigg]
  \left\|
    \xi
  \right\|_{ \L^{p}(\P;V) }
  +
  \sup_{ 0 \leq s < t \leq T }
  \left[ 
    \frac{ 
      T^{2\theta} \,
      \|
	\tilde{O}_t 
	-
	\tilde{O}_s 
      \|_{ \L^{p}(\P;V) }
    }{
      (t-s)^{\theta}
    }
  \right]
\\&\leq
  T^{\theta}
  \left[
    \sup_{ s\in (0,T) } 
    \big\|
      (-sA)^{\theta}  \,
      e^{sA}
    \big\|_{ L(V) }
  \right]
  \left[
    \sup_{ s\in (0,T) } 
    \big\|
      (-sA)^{-\theta} \,
      \big(
	e^{sA}
	-
	\Id_H
      \big)
    \big\|_{ L(V) }
  \right]
  \left\|
    \xi
  \right\|_{ \L^{p}(\P;V) }
\\&\quad+
  \sup_{ 0 \leq s < t \leq T }
  \left[ 
    \frac{
      T^{2\theta} \,
      \|
	\tilde{O}_t
	-
	\tilde{O}_s 
      \|_{ \L^{p}(\P;V) }
    }{
      (t-s)^{\theta}
    }
  \right]
  <
  \infty .
\end{split}
\end{align}
Combining~\eqref{eq:wiener_1} and~\eqref{eq:wiener_2}
shows~\eqref{it:wiener_3}. The proof of Lemma~\ref{lem:wiener} is thus
completed.
\end{proof}

\subsection{Strong convergence rates for a nonlinearity-truncated
exponential Euler scheme}

\begin{corollary}
\label{cor:exp_euler_convergence}
Assume the setting in Section~\ref{sec:example_setting}, let
$ \vartheta \in (0,\infty) $, 
$ \chi \in (0,\tfrac{1}{2n}] $, $ p \in [2,\infty) $, $ \D = (0,1) $,
$ \xi \in \L^{(3n-1)p\max\{2,2(n-1),2\vartheta,2(n-1)\vartheta\}}\!(\P; V) $,
let $ (W_t)_{ t \in [0,T] } $ be an 
$ \Id_H $-cylindrical $ (\F_t)_{ t \in [0,T] } $-Wiener process,
let $ X \colon [0,T] \times \Omega \rightarrow V $ 
be a stochastic process with continuous sample paths which
satisfies for all $ t \in [0,T] $ that
$ 
  \big[
    X_t 
    -
    e^{tA} \xi
    - 
    \int_0^t 
    e^{(t-s)A} 
    F(X_s) \, ds
  \big]_{\P,\B(H)}
  =
  \int_0^t
  e^{(t-s)A} \, dW_s
$,
and let $ \YM{} \colon [0,T] \times \Omega \rightarrow V $, $ M \in \N $,
be stochastic processes which satisfy for all $ t \in [0,T] $, $ M \in \N $ that
\begin{align}
\begin{split}
 &\left[ 
    \YM{t}
    -
    e^{tA} \xi
    -
    \smallint_0^t
    e^{(t-\fl{s})A} \,
    \one_{ 
      \{
	\| \YM{\fl{s}} \|_{\BaMi}
	\leq
	(M/T)^{\chi}
      \}
    } \,
    F(\YM{\fl{s}}) \, ds
  \right]_{\P,\B(H)}
  =
  \smallint_0^t
  e^{(t-\fl{s})A} \, dW_s .
\end{split}
\end{align}
Then it holds for all $ \theta \in [0,\nicefrac{1}{4}) $ that
\begin{align}
\label{eq:exp_euler_convergence}
\begin{split}
 &\sup_{M \in \N}
  \sup_{t \in [0,T]}
  \left[
    \big\|
      \YM{t}
    \big\|_{ \L^{p\max\{2, 2(n-1), 4\vartheta,4(n-1)\vartheta\}}\!(\P;V) }
    +
    \big\|
      X_t
    \big\|_{ \L^{p}(\P;H) }
    +
    M^{\min\{\vartheta\chi,\theta\}} \,
    \big\|
      X_t 
      -
      \YM{t}
    \big\|_{ \L^p(\P;H) }
  \right]
  <
  \infty .
\end{split}
\end{align}
\end{corollary}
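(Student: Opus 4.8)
The plan is to derive Corollary~\ref{cor:exp_euler_convergence} from Theorem~\ref{thm:main} by a direct verification of the hypotheses of Section~\ref{sec:main_result_setting}. I would take $H$, $\BaMi=V=L^{2n^2}(\lambda_{\D};\R)$, $A$, the interpolation spaces $(H_r)_{r\in\R}$, and $F$ as in Section~\ref{sec:example_setting} with $\D=(0,1)$ (so $d=1$); take $\SM_{s,t}=e^{(t-s)A}$ for $0\le s<t\le T$, $M\in\N$ (so that $\SM_{\fl{s},t}=e^{(t-\fl{s})A}$ is exactly the exponential-Euler propagator in~\eqref{eq:exp_scheme} and the pseudo-semigroup identity for $\SM$ is trivial); take $\qm=\nicefrac12$, $\cm=\chi$, $\qmm=\max\{1,2(n-1)\}$, $\nmm=\vartheta$; take $\Um(v)=\|v\|_{L^{2n}(\lambda_{\D};\R)}^{2n}$ and $\Rm(v)=K_0\max\{1,\|v\|_{L^{3n-1}(\lambda_{\D};\R)}^{3n-1}\}$ for $v\in\BaMi$, where $K_0\in(0,\infty)$ is the constant furnished by Corollary~\ref{cor:U_cor1} for $q=2n$; and take $c\in[1,\infty)$ large enough to absorb the finitely many constants from Lemmas~\ref{lem:F_lem1}, \ref{lem:F_lem2}, and~\ref{lem:F_lem3}. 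For the noise I would use Lemma~\ref{lem:wiener} (verbatim for the exact Ornstein--Uhlenbeck process $O_t=e^{tA}\xi+\int_0^te^{(t-s)A}\,dW_s$ and with the obvious modification of its proof for the frozen-time version $\OM{t}=e^{tA}\xi+\int_0^te^{(t-\fl{s})A}\,dW_s$); then, for $X$ and $\YM{}$ as in the Corollary, the defining $\P$-a.s.\ identities of Section~\ref{sec:main_result_setting} hold since, for each $t$, $X_t-\int_0^te^{(t-s)A}F(X_s)\,ds$ and $O_t$ (respectively $\YM{t}-\int_0^t\SM_{\fl{s},t}\one_{\{\|\YM{\fl{s}}\|_{\BaMi}\le(M/T)^{\chi}\}}F(\YM{\fl{s}})\,ds$ and $\OM{t}$) lie in the same $[\cdot]_{\P,\B(H)}$-class.

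Next I would check the structural hypotheses of Section~\ref{sec:main_result_setting}: the growth bound $\|F(u+v)\|_H\le c\,(1+|\Um(u)|^{\qm}+|\Um(v)|^{\qm})$ is Lemma~\ref{lem:F_lem1} with $q=2n$ (here $|\Um(v)|^{\nicefrac12}=\|v\|_{L^{2n}(\lambda_{\D};\R)}^{n}$); the local Lipschitz bound with exponent $\qmm=\max\{1,2(n-1)\}$ is Lemma~\ref{lem:F_lem3}; the monotonicity bound $\langle x-y,A(x-y)+F(x)-F(y)\rangle_H\le c\,\|x-y\|_H^2$ is Lemma~\ref{lem:F_lem2}; and the Lyapunov-type inequality~\eqref{eq:assume_u_main} is Corollary~\ref{cor:U_cor1} applied with $q=2n$, $h=\nicefrac TM$, $\chi=\cm$, using $c\ge1$. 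The crucial point making these fit together is that the truncation threshold $h^{-\chi}=(M/T)^{\chi}$ in Corollary~\ref{cor:U_cor1} is imposed on the $L^{nq}(\lambda_{\D};\R)$-norm, i.e.\ on the $L^{2n^2}(\lambda_{\D};\R)=\BaMi$-norm, which coincides with the truncation in~\eqref{eq:exp_scheme} and in~\eqref{eq:assume_u_main}; this dictates $q=2n$, hence $\Um(v)=\|v\|_{L^{2n}(\lambda_{\D};\R)}^{2n}$ and $\qm=\nicefrac12$.

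Then I would verify~\eqref{eq:main_assumption}. With $\sm=\nicefrac14-\nicefrac1{4n^2}$, Lemma~\ref{lem:semigroup_bound} (with $q=2n^2$, $\rho=\sm$) bounds $t^{\sm}\|e^{tA}\|_{L(H,\BaMi)}$ and $(t-s)^{\sm}\|\SM_{\fl{s},t}\|_{L(H,\BaMi)}$ uniformly in $M$, $s$, $t$, and Lemma~\ref{lem:exponential_order} (with $q=2n^2$) bounds $M^{\smm}\int_0^t\|e^{(t-u)A}-\SM_{\fl{u},t}\|_{L(H,\BaMi)}\,du$ uniformly in $M$, $t$ for every $\smm\in[0,1-\sm)$ (note $1-\sm=\nicefrac34+\nicefrac1{4n^2}$ is the supremum of admissible orders in Lemma~\ref{lem:exponential_order} for $d=1$, $q=2n^2$). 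The moment and temporal-regularity requirements on $O$ follow from Lemma~\ref{lem:wiener}, combined with the continuous embeddings $L^{2n^2}(\lambda_{\D};\R)\hookrightarrow L^{2n}(\lambda_{\D};\R)$ and $L^{2n^2}(\lambda_{\D};\R)\hookrightarrow L^{3n-1}(\lambda_{\D};\R)$ (valid as $3n-1\le2n^2$) to estimate $\Um(O_t)$ and $\Rm(O_t)$; the integrability hypothesis $\xi\in\L^{(3n-1)p\max\{2,2(n-1),2\vartheta,2(n-1)\vartheta\}}(\P;V)$ is exactly what makes $\sup_{M,t}\|\Um(\OM{t})+\Rm(\OM{t})\|_{\L^{2p\qm\max\{2,\qmm,2\nmm,\nmm\qmm\}}(\P;\R)}$ finite (with the present choices $2p\qm=p$ and $2p\qm\max\{2,\qmm,2\nmm,\nmm\qmm\}=p\max\{2,2(n-1),2\vartheta,2(n-1)\vartheta\}$). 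The one remaining input is $\sup_{M\in\N}\sup_{t\in[0,T]}M^{\nm}\|O_t-\OM{t}\|_{\L^{(\cdots)}(\P;\BaMi)}<\infty$ for every $\nm\in(0,\nicefrac14)$; writing $O_t-\OM{t}=\int_0^te^{(t-s)A}(\Id_H-e^{(s-\fl{s})A})\,dW_s$, this is a routine stochastic-convolution estimate obtained by the factorization method together with Lemma~\ref{lem:semigroup_bound} and the $\gamma$-radonifying bounds for $e^{\cdot A}\colon H\to L^{2n^2}(\lambda_{\D};\R)$ that already underlie Lemma~\ref{lem:wiener} (cf.\ \cite{bj09}); the bound $\|\OM{t}\|_{\BaMi}\le\|O_t\|_{\BaMi}+\|O_t-\OM{t}\|_{\BaMi}$ then also supplies the moment bounds on $\OM{}$.

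Finally, given $\theta\in[0,\nicefrac14)$ I would fix $\nm\in(\theta,\nicefrac14)$ and $\smm\in(\theta,1-\sm)$; the condition $p\in[\max\{2,\nicefrac1{\qmm},\tfrac1{2\qm}\min\{1,\tfrac2{\qmm},\tfrac1{2\nmm},\tfrac1{\nmm\qmm}\}\},\infty)$ of Theorem~\ref{thm:main} then reduces to $p\ge2$ (since $\qm=\nicefrac12$ gives $\tfrac1{2\qm}=1$ and $\qmm\ge1$ gives $\nicefrac1{\qmm}\le1$), which holds by assumption. Thus Theorem~\ref{thm:main} applies and~\eqref{eq:main_assert} yields $\sup_{M\in\N}\sup_{t\in[0,T]}\big[\|\YM{t}\|_{\L^{p\max\{2,\qmm,4\nmm,2\nmm\qmm\}}(\P;\BaMi)}+\|X_t\|_{\L^p(\P;H)}+M^{\min\{\nmm\cm,\smm,\nm\}}\|X_t-\YM{t}\|_{\L^p(\P;H)}\big]<\infty$. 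Since $\min\{\nmm\cm,\smm,\nm\}=\min\{\vartheta\chi,\smm,\nm\}\ge\min\{\vartheta\chi,\theta\}$ and $\max\{2,\qmm,4\nmm,2\nmm\qmm\}=\max\{2,2(n-1),4\vartheta,4(n-1)\vartheta\}$, this is precisely~\eqref{eq:exp_euler_convergence}. The main obstacle is not any single analytic estimate — the auxiliary lemmas of Section~\ref{sec:example_aux} and Theorem~\ref{thm:main} do the heavy lifting — but rather (i) the bookkeeping needed to see that every integrability exponent entering~\eqref{eq:main_assumption} is dominated by the single integrability imposed on $\xi$, and (ii) supplying the one estimate not stated verbatim above, namely the $M^{-\nm}$ rate ($\nm<\nicefrac14$) of the frozen-semigroup stochastic-convolution error $O_t-\OM{t}$ in $\L^p(\P;\BaMi)$.
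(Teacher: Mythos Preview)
Your proposal is correct and follows essentially the same route as the paper's proof: the same choices $\SM_{s,t}=e^{(t-s)A}$, $\qm=\nicefrac12$, $\qmm=\max\{1,2(n-1)\}$, $\Um(v)=\|v\|_{L^{2n}}^{2n}$, $\Rm(v)=K\max\{1,\|v\|_{L^{3n-1}}^{3n-1}\}$, $\sm=\nicefrac14-\nicefrac1{4n^2}$, and the same auxiliary lemmas (\ref{lem:semigroup_bound}, \ref{lem:F_lem1}--\ref{lem:F_lem3}, \ref{lem:exponential_order}, Corollary~\ref{cor:U_cor1}, Lemma~\ref{lem:wiener}) feed into Theorem~\ref{thm:main}. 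The only difference is that for the $M^{-\nm}$ rate of $\|O_t-\OM{t}\|_{\L^p(\P;\BaMi)}$ the paper invokes a ready-made result from Cox \& van Neerven~\cite{cvn10} rather than redoing the factorization argument by hand.
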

\begin{proof}[Proof of Corollary~\ref{cor:exp_euler_convergence}]
Throughout this proof let
$ \OM{} \colon [0,T] \times \Omega \rightarrow V $, $ M \in \N $, 
be stochastic processes with the property that for all
$ t \in [0,T] $, $ M \in \N $ it holds that
$
  [\OM{t} - e^{tA}\xi]_{ \P, \B(H)} 
  = 
  \int_0^t
  e^{(t-\fl{s})A} \, dW_s
$.
Next note that
Lemma~\ref{lem:wiener} ensures that there exists
a stochastic process $ O \colon [0,T] \times \Omega \rightarrow V $
with continuous sample paths
which satisfies for all
$ t \in [0,T] $
that
$ 
  [O_t - e^{tA}\xi]_{ \P, \B(H)} 
  = 
  \int_0^t
  e^{(t-s)A} \, dW_s
$,
which satisfies for all
$ \omega \in \Omega $ that
$
  \limsup_{ r\searrow 0 }
  \sup_{ 0 \leq s < t \leq T }
  \frac{ 
    s
    \| O_t(\omega) - O_s(\omega) \|_V
  }{ (t-s)^r }
  <
  \infty
$,
and
which satisfies
for all $ \theta \in (0,\nicefrac{1}{4}) $
that
\begin{align}
\begin{split}
\label{eq:w_bounds_a}
 &\sup_{M \in \N}
  \sup_{ t \in [0,T] }
  \big\|
    \|
      O_t
    \|_{ V }
    +
    |M\fl{t}|^{\theta} \,
    \|
      O_t
      -
      O_{\fl{t}}
    \|_V
  \big\|_{ \L^{(3n-1)p\max\{2,2(n-1),2\vartheta,2(n-1)\vartheta\}}\!(\P;\R) }
  <
  \infty .
\end{split}
\end{align}
Furthermore, note that
for all $ q \in [1,\infty) $,
$ \theta \in [0,\nicefrac{1}{4}) $
it holds that
\begin{align}
\label{eq:w_bounds_11}
\begin{split}
 &\sup_{M \in \N}
  \sup_{ t \in [0,T] }
  M^{\theta}
  \left\|
    O_{t}
    -
    \OM{t}
  \right\|_{ \L^q(\P;V) }
\\&\leq
  \sup_{M \in \N}
  M^{\theta}
  \left(
    \E\!\left[ 
      \sup_{ t \in [0,T] }
      \left\|
	O_{t}
	-
	\OM{t}
      \right\|_{ V }^q
    \right]
  \right)^{\!\nicefrac{1}{q}}
  \leq
  \sup_{M \in \N}
  M^{\theta}
  \left(
    \E\!\left[ 
      \sup_{ t \in [0,T] }
      \left\|
	O_{t}
	-
	\OM{t}
      \right\|_{ \C([0,1],\R) }^q
    \right]
  \right)^{\!\nicefrac{1}{q}}
  <
  \infty
\end{split}
\end{align}
(cf., e.g., Theorem~4.13 
in Cox \& van Neerven~\cite{cvn10}).
This, the triangle inequality,
and~\eqref{eq:w_bounds_a} show that
\begin{align}
\label{eq:w_bounds_1}
\begin{split}
 &\sup_{M \in \N}
  \sup_{ t \in [0,T] }
  \left\|
    \OM{t}
  \right\|_{ \L^{(3n-1)p\max\{2,2(n-1),2\vartheta,2(n-1)\vartheta\}}\!(\P;L^{(3n-1)}(\lambda_{(0,1)}; \R)) }
\\&\leq
  \sup_{M \in \N}
  \sup_{ t \in [0,T] }
  \left\|
    O_t
  \right\|_{ \L^{(3n-1)p\max\{2,2(n-1),2\vartheta,2(n-1)\vartheta\}}\!(\P;V) }
\\&\quad+
  \sup_{M \in \N}
  \sup_{ t \in [0,T] }
  \left\|
    O_{t}
    -
    \OM{t}
  \right\|_{ \L^{(3n-1)p\max\{2,2(n-1),2\vartheta,2(n-1)\vartheta\}}\!(\P;V) }
  <
  \infty .
\end{split}
\end{align}
Moreover, note that
Corollary~\ref{cor:U_cor1} ensures
that there exists a real number 
$ K \in (0,\infty) $ which
satisfies for all $ h \in (0,T] $,
$ t \in [0,h] $, $ u, v \in V $
that
\begin{align}
\label{eq:u_ineq_1}
\begin{split} 
 &\big\|
    e^{tA}
    \big[
      u
      +
      t \,
      \one_{ 
	[0, h^{-\chi}]
      }
      ( \left\| u + v \right\|_{ V } ) \,
      F( u + v )
    \big]
  \big\|_{ L^{2n}(\lambda_{(0,1)};\R) }^{2n}
\\&
  \leq
  e^{t}
  \left[
    \|
      u
    \|_{ L^{2n}(\lambda_{(0,1)};\R) }^{2n}
    + 
    t
    K
    \max\!\left\{
      1,
      \|
	v
      \|_{ L^{(3n-1)}(\lambda_{(0,1)};\R) }^{(3n-1)}
    \right\}
  \right] .
\end{split}
\end{align}
In the next step observe that inequalities \eqref{eq:w_bounds_a}--\eqref{eq:w_bounds_1}
imply that for all $ \theta \in (0,\nicefrac{1}{4}) $ it holds that
\begin{align}
\label{eq:w_bounds_12}
\begin{split}
 &\sup_{M \in \N}
  \sup_{ t \in [0,T] }
  \left\|
    \left\|
      \OM{t}
    \right\|_{ L^{2n}(\lambda_{(0,1)}; \R) }^{2n}
    +
    K
    \max\!\left\{
      1,
      \left\|
	\OM{t}
      \right\|_{ L^{(3n-1)}(\lambda_{(0,1)}; \R) }^{(3n-1)}
    \right\}
  \right\|_{ \L^{p\max\{2,2(n-1),2\vartheta,2(n-1)\vartheta\}}\!(\P; \R) }
\\&\quad+
  \sup_{M \in \N}
  \sup_{ t \in [0,T] }
  \Big\| \!
    \left\|
      O_t
    \right\|_V
    +
    M^{\theta} \,
    \big\|
      O_t
      -
      \OM{t}
    \big\|_{ V }
  \Big\|_{ \L^{p\max\{2,2(n-1),4\vartheta,4(n-1)\vartheta\}}\!(\P; \R) }
\\&\quad+
  \sup_{M \in \N}
  \sup_{ t \in [0,T] }
  \Big[
    |M\fl{t}|^{\theta} \,
    \big\|
      O_t
      -
      O_{\fl{t}}
    \big\|_{ \L^{2p}(\P;V) }
  \Big]
  <
  \infty .
\end{split}
\end{align}
Combining this
and the fact that
$
  \forall \,
  t_1, t_2, t_3 \in [0,T]
$
with
$ 
  t_1 < t_2 < t_3 
  \colon 
  e^{(t_3-t_1)A}
  =
  e^{(t_3-t_2)A}
  e^{(t_2-t_1)A}
$
with
\eqref{eq:u_ineq_1},
Lemma~\ref{lem:semigroup_bound},
Lemma~\ref{lem:F_lem1},
Lemma~\ref{lem:F_lem2},
Lemma~\ref{lem:F_lem3},
and Lemma~\ref{lem:exponential_order}
allows us to apply Theorem~\ref{thm:main} (with 
$ \varphi = \max\{1,2(n-1)\} $, $ \phi = \nicefrac{1}{2} $,
$ \SM_{s,t} = e^{(t-s)A} $,
$ \Um(v) = \| v \|_{L^{2n}(\lambda_{(0,1)}; \R) }^{2n} $,
$ \Rm(v) = K\max\{1, \| v \|_{L^{(3n-1)}(\lambda_{(0,1)}; \R) }^{(3n-1)}\} $,
$ \rho = \nicefrac{1}{4} - \nicefrac{1}{4n^2} $,
and $ \varrho = \nicefrac{3}{4} $
for $ v \in V $, $ M \in \N $, $ s, t \in [0,T] $ with $ s < t $
in the notation of Theorem~\ref{thm:main})
to obtain~\eqref{eq:exp_euler_convergence}.
The proof of Corollary~\ref{cor:exp_euler_convergence} is thus completed.
\end{proof}

The following result, Corollary~\ref{cor:exp_euler_convergence_2},
specializes Corollary~\ref{cor:exp_euler_convergence}
to the case where the initial random variable
$ \xi \colon \Omega \rightarrow V $ satisfies
$ \forall \, p \in (0,\infty) \colon \E\big[ \| \xi \|_V^p \big] < \infty $.

\begin{corollary}
\label{cor:exp_euler_convergence_2}
Assume the setting in Section~\ref{sec:example_setting}, let 
$ \D = (0,1) $,
$ \xi \in \cap_{p\in(0,\infty)} \L^p(\P; V) $,
$ \chi \in (0,\tfrac{1}{2n}] $,
let $ (W_t)_{ t \in [0,T] } $ be an 
$ \Id_H $-cylindrical $ (\F_t)_{ t \in [0,T] } $-Wiener process,
let $ X \colon [0,T] \times \Omega \rightarrow V $ 
be a stochastic process with continuous sample paths which
satisfies for all $ t \in [0,T] $ that
$ 
  \big[
    X_t 
    -
    e^{tA} \xi 
    - 
    \int_0^t 
    e^{(t-s)A} 
    F(X_s) \, ds
  \big]_{\P,\B(H)}
  =
  \int_0^t
  e^{(t-s)A} \, dW_s
$,
and let $ \YM{} \colon [0,T] \times \Omega \rightarrow V $, $ M \in \N $,
be stochastic processes which satisfy for all $ t \in [0,T] $, $ M \in \N $ that
\begin{align}
\begin{split}
 &\left[ 
    \YM{t}
    -
    e^{tA} \xi
    -
    \smallint_0^t
    e^{(t-\fl{s})A} \,
    \one_{ 
      \{
	\| \YM{\fl{s}} \|_{\BaMi}
	\leq
	(M/T)^{\chi}
      \}
    } \,
    F(\YM{\fl{s}}) \, ds
  \right]_{ \P, \B(H) }
  =
  \smallint_0^t
  e^{(t-\fl{s})A} \, dW_s .
\end{split}
\end{align}
Then it holds for all $ \theta \in [0,\nicefrac{1}{4}) $,
$ p \in (0,\infty) $ that
\begin{align}
\begin{split}
 &\sup_{M \in \N}
  \sup_{t \in [0,T]}
  \left[
    \big\|
      \YM{t}
    \big\|_{ \L^{p}(\P;V) }
    +
    \big\|
      X_t
    \big\|_{ \L^{p}(\P;H) }
    +
    M^{\theta} \,
    \big\|
      X_t 
      -
      \YM{t}
    \big\|_{ \L^p(\P;H) }
  \right]
  <
  \infty .
\end{split}
\end{align}
\end{corollary}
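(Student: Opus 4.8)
The plan is to obtain Corollary~\ref{cor:exp_euler_convergence_2} as a direct specialization of Corollary~\ref{cor:exp_euler_convergence}. The two statements differ only in that Corollary~\ref{cor:exp_euler_convergence} is phrased for $p \in [2,\infty)$, that it carries an auxiliary exponent $\vartheta \in (0,\infty)$, and that it gives the order $\min\{\vartheta\chi,\theta\}$ rather than $\theta$; all of this will be reconciled by an elementary choice of parameters together with the fact that, on the probability space $(\Omega,\F,\P)$, one has $\|v\|_{\L^r(\P;\cdot)} \leq \|v\|_{\L^q(\P;\cdot)}$ whenever $q \geq r > 0$.

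First I would fix $\theta \in [0,\nicefrac{1}{4})$ and $p \in (0,\infty)$. Since the $\L^r(\P;\cdot)$-norms are nondecreasing in $r$ on a probability space, it suffices to prove the assertion with $p$ replaced by $\max\{p,2\}$, so I would assume without loss of generality that $p \in [2,\infty)$. Next I would put $\vartheta := \max\{1,\theta/\chi\} \in (0,\infty)$ (which is meaningful because $\chi > 0$); this forces $\vartheta\chi = \max\{\chi,\theta\} \geq \theta$ and hence $\min\{\vartheta\chi,\theta\} = \theta$, and it covers the degenerate case $\theta = 0$ automatically. Because $\xi \in \cap_{q\in(0,\infty)} \L^q(\P;V)$, we have in particular that $\xi \in \L^{(3n-1)p\max\{2,2(n-1),2\vartheta,2(n-1)\vartheta\}}(\P;V)$, so that all hypotheses of Corollary~\ref{cor:exp_euler_convergence} are satisfied for this $\vartheta$, the given $\chi$, and the given $p$.

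Applying Corollary~\ref{cor:exp_euler_convergence} would then yield that $\sup_{M\in\N}\sup_{t\in[0,T]}\big[\|\YM{t}\|_{\L^{q}(\P;V)} + \|X_t\|_{\L^p(\P;H)} + M^{\theta}\,\|X_t - \YM{t}\|_{\L^p(\P;H)}\big]$ is finite, where $q := p\max\{2,2(n-1),4\vartheta,4(n-1)\vartheta\} \geq p$ and where the exponent $\min\{\vartheta\chi,\theta\}$ has been replaced by $\theta$ in view of the choice of $\vartheta$. Since $q \geq p$ implies $\|\YM{t}\|_{\L^p(\P;V)} \leq \|\YM{t}\|_{\L^q(\P;V)}$ for every $t \in [0,T]$ and every $M \in \N$, this would give the desired finiteness for all $\theta \in [0,\nicefrac{1}{4})$ and $p \in (0,\infty)$, thereby completing the argument. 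I do not anticipate any genuine obstacle here — the argument is pure exponent bookkeeping — the only subtlety being that $\vartheta$ must be chosen depending on $\theta$ and $\chi$ only (and not on $M$ or $t$), which the explicit formula $\vartheta = \max\{1,\theta/\chi\}$ ensures.
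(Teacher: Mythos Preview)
Your proposal is correct and matches the paper's intent: the paper presents Corollary~\ref{cor:exp_euler_convergence_2} without an explicit proof, indicating only that it ``specializes Corollary~\ref{cor:exp_euler_convergence} to the case where the initial random variable $\xi$ satisfies $\forall \, p \in (0,\infty) \colon \E\big[\|\xi\|_V^p\big] < \infty$.'' Your reduction via $p \mapsto \max\{p,2\}$, the choice $\vartheta = \max\{1,\theta/\chi\}$, and the monotonicity of $\L^r(\P;\cdot)$-norms is exactly the bookkeeping that makes this specialization precise.
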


\subsection[Strong convergence rates for a nonlinearity-truncated
linear-implicit Euler scheme]{Strong convergence rates for a 
nonlinearity-truncated \\ linear-implicit Euler scheme}

\begin{corollary}
\label{cor:li_euler_convergence}
Assume the setting in Section~\ref{sec:example_setting}, let
$ \vartheta \in (0,\infty) $,
$ \chi \in (0,\tfrac{1}{2n}] $, $ p \in [2,\infty) $, $ \D = (0,1) $,
$ \xi \in \L^{(3n-1)p\max\{2,2(n-1),2\vartheta,2(n-1)\vartheta\}}\!(\P; H_{\nicefrac{3}{4}}) $,
let $ (W_t)_{ t \in [0,T] } $ be an 
$ \Id_H $-cylindrical $ (\F_t)_{ t \in [0,T] } $-Wiener process,
let $ X \colon [0,T] \times \Omega \rightarrow V $ 
be a stochastic process with continuous sample paths which
satisfies for all $ t \in [0,T] $ that
$ 
  \big[
    X_t 
    - 
    e^{tA} \xi 
    - 
    \int_0^t 
    e^{(t-s)A} 
    F(X_s) \, ds
  \big]_{\P,\B(H)}
  =
  \int_0^t
  e^{(t-s)A} \, dW_s
$,
and let $ \YM{} \colon [0,T] \times \Omega \rightarrow V $, $ M \in \N $,
be stochastic processes which satisfy for all $ t \in [0,T] $, $ M \in \N $ that
\begin{align}
\begin{split}
 &\bigg[
    \YM{t}
    -
    \big(
      \Id_H - (t-\fl{t})A
    \big)^{-1}
    \big(
      \Id_H - \tfrac{T}{M}A
    \big)^{ -\fl{t}\frac{M}{T} } 
    \xi
\\&-
    \smallint\limits_0^t
    \big(
      \Id_H - (t-\fl{t})A
    \big)^{-1}
    \big(
      \Id_H - \tfrac{T}{M}A
    \big)^{ (\fl{s}-\fl{t})\frac{M}{T} } \,
    \one_{ 
      \{
	\| \YM{\fl{s}} \|_{\BaMi}
	\leq
	(M/T)^{\chi}
      \}
    } \,
    F(\YM{\fl{s}}) \, ds
  \bigg]_{\P,\B(H)}
\\&=
  \smallint\limits_0^t
  \big(
    \Id_H - (t-\fl{t})A
  \big)^{-1}
  \big(
    \Id_H - \tfrac{T}{M}A
  \big)^{ (\fl{s}-\fl{t})\frac{M}{T} } \, dW_s .
\end{split}
\end{align}
Then it holds for all
$ \theta \in [0,\nicefrac{1}{4}) $ that
\begin{align}
\label{eq:li_euler_convergence}
\begin{split}
 &\sup_{M \in \N}
  \sup_{t \in [0,T]}
  \left[
    \big\|
      \YM{t}
    \big\|_{ \L^{p\max\{2,2(n-1),4\vartheta,4(n-1)\vartheta\}}(\P;V) }
    +
    \big\|
      X_t
    \big\|_{ \L^{p}(\P;H) }
    +
    M^{\min\{\vartheta\chi,\theta\}} \,
    \big\|
      X_t 
      -
      \YM{t}
    \big\|_{ \L^p(\P;H) }
  \right]
  <
  \infty .
\end{split}
\end{align}
\end{corollary}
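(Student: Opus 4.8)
The plan is to deduce Corollary~\ref{cor:li_euler_convergence} from Theorem~\ref{thm:main} along exactly the same lines as Corollary~\ref{cor:exp_euler_convergence}, the only structural change being that the exponential evolution family $ e^{(t-s)A} $ is replaced throughout by the linear-implicit one
\[
  \SM_{s,t}
  =
  \big( \Id_H - (t-\fl{t})A \big)^{-1}
  \big( \Id_H - \tfrac{T}{M}A \big)^{ (\fl{s}-\fl{t})\frac{M}{T} } .
\]
First I would introduce the linear-implicit Ornstein--Uhlenbeck-type processes $ \OM{} \colon [0,T]\times\Omega\to\BaMi $, $ M\in\N $, by demanding for all $ t\in[0,T] $, $ M\in\N $ that $ [ \OM{t} - \SM_{0,t}\xi ]_{\P,\B(H)} = \int_0^t \SM_{\fl{s},t} \, dW_s $ (choosing versions with the required pathwise regularity), so that $ \YM{} - \OM{} $ solves the integral equation of Section~\ref{sec:main_result_setting} with $ \SM $ and with $ \Vm(v) = \| v \|_{\BaMi}^{1/\chi} $, $ \cm = \chi $. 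As in the proof of Corollary~\ref{cor:exp_euler_convergence} I would invoke Lemma~\ref{lem:wiener} (with a sufficiently high integrability exponent) to obtain the exact Ornstein--Uhlenbeck process $ O $ together with its pathwise H{\"o}lder regularity and all of its moment and temporal-regularity bounds; since $ \xi\in H_{\nicefrac{3}{4}}\subseteq\BaMi $ this is legitimate. The data passed to Theorem~\ref{thm:main} are the same as in Corollary~\ref{cor:exp_euler_convergence}: $ \Um(v) = \| v \|_{L^{2n}(\lambda_{(0,1)};\R)}^{2n} $, $ \Rm(v) = K\max\{1, \| v \|_{L^{(3n-1)}(\lambda_{(0,1)};\R)}^{(3n-1)}\} $ with $ K $ from Corollary~\ref{cor:U_cor1}, $ \qmm = \max\{1,2(n-1)\} $, $ \qm = \nicefrac{1}{2} $, $ \nmm = \vartheta $, $ \cm = \chi $, smoothing exponent $ \nicefrac14-\nicefrac{1}{4n^2} $, consistency exponent $ \nicefrac34 $, and temporal-regularity exponent any number in $ (0,\nicefrac14) $.

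The bulk of the argument is the verification of hypothesis~\eqref{eq:main_assumption}. Everything except two ingredients is verbatim the computation carried out in Corollary~\ref{cor:exp_euler_convergence}: the one-step Lyapunov inequality~\eqref{eq:assume_u_main} is supplied by Corollary~\ref{cor:U_cor1}, which bounds both $ e^{tA}[\,\cdot\,] $ and $ ( \Id_H - tA )^{-1}[\,\cdot\,] $ (on a single step $ \SM_{\fl{t},t} = ( \Id_H - (t-\fl{t})A )^{-1} $); the polynomial growth, global Lipschitz and one-sided Lipschitz bounds on $ F $ come from Lemma~\ref{lem:F_lem1}, Lemma~\ref{lem:F_lem3}, and Lemma~\ref{lem:F_lem2}; the smoothing bounds for $ e^{tA} $ and $ \SM_{\fl{s},t} $ come from Lemma~\ref{lem:semigroup_bound}; and the composition property of $ \SM $ holds because resolvents of $ A $ commute. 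The first genuinely new ingredient is the consistency estimate $ \sup_{M\in\N}\sup_{t\in(0,T]} M^{\nicefrac34}\int_0^t \| e^{(t-u)A} - \SM_{\fl{u},t} \|_{L(H,\BaMi)} \, du < \infty $, which is now precisely Lemma~\ref{lem:limplicit_order_4} with $ \varrho = \nicefrac34 $ (this replaces the use of Lemma~\ref{lem:exponential_order} in the exponential case). The second new ingredient is the $ \L^p(\P;\BaMi) $-estimate of $ O_t - \OM{t} $, which I would split as $ O_t - \OM{t} = ( e^{tA} - \SM_{0,t} )\xi + \int_0^t ( e^{(t-s)A} - \SM_{\fl{s},t} ) \, dW_s $. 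For the stochastic term I would combine a Burkholder--Davis--Gundy / factorization argument with the consistency rate of Lemma~\ref{lem:limplicit_order_4} (cf.\ Theorem~4.13 in Cox \& van Neerven~\cite{cvn10}) to obtain $ \sup_{M\in\N} M^{\theta}\big\| \int_0^t ( e^{(t-s)A} - \SM_{\fl{s},t} )\,dW_s \big\|_{\L^q(\P;\BaMi)} < \infty $ for all $ q\in[1,\infty) $, $ \theta<\nicefrac14 $; together with Lemma~\ref{lem:wiener} this also yields the bounds on $ \OM{t} $ and on the increments $ \| \OM{t} - \OM{\fl{t}} \|_{\L^{2p}(\P;\BaMi)} $. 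For the deterministic term --- which is \emph{absent} in Corollary~\ref{cor:exp_euler_convergence}, since the exponential scheme reproduces $ e^{tA}\xi $ exactly --- I would use $ \xi\in H_{\nicefrac34} $ together with Lemma~\ref{lem:limplicit_order_1} and Lemma~\ref{lem:limplicit_order_3} to bound $ ( -A )^{\gamma} ( e^{tA} - \SM_{0,t} ) ( -A )^{-\nicefrac34} $ in $ L(H) $ for some $ \gamma > \nicefrac14-\nicefrac{1}{4n^2} $, and then the continuous embedding $ H_{\gamma}\subseteq L^{2n^2}(\lambda_{(0,1)};\R) = \BaMi $ used in the proof of Lemma~\ref{lem:semigroup_bound}, so that $ \| ( e^{tA} - \SM_{0,t} )\xi \|_{\BaMi} \leq C M^{-\theta}\| \xi \|_{H_{\nicefrac34}} $ uniformly in $ t\in[0,T] $ for every $ \theta<\nicefrac14 $.

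Once~\eqref{eq:main_assumption} is verified, Theorem~\ref{thm:main} (with $ \SM $, $ \Um $, $ \Rm $, $ \qmm $, $ \qm $, $ \cm = \chi $ and the smoothing/consistency exponents $ \nicefrac14-\nicefrac{1}{4n^2} $, $ \nicefrac34 $ as above, so that $ \alpha = \vartheta\chi $) yields $ \sup_{M\in\N}\sup_{t\in[0,T]}\big[ \| \YM{t} \|_{\L^{p\max\{2,2(n-1),4\vartheta,4(n-1)\vartheta\}}(\P;\BaMi)} + \| X_t \|_{\L^p(\P;H)} + M^{\min\{\vartheta\chi,\nicefrac34,\theta\}}\| X_t - \YM{t} \|_{\L^p(\P;H)} \big] < \infty $, and since $ \theta<\nicefrac14<\nicefrac34 $ this is exactly~\eqref{eq:li_euler_convergence}. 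The main obstacle is the $ \BaMi $-norm estimate of $ O_t - \OM{t} $: the stochastic part needs the rational-approximation analogue of the sharp stochastic-convolution regularity used in Corollary~\ref{cor:exp_euler_convergence}, and the deterministic part must be controlled uniformly in $ M $ despite the $ (t-\fl{t})^{-(\nicefrac14-\nicefrac{1}{4n^2})} $- and $ \fl{t}^{-1} $-type singularities that Lemma~\ref{lem:limplicit_order_1} and Lemma~\ref{lem:limplicit_order_3} produce near the first time step --- this is precisely why the hypothesis $ \xi\in H_{\nicefrac34} $ (rather than merely $ \xi\in\BaMi $) is imposed and why the attainable rate is $ \nicefrac14 $ up to an arbitrarily small loss. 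All remaining steps are routine bookkeeping identical to the exponential-Euler corollary.
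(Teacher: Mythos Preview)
Your proposal is correct and follows the paper's proof essentially step for step. Two minor discrepancies are worth noting. First, for the bound on $O_t-\OM{t}$ the paper invokes Theorem~1.1 in Cox \& van Neerven~\cite{cvn13} (the rational-approximation paper) rather than~\cite{cvn10}, and this single citation covers the full difference $O_t-\OM{t}$ in $\L^q(\P;\BaMi)$---including the initial-value contribution $(e^{tA}-\SM_{0,t})\xi$---so your explicit splitting and the separate treatment via Lemma~\ref{lem:limplicit_order_1} and Lemma~\ref{lem:limplicit_order_3} are not carried out in the paper (though your sketch of how to do it by hand, and your identification of $\xi\in H_{\nicefrac34}$ as the reason it works, are correct). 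Second, in order that the composition hypothesis $\SM_{r_1,r_3}=\SM_{r_2,r_3}\SM_{r_1,r_2}$ of Section~\ref{sec:main_result_setting} hold for \emph{all} $r_1<r_2<r_3$ (not only when $r_2$ is a grid point), the paper inserts the correction factor $(\Id_H-(s-\fl{s})A)$ into $\SM_{s,t}$; this factor equals $\Id_H$ whenever $s$ is a grid point and therefore does not affect any of the estimates you list.
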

\begin{proof}[Proof of Corollary~\ref{cor:li_euler_convergence}]
Throughout this proof let
$ \OM{} \colon [0,T] \times \Omega \rightarrow V $, $ M \in \N $, 
be stochastic processes with the property that for all
$ t \in [0,T] $, $ M \in \N $ it holds that
$
  \big[
    \OM{t} 
    - 
    (
      \Id_H - (t-\fl{t})A
    )^{-1}
    (
      \Id_H - \tfrac{T}{M}A
    )^{ -\fl{t}\frac{M}{T} } 
    \xi
  \big]_{ \P, \B(H)} 
  = 
  \int_0^t
  (
    \Id_H - (t-\fl{t})A
  )^{-1}
  (
    \Id_H - \tfrac{T}{M}A
  )^{ (\fl{s}-\fl{t})\frac{M}{T} } \, dW_s
$.
Next note that
Lemma~\ref{lem:wiener} 
together with the
fact that $ H_{\nicefrac{3}{4}} \subseteq V $
continuously ensures that there exists
a stochastic process $ O \colon [0,T] \times \Omega \rightarrow V $
with continuous sample paths
which satisfies for all
$ \omega \in \Omega $ that
$
  \limsup_{ r\searrow 0 }
  \sup_{ 0 \leq s < t \leq T }
  \frac{ 
    s
    \| O_t(\omega) - O_s(\omega) \|_V
  }{ (t-s)^r }
  <
  \infty
$,
which satisfies for all
$ t \in [0,T] $
that
$ 
  [O_t - e^{tA}\xi]_{ \P, \B(H)} 
  = 
  \int_0^t
  e^{(t-s)A} \, dW_s
$,
and
which satisfies
for all $ \theta \in (0,\nicefrac{1}{4}) $
that
\begin{align}
\begin{split}
\label{eq:w_bounds}
 &\sup_{M \in \N}
  \sup_{ t \in [0,T] }
  \big\|
    \|
      O_t
    \|_{ V }
    +
    |M\fl{t}|^{\theta} \,
    \|
      O_t
      -
      O_{\fl{t}}
    \|_V
  \big\|_{ \L^{(3n-1)p\max\{2,2(n-1),2\vartheta,2(n-1)\vartheta\}}\!(\P;\R) }
  <
  \infty .
\end{split}
\end{align}
Moreover,
note that for all
$ q \in [1,\infty) $,
$ \theta \in [0, \nicefrac{1}{4}) $
it holds that
\begin{align}
\begin{split}
 &\sup_{M \in \N}
  \sup_{ t \in [0,T] }
  M^{\theta} 
  \left\|
    O_t
    -
    \OM{t}
  \right\|_{ \L^q(\P;V) }
  \leq
  \sup_{M \in \N}
  M^{\theta} 
  \left(
    \E\!\left[ 
      \sup_{ t \in [0,T] }
      \left\|
        O_t
        -
        \OM{t}
      \right\|_{ V }^q
    \right]
  \right)^{\!\nicefrac{1}{q}}
  <
  \infty 
\end{split}
\end{align}
(cf., e.g., Theorem~1.1 
in Cox \& van Neerven~\cite{cvn13}).
The triangle inequality
and~\eqref{eq:w_bounds}
therefore show that
\begin{align}
\label{eq:w_bounds_33}
\begin{split}
 &\sup_{M \in \N}
  \sup_{ t \in [0,T] }
  \left\|
    \OM{t}
  \right\|_{ \L^{(3n-1)p\max\{2,2(n-1),2\vartheta,2(n-1)\vartheta\}}\!(\P;L^{(3n-1)}(\lambda_{(0,1)}; \R)) }
\\&\leq
  \sup_{M \in \N}
  \sup_{ t \in [0,T] }
  \left\|
    O_t
  \right\|_{ \L^{(3n-1)p\max\{2,2(n-1),2\vartheta,2(n-1)\vartheta\}}\!(\P;V) }
\\&\quad+
  \sup_{M \in \N}
  \sup_{ t \in [0,T] }
  \left\|
    O_t
    -
    \OM{t}
  \right\|_{ \L^{(3n-1)p\max\{2,2(n-1),2\vartheta,2(n-1)\vartheta\}}\!(\P;V) }
  <
  \infty .
\end{split}
\end{align}
Next note that
Corollary~\ref{cor:U_cor1} ensures
that there exists a real number
$ K \in (0,\infty) $ which
satisfies for all $ h \in (0,T] $,
$ t \in [0,h] $, $ u, v \in V $
that
\begin{align}
\label{eq:u_ineq_2}
\begin{split} 
 &\big\|
    \big( \Id_H - tA\big)^{-1}
    \big[
      u
      +
      t \,
      \one_{ 
	[0, h^{-\chi}]
      }
      ( \left\| u + v \right\|_{ V } ) \,
      F( u + v )
    \big]
  \big\|_{ L^{2n}(\lambda_{(0,1)};\R) }^{2n}
\\&
  \leq
  e^{t}
  \left[
    \|
      u
    \|_{ L^{2n}(\lambda_{(0,1)};\R) }^{2n}
    + 
    t
    K
    \max\!\left\{
      1,
      \|
	v
      \|_{ L^{(3n-1)}(\lambda_{(0,1)};\R) }^{(3n-1)}
    \right\}
  \right] .
\end{split}
\end{align}
Moreover, inequalities~\eqref{eq:w_bounds}--\eqref{eq:w_bounds_33}
assure that
for all $ \theta \in (0,\nicefrac{1}{4}) $
it holds that
\begin{align}
\label{eq:w_bounds_4}
\begin{split}
 &\sup_{M \in \N}
  \sup_{ t \in [0,T] }
  \left\|
    \left\|
      \OM{t}
    \right\|_{ L^{2n}(\lambda_{(0,1)}; \R) }^{2n}
    +
    K
    \max\!\left\{
      1,
      \left\|
	\OM{t}
      \right\|_{ L^{(3n-1)}(\lambda_{(0,1)}; \R) }^{(3n-1)}
    \right\}
  \right\|_{ \L^{p\max\{2,2(n-1),2\vartheta,2(n-1)\vartheta\}}\!(\P; \R) }
\\&\quad+
  \sup_{M \in \N}
  \sup_{ t \in [0,T] }
  \Big\| \!
    \left\|
      O_t
    \right\|_V
    +
    M^{\theta}
    \left\|
      O_t
      -
      \OM{t}
    \right\|_V
  \Big\|_{ \L^{p\max\{2,2(n-1),4\vartheta,4(n-1)\vartheta\}}\!(\P; \R) }
\\&\quad+
  \sup_{M \in \N}
  \sup_{ t \in [0,T] }
  \Big[ 
    |M\fl{t}|^{\theta} \,
    \big\|
      O_t
      -
      O_{\fl{t}}
    \big\|_{ \L^{2p}(\P;V) }
  \Big]
  <
  \infty .
\end{split}
\end{align}
In addition, observe that 
for all
$
  t_1, t_2, t_3 \in [0,T]
$
with
$ 
  t_1 < t_2 < t_3 
$
it holds that
\begin{align}
\begin{split}
 &\big(
    \Id_H
    -
    (t_1-\fl{t_1}) A
  \big)
  \big(
    \Id_H
    -
    (t_3-\fl{t_3}) A
  \big)^{-1}
  \big(
    \Id_H
    -
    \tfrac{T}{M} A
  \big)^{(\fl{t_1}-\fl{t_3})\frac{T}{M}}
\\&=
  \big(
    \Id_H
    -
    (t_2-\fl{t_2}) A
  \big)
  \big(
    \Id_H
    -
    (t_3-\fl{t_3}) A
  \big)^{-1}
  \big(
    \Id_H
    -
    \tfrac{T}{M} A
  \big)^{(\fl{t_2}-\fl{t_3})\frac{T}{M}}
\\&\quad
  \big(
    \Id_H
    -
    (t_1-\fl{t_1}) A
  \big)
  \big(
    \Id_H
    -
    (t_2-\fl{t_2}) A
  \big)^{-1}
  \big(
    \Id_H
    -
    \tfrac{T}{M} A
  \big)^{(\fl{t_1}-\fl{t_2})\frac{T}{M}}
\end{split}
\end{align}
(cf., e.g., (142)--(146) in Da Prato et al.~\cite{pjr10}).
Combining this, \eqref{eq:u_ineq_2}--\eqref{eq:w_bounds_4},
Lemma~\ref{lem:semigroup_bound},
Lemma~\ref{lem:F_lem1},
Lemma~\ref{lem:F_lem2},
Lemma~\ref{lem:F_lem3},
and Lemma~\ref{lem:limplicit_order_4}
ensures that we can apply 
Theorem~\ref{thm:main} (with 
$ \varphi = \max\{1, 2(n-1) \} $, $ \phi = \nicefrac{1}{2} $,
$ \Um(v) = \| v \|_{L^{2n}(\lambda_{(0,1)}; \R) }^{2n} $,
$ \Rm(v) = K\max\{1, \| v \|_{L^{(3n-1)}(\lambda_{(0,1)}; \R) }^{(3n-1)}\} $,
$ 
  \SM_{s,t} 
  = 
  (
    \Id_H
    -
    (s-\fl{s}) A
  )
  (
    \Id_H
    -
    (t-\fl{t}) A
  )^{-1}
  (
    \Id_H
    -
    \frac{T}{M} A
  )^{(\fl{s}-\fl{t})\frac{T}{M}} 
$,
$ \rho = \nicefrac{1}{4} - \nicefrac{1}{4n^2} $,
and $ \varrho = \nicefrac{3}{4} $
for $ v \in V $, $ M \in \N $, $ s, t \in [0,T] $ with $ s < t $
in the notation of Theorem~\ref{thm:main})
to obtain~\eqref{eq:li_euler_convergence}.
The proof of Corollary~\ref{cor:li_euler_convergence} is thus completed. 
\end{proof}

In our next result, Corollary~\ref{cor:li_euler_convergence_2},
we specialize Corollary~\ref{cor:li_euler_convergence}
to the case where the initial random variable
$ \xi \colon \Omega \rightarrow H_{\nicefrac{3}{4}} $ satisfies
$ \forall \, p \in (0,\infty) \colon \E\big[ \| \xi \|_{H_{\nicefrac{3}{4}}}^p \big] < \infty $.

\begin{corollary}
\label{cor:li_euler_convergence_2}
Assume the setting in Section~\ref{sec:example_setting}, let
$ \D = (0,1) $,
$ \xi \in \cap_{p\in(0,\infty)} \L^p(\P; H_{\nicefrac{3}{4}}) $,
$ \chi \in (0,\tfrac{1}{2n}] $,
let $ (W_t)_{ t \in [0,T] } $ be an 
$ \Id_H $-cylindrical $ (\F_t)_{ t \in [0,T] } $-Wiener process,
let $ X \colon [0,T] \times \Omega \rightarrow V $ 
be a stochastic process with continuous sample paths which
satisfies for all $ t \in [0,T] $ that
$ 
  \big[
    X_t 
    - 
    e^{tA} \xi 
    - 
    \int_0^t 
    e^{(t-s)A} 
    F(X_s) \, ds
  \big]_{\P,\B(H)}
  =
  \int_0^t
  e^{(t-s)A} \, dW_s
$,
and let $ \YM{} \colon [0,T] \times \Omega \rightarrow V $, $ M \in \N $,
be stochastic processes which satisfy for all $ t \in [0,T] $, $ M \in \N $ that
\begin{align}
\begin{split}
 &\bigg[
    \YM{t}
    -
    \big(
      \Id_H - (t-\fl{t})A
    \big)^{-1}
    \big(
      \Id_H - \tfrac{T}{M}A
    \big)^{ -\fl{t}\frac{M}{T} } 
    \xi
\\&-
    \smallint\limits_0^t
    \big(
      \Id_H - (t-\fl{t})A
    \big)^{-1}
    \big(
      \Id_H - \tfrac{T}{M}A
    \big)^{ (\fl{s}-\fl{t})\frac{M}{T} } \,
    \one_{ 
      \{
	\| \YM{\fl{s}} \|_{\BaMi}
	\leq
	(M/T)^{\chi}
      \}
    } \,
    F(\YM{\fl{s}}) \, ds
  \bigg]_{\P,\B(H)}
\\&=
  \smallint\limits_0^t
  \big(
    \Id_H - (t-\fl{t})A
  \big)^{-1}
  \big(
    \Id_H - \tfrac{T}{M}A
  \big)^{ (\fl{s}-\fl{t})\frac{M}{T} } \, dW_s .
\end{split}
\end{align}
Then it holds for all
$ \theta \in [0,\nicefrac{1}{4}) $,
$ p \in (0,\infty) $ that
\begin{align}
\begin{split}
 &\sup_{M \in \N}
  \sup_{t \in [0,T]}
  \left[
    \big\|
      \YM{t}
    \big\|_{ \L^p(\P;V) }
    +
    \big\|
      X_t
    \big\|_{ \L^{p}(\P;H) }
    +
    M^{\theta} \,
    \big\|
      X_t 
      -
      \YM{t}
    \big\|_{ \L^p(\P;H) }
  \right]
  <
  \infty .
\end{split}
\end{align}
\end{corollary}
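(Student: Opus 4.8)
The plan is to deduce Corollary~\ref{cor:li_euler_convergence_2} from Corollary~\ref{cor:li_euler_convergence} by a routine specialization argument that exploits the fact that $\xi$ now possesses finite moments of all orders. First I would fix $\theta \in [0,\nicefrac{1}{4})$ and $p \in (0,\infty)$ and choose a real number $\vartheta \in (0,\infty)$ large enough that $\vartheta \chi \geq \theta$; this is possible since $\chi \in (0,\nicefrac{1}{2n}]$, and it ensures $\min\{\vartheta\chi,\theta\} = \theta$. Next I would set $q = \max\{2,p\} \in [2,\infty)$, which is the exponent with which Corollary~\ref{cor:li_euler_convergence} will be applied; passing through $q$ rather than $p$ is necessary precisely to accommodate the range $p \in (0,2)$ not covered directly by Corollary~\ref{cor:li_euler_convergence}.

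The second step is to verify the hypotheses of Corollary~\ref{cor:li_euler_convergence} for the exponent $q$ and the chosen $\vartheta$. The conditions $q \in [2,\infty)$, $\vartheta \in (0,\infty)$, $\chi \in (0,\nicefrac{1}{2n}]$, and $\D = (0,1)$ hold by construction and by assumption, the processes $X$, $(\YM{})_{M \in \N}$ and the driving noise $(W_t)_{t\in[0,T]}$ are exactly those specified in Corollary~\ref{cor:li_euler_convergence_2}, and the operator $A$ and the nonlinearity $F$ are those fixed in Section~\ref{sec:example_setting}. The only remaining requirement is the integrability condition $\xi \in \L^{(3n-1)q\max\{2,2(n-1),2\vartheta,2(n-1)\vartheta\}}(\P; H_{\nicefrac{3}{4}})$, and this is an immediate consequence of the standing assumption $\xi \in \cap_{r\in(0,\infty)} \L^r(\P; H_{\nicefrac{3}{4}})$.

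Applying Corollary~\ref{cor:li_euler_convergence} with the exponent $q$ in place of $p$ and with the above $\vartheta$ then yields
\begin{align*}
 &\sup_{M \in \N}
  \sup_{t \in [0,T]}
  \left[
    \big\|
      \YM{t}
    \big\|_{ \L^{q\max\{2,2(n-1),4\vartheta,4(n-1)\vartheta\}}(\P;V) }
    +
    \big\|
      X_t
    \big\|_{ \L^{q}(\P;H) }
    +
    M^{\theta} \,
    \big\|
      X_t - \YM{t}
    \big\|_{ \L^q(\P;H) }
  \right]
  <
  \infty .
\end{align*}
Finally I would invoke the monotonicity of the $\L^a(\P;\cdot)$-norms on the probability space $(\Omega,\F,\P)$: since $q\max\{2,2(n-1),4\vartheta,4(n-1)\vartheta\} \geq q \geq p$, each of the three norms appearing in the target inequality of Corollary~\ref{cor:li_euler_convergence_2} is bounded above by the corresponding norm in the displayed estimate, whence the assertion follows. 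There is no genuinely hard step; the only points requiring a moment's care are the degenerate case $\theta = 0$ (where $M^{\theta} = 1$ and $\min\{\vartheta\chi,\theta\} = 0$ for every admissible $\vartheta$) and the sub-$L^2$ range $p \in (0,2)$, which is handled by the choice $q = \max\{2,p\}$ made at the outset.
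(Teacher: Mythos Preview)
Your proposal is correct and matches the paper's approach: the paper presents Corollary~\ref{cor:li_euler_convergence_2} as an immediate specialization of Corollary~\ref{cor:li_euler_convergence} without giving an explicit proof, and your argument supplies precisely the routine details (choosing $\vartheta$ with $\vartheta\chi \geq \theta$, lifting $p$ to $q = \max\{2,p\}$, and invoking the monotonicity of $\L^a(\P;\cdot)$-norms) that this specialization entails.
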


\subsubsection*{Acknowledgments}
This work has been partially supported
through the SNSF-Research project
200021\_156603
``Numerical approximations of nonlinear
stochastic ordinary and partial differential
equations''.

\bibliographystyle{acm}
\bibliography{bibfile}

\end{document}